\newtheorem{theorem}{Theorem}
\newtheorem{corollary}[theorem]{Corollary}
\newtheorem{definition}[theorem]{Definition}
\newtheorem{lemma}[theorem]{Lemma}
\newtheorem{proposition}[theorem]{Proposition}
\newtheorem{remark}[theorem]{Remark}
\newtheorem{mytheorem}{Theorem}
\newcommand{\rn}[1]{\mathbb{R}^{#1}}
\newcommand{\re}{ \mathbb{R}}
\newcommand{\beq}{\begin{equation}}
\newcommand{\bea}[1]{\begin{array}{#1} }
\newcommand{\eeq}{ \end{equation}}
\newcommand{\ea}{ \end{array}}
\newcommand{\ep}{\epsilon}
\newcommand{\es}{\emptyset}
\newcommand{\al}{\alpha}
\newcommand{\ga}{\gamma}
\newcommand{\de}{\delta}
\newcommand{\ds}{\displaystyle}
\newcommand{\ts}{\textstyle}
\newcommand{\ran}{\rangle}
\newcommand{\lan}{\langle}
\newcommand{\Ga}{\Gamma}
\newcommand{\la}{\lambda}
\newcommand{\La}{\Lambda}
\newcommand{\ar}{\partial}
\newcommand{\si}{\sigma}
\newcommand{\om}{\omega}
\newcommand{\Om}{\Omega}
\newcommand{\be}{\beta}
\newcommand{\ch}{\chi}
\newcommand{\ph}{\phi}
\newcommand{\he}{\theta}
\newcommand{\Ph}{\Phi}
\newcommand{\hs}[1]{\mbox{$ \hspace{#1}$}}
\newcommand{\sem}{\setminus}
\newcommand{\ze}{\zeta}
\newcommand{\ti}{\tilde}
\newcommand{\noi}{\noindent}
\renewcommand*{\backref}[1]{}
\renewcommand*{\backrefalt}[4]{%
 \ifcase #1 (Not cited.)%
   \or        (Cited on page~#2.)%
    \else      (Cited on pages~#2.)%
    \fi}  
\begin{document} 

\title[Failure of Fatou type theorems for Solutions to PDE of  $p$-Laplace Type]{Failure of Fatou type theorems for Solutions to PDE of  $p$-Laplace Type  in Domains with Flat Boundaries} 


\author[M. Akman]{Murat Akman}
\address{{\bf Murat Akman}\\
Department of Mathematical Sciences, University of Essex\\
Wivenhoe Park, Colchester, Essex CO4 3SQ, UK
} \email{murat.akman@essex.ac.uk}

\author[J. Lewis]{John Lewis}
\address{{\bf John Lewis} \\ Department of Mathematics \\ University of Kentucky \\ Lexington, Kentucky, 40506}
\email{johnl@uky.edu}

\author[A. Vogel]{Andrew Vogel}
\address{{\bf Andrew Vogel}\\ Department of Mathematics, Syracuse University \\  Syracuse, New York 13244}
\email{alvogel@syracuse.edu}


\keywords{gap series, $p$-harmonic measure, $p$-harmonic function, radial  limits,  Fatou theorem} 
\subjclass[2010]{35J60,31B15,39B62,52A40,35J20,52A20,35J92}
\begin{abstract}
  Let   $  \rn{n} $ 
   denote Euclidean $ n $ space    and  given  $k$   a positive integer let  $ \La_k  
  \subset \rn{n} $,   $ 1 \leq k < n - 1,  n \geq 3, $  be a $k$-dimensional plane with $ 0 \in \La_k.$    
  If     $p>n-k$, we  first  study  the    Martin boundary  problem  for solutions to  the   $p$-Laplace equation (called $p$-harmonic functions) in
   $ \rn{n} \sem \La_k $ relative to $ \{0\}. $  We then  use the results from our study to 
     extend  the work of Wolff in \cite{W} (on the failure of Fatou type theorems  for   $p$-harmonic functions in $ \rn{2}_+ $)  to $p$-harmonic functions  in $ \rn{n} \sem \La_k $   when $ n-k < p <\infty$.  Finally, we  discuss generalizations of our  work to  solutions of  $ p $-Laplace type   PDE  (called $ \mathcal{A}$-harmonic  functions).  
 \end{abstract}

\maketitle
\setcounter{tocdepth}{2}
\tableofcontents

\section{Introduction}  
\label{sec1}    
In 1984  Wolff brilliantly  used ideas  from  harmonic analysis and  PDE  to  prove that 
the Fatou theorem fails for $p$-harmonic functions when  $2<p<\infty$. He proved 
\begin{theorem}[{\cite[Theorem 1]{W}}] 
\label{thm1.1}  
If  $2<p<\infty$   then there exist   bounded  weak  solutions  $ \hat u $   of  the $p$-Laplace equation: 
\begin{align}
  \label{1.1}  
  \mathcal{L}_p \hat u  := \nabla \cdot \left( |\nabla \hat u|^{p-2} \nabla \hat u \right) =   0 
\end{align}
  in $\rn{2}_+ = \{ x = (x_1, x_2) : x_2 > 0 \},  $   for which  $\{ t \in \re  : {\ds  \lim_{ y \to 0 } \hat  u ( t + i  y ) }$ exists\}   
  has  Lebesgue measure  zero.  Also    there exist positive  bounded weak solutions of $ \mathcal{L}_p \hat v = 0$  such that  $ 
\{ t \in \re  :  {\ds \limsup_{y \to 0 } \,   \hat v   (t + i  y)  > 0 \}}  $  has Lebesgue measure $0$.    
\end{theorem}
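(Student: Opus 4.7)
The plan is to reconstruct Wolff's original argument, which is inherently perturbative and must exploit the nonlinearity of $\mathcal{L}_p$ for $p>2$. The strategy is to build $\hat u$ as a carefully controlled lacunary/gap series whose partial sums are approximate $p$-harmonic functions, and then show that the oscillations at different frequency scales survive in the limit so as to destroy radial convergence on a set of full measure on $\partial \rn{2}_+$.

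First I would set up the basic building blocks. Choose a rapidly growing lacunary sequence of frequencies $\{N_k\}$ (say $N_{k+1}/N_k \to \infty$) and small amplitudes $\{a_k\}$ with $\sum |a_k| < \infty$ so the boundary trace $\sum_k a_k \cos(N_k x_1)$ is a bounded $L^\infty$ function. In the linear ($p=2$) case the harmonic extension is simply $\sum a_k e^{-N_k x_2}\cos(N_k x_1)$, which has radial limits almost everywhere by Fatou. The whole point is that for $p>2$ the Dirichlet solution is \emph{not} this sum, because $p$-harmonic functions cannot be superposed; instead one must build the solution iteratively, exploiting the nonlinear coupling.

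The main construction is an iteration $u_0, u_1, u_2, \ldots$ where $u_0$ is a fixed smooth $p$-harmonic "background" (for example a tilted linear function with $\nabla u_0 \neq 0$, so that the linearization of $\mathcal{L}_p$ at $u_0$ is a uniformly elliptic operator with smooth coefficients) and $u_{k+1} = u_k + v_k$, where $v_k$ solves a linearized problem $L_{u_k} v_k = E_k$ in a strip $\{0 < x_2 < \delta_k\}$ with boundary data $a_k \cos(N_k x_1)$ on $\{x_2=0\}$, the error term $E_k$ absorbing the nonlinear remainder generated at the previous step. By choosing $N_k$ large enough compared to the scale on which $u_k$ varies, the correction $v_k$ is concentrated in a thin layer of width $\sim 1/N_k$ near the boundary, so its contribution to $u_{k+1}$ and to subsequent linearizations decays geometrically; and by choosing $a_k$ suitably (small but not summable in the relevant sense) one arranges that the pointwise oscillation of $u_k$ restricted to any vertical ray is bounded below by a definite amount at scale $\sim 1/N_k$. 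This is the part where $p>2$ is essential: the nonlinear "self-interaction" of a single high frequency mode produces a lower-order term that, roughly, looks like a constant plus a nonvanishing contribution at the same frequency, and one uses this feedback to keep the oscillation alive at every scale simultaneously.

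The remaining step is the measure-theoretic conclusion. Once $\hat u = \lim u_k$ is shown to be a genuine bounded weak solution of $\mathcal{L}_p \hat u = 0$ in $\rn{2}_+$, one shows that for almost every $t \in \re$, the function $y \mapsto \hat u(t+iy)$ oscillates by at least a fixed $\eta>0$ as $y\to 0$; this follows from a Borel--Cantelli argument applied to the exceptional sets at each scale $N_k$, using the lacunarity to get near-independence between different scales (this is where Wolff's harmonic-analytic input, in the spirit of Zygmund's theorems on lacunary series, enters). Thus the set where a radial limit exists has Lebesgue measure zero, giving the first assertion. For the second, positive statement, the same $\hat u$ can be shifted by a large constant to become positive, and then one replaces $\hat u$ by $\hat v = e^{\hat u}$-type construction or, more precisely, repeats the construction starting from a positive background $u_0$ and arranges the oscillations to be one-sided at each scale, so that the positive solution has $\limsup \hat v(t+iy) > 0$ only on a set of measure zero. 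The hardest single step will be the iterative control of errors: proving that the nonlinear remainders $E_k$ produced at stage $k$ are dominated in the appropriate norm by the linear data $a_k$ used at stage $k+1$, uniformly in $k$, so that the series converges in $W^{1,p}_{\mathrm{loc}}$ and the limit is actually a weak solution of (\ref{1.1}). This is the technical heart of \cite{W} and is where the lacunarity of $\{N_k\}$, the smoothness of the linearized operator away from $\nabla u_k = 0$, and the specific structure of the $p$-Laplacian all must be used in concert.
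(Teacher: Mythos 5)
Your sketch captures the general flavor (lacunary frequencies, exploiting nonlinearity for $p>2$), but it misses the idea that actually drives Wolff's proof and its analogue in this paper, and the mechanism you propose in its place would not close. The whole theorem reduces to the existence of a single building block: a bounded, Lipschitz, $1$-periodic $p$-harmonic function $\Phi$ on $\rn{2}_+$ with finite energy per period, $\Phi(t+iy)\to 0$ uniformly as $y\to\infty$, and \emph{nonzero boundary mean} $\int_0^1\Phi(s)\,ds\neq 0$ (Theorem \ref{thm1.2}; Theorem \ref{thmB} here). This is precisely what is impossible for $p=2$ (the mean value property forces the mean to equal the limit at infinity), and it is the only place the Fredholm-alternative/perturbation input of Wolff, or the Martin-exponent estimate $\sigma<k$ of Theorem \ref{thmA} in this paper, is needed. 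Your remark that the "nonlinear self-interaction produces a constant" gestures at this phenomenon, but you never isolate it as a lemma, and your Newton-type iteration (linearize at $u_k$, solve $L_{u_k}v_k=E_k$, control remainders geometrically) is not how either Wolff or this paper proceeds: the actual argument works with \emph{exact} $p$-harmonic extensions of truncated boundary data and proves a stability estimate (Wolff's Lemma 1.6; Lemma \ref{lem5.2} here) saying that $\widehat{fq+g}\approx f\,\hat q+g$ in a boundary layer of width $\nu^{-\alpha}$ and $\approx\hat g$ outside it, via Caccioppoli/energy bounds, boundary H\"older continuity and Harnack — not via linearized equations, which in any case degenerate where $\nabla u_k=0$, and for which you give no mechanism guaranteeing the limit's radial oscillation survives.

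Your measure-theoretic step is also off target. The failure of radial limits is not obtained by Borel--Cantelli and "near-independence" of scales: in fact the zero-mean lacunary series $\sum_j a_j\psi(T_jx)$ (with $\psi=\Phi(\cdot,0)-\bar b$) \emph{converges} almost everywhere by a weak-type maximal estimate (Lemma \ref{lem5.3}); divergence of the boundary data comes from the nonzero mean, since $\sum_j a_j\tilde\psi_j=\sum_j a_j\psi_j+\bar b\sum_j a_j$ and one chooses $(a_j)$ with bounded but non-convergent partial sums (\eqref{5.40}--\eqref{5.41}). The stability lemma then forces $\hat u(t+iy)$ to track the divergent partial sums as $y\to0$ on the scales $A_jT_{j+1}^{-\alpha}$, which is what kills the radial limit a.e. Finally, the positive example $\hat v$ cannot be produced by shifting $\hat u$ by a constant or exponentiating: one needs the separate construction of Lemma \ref{lem5.5}, with $a_j=-\frac{1}{4j}$ and the inductively built cutoff multipliers $L_j$, so that the boundary partial sums $\tilde\sigma_m$ stay positive and uniformly bounded yet tend to $0$ almost everywhere; positivity and bounded\-ness of $\hat v$ then follow from the maximum principle, while $\limsup_{y\to0}\hat v=0$ a.e. follows again from the stability lemma. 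Without the nonzero-mean building block and without these gap-series constructions, your outline does not yield either assertion of the theorem.
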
    
 
The  key to his proof  and  the only obstacle in extending  Theorem \ref{thm1.1} to $ 2 < p\neq 2 < \infty$    
was the  validity of  the  following  theorem,   stated  as  Lemma 1 in  \cite{W}.  
\begin{theorem}[{\cite[Lemma 1]{W}}]
\label{thm1.2}   
 If  $2<p<\infty$  there exists a  bounded  Lipschitz function $ \Ph $  on  
the closure of $  \rn{2}_+ $  with   $  \Ph ( z + 1 )  =  \Ph ( z )$ for $z \in \rn{2}_+$,   $\mathcal{L}_p  \Phi  = 0   $ 
weakly  on $  \rn{2}_+$, $   \int_{ (0,1) \times (0, \infty) }  | \nabla  \Ph |^p   \, dx dy   <  \infty,  $  
and    
\begin{align}
\label{1.2} 
  {\ds   \lim_{y \to \infty} \Phi ( t + i  y ) = 0}\, \, \mbox{uniformly for}\, \, t \in \re,   \quad  \mbox{but}\quad     \int_0^1  \,  \Ph ( s )  ds   \neq 0.   
\end{align}
\end{theorem}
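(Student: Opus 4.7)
The plan is to realize the desired $\Ph$ as $u_\phi - L(\phi)$, where $u_\phi$ is the bounded $p$-harmonic extension to $\rn{2}_+$ of a suitably chosen $1$-periodic Lipschitz boundary datum $\phi$ and $L(\phi) := \lim_{y \to \infty} u_\phi(\cdot, y)$, with $\phi$ engineered so that $L(\phi) \ne \int_0^1 \phi(s)\,ds$. The main work lies in producing such a $\phi$; once this is done, verifying all the listed properties of $\Ph$ is essentially standard.

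First, set up the solution operator. For a $1$-periodic bounded Lipschitz $\phi$, construct $u_\phi$ as a bounded weak solution of $\mathcal{L}_p u = 0$ on $\rn{2}_+$ with trace $\phi$ by solving Dirichlet problems on rectangles $(0,1) \times (0, R)$ with periodic lateral identification and any fixed constant on top, then letting $R \to \infty$; the comparison principle for $\mathcal{L}_p$ gives uniqueness in the bounded class, the bound $\|u_\phi\|_\infty \le \|\phi\|_\infty$, and periodicity of $u_\phi$ in $x_1$. To show $u_\phi(x,y)$ tends uniformly in $x$ to a constant, apply the Harnack inequality for nonnegative $p$-harmonic functions to $u_\phi - m(y_0)$ and $M(y_0) - u_\phi$ on horizontal dyadic slabs $\{y_0 < y < 2 y_0\}$, where $m(y_0), M(y_0)$ are the min/max of $u_\phi(\cdot, y_0)$ over one period; this gives geometric decay of the oscillation in $y$, hence a uniform limit $L(\phi)$.

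Given such $\phi$, put $\Ph := u_\phi - L(\phi)$. Then $\Ph$ is $1$-periodic, bounded, Lipschitz on $\overline{\rn{2}_+}$ by boundary regularity for $\mathcal{L}_p$ with Lipschitz data on a flat boundary, $p$-harmonic by translation invariance of $\mathcal{L}_p$, and tends to $0$ uniformly as $y \to \infty$. Its mean over one period at the boundary equals $\int_0^1 \phi - L(\phi)$, so the nonvanishing mean condition is equivalent to $L(\phi) \ne \int_0^1 \phi$. A Caccioppoli estimate with a cutoff supported at large $y$, exploiting the uniform smallness of $\Ph$ there, combined with the Lipschitz control near the boundary on a bounded range of $y$, yields $\int_{(0,1) \times (0, \infty)} |\nabla \Ph|^p \, dx dy < \infty$.

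The substance therefore lies in constructing $\phi$ with $L(\phi) \ne \int_0^1 \phi$; for $p = 2$ this is impossible by the mean value property, so the argument must genuinely use $p \ne 2$. The plan is a perturbation argument: take $\phi_\e = c + \e \phi_1$ with $\phi_1(s) = \cos(2\pi s)$ (or any nontrivial mean-zero trigonometric polynomial) and $c$ a constant, and expand $u_{\phi_\e}$ to second order in $\e$. A direct expansion about the constant $c$ is obstructed by the degeneracy of $\mathcal{L}_p$ at points where $\nabla u$ vanishes, so one instead linearizes about a $p$-harmonic background with gradient bounded below --- for instance $c + \de x_2$ restricted to a large strip $\{0 < x_2 < R\}$, with $R \to \infty$ and $\de \to 0$ taken in a controlled order at the end. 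The linearization is $\nabla \cdot (A_p \nabla \cdot)$ with $A_p = |\nabla u_0|^{p-2}(I + (p-2)\, \hat n \otimes \hat n)$, $\hat n = \nabla u_0 / |\nabla u_0|$, uniformly elliptic but \emph{anisotropic} precisely when $p \ne 2$; the first-order $\e$-term in $u_{\phi_\e}$ is the corresponding Poisson extension of $\phi_1$, and the $\e^2$-correction to $L(\phi_\e) - \int_0^1 \phi_\e$ comes out as an explicit integral over $\rn{2}_+$ of a quadratic form in this first-order field against the Hessian of $\xi \mapsto |\xi|^p$ at $\nabla u_0$. That Hessian fails to be a scalar multiple of $I$ exactly for $p \ne 2$, and for the concrete $\phi_1$ above a direct computation shows the integral is nonzero; fixing $\e$ small then furnishes the required $\phi$. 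The hard step is this last computation: controlling the expansion uniformly as the auxiliary parameters $\de, R$ are removed, and verifying that the leading nonlinear correction does not accidentally cancel. This is the heart of Wolff's construction in \cite{W}, and it is what forces $p \ne 2$ into the hypothesis; Steps 1--3 above are routine applications of comparison, Harnack, boundary regularity and Caccioppoli.
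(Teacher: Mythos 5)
Steps 1--2 and the post-processing in your outline (existence and uniqueness of the bounded periodic extension $u_\phi$, the uniform limit $L(\phi)$ at infinity via Harnack on horizontal slabs -- this is the paper's Lemma \ref{lem2.8} and Wolff's Lemma 1.3 -- boundary regularity, and the Caccioppoli bound for finite energy) are indeed routine. The genuine gap is that you never prove the one statement carrying all the content: existence of $\phi$ with $L(\phi)\neq\int_0^1\phi\,ds$. You assert that the $\e^2$-correction is ``an explicit integral'' and that ``a direct computation shows the integral is nonzero,'' but the computation is not performed, and you yourself flag it as the hard step. Moreover the regularization you introduce to dodge the degeneracy at $\nabla u=0$ creates difficulties the proposal does not resolve: the background $c+\de x_2$ is unbounded and is not admissible data for the half-plane problem whose limit at infinity you need; its linearization carries the factor $\de^{p-2}$, which degenerates as $\de\to 0$ for $p>2$; and the true bounded extension of $c+\e\cos(2\pi x_1)$ has gradient tending to zero at infinity, exactly where the expansion breaks down. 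You would need a quantitative argument showing that the $O(\e^2)$ mean defect computed in the tilted strip survives the limits $R\to\infty$, $\de\to 0$ and transfers to the untilted bounded extension, with an explicit ordering of $\e,\de,1/R$ and an argument excluding cancellation. That analysis (Wolff carried it out via a Fredholm-alternative/perturbation scheme) is precisely the content of his Lemma 1, so as written your proposal is a plan for reconstructing Wolff's proof, not a proof.

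Note also that this paper does not reprove Theorem \ref{thm1.2} by any perturbative route -- it quotes Wolff -- and its own construction of the analogous function (Theorem \ref{thmB}, proved in section \ref{sec4}; the case $k=n-1$, $n=2$, $p>2$ is exactly the setting of Theorem \ref{thm1.2}) is entirely different and more elementary: take periodic boundary data equal to a bump of small width $t$; the explicit subsolution of Theorem \ref{thmA} (for $k=n-1$, the Llorente--Manfredi--Troy--Wu sub/supersolutions of Remark \ref{rmk1.6}) together with the boundary Harnack inequalities (Lemmas \ref{lem2.6}, \ref{lem2.7}) shows the solution at unit height is $\approx t^{\si}$ with the Martin exponent $\si<k$, while the boundary mean is $\approx t^{k}$; Lemma \ref{lem2.8} gives a uniform limit $\xi$ at infinity, and since $t^{\si}\gg t^{k}$ for small $t$, $\xi$ cannot equal the boundary mean for both the solution and a small vertical translate of it, so one of the two (minus $\xi$) has nonzero boundary mean. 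If you want a self-contained argument, either complete the second-order computation with the uniform control described above, or switch to this ``hands on'' route, which avoids the expansion altogether.
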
  
 Theorem \ref{thm1.2} was  later  proved for $ 1 < p < 2, $    by the second author of this article
  in   \cite{L1}  (so  Theorem \ref{thm1.1}  is  valid   for  $  1 < p\neq 2 <  \infty). $  Wolff notes that  
  Theorems \ref{thm1.1} and \ref{thm1.2}, generalize to  $ \rn{n}_+ = \{x = (x_1, \ldots, x_n) : x_n > 0 \}$  
  simply by defining $ \hat u, \hat v, \Ph, $  to be constant in the  additional  coordinate directions.       
  Wolff remarks above the statement of   his Lemma 1,  that Theorem \ref{thm1.1}     
  ``should generalize to  other domains  but  the  arguments are easiest in a 
  half space since  $ \mathcal{ L}_p $   behaves  nicely under  Euclidean operations''.
 In fact Wolff made extensive use  in his  argument  of    the fact that $ \Ph ( N  z + z_0 )$, $z = x + i y  \in \rn{2}_+$,  $ N $  a  
 positive integer,  $ z_0   \in \rn{2}_+,   $ is  $p$-harmonic in $ \rn{2}_+ ,     $ 
 and  $ 1/N $  periodic in $x$,  with  Lipschitz norm  $ \approx N $  on $ \re  = \ar \rn {2}_+$.  Also he  used    functional analysis-PDE   arguments,  involving the  
Fredholm alternative and perturbation of  certain $p$-harmonic functions   to  get $\Phi $ satisfying \eqref{1.2}  when $2<p<\infty$.     
  
Building on a work of  Varpanen in \cite{V},  we  managed  to obtain analogues of  Theorem \ref{thm1.1}  and  Theorem  \ref{thm1.2}  for $ 1 < p < \infty, $ in the unit disk of $ \rn{2}$ in \cite{ALV}.  In fact we 
   gave two proofs  of these theorems when $ p > 2. $  One proof used  the exact values   of exponents in the Martin boundary problem for  $p$-harmonic functions, for $p>2$, in $ \rn{2}_+ $  relative to $ \{0\},$  as well as, boundary Harnack inequalities  for certain $p$-harmonic functions.   This proof    seemed conceptually simpler and more straight forward  to us  than the other proof, so we dubbed it   `a  hands on proof'.  As a warm up for this proof  
    we first gave,  in   Lemma  3.1 of  \cite{ALV},   a  `hands on example' of  a  $ \Ph $  for which  
 Theorem \ref{thm1.2} is  valid.    
  In this  paper we    use a    similar    argument to prove    an  analogue of 
 Theorems \ref{thm1.1} and \ref{thm1.2} for $p$-harmonic  functions  in  domains  whose complements in $ \rn{n},$ are $k$-dimensional planes where $1 \leq k  <  n - 1$.  
 To be more specific  we need some  definitions  and notations.  
 \subsection{Definitions and Notations} 
 \noindent Let $n\geq 2$ and denote points in Euclidean $ n
$-space $ \rn{n} $  by $ y = ( y_1,
 \ldots,  y_n) $.   Let $ \mathbb S^{n-1} $ denote the unit sphere in $\mathbb R^n$. 
 We write  $ e_m,  1 \leq m \leq n,  $  for the point in  $  \rn{n} $  with  1 in the $m$-th coordinate and  $0$  elsewhere. Let  $ \bar E,
\ar E$,  and $\mbox{diam}(E)$ be the closure,
 boundary, and diameter of the set $ E \subset
\mathbb R^{n} $ respectively. We define $ d ( y, E ) $   to be the distance from
 $  y \in \mathbb R^{n} $ to $ E$.  
 Let  $   \lan \cdot,  \cdot  \ran $  denote  the standard inner
product on $ \mathbb R^{n} $ and  let  $  | y | = \lan y, y \ran^{1/2} $ be
the  Euclidean norm of $ y. $   For  $z\in \mathbb R^{n}$ and $r>0$, put
\[
B (z, r ) = \{ y \in \mathbb R^{n} : | z  -  y | < r \}. 
\]
Let  $dy$ denote  the $n$-dimensional Lebesgue measure on    $ \mathbb R^{n} $ 
and let  $\mathcal{H}^{\la},  0 <  \la  \leq n, $  denote the  $\la$-dimensional  \textit{Hausdorff measure} on $ \rn{n}$ defined by 
\[
\mathcal{H}^{\la }(E)=\lim_{\delta\to 0} \inf\left\{\sum_{j} r_j^{\la}; \, \, E\subset\bigcup\limits_{j} B(x_j, r_j), \, \, r_j\leq \delta\right\}
\] 
where the  infimum is taken over all possible  $  \de$-covering  $\{B(x_j, r_j)\} $ of  $E$.  
If $ O  \subset \mathbb R^{n} $ is open and $ 1  \leq  q  \leq  \infty, $ then by   $
W^{1 ,q} ( O ) $ we denote the space of equivalence classes of functions
$ h $ with distributional gradient $ \nabla h= ( h_{y_1},
 \ldots, h_{y_n} ), $ both of which are $q$-th power integrable on $ O. $  Let  
 \[
 \| h \|_{1,q} = \| h \|_q +  \| \, | \nabla h | \, \|_{q}
 \]
be the  norm in $ W^{1,q} ( O ) $ where $ \| \cdot \|_q $ is
the usual  Lebesgue $ q $ norm  of functions in the Lebesgue space $ L^q(O).$ 
Let $ C^\infty_0 (O )$ be
the set of infinitely differentiable functions with compact support in 
$O $ and let  $ W^{1,q}_0 ( O ) $ be the closure of $ C^\infty_0 ( O ) $
in the norm of $ W^{1,q} ( O  ). $  
\begin{definition}  
\label{def1.3a} 
For fixed $p$ with $ 1 < p < \infty, $, given a compact set $ E$ and open set $O$ with $E \subset  O $,   define   the  $p$-capacity of  $ E $  relative to $ O $  by   
\[ 
\mathcal{C}_p ( E, O) :=  \inf\left\{ \int_O | \nabla h |^p   dx  : h \in W^{1,p}_0 (O) \mbox{ with } h  \geq 1 \mbox{ on } E \right\}.  
\]  
\end{definition}

\begin{definition} 
\label{def1.3b} 
If $p$ is fixed, $ 1 < p < \infty, $ then 
{\em $ \hat u $   is said to be $p$-harmonic in  an  open set 
 $ O  $}  provided $ \hat u \in W^ {1,p} ( G ) $ for each open $ G $ with  $ \bar G \subset O $ and
	\begin{align}
	\label{1.3}
		\int \lan    |\nabla \hat u|^{p-2}  \nabla \hat u (y),  \nabla  \he ( y ) \ran \, dy = 0 \quad \mbox{whenever} \, \,\he \in W^{1, p}_0 ( G ).
			\end{align}
	 We say that $  \hat u  $ is a  {\em  $p$  subsolution}  ($p$ {\em supersolution}) in $O$ 
	 if   $  \hat u \in W^{1,p} (G) $ whenever $ G $ is as above and  \eqref{1.3} holds with
$=$ replaced by $\leq$ ($\geq$) whenever $  \theta  \in W^{1,p}_{0} (G )$ with $\theta \geq 0$.  
 Here $ \nabla \cdot $  denotes the divergence operator. 
 \end{definition}  
   \begin{definition} 
   \label{def1.1}  
   Given $ 1 \leq  k  \leq   n - 2, n \geq 3, $  let  $ \La_k \subset \rn{n} $ be a $k$-dimensional plane.   
   If  $ p $   is fixed, $  n - k  < p < \infty,   $   and  $ z \in  \La_k, $ then $ u $  is said to be a $p$-Martin function for   $ \La_k, $  relative to 
   $ \{z\}, $  provided  $  u > 0 $ is    $p$-harmonic  in  $ \rn{n} \sem \La_k  $  and  $ u (x) \to 0 $ as $ x \to \infty$, $x \in  \rn{n} \sem \La_k.$   Also  $u$ is  continuous  in $ \rn{n} \sem \{z\}$ with $  u  \equiv 0 $ on $ \La_k \sem \{z\}. $  A $p$-Martin function is defined similarly  when $ k = n - 1$, $z \in \La_k$ only relative to a component of  $ \rn{n} \sem \La_k. $  
   \end{definition} 
      Existence  of   $ u $  for $ 1 \leq k  \leq n - 2 , p > n - k ,$  is shown in Lemma 8.2  of \cite{LN}. For existence  of $u$  when $ k = n - 1$ and $p > 2, $  see subsection 5.1 in  \cite{LLN}.     Also (see \eqref{4.1}  in section \ref{sec4}), 
   \beq \bea{l} 
   \label{1.4} 
   (a) \hs{.2in}  u \mbox{ is unique up to constant multiples,} \\ \\
   (b) \hs{.2in} \mbox{There exists $ \si =  \si ( p, n, k)  > 0 $ such that  \,  $  u ( z + t  x ) =  t^{- \si} \hat u ( z +  x ) $}  \\ 
   \hs{.46in}  \mbox{ whenever $  t > 0.$}  
   \ea \eeq   
To make our `hands on` argument work,  when  $ 1 \leq k \leq n - 2$ and $p > n - k, $ we need to show in   \eqref{1.4} $(b)$   that  $ \si < k $   
when $ n - k  < p. $  In estimating $ \si $  and in  statement of our theorems, we assume that 
\[    
z=0 \mbox{ and }  \La_k = \{ (  x_{1}, \ldots, x_k, 0, \ldots, 0)\in\mathbb{R}^n  :  x_i \in \re,  1  \leq i  \leq k  \}. 
\]  
This assumption is permissible  since $p$-harmonic functions are invariant under translation and rotation.   
Moreover with a slight abuse of notation we write $ \rn{k} $  for  $ \La_k$ and $\rn{n} = \rn{k} \times \rn{n - k}.  $        
We now state our first result. 
\begin{mytheorem} 
\label{thmA}    
Let $ k, n $ be  fixed positive integers with   $1\leq  k \leq n - 2$ and  $p>n-k$.      
Let   $ x' =  ( x_1, x_2,...,  x_{k}),  x''  =  ( x_{k+1}, \ldots, x_n ), $ and $ x = 
 ( x', x'' ). $     Put 
\begin{align}
\label{1.5}  
\hat u ( x ) =   |x''|^{\be}  |x|^{-\ga}  =  |x''|^{\be} r^{-\ga}  \mbox{ where }  \be   =  {\ts \frac{ p +  k - n }{p-1}}, \, \, r = |x|, \mbox{ and } \ga >\be >  0.
\end{align}
Let $ \la  =  \ga - \be. $  If    
\begin{align}
\label{1.6}  
\la     > \max   \left(   \frac{  (p+k-n) ( k  +  p  - 2)}{ (p - 1) ( 2  p - n  + k - 2) },   \frac{ k}{p-1} \right)  =: \chi=\chi (p,n,k),  
\end{align}                     
then $ \hat u $ is a  $p$-subsolution on  $ \rn{n} \sem \rn{k}  $  and $ \chi  < k, $ while   if    
\begin{align}
 \label{1.7} 
 \la <   \min \left(   \frac{  (p+k-n) ( k  +  p  - 2)}{ (p - 1) ( 2  p - n  + k - 2) },   \frac{ k}{p-1} \right) =: \breve \chi=\breve \chi (p,n,k),  
\end{align}
 then $ \hat u $ is a $p$-supersolution on $ \rn{n} \sem \rn{k}$.     
\end{mytheorem}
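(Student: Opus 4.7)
My plan is to compute $\Delta_p\hat u$ pointwise on $\rn{n}\setminus\rn{k}$ and read off its sign. On this open set $\hat u$ is smooth and $\nabla\hat u\neq 0$ (the only candidate critical locus $\rho=0$ is ruled out by $\hat u_s\neq 0$ there), so the weak sub/supersolution condition of Definition~\ref{def1.3b} is equivalent to the pointwise inequality $\Delta_p\hat u\geq 0$, respectively $\Delta_p\hat u\leq 0$. Setting $\rho=|x'|$, $s=|x''|$, and $r=(\rho^2+s^2)^{1/2}$, a short direct computation gives
\[
\hat u_\rho=-\gamma\rho\,s^\beta r^{-\gamma-2},\quad \hat u_s=s^{\beta-1}r^{-\gamma-2}(\beta\rho^2-\lambda s^2),\quad |\nabla\hat u|^2=s^{2\beta-2}r^{-2\gamma-2}(\beta^2\rho^2+\lambda^2 s^2).
\]
Since $\hat u$ is $(-\lambda)$-homogeneous, $\Delta_p\hat u$ is $(-\lambda(p-1)-p)$-homogeneous, so its sign depends only on the polar angle $\phi\in(0,\pi/2)$ defined by $\sin\phi=s/r$, $\cos\phi=\rho/r$.

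Writing $\hat u=r^{-\lambda}\sin^\beta\phi$ and expanding $\nabla\cdot(|\nabla\hat u|^{p-2}\nabla\hat u)$ in the spherical parametrization $\omega=(\cos\phi\,\eta',\sin\phi\,\eta'')$ of $\mathbb S^{n-1}$, with $\eta'\in\mathbb S^{k-1}$, $\eta''\in\mathbb S^{n-k-1}$ and volume element $\cos^{k-1}\phi\sin^{n-k-1}\phi\,d\phi\,dS'\,dS''$, the crucial cancellation $(\beta-1)(p-1)+(n-k-1)=0$---equivalent to the defining identity $\beta(p-1)=p+k-n$---makes all powers of $\sin\phi$ collapse out of the divergence of the tangential flux, and the calculation collects into
\[
\Delta_p\hat u = r^{-\lambda(p-1)-p}\sin^{k-n+2}\phi\,H^{p-4}\Bigl[B\lambda^2+\bigl(A-B(\lambda^2-\beta^2)\bigr)\cos^2\phi\Bigr],
\]
where $H(\phi)=(\lambda^2\sin^2\phi+\beta^2\cos^2\phi)^{1/2}>0$, $A=(p-2)\beta(\lambda^2-\beta^2)$, and after using $\gamma=\beta+\lambda$ together with $\beta(p-1)=p+k-n$, $B=\gamma[\lambda(p-1)-k]$.

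Since the bracketed factor is affine in $\cos^2\phi\in[0,1]$, its sign on $(0,\pi/2)$ is determined by the two endpoint values: $B\lambda^2$ at $\phi=\pi/2$, which has the sign of $\lambda(p-1)-k$, and $A+B\beta^2$ at $\phi=0$, which after a short manipulation factors as
\[
A+B\beta^2=\beta\gamma(2p+k-n-2)\Bigl[\lambda-\tfrac{(p+k-n)(p+k-2)}{(p-1)(2p+k-n-2)}\Bigr],
\]
with $\beta\gamma>0$ and $2p+k-n-2>0$ (the latter from $p>n-k\geq 2$). Both endpoint values are nonnegative exactly when $\lambda\geq\chi$, giving the subsolution conclusion \eqref{1.6}; reversing both inequalities gives $\lambda\leq\breve\chi$ and the supersolution conclusion \eqref{1.7}. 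The inequality $\chi<k$ follows by observing first that $k/(p-1)<k$ because $p>2$, and second that substituting $b=p-1$ and $\beta b=p+k-n$ reduces $(p+k-n)(p+k-2)<k(p-1)(2p+k-n-2)$ to $\beta(k-1)+k<b\bigl[\beta(k-1)+k\bigr]$, again equivalent to $p>2$ (which holds since $p>n-k\geq 2$).

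The main obstacle is the spherical $p$-Laplacian bookkeeping that produces the $\cos^2\phi$-affine form: tracking the exponents of $\sin\phi$ and $\cos\phi$ arising from the volume element, and spotting the cancellation forced by the defining formula for $\beta$ at just the right moment, is where careless arithmetic is most likely to derail the calculation. Once the bracket has been identified as affine in $\cos^2\phi$, everything reduces to elementary algebra.
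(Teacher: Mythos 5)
Your proposal is correct and follows essentially the same route as the paper: reduce to the rotationally invariant computation in $(|x'|,|x''|)$, insert the homogeneous ansatz, use $\beta(p-1)=p+k-n$ to make the problematic term drop out, and determine the sign from two factors — indeed your $B=\gamma[\lambda(p-1)-k]$ is exactly the paper's quadratic $A$ in \eqref{3.13}, your endpoint value $A+B\beta^2$ equals the paper's $B\beta$, and your affine-in-$\cos^{2}\phi$ endpoint criterion is precisely the paper's requirement in \eqref{abeqn} that its $A$ and $B$ have the stated common sign. The only noteworthy (minor) divergence is your proof of $\chi<k$, a single substitution reducing the inequality to $p>2$, which is cleaner than the paper's case split $p\geq n$ versus $2<p<n$ in \eqref{3.14}--\eqref{3.16}; I checked the reduction and it is valid.
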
   

\begin{remark} 
\label{rmk1.6}  
We  note  that  Llorente,  Manfredi, Troy, and Wu in \cite{LMTW}   proved   \eqref{1.6},  \eqref{1.7},  when  $ k = n - 1$ and $2  < p <  \infty. $       
We shall use this result   throughout  section \ref{sec6}.    
\end{remark} 
In order to state our second result, we need to introduce some notations.  Given  $ \tau  > 0$ and $y'  \in  \La_k  = \rn{k},  $ let 
 \begin{align}
 \label{1.8a}    
 Q_{\tau} ( y' )  = Q^{(k)}_{\tau} (y' ) :=  \{ z' \in \rn{k} : |z'_i - y'_i | < \tau/2 ,  \mbox{  when }   1 \leq i \leq k\}.  
\end{align}
Armed with Theorem \ref{thmA} and Remark \ref{rmk1.6},  our second result generalizes the work of Wollf \cite{W} and our earlier work in \cite{ALV} when the boundary is a low dimensional plane. 
\begin{mytheorem} 
\label{thmB}
 Let  $ k, n $   be positive integers  with  either  $ (i) $  $   1  \leq k  \leq  n - 2$ and $p >  n - k, $ or $(ii) $  $ k = n - 1$ and $p > 2. $   
 In  case $(i)$   there exists a  $p$-harmonic  function $ \Psi $  on  $ \rn{n}\sem \rn{k},$  that is continuous  on  $ \rn{n}, $ with
 \beq  
 \bea{l} 
 \label{1.8} 
  (a) \hs{.2in} { \ds \Psi \mbox{ Lipschitz   on $ \rn{k} $  and $ \int_{ Q_{1/2}(0) \times \rn{n - k} }    | \nabla  \Psi |^p   \, dx    <  \infty, $ } } 
  \\ \\
  (b) \hs{.2in}   \Psi ( x + e_i  )  =  \Psi ( x  ) \mbox{   for  $   1 \leq i \leq k, $    whenever  $ x \in \rn{n},$ } 
    \\ \\   
    (c) \hs{.2in}  {\ds  \lim_{x'' \in \rn{n-k}  \to \infty} \Psi ( x' ,  x'' ) = 0\,  \mbox{uniformly for}\,  \, x'  \in \bar Q_{1/2} (0),}  \, \,  
   \\ \\ 
   (d) \hs{.2in}    {\ds  \int_{Q_{1/2}(0)} \,  \Psi (x',0)  \, \,  d \mathcal{H}^k x'    \neq 0. }  
\ea    
\eeq
 In case $(ii) $ there exists a $p$-harmonic function $ \Psi $ on $ \rn{n}_+ $ that  is  continuous on  the closure of $ \rn{n}_+$, 
 satisfying  \eqref{1.8} when  $ k = n-1 $ with $ x'' = x_n > 0$.    
 \end{mytheorem}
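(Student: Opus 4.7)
The plan is to construct $\Psi$ by direct $p$-energy minimization, and then to verify the decay (c) using a combination of the explicit sub/supersolutions from Theorem \ref{thmA} and a compactness-Liouville argument. I work out case (i); case (ii) is analogous, using Remark \ref{rmk1.6} in place of Theorem \ref{thmA}.

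I first choose a smooth, $\mathbb Z^k$-periodic function $f:\rn{k}\to[0,1]$ supported (modulo translations) in $Q_{1/4}(0)$ with $\int_{Q_{1/2}(0)}f\,d\mathcal{H}^k>0$. Fixing a cutoff $\eta\in C_c^\infty([0,\infty))$ with $\eta(0)=1$ and $\eta\equiv 0$ on $[1,\infty)$, set $v_0(x):=f(x')\eta(|x''|)$, a $\mathbb Z^k$-periodic competitor that agrees with $f$ on $\rn{k}$, vanishes for $|x''|\geq 1$, and has finite $p$-energy $I[v_0]:=\int_{Q_{1/2}(0)\times\rn{n-k}}|\nabla v_0|^p\,dx$. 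Let $\Psi$ be the unique minimizer of $I$ over the class $v_0+X$, where $X$ is the completion in the $W^{1,p}$ semi-norm of smooth, $\mathbb Z^k$-periodic-in-$x'$, compactly supported functions on $\rn{n}\setminus\rn{k}$. Weak lower semicontinuity, coercivity on the periodic slab, and uniform convexity of $I$ yield $\Psi$; the Euler-Lagrange equation makes $\Psi$ a $p$-harmonic function on $\rn{n}\setminus\rn{k}$; and standard boundary regularity (using that $p>n-k$ gives $\rn{k}$ positive $p$-capacity in the sense of Definition \ref{def1.3a}) gives continuity on $\rn{n}$ with boundary value $f$ on $\rn{k}$. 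Properties (a), (b), (d) then follow by construction, since $I[\Psi]\leq I[v_0]<\infty$, $\Psi|_{\rn{k}}=f$ is Lipschitz, periodicity is built in, and $\int_{Q_{1/2}(0)}\Psi(x',0)\,d\mathcal{H}^k x'=\int_{Q_{1/2}(0)} f\,d\mathcal{H}^k\neq 0$.

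The main obstacle is (c). Theorem \ref{thmA} enters as the key ingredient: choosing $\lambda$ in the subsolution range $(\chi,\infty)$ or the supersolution range $(0,\chi)$, and setting $\gamma:=\lambda+\beta$, the translates $\phi_m(x):=|x''|^\beta|x-(m,0)|^{-\gamma}$ are $p$-sub/supersolutions on $\rn{n}\setminus\rn{k}$ that vanish on $\rn{k}\setminus\{(m,0)\}$ and decay like $|x''|^{-\lambda}$ as $|x''|\to\infty$ uniformly on $\bar Q_{1/2}(0)$. Applying the weak comparison principle cell-by-cell on the truncated cylinders $D_R:=Q_{1/2}(0)\times\{|x''|<R\}$ (with periodic lateral conditions) to $\Psi$ against suitable multiples of $\phi_0$ and its translates yields an $R$-independent upper bound on $\Psi$ of order $|x''|^{-\lambda}$ near the $\rn{k}$-plane. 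The residual decay comes from a Liouville-compactness argument: any subsequential limit of $\Psi(\cdot,\cdot+tw)$ as $t\to\infty$ (for unit $w\in\rn{n-k}$) is a bounded $p$-harmonic function on all of $\rn{n}$, hence constant, while the admissibility $\Psi\in v_0+X$ with $v_0\equiv 0$ for $|x''|\geq 1$ and the finite $p$-energy bound together force this constant to be zero. The delicate point---where I expect the real work to lie---is that for $p\neq 2$ the lattice sum of $p$-sub/supersolutions is not itself a $p$-sub/supersolution, so the barrier comparison must be carried out cell-by-cell, exploiting the vanishing of $\phi_m$ on $\rn{k}\setminus\{(m,0)\}$. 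This patching, together with the Liouville step, is where the "hands on" spirit of Lemma 3.1 in \cite{ALV} is most closely followed.
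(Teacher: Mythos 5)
Your minimization step reproduces, in essence, the paper's periodic extension $\ti \Psi$, and properties (a), (b) are fine; the fatal gap is in (c). Because $p>n-k$, the periodic slab has vanishing $p$-capacity at infinity: cutoffs $\zeta_R(x'')$ with $\zeta_R\equiv 1$ for $|x''|\leq R$, $\zeta_R\equiv 0$ for $|x''|\geq 2R$, $|\nabla \zeta_R|\lesssim R^{-1}$ have energy $\approx R^{\,n-k-p}\to 0$ on $Q_{1}(0)\times\rn{n-k}$, so the completion $X$ of compactly supported periodic functions in the gradient seminorm contains functions equal to an arbitrary nonzero constant for all large $|x''|$. Hence membership in $v_0+X$ together with finite energy imposes no constraint at all on the limit at infinity, and your Liouville step ("the constant is forced to be zero") has no justification. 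In fact the opposite occurs: with nonnegative, nontrivial boundary data $f$, the minimizer is strictly positive in $\rn{n}\sem\rn{k}$, and by the truncation/monotonicity argument of Lemma \ref{lem2.8} the quantity $\min_{Z(t)}\Psi$ is nondecreasing in $t$, so $\Psi\to \xi\geq \min_{Z(1)}\Psi>0$ uniformly as $|x''|\to\infty$; your $\Psi$ tends to a strictly positive constant and (c) fails. The cell-by-cell barrier comparison cannot repair this: on the top $\{|x''|=R\}$ of the truncated cylinder your function is $\approx \xi>0$ while the translates $\phi_m$ are $O(R^{-\la})$, so the inequality needed on $\partial D_R$ is false for large $R$; such barriers only yield boundary-type estimates near $\rn{k}$, not decay at infinity.

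This tension is exactly what makes Theorem \ref{thmB} (the analogue of Wolff's Lemma 1) nontrivial: it is easy to get (a), (b), (d) or (a), (b), (c), but not all four at once, since subtracting the limit constant $\xi$ restores (c) but destroys your "by construction" argument for (d) — for $p\neq 2$ there is no mean value property guaranteeing $\int_{Q_{1/2}(0)}\bigl(\ti\Psi(x',0)-\xi\bigr)\,d\mathcal{H}^k x'\neq 0$. The paper resolves this quantitatively, using precisely the part of Theorem \ref{thmA} your argument never invokes, namely $\chi<k$ and hence the Martin exponent bound $\si\leq \la<k$ of \eqref{4.1b}: taking data supported in a cube of side $\approx t$, the comparison with the Martin function and the boundary Harnack inequalities (Lemmas \ref{lem2.6}, \ref{lem2.7}, leading to \eqref{4.4a}, \eqref{4.5}, \eqref{4.8}) give $\int_{Q_{1/2}(0)}\ti\Psi(x',\hat e)\,d\mathcal{H}^k x'\gtrsim t^{\si}$, while the smallness of the support gives $\int_{Q_{1/2}(0)}\ti\Psi(x',s\hat e)\,d\mathcal{H}^k x'\leq 4^k t^k$ for small $s$ as in \eqref{4.9}. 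Since $\si<k$, for small $t$ these two averages differ, so after subtracting $\xi$ at least one of the two vertical translates $\ti\Psi(x',x''+\hat e)-\xi$ or $\ti\Psi(x',x''+s\hat e)-\xi$ satisfies (d), and both satisfy (a)--(c). Without an argument of this type (or some other mechanism ruling out that $\xi$ coincides with the relevant boundary average), your proposal does not establish (c) and (d) simultaneously.
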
 
 Theorem \ref{thmA} and the technique in proving Theorem  \ref{thmB}  are  also easily seen to imply the following corollary.          
\begin{corollary}  
\label{cor1.7}
 Let $\chi$ and $\breve \chi$ be as in Theorem \ref{thmA}. Let  $ k, n $   be positive integers  with  $   1  \leq k  \leq  n - 2,   $ and $ p $ fixed, $ p >  n - k. $  Let 
$ 0 \leq \om_p ( B(0,r) \cap  \rn{k} , \cdot )   \leq 1, $ denote the unique bounded $p$-harmonic function on $ \rn{n} \sem \rn{k} $ 
which is  1 on $ B (0,r) \cap \rn{k}$  and 0  on  $ \rn{k}  \sem \bar B (0,r) .$  
There exists  $ c = c (p,n,k) \geq 1 $ so that if $ 0 < r < 1/2, $ then 
\[  
c^{-1} \, r^{ \chi} \leq      \om_p ( B (0,r) \cap \rn{k}, e_n )   \leq  c  r^{\breve \chi}.    
\]
\end{corollary}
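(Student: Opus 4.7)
The plan is to use the explicit $p$-sub- and $p$-super-solutions from Theorem \ref{thmA} as barriers for $\omega_p$ and to apply the comparison principle on an unbounded domain after rescaling to the unit ball. By the invariance of $\mathcal{L}_p$ under dilations, $\omega_p(B(0,r)\cap\rn{k},x)=\omega_p(B(0,1)\cap\rn{k},x/r)$, so writing $\tilde U(y):=\omega_p(B(0,1)\cap\rn{k},y)$ and $\rho=1/r>2$ reduces the corollary to $c^{-1}\rho^{-\chi}\leq\tilde U(\rho e_n)\leq c\,\rho^{-\breve\chi}$. Applying Theorem \ref{thmA} at the boundary exponents $\lambda=\breve\chi$ and $\lambda=\chi$ (admissible because the weak super- and sub-solution inequalities are preserved under $W^{1,p}_{\text{loc}}$ limits from the strict cases), I obtain $v_-(x):=|x''|^\beta|x|^{-(\beta+\breve\chi)}$, a $p$-supersolution, and $v_+(x):=|x''|^\beta|x|^{-(\beta+\chi)}$, a $p$-subsolution, on $\rn{n}\sem\rn{k}$. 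Both are positive on $\rn{n}\sem\rn{k}$, vanish on $\rn{k}\sem\{0\}$ and at infinity, and satisfy $v_-(\rho e_n)=\rho^{-\breve\chi}$, $v_+(\rho e_n)=\rho^{-\chi}$.

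The comparison is carried out on $\Omega:=\{|x|>2\}\sem\rn{k}$. On $\partial\Omega\cap\rn{k}\subset\rn{k}\sem B(0,1)$ both $\tilde U$ and $v_\pm$ vanish. On $\partial B(0,2)\sem\rn{k}$ I combine (a) the interior Harnack inequality, which gives that $\tilde U(x)$ is comparable to the positive constant $\tilde U(2e_n)$ at points of distance at least $1$ from $\rn{k}$, and (b) the boundary Harnack principle for positive $p$-harmonic functions in $\rn{n}\sem\rn{k}$ vanishing on $\rn{k}$, applied with the explicit $p$-harmonic function $x\mapsto|x''|^\beta$ (which is $p$-harmonic on $\rn{n}\sem\rn{k}$ by a direct computation using the definition of $\beta$). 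Together these yield two-sided comparability $\tilde U(x)\approx|x''|^\beta$ on $\partial B(0,2)\sem\rn{k}$ with constants depending only on $p,n,k$. Since $v_\pm(x)\approx|x''|^\beta$ on the same set, the boundary inequalities $\tilde U\leq Cv_-$ and $\tilde U\geq c^{-1}v_+$ hold there. Because $\tilde U$ and $v_\pm$ all decay to $0$ at infinity, a Phragm\'en--Lindel\"of argument on $\Omega$ (approximating by $\Omega\cap B(0,R)$ and letting $R\to\infty$) extends these bounds to all of $\Omega$; evaluation at $\rho e_n$ then produces the desired estimates.

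The main obstacle is step (b), namely establishing that every positive $p$-harmonic function in $\rn{n}\sem\rn{k}$ vanishing on a portion of $\rn{k}$ vanishes at the boundary rate $|x''|^\beta$. This is the boundary Harnack principle for $p$-harmonic functions at a $k$-dimensional plane, the same technology that underlies the Martin-boundary analysis of the $p$-Martin function $u$ referenced after Definition \ref{def1.1} and developed in Section \ref{sec4}. Once it is in hand, the remaining comparisons---dilation invariance, interior Harnack, and the unbounded-domain maximum principle---are routine and yield constants depending only on $p,n,k$.
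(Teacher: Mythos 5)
Your route is correct in outline but it is not the paper's. The paper obtains the corollary as a byproduct of the machinery in Section \ref{sec4}: it shows that the (rescaled) $p$-harmonic measure of a small boundary ball is comparable, via the boundary Harnack inequality \eqref{2.6} and the maximum principle (this is \eqref{4.5} together with Lemma \ref{lem2.2}), to the rescaled Martin function $t^{\si}u$, so that $\om_p(B(0,r)\cap\rn{k},e_n)\approx r^{\si}$; the exponent bounds $\breve\chi\le\si\le\chi$ then come from comparing the Martin function itself with the barriers of Theorem \ref{thmA} as in \eqref{4.1b}. You bypass the Martin function entirely and compare $\om_p$ directly with the barriers on the exterior of a fixed ball, which forces you to use the endpoint exponents $\la=\chi$ and $\la=\breve\chi$; that is legitimate (the sign computation in Section \ref{sec3} is non-strict at the endpoints, and your limiting argument is even easier than stated because test functions are compactly supported away from $\rn{k}$), and the payoff is a shorter, more self-contained argument that does not need uniqueness or homogeneity of the Martin function. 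The paper's detour through $\si$ avoids the endpoint cases and simultaneously yields the sharper statement $\om_p\approx r^{\si}$.

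Two steps in your middle paragraph are asserted rather than proved, and both are exactly where the constants could fail to depend only on $p,n,k$. First, for the lower bound you pass from ``$\tilde U(x)$ is comparable to $\tilde U(2e_n)$'' to ``$\tilde U\approx |x''|^{\be}$ on $\partial B(0,2)$ with constants $c(p,n,k)$''; this needs the quantitative nondegeneracy $\tilde U(2e_n)\ge c^{-1}(p,n,k)$, which does not follow from Harnack alone. It is supplied by the boundary H\"{o}lder estimate \eqref{2.3}/\eqref{2.4} of Lemma \ref{lem2.4} applied to $1-\tilde U$, which vanishes on $B(0,1)\cap\rn{k}$ (this is precisely how the paper gets $\ti v(te_n)\approx 1$ before \eqref{4.5}), followed by a Harnack chain to $2e_n$. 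Second, your Phragm\'en--Lindel\"of exhaustion for the upper bound $\tilde U\le Cv_-$ needs $\max_{\partial B(0,R)}\tilde U\to 0$ as $R\to\infty$ (you cannot absorb the error into $v_-$, which is small near $\rn{k}$ on $\partial B(0,R)$); this uniform decay is true but requires the argument of \eqref{4.3}--\eqref{4.4} (truncated approximants, monotonicity of spherical maxima via Lemma \ref{lem2.2}, and Harnack iteration), or some equivalent justification. The lower-bound exhaustion is fine as written since there only the explicit decay of $v_+$ is used. With these two points filled in from Lemmas \ref{lem2.2}--\ref{lem2.4}, your proof is complete.
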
   
Corollary \ref{cor1.7} was proved for $ p > 2$ and $k = n - 1$ in  \cite{LMTW} (see also \cite{DB}) using the analogue of  Theorem \ref{thmA}  (see Remark \ref{rmk1.6}).  

We  can   use  the gist of  Wolff's argument and Theorem  \ref{thmB}  to show the 
failure of a Fatou's theorem  for $p$-harmonic functions vanishing on low dimensional planes.
     \begin{mytheorem}   
     \label{thmC} 
     Let  $ k, n $   be positive integers  with either $(i)$  $   1  \leq k  \leq  n - 2$ with $p > n - k,  $  or  $ (ii)$  $ k = n - 1$ with $p > 2. $  
     In case $ (i) $  there exists  bounded  $p$-harmonic   functions   $ \hat u, \hat v $ in $ \rn{n} \sem \rn{k} $  with  the following properties.  
      Suppose $ \ze :   \rn{n-k} \sem \{0\}  \to  \rn{n-k} \sem \{0\}  $ is  continuous with       $ {\ds \lim_{x'' \to 0}  \ze (x'') = (0,..,0)}. $       
    Then           
\[
\{ x'  \in  \rn{k} : {\ds \lim_{  x''  \to 0 }}      \hat  u ( x',  \ze (x'') )\, \,  \mbox{exists} \} \subset D_1 \quad \mbox{where}\quad  \mathcal{H}^k (D_1) = 0.
\]        
Also   
\[
\{x' \in \rn{k}  :  {\ds \limsup_{x''  \to 0 } } \,   \hat v   (x' ,  \ze (x''))  > 0  \} \subset D_2\quad  \mbox{where}\quad \mathcal{H}^k (D_2) = 0.
\]
 The Borel sets  $ D_1$ and $D_2 $  are  independent of the choice of   $ \ze. $    
   In case $(ii) $  there exists bounded  $ p$-harmonic   functions   $ \hat u, \hat v $ in $ \rn{n}_+ $  with  the above properties when $ k = n - 1$ and $ x'' = x_n > 0$.   
   \end{mytheorem}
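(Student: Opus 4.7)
The plan is to adapt Wolff's construction from \cite{W}, with $\Psi$ from Theorem \ref{thmB} playing the role of Wolff's $\Phi$, to produce $\hat u$ and $\hat v$ as appropriately weighted superpositions of dilated, translated copies of $\Psi$. For a rapidly growing sequence of positive integers $N_1 \ll N_2 \ll \cdots$ (to be chosen), shifts $\alpha_j \in Q_1(0) \subset \rn{k}$, and signs $\e_j \in \{-1, +1\}$, set
\[
\Ps_j (x', x'') := \Ps ( N_j x' + \al_j , \, N_j x'' ).
\]
Each $\Ps_j$ is $p$-harmonic on $\rn{n} \sem \rn{k}$ since $p$-harmonicity is invariant under translation and dilation, is $(1/N_j)$-periodic in each coordinate of $x'$ by \eqref{1.8}$(b)$, and decays uniformly to zero as $N_j|x''| \to \infty$ by \eqref{1.8}$(c)$. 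Thus $\Psi_j$ is effectively supported in the slab $\{|x''| \lesssim 1/N_j\}$, and by choosing $N_{j+1}/N_j$ enormous we can arrange that these effective slabs are nearly disjoint.

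The candidate for $\hat u$ is
\[
\hat u (x) := \sum_{j=1}^{\infty} \e_j \, \Ps_j (x).
\]
The first step is to check convergence and regularity: because $\|\Ps_j\|_\infty \leq \|\Ps\|_\infty$ and the effective supports nest at rapidly separated scales, one shows uniform convergence on compact subsets of $\rn{n} \sem \rn{k}$, extracting a bounded $p$-harmonic limit (using local uniform Hölder estimates for $p$-harmonic functions to pass $p$-harmonicity through the limit). The key non-trivial feature, inherited from \eqref{1.8}$(d)$, is that each $\Ps_j$ has non-vanishing mean value $c_0 / N_j^k$ where $c_0 := \int_{Q_{1/2}(0)} \Ps (x', 0) \, d\mathcal{H}^k x' \neq 0$.

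The heart of the argument — and the main technical obstacle — is showing that for \emph{every} admissible approach curve $\ze$, the set of $x' \in \rn{k}$ at which the limit $\lim_{x'' \to 0} \hat u (x', \ze (x''))$ exists has $\mathcal{H}^k$-measure zero, and that the exceptional set is a Borel set independent of $\ze$. The mechanism is that for $x''$ in the transition region $|x''| \approx 1/N_j$, the value $\hat u (x', \ze(x''))$ contains an oscillation of order $\e_j \cdot \Ps (N_j x' + \al_j, N_j \ze(x''))$ which depends only on the fractional parts of $N_j x' + \al_j$, hence is essentially independent of the precise shape of $\ze$. Following Wolff, choose $\e_j$ i.i.d. uniform in $\{-1, +1\}$ on a probability space; by Kolmogorov's three-series (or a martingale-Borel-Cantelli) argument applied scale by scale, one shows that almost surely the oscillations at scales $|x''| \sim 1/N_j$ fail to be summable on a set of full measure in $\rn{k}$. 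Fixing a realization of $(\e_j, \al_j)_{j \geq 1}$ for which this divergence holds gives $\hat u$ with the required divergence property; the exceptional set $D_1$ is defined intrinsically from the partial sums and their oscillations, so it does not depend on $\ze$.

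For the positive function $\hat v$, one uses Wolff's second construction: take a subsequence and, instead of summing signed copies, form $\hat v := \sum_j 2^{-j} (\Ps_j)_+$ up to a renormalization that keeps it bounded and non-trivial, or alternatively take the supremum/composition argument that Wolff uses to produce a non-negative solution whose $\limsup$ vanishes on a full-measure set while being strictly positive somewhere. This uses the strict sign information of $\Ps$ guaranteed by \eqref{1.8}$(d)$ together with maximum-principle and Harnack arguments for $p$-harmonic functions. The hardest part throughout is the convergence/regularity bookkeeping, in particular quantifying the decay rate of $\Psi(y', y'')$ as $|y''| \to \infty$; here the $L^p$ gradient bound in \eqref{1.8}$(a)$ combined with standard Caccioppoli and Hölder estimates in half-spaces $\{|x''| > R\}$ (which are $p$-harmonically well-behaved) delivers the required rate, and case $(ii)$ ($k = n-1$, $p>2$) is handled identically, using Theorem \ref{thmB} case $(ii)$ in place of case $(i)$.
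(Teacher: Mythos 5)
Your construction breaks down at its very first step: for $p\neq 2$ a sum of $p$-harmonic functions is not $p$-harmonic, so $\hat u:=\sum_j \e_j\Ps_j$ and $\hat v:=\sum_j 2^{-j}(\Ps_j)_+$ are not solutions, and no amount of local H\"{o}lder/compactness bookkeeping can repair this, because the partial sums you would pass to the limit are themselves not solutions (the positive part $(\Ps_j)_+$ is not even a solution for a single $j$). The failure of superposition is precisely what makes Wolff's theorem hard; your proposal silently reinstates linearity. A secondary problem is boundedness: with unit-size random signs and no decay or damping of the coefficients, the boundary series $\sum_j\e_j\Ps(N_jx'+\al_j,0)$ has almost surely unbounded partial sums, so even the boundary data of your candidate $\hat u$ is unbounded, and positivity plus vanishing $\limsup$ for $\hat v$ cannot be extracted from $\sum 2^{-j}(\Ps_j)_+$.

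What the paper does instead is superpose only at the level of boundary data and re-solve at each stage. One forms the lacunary gap series with boundary traces $\psi_j(x')=\psi(T_jx')$, $T_{j+1}\gg T_j$ as in \eqref{5.28}, multiplied by damping factors $L_j$ constructed in Lemmas \ref{lem5.4} and \ref{lem5.5} so that the partial sums $\si_m$ (resp.\ $\ti\si_m$) are uniformly bounded yet diverge $\mathcal{H}^k$-a.e.\ (resp.\ stay positive and tend to $0$ a.e.); the weak-type estimate \eqref{5.30} of Lemma \ref{lem5.3} replaces your Borel--Cantelli step, deterministically. The functions $\hat u,\hat v$ are then the locally uniform limits of the $p$-harmonic extensions $\hat\si'_m$ of these partial sums, and the nonlinear substitute for superposition is Lemma \ref{lem5.2} (Wolff's main lemma): the $p$-harmonic extension of boundary data of the form $fq+g$, with $q$ oscillating at scale $\nu^{-1}$, is within $\ep$ of $f(x',0)\hat q(x)+g(x',0)$ for $|x''|\le A\nu^{-\al}$ and within $3\ep$ of $\hat g$ above that height (\eqref{5.6}--\eqref{5.8}). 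This is what yields \eqref{5.61}--\eqref{5.64}, i.e.\ that $\hat\si'(x',\ze(x''))$ tracks the boundary partial sum $\si'_l(x')$ when $|x''|\approx A_lT_{l+1}^{-\al}$, from which divergence a.e.\ for $\hat u$, limit $0$ a.e.\ for $\hat v$, boundedness via the maximum principle, and the independence of $D_1,D_2$ from $\ze$ all follow. Your outline contains no analogue of Lemma \ref{lem5.2} and no mechanism to keep the construction inside the class of $p$-harmonic functions, so the gap is essential rather than technical.
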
    
   
   \begin{remark}  
   \label{rmk1.8} To get  Theorem  \ref{thmC} for $ \hat v,$  it would have sufficed to just prove existence of limit 0  for 
   $ \mathcal{H}^k $ almost every  $ x' \in \rn{k} $    when  
 $ \ze (x'') = (0,...,0,x_n), $  thanks to  Harnack's  inequality for  positive $p$-harmonic functions (see  \eqref{2.2} $(c))$.   
  Also in case $(i), $  one could just prove this theorem for $ k = 1,  p > n - 1, $  since the general case would  
  then follow from extending these functions to  $ \rn{n+k - 1 } \sem{\rn{k}}$ for $1 < k $  
  by defining them to be constant in the other  added $ k - 1 $ coordinate directions.  
  However  our approach  yields  a   larger and arguably more interesting variety of  examples. 

We also note that  when $1<p  \leq n - k $
\begin{align} \label{1.9}  
  \mathcal{C}_p (   \rn{k}   \cap \bar B (0, R),  B(0, 2R)) =  0
\end{align}
for every $R>0$ (see \cite[Page 43]{HKM})    and consequentially  neither  Theorem   \ref{thmB} or  Theorem  \ref{thmC} has  an  analogue  in case $(i)$ when   $1<p \leq n - k. $   
    For the reader's convenience  a proof  of this statement is given in  Remark \ref{rmk5.6} after the proof of  
   Theorem \ref{thmC}  in section \ref{sec5}.   
   \end{remark} 


Finally  in   section  \ref{sec6}    we  consider  partial   analogues    of  Theorem  \ref{1.1},  Theorem \ref{1.2},  Theorems  \ref{thmA}-\ref{thmC},  for solutions  to  a  more general class of  PDE's   modelled on the $p$-Laplacian,   which are  called  $ \mathcal{A}$-harmonic functions (see 
Definition \ref{def6.2}    in section \ref{sec6}).  $\mathcal{A}$-harmonic  functions  share with $p$-harmonic functions the properties  used in the proof of  Theorems  \ref{thmA}-\ref{thmC}, so originally  we hoped to prove   these  theorems  with  
$p$-harmonic replaced by  $ \mathcal{A}$-harmonic.   However preliminary investigations using  maple and hand calculations,   indicated  that  this   class  would not in general   yield the  necessary estimates on  exponents of an  $ \mathcal{A}$-harmonic  Martin function for  $p$ in the required  ranges.   For this reason  we  relegated our discussion of  
$ \mathcal{A}$-harmonic  functions to  section \ref{sec6} and made this discussion more or less  self contained. Subsections of section \ref{sec6}  include:

\begin{enumerate}

\item A definition of  $\mathcal{A}$-harmonic functions and   listing of  their basic properties.  \\

\item Statement  of   two    Propositions   concerning validity of  Theorems  
\ref{thmB} -  \ref{thmC}  for   $ \mathcal{A}$-harmonic operators sufficiently near the $p$-Laplace operator.  \\

\item Estimates of   $\mathcal{A}$-harmonic Martin  exponents  when  $ 1 \leq k \leq n - 2$ and $p  > n - k$ for $n  >  3 , $ in  $ \rn{n} \sem \rn{k} $  
(see subsection 6.3)   and when $ k  = n - 1$ and $p > 2$ for $n \geq 2 $  in  $ \rn{n}_+ $  (see subsection 6.4).   
These estimates        give partial analogues of  Theorems \ref{1.1} and \ref{1.2}   for a  subclass  of  $  \mathcal{A}$-harmonic operators,  
only slightly more general than the  $p$-Laplace operator.   Still  this  was   an  interesting  subclass for us to
highlight  computational  difficulties  in   showing  $ \si < k $  for the exponent in \eqref{1.4}  of an  $\mathcal{A}$-harmonic  Martin function.  
Moreover,  in the baseline $ n = 2$ and $p > 2$  case  we obtained a  rather  surprising  result  (see  subsection  6.5  for more details).
\end{enumerate}
   
As for the plan of this paper in  section \ref{sec2} we introduce and state some lemmas  listing  basic estimates for  $p$-harmonic functions. 
Also   specific estimates  for  $ \La_k$ when  $ 1 \leq k \leq n - 1$  are given.    
Statements and references for proofs of  these lemmas  are made  so that  we can essentially say `ditto'  in our discussion of  
$ \mathcal{A}$-harmonic  functions.    In section \ref{sec3}  we prove  Theorem \ref{thmA}.   
In section \ref{sec4}  we  use Theorem  \ref{thmA}  to prove Theorem \ref{thmB}. In section \ref{sec5} 
we indicate the changes in   Wolff's main  lemmas for applications  and  prove  Theorem \ref{thmC}.    
In  section 6  we   introduce   $\mathcal{A}$-harmonic functions and proceed as outlined above.

\section{Definition and Basic Estimates  for $p$-harmonic functions}
\label{sec2}  
\setcounter{equation}{0} 
 \setcounter{theorem}{0}
 
 In this section we  first introduce some more notation  and  then state  some fundamental estimates for 
  $p$-harmonic  functions.  Concerning constants, unless otherwise stated, in  sections \ref{sec2}-\ref{sec6}, $c$  will  denote  a  positive constant  $ \geq 1$, not
necessarily the same at each occurrence, depending only on $p, n, k.$ In general throughout this paper,
$ c ( a_1, a_2,  \ldots,  a_m) $  denotes  a  positive constant  $ \geq 1$, not
necessarily the same at each occurrence, depending only on $  a_1, \ldots, a_m. $  Also   $ A  \approx  B $  means  $  A/B $ is bounded above and below 
 by  positive constants  whose dependence will be stated.  
We also let $ { \ds \max_{E}   \hat v, \,  \min_{E} \hat v } $  denote  the
essential supremum and infimum  of $  \hat v  $  (with respect to  Lebesgue $n$-measure)  whenever $ E \subset  \rn{n} $ and  $ \hat v $ is defined on $ E$.

Next we state  some  basic  lemmas  for   $p$-harmonic functions.  
  \begin{lemma}  
\label{lem2.2}  
For fixed  $ p,   1<p<\infty$,  suppose   $  \hat v $ is  a  $p$-subsolution and  $ \hat h $ is  a  $p$-supersolution in  the open set $ O $ with 
 $ \max ( \hat v -  \hat h, 0 )  \in  W_0^{1, p} (O) . $ 
  Then   $  { \ds \max_O ( \hat v -  \hat h)}  \leq 0 . $ 
\end{lemma}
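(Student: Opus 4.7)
The plan is to use $\theta := \max(\hat v - \hat h, 0)$ as a common test function in the weak formulations of the sub and supersolution inequalities. By hypothesis $\theta \in W_0^{1,p}(O)$ and $\theta \geq 0$, so it is an admissible nonnegative test function both for the $p$-subsolution inequality on $\hat v$ (giving $\int \langle |\nabla \hat v|^{p-2}\nabla \hat v, \nabla \theta\rangle\, dy \leq 0$) and for the $p$-supersolution inequality on $\hat h$ (giving $\int \langle |\nabla \hat h|^{p-2}\nabla \hat h, \nabla \theta\rangle\, dy \geq 0$). Subtracting the two yields
\[
\int_O \langle |\nabla \hat v|^{p-2}\nabla \hat v \, - \, |\nabla \hat h|^{p-2}\nabla \hat h,\; \nabla \theta\rangle\, dy \;\leq\; 0.
\]

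Next I would exploit the standard truncation rule in $W^{1,p}$: letting $E := \{y \in O : \hat v(y) > \hat h(y)\}$, one has $\nabla \theta = (\nabla \hat v - \nabla \hat h)\,\chi_E$ a.e. in $O$. Combined with the well-known monotonicity inequality
\[
\langle |\xi|^{p-2}\xi - |\eta|^{p-2}\eta,\; \xi - \eta\rangle \;\geq\; 0 \qquad \text{for all } \xi,\eta \in \rn{n},
\]
with equality iff $\xi = \eta$, the integrand above is pointwise nonnegative on $O$ and is in fact zero off $E$. Hence the displayed inequality forces the integrand to vanish a.e., which via strict monotonicity gives $\nabla \hat v = \nabla \hat h$ a.e.\ on $E$, and therefore $\nabla \theta = 0$ a.e.\ in $O$.

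Finally, since $\theta \in W_0^{1,p}(O)$ with $\nabla \theta = 0$ a.e., the Poincar\'e inequality (applied componentwise on connected pieces after approximation by $C_0^\infty$ functions) yields $\theta \equiv 0$ a.e.\ in $O$, that is $\hat v \leq \hat h$ a.e.\ in $O$, which is the desired conclusion on the essential supremum. The only real subtlety is the justification of the chain/truncation identity $\nabla \theta = (\nabla \hat v - \nabla \hat h)\chi_E$ together with the admissibility $\theta \in W_0^{1,p}(O)$ locally in each relatively compact $G \subset O$ so that $\theta$ may be inserted into Definition \ref{def1.3b}; once these standard Sobolev facts are in hand, the argument reduces to the algebraic monotonicity of the vector field $\xi \mapsto |\xi|^{p-2}\xi$.
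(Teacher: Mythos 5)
Your strategy is the right one, and it is in fact the argument behind the result the paper itself merely cites for this lemma (\cite[Lemma 3.18]{HKM}): test both inequalities with $\theta=\max(\hat v-\hat h,0)$, subtract, use $\nabla\theta=(\nabla \hat v-\nabla \hat h)\chi_{\{\hat v>\hat h\}}$ together with the strict monotonicity of $\xi\mapsto|\xi|^{p-2}\xi$ to force $\nabla\theta=0$ a.e., and then conclude $\theta\equiv 0$ from $\theta\in W_0^{1,p}(O)$. (For the last step, rather than a componentwise Poincar\'e inequality on what may be unbounded components, it is cleaner to note that the zero extension of $\theta$ to $\rn{n}$ lies in $W^{1,p}(\rn{n})$ with vanishing gradient, hence is a constant in $L^p(\rn{n})$, hence zero.)

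The step you set aside as ``the only real subtlety'' is, however, exactly where the work lies, and the way you propose to dispose of it is not correct as stated. Definition \ref{def1.3a}--\ref{def1.3b} admits test functions only from $W_0^{1,p}(G)$ for open $G$ with $\bar G\subset O$, and your $\theta$ is in general not such a function: membership in $W_0^{1,p}(O)$ neither gives compact support in $O$ nor gives $\theta|_G\in W_0^{1,p}(G)$ for relatively compact $G$ (the restriction need not vanish near $\partial G$). Moreover $\hat v,\hat h$ are only locally in $W^{1,p}$, so the global integrals $\int_O\langle|\nabla \hat v|^{p-2}\nabla \hat v,\nabla\theta\rangle\,dy$ are not a priori finite, and the ``subtract the two weak inequalities over $O$'' step is not yet licensed. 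The standard repair is an approximation argument: pick $\phi_j\in C_0^\infty(O)$ with $\phi_j\to\theta$ in the $W^{1,p}$ norm, use the nonnegative, compactly supported truncations $\theta_j=\min(\theta,\phi_j^{+})$, which do lie in $W_0^{1,p}(G_j)$ for suitable $G_j$ with $\bar G_j\subset O$ and are therefore admissible in \eqref{1.3}; then pass to the limit $j\to\infty$ using the monotonicity of the vector field and H\"older's inequality on the supports. This limiting argument is precisely what the cited proof in \cite{HKM} supplies. With that substitution in place of your ``standard Sobolev facts'' sentence, your proof is complete and coincides with the one the paper relies on.
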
 
  \begin{proof}
A proof of this lemma can be found in \cite[Lemma 3.18]{HKM}. 
  \end{proof}
\begin{lemma}
\label{lem2.3}
For fixed $ p, 1 < p < \infty,   $   let
 $ \hat v $  be  $p$-harmonic in $B (z_0 ,4\rho)$ for some $\rho>0$ and $z_0  \in \rn{n}$. Then  
there exists $  c = c (p,  n)  $   with    
\begin{align}
 	\label{2.2} 
\begin{split}
&(a)  \hs{.2in}  {\ds  \max_{B (z_0,  \rho/2)}  \, \hat v  -     \min_{B ( z_0, \rho/2 ) }  \hat v     \leq  c \left( \rho^{ p - n} \,\int_{B ( z_0, \rho)} \, | \nabla \hat v |^{ p } \, dx \right)^{1/p}   \, \leq \, c^2  \, (\max_{B ( z_0,
 2 \rho )} \hat  v   -   \min_{B ( z_0, 2 \rho ) }  \hat v  ).} \\
&	\mbox{ Furthermore, there exists $\ti \be  = \ti \be (p, n)  \in (0,1)$ such that if $ s \leq  \rho,$  then} \\ 			
& (b) \hs{.2in}   {\ds  \max_{B(z_0,s) }  \hat v  - \min_{B ( z_0, s) } \hat v  \leq  c \left( \frac{s}{\rho} \right)^{\ti \be} \,  \left(\max_{B (z_0, 2 \rho )}  \, \hat v  -     \min_{B ( z_0, 2\rho ) }  \hat v \right) .} \\  
&(c) \hs{.2in}  \mbox{ If  $ \hat v \geq 0 $ in $ B ( z_0, 4 \rho), $  then }      {\ds  \max_{B(z_0, 2\rho ) }  \hat v  \leq   c  \min_{B ( z_0, 2 \rho ) } \hat v }. 
\end{split}
\end{align}
 \end{lemma}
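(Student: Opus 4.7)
\textbf{Proof Proposal for Lemma \ref{lem2.3}.} All three estimates are classical interior-regularity results for $p$-harmonic functions. The plan is to recognise each as a special case (or immediate consequence) of a theorem already in the literature---most conveniently the monograph \cite{HKM} already invoked for Lemma \ref{lem2.2}---and to record the analytical mechanism underlying each.

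For part (a) I would establish the right-hand inequality first, by a standard Caccioppoli argument: testing the weak formulation \eqref{1.3} against $\eta^p(\hat v - k)$ with $\eta\in C_0^\infty(B(z_0,2\rho))$ equal to $1$ on $B(z_0,\rho)$ and $|\nabla \eta|\le c/\rho$, and choosing $k$ to be $\max_{B(z_0,2\rho)}\hat v$, yields
\[
\int_{B(z_0,\rho)} |\nabla \hat v|^p\, dx \;\le\; c\rho^{-p}\!\int_{B(z_0,2\rho)} |\hat v - k|^p\, dx \;\le\; c\rho^{n-p}\Bigl(\max_{B(z_0,2\rho)}\hat v - \min_{B(z_0,2\rho)}\hat v\Bigr)^p.
\]
For the left-hand inequality I would use that $\hat v - \bar v$ (with $\bar v$ the average of $\hat v$ over $B(z_0,\rho)$) is itself $p$-harmonic, apply the interior $L^p$-to-$L^\infty$ bound for $p$-subsolutions to $|\hat v - \bar v|$, and then Poincar\'e's inequality to obtain
\[
\max_{B(z_0,\rho/2)} |\hat v - \bar v| \;\le\; c\Bigl(\rho^{-n}\!\int_{B(z_0,\rho)} |\hat v - \bar v|^p\, dx\Bigr)^{1/p} \;\le\; c\Bigl(\rho^{p-n}\!\int_{B(z_0,\rho)} |\nabla \hat v|^p\, dx\Bigr)^{1/p};
\]
the oscillation bound on $B(z_0,\rho/2)$ then follows by the triangle inequality.

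The H\"older estimate (b) cannot be obtained by iterating (a): the chain in (a) merely bounds the oscillation at scale $\rho/2$ by a constant multiple of the oscillation at scale $2\rho$, which is not a contraction. To upgrade to quantitative decay I would invoke the classical interior H\"older regularity theorem for $p$-harmonic functions, which gives $\theta=\theta(p,n)\in(0,1)$ with
\[
\max_{B(z_0,\rho/2)}\hat v - \min_{B(z_0,\rho/2)}\hat v \;\le\; \theta\Bigl(\max_{B(z_0,2\rho)}\hat v - \min_{B(z_0,2\rho)}\hat v\Bigr).
\]
The underlying argument is a De Giorgi--Nash--Moser truncation: writing $m,M$ for the infimum and supremum on $B(z_0,2\rho)$, at least one of the nonnegative subsolutions $\max(\hat v - (m+M)/2,0)$ and $\max((m+M)/2 - \hat v,0)$ vanishes on at least half of $B(z_0,2\rho)$, so that Moser iteration on it gains a definite fraction of the oscillation; dyadic iteration then produces the exponent $\ti\be$.

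For the Harnack inequality (c) I would cite the Trudinger--Serrin inequality for nonnegative $p$-harmonic functions in the form recorded in \cite{HKM}. Its proof rests on Moser iteration applied to $\hat v^q$ (a $p$-subsolution for $q>0$ and $p$-supersolution for $q<0$, modulo rescaling), linked as $q\to 0$ by a John--Nirenberg bound on $\log\hat v$. The only substantive step anywhere in this lemma is the logarithmic estimate behind Harnack; since it is a textbook calculation I would simply cite it. In total the lemma assembles three standard interior-regularity estimates and poses no genuinely new obstacle.
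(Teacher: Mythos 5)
Your proposal is correct and is essentially what the paper does: the paper simply cites Serrin \cite{S} and Chapter~6 of \cite{HKM}, noting the proofs go via Moser iteration, and you have spelled out the standard Caccioppoli / De Giorgi--Nash--Moser / Harnack machinery behind those references. No gaps; same route, just more detail.
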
  
\begin{proof}
     Lemma \ref{lem2.3} is well known.   A   proof of this  lemma,   using Moser iteration of  positive solutions to PDE of 
      $p$-Laplace type,    can be found in \cite{S} or Chapter 6 in \cite{HKM}. \eqref{2.2}  $(c)$ is called  Harnack's inequality.  
\end{proof}    
\begin{lemma} 
\label{lem2.4} 
Let  $ k $  be a positive integer $ 1 \leq k \leq n-1, $   $p$  fixed with $  n - k  < p < \infty$  and $ \rho > 0. $   Let $ \rn{n}_+  = \{ x \in \rn{n} : x_n > 0 \},  z_0   \in  \rn{k},   $   and  put $ \Om = (\rn{n} \sem \rn{k}) \cap B ( z_0, 4 \rho)$ 
when $ 1 \leq k \leq n - 2 $  while $ \Om =  \rn{n}_+ \cap B ( z_0, 4\rho) $  when $ k = n - 1. $   Let 
$ \ze  \in C_0^\infty ( B ( z_0, 4 \rho )) $   with $ \ze \equiv 1 $ on $ B ( z_0, 3 \rho) $ and $ |\nabla \ze | \leq c (n) \rho^{-1}  . $ 
Suppose  $  \hat v $ is  ${p}$-harmonic  in   $  \Om,  $  $ \hat h  \in  W^{1,p}  ( \Om),  $   and   $ ( \hat v  - \hat h ) \ze  \in  W^{1,p}_0  ( \Om). $   

  If    $  \hat h $ is continuous on $  \ar \Om \cap B(z_0, 4 \rho), $  
  then $ \hat v $ has a continuous extension to  $ \bar \Om \cap B ( z_0, 4 \rho), $ 
   also denoted  $ \hat v, $  with  $ \hat v \equiv \hat  h $ on  $  \ar  \Om \cap B ( z_0, 4 \rho). $    If  
\[  
|\hat h ( z )  -  \hat h (w) | \leq    M'   | z - w |^{\hat \si} \quad \mbox{whenever}\quad    z, w \in    \ar \Om \cap B (z_0, 3 \rho) ,  
\]  
for some $  \hat \si \in (0, 1], $  and $  1  \leq  M'  <  \infty, $ then there exists $  \hat \si_1 \in  (0, 1],  c \geq 1, $  depending only on $ \hat \si,   n, $ and $p $,  such that   
\begin{align}
\label{2.3}   
 | \hat v ( z ) -  \hat v ( w) |  \leq  8 M' \rho^{ \hat \si}  +   \,    \,  (| z - w |/ 2 \rho)^{ \hat \si_1}     \, \,    \max_{\Om \cap  \bar B ( z_0, 2 \rho)} \, | \hat v|
\end{align}
whenever $z, w   \in \Om \cap B ( z_0,  \rho   )$. 
 
If  $   \hat h \equiv 0  \mbox{ on } \ar  \Om $,   $\hat v   \geq 0 $ in  $  B ( z_0, 4 \rho )$,    $\hat c  \geq  1,  $  and  $ z_1  \in   \Om  \cap    B ( z_0, 3 \rho )$, with $ \hat c \,  d ( z_1, \ar \Om ) \geq    \rho, $  then there exists  $  \ti c,   $  depending only on $ \hat c,   n, $ and  $p$,    such that  
\begin{align}
 \label{2.4} 
\hs{.85in} (+) \hs{.2in}   \max_{B (z_0, 2 \rho )} \hat v  \leq  \ti c \left( \rho^{ p - n} \,\int_{B ( z_0, 3 \rho)} \, | \nabla \hat v |^{ p } \, dx  \right)^{1/p}   \leq   (\ti c)^2  \, \hat  v ( z_1 ).
\end{align} 
Furthermore, using \eqref{2.3}, it follows for $ z, w  \in  \bar \Om \cap B ( z_0, 2 \rho )$ that
\[
(++)   \hs{.2in}       | \hat v ( z  ) - \hat  v ( w ) |   \leq  c\,  \hat v (z_1)   \left( \frac{ | z  -  w |}{\rho } \right )^{ \hat \si_1}  . 
\]
\end{lemma}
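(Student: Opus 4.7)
The overall strategy uses the fact that, under the hypothesis $p>n-k$, the boundary $\partial\Omega$ (either $\La_k$ or the hyperplane $\partial\rn{n}_+$) is uniformly $p$-thick in the sense of Wiener, i.e.\ $\mathcal{C}_p(\partial\Omega\cap\bar B(w,s),B(w,2s))\approx s^{n-p}$ uniformly for $w\in\partial\Omega$ and $s>0$. This is the classical condition under which the Dirichlet problem for the $p$-Laplacian is continuously solvable at the boundary and solutions enjoy a uniform boundary Hölder modulus.

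\emph{Continuous extension and the oscillation estimate \eqref{2.3}.} Fix $w\in\partial\Omega\cap B(z_0,3\rho)$ and $0<s\le\rho$. Comparing $\hat v-\hat h$ to its extremal boundary values on $\partial(\Omega\cap B(w,s))$ via Lemma \ref{lem2.2}, and then combining the Caccioppoli inequality with a Poincaré/Sobolev inequality relative to the set $\partial\Omega\cap B(w,s)$ of positive $p$-capacity, one obtains an oscillation decay
\[
\mathrm{osc}_{\Omega\cap B(w,s/2)}(\hat v-\hat h)\le\theta\,\mathrm{osc}_{\Omega\cap B(w,s)}(\hat v-\hat h)+cM's^{\hat\si}
\]
for some fixed $\theta=\theta(p,n,k)\in(0,1)$. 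Iterating this inequality yields boundary Hölder continuity of $\hat v-\hat h$ with some exponent $\hat\si_1\in(0,\hat\si]$, and therefore gives a continuous extension of $\hat v$ to $\bar\Omega\cap B(z_0,4\rho)$ with $\hat v\equiv\hat h$ on $\partial\Omega$. Interpolating this boundary Hölder bound with the interior estimate of Lemma \ref{lem2.3}(b) then produces the two-term estimate \eqref{2.3}.

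\emph{The Carleson-type estimate \eqref{2.4}.} With $\hat h\equiv0$, extend $\hat v$ by $0$ across $\partial\Omega$; the extension is a non-negative $p$-subsolution on $B(z_0,4\rho)$, so Lemma \ref{lem2.3}(a) applied to it immediately yields the first inequality of $(+)$. For the second, a Caccioppoli estimate testing with $\hat v\ze^p$ gives $\rho^{p-n}\int_{B(z_0,3\rho)}|\nabla\hat v|^p\,dx\le c(\max_{\Omega\cap B(z_0,4\rho)}\hat v)^p$, so it suffices to prove the Carleson-type bound $\max_{\Omega\cap B(z_0,4\rho)}\hat v\le c\,\hat v(z_1)$. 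Pick a near-maximizer $y^*$: either $d(y^*,\partial\Omega)\gtrsim\rho$, in which case a Harnack chain of $O(1)$ balls joining $z_1$ to $y^*$ inside $\Omega$ (by iterating Lemma \ref{lem2.3}(c)) yields $\hat v(y^*)\le c\hat v(z_1)$; or $y^*$ lies close to a boundary point, in which case Step 1 applied with $M'=0$ forces $\hat v(y^*)\le c(d(y^*,\partial\Omega)/\rho)^{\hat\si_1}\max_{\Omega\cap B(z_0,2\rho)}\hat v$, and a standard absorption argument pushes the near-maximizer back into the interior regime where the Harnack chain applies. Finally, $(++)$ follows by inserting the resulting bound $\max_{\bar\Omega\cap B(z_0,2\rho)}\hat v\le c\hat v(z_1)$ into \eqref{2.3} taken with $M'=0$.

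The main technical obstacle is the second inequality in $(+)$: the Harnack chain is routine deep in the interior but breaks down near $\partial\Omega$, where the only tool available is the boundary Hölder estimate of Step 1. Closing the Carleson bound thus relies in an essential way on the uniform $p$-thickness of $\partial\Omega$, which is precisely what the assumption $p>n-k$ guarantees.
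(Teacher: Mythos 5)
Your sketch is correct in outline and matches the paper's intent: the paper's own proof is essentially a citation of Corollary 6.36 and Theorem 6.44 of \cite{HKM}, together with the remark that the proofs rest on Wiener-type ($p$-capacity) estimates for $p$-harmonic subsolutions vanishing on $\ar\Om\cap B(z_0,3\rho)$, and your reconstruction (uniform $p$-thickness of $\rn{k}$ when $p>n-k$, iterated boundary oscillation decay, Caccioppoli/Moser, and a Harnack-chain-plus-absorption Carleson argument) is precisely what those references and remarks encode. One small imprecision worth noting in the Carleson step: the one-shot bound $\hat v(y^*)\le c\,(d(y^*,\ar\Om)/\rho)^{\hat\si_1}\max_{\Om\cap B(z_0,2\rho)}\hat v$ as written has the reference supremum over the same ball $B(z_0,2\rho)$, whereas iterating the boundary decay about a foot point near $\ar B(z_0,2\rho)$ only controls $\hat v(y^*)$ by the supremum over a slightly larger ball; the standard repair is the Caffarelli--Fabes--Mortola--Salsa-type contradiction argument that produces a sequence of points in $B(z_0,3\rho)$ with geometrically growing values of $\hat v$, which is evidently what your ``standard absorption argument'' is shorthand for.
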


 \begin{proof}  
 Continuity of  $  \hat v $   given  continuity of  $ \hat h $  in  $ \bar \Om  $  follows from Corollary 6.36  in  \cite{HKM}.      
   This  Corollary  and the H\"{o}lder  continuity  estimate on  $ h $ above,   are then used in   Theorem 6.44  of  \cite{HKM}  
   to  prove  an  inequality analogous to  \eqref{2.4}.   Proofs  involve   Wiener  type estimates (in terms of  $p$-capacity) 
   for  $p$-harmonic subsolutions   that vanish on  
   $  \ar  \Om  \cap  B ( z_0,  3 \rho) $.         
\end{proof} 
\begin{lemma}   
\label{lem2.5}	
Let $p,  \hat v, z_0,\rho,$ be as in Lemma \ref{lem2.3}.
Then	 $ \hat  v $ has a  representative locally in  $ W^{1, p} (B(z_0, 4\rho))$,    with H\"{o}lder
continuous partial derivatives in $B(z_0,4\rho) $  (also denoted $\hat v $), and there exist $ \hat  \ga \in (0,1]$ and $c \geq 1 $,
depending only on $ p, n, $ such that if $ z,  w  \in B (  z_0,  \rho/2 ), $     then  
\begin{align}
	\label{2.5} 	
\begin{split}	
&	(\hat a) \hs{.1in} \, \, 
 c^{ - 1} \, | \nabla \hat v  ( z ) - \nabla \hat v ( w ) | \, \leq \,
 ( | z -  w  |/ \rho)^{\hat  \ga }\, {\ds \max_{B (  z_0 ,\rho )}} \, | \nabla \hat  v | \leq \, c \,  \rho^{ - 1} \, ( | z -  w |/ \rho )^{\hat  \ga  }\, {\ds \max_{B (z_0, 2 \rho )}}  | \hat v |.  \\
&   \mbox{Also $ \hat v $  has distributional  second partials with } \\
&   (\hat b)  \, \,  \, {\ds \int_{B(z_0, \rho) \cap  \{ \nabla \hat v \not = 0 \} }}  \,   |\nabla \hat v  |^{p-2} \, \left( 
\sum_{i,j=1}^n \,  |\hat v_{x_i x_j}|^2 (z)   \right) dx  
\leq   c \, \rho^{n-p - 2} \, {\ds  \max_{B (z_0, 2 \rho )}  | \hat v |} .\\
\end{split}
\end{align}
 \end{lemma}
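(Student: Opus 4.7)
The plan is to derive $(\hat{a})$ from the classical interior $C^{1,\hat\gamma}$ regularity theory for $p$-harmonic functions, and to obtain $(\hat{b})$ from a Caccioppoli-type energy estimate applied to the differentiated equation after a standard regularization.

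For $(\hat{a})$, I would invoke the well-known interior gradient H\"{o}lder estimate (due to Uraltseva, Uhlenbeck, Evans, Tolksdorf, DiBenedetto, and Lewis): there exist $\hat{\gamma}=\hat{\gamma}(p,n)\in(0,1]$ and $c=c(p,n)\ge 1$ such that
\[
|\nabla \hat v(z)-\nabla \hat v(w)| \leq c\,(|z-w|/\rho)^{\hat\gamma}\max_{B(z_0,\rho)}|\nabla \hat v|,\qquad z,w\in B(z_0,\rho/2).
\]
The scaling form is recovered from the unit-ball version by the change of variable $x\mapsto z_0+\rho x$, which preserves $p$-harmonicity. The second inequality in $(\hat{a})$ then follows from the local Lipschitz estimate $\max_{B(z_0,\rho)}|\nabla \hat v|\le c\rho^{-1}\max_{B(z_0,2\rho)}|\hat v|$, itself part of the same regularity theory (or obtainable by Moser iteration applied to $|\nabla \hat v|^{2}$, which is a subsolution of a uniformly elliptic equation on $\{\nabla \hat v\neq0\}$).

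For $(\hat{b})$, I would regularize: for each $\epsilon>0$ let $\hat v_{\epsilon}\in \hat v+W^{1,p}_{0}(B(z_0,4\rho))$ solve
\[
\nabla\cdot\bigl((\epsilon^{2}+|\nabla \hat v_{\epsilon}|^{2})^{(p-2)/2}\nabla \hat v_{\epsilon}\bigr)=0;
\]
such $\hat v_{\epsilon}$ are $C^{2}$ in the interior, and along a subsequence converge to $\hat v$ in $C^{1,\hat\gamma}_{\mathrm{loc}}$ as $\epsilon\to 0^{+}$. Differentiating this equation in $x_{k}$, testing with $\eta^{2}\partial_{x_{k}}\hat v_{\epsilon}$ for a cutoff $\eta\in C_{0}^{\infty}(B(z_0,3\rho/2))$ with $\eta\equiv 1$ on $B(z_0,\rho)$ and $|\nabla \eta|\leq c/\rho$, summing in $k$, and integrating by parts, one obtains by ellipticity of the linearized coefficient matrix
\[
\int \eta^{2}(\epsilon^{2}+|\nabla \hat v_{\epsilon}|^{2})^{(p-2)/2}|D^{2}\hat v_{\epsilon}|^{2}\,dx \;\leq\; c\int |\nabla \eta|^{2}(\epsilon^{2}+|\nabla \hat v_{\epsilon}|^{2})^{p/2}\,dx.
\]
The right-hand side is bounded by $c\rho^{n-p-2}\bigl(\max_{B(z_0,2\rho)}|\hat v|\bigr)^{p}$ via the Lipschitz estimate of $(\hat{a})$; letting $\epsilon\to 0^{+}$ and using lower semicontinuity of the left integrand on the open set $\{\nabla \hat v\neq 0\}$ (where the integrand is continuous in $\epsilon$) then yields $(\hat{b})$.

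The main technical obstacle is the degeneracy of the $p$-Laplace operator at $\{\nabla \hat v=0\}$: pointwise second derivatives of $\hat v$ need not exist there, which is precisely why the integration in $(\hat{b})$ is restricted to $\{\nabla \hat v\neq 0\}$, and a direct differentiation of the original equation is illegitimate. The regularization-and-limit procedure bypasses this, but the $\epsilon\to 0^{+}$ passage is delicate: for $p\geq 2$ lower semicontinuity combined with $C^{1,\hat\gamma}$ convergence suffices, whereas for $1<p<2$ the weight $(\epsilon^{2}+|\nabla \hat v|^{2})^{(p-2)/2}$ blows up near $\{\nabla \hat v=0\}$, so uniform control on compact subsets of $\{\nabla \hat v\neq 0\}$ together with a Fatou-type argument is needed to preserve the stated bound.
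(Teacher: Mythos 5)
Your proposal is correct and follows essentially the same route as the paper, which simply cites Tolksdorf's Theorem 1 and Proposition 1 (reference \cite{T}) — results whose proofs are exactly the regularized-equation plus differentiated Caccioppoli argument you outline, together with the standard interior $C^{1,\hat\gamma}$ theory for $(\hat a)$. One small point: the bound your argument produces on the right of $(\hat b)$ is $c\,\rho^{n-p-2}\bigl(\max_{B(z_0,2\rho)}|\hat v|\bigr)^{p}$, which is the dimensionally consistent form; the first power appearing in the paper's display should be read accordingly.
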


 \begin{proof}
  For  a  proof of  \eqref{2.5}  $(\hat a), (\hat b), $    see  Theorem 1  and Proposition 1 in  \cite{T}. 
 \end{proof} 
     In  the proof of  Theorems \ref{thmB} and \ref{thmC},  we need the following boundary Harnack inequalities.   
\begin{lemma} 
\label{lem2.6}  
Let   $k, n, p,  z_0, \Om,  $ be as in Lemma \ref{lem2.4}.   Suppose  $ \hat u,$  $\hat v,$ are non-negative  
$p$-harmonic functions  in   $ \Om $  with   $ \hat u  = \hat v \equiv 0 $  on  $ \ar \Om  \cap  B ( z_0, 4 \rho).$      
There exists  $ c = c (p, n, k) $  and $  \be = \be ( p, n, k) $  such that  if $ y, z \in  \Om \cap B (z_0, \rho ), $ then 
\begin{align}
\label{2.6}  
 \left|  \frac{\hat u (z)}{\hat v (z)}  -  \frac{\hat u (y)}{\hat v (y)} \right|   \leq     c  
 \frac{\hat u (z)}{\hat v (z)}      \left( \frac{ | y - z |}{ \rho}  \right)^{\be}. 
\end{align}
 \end{lemma}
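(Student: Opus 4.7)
The plan is to adapt the boundary Harnack principle for $p$-harmonic functions developed by Lewis--Nystr\"om \cite{LN,LLN} to the present flat geometry. The argument naturally breaks into two stages: first, a non-decaying (scale-invariant) form of the comparison, and second, an oscillation-decay iteration across dyadic scales that produces the H\"older exponent $\be$.

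For the scale-invariant stage, fix a corkscrew point $z_1 \in \Om \cap B(z_0, 3\rho)$ with $d(z_1, \ar\Om) \geq \rho/4$; such a point exists in both the $1 \leq k \leq n-2$ and $k = n-1$ cases. The aim is to show
\[
c^{-1}\,\frac{\hat u(z_1)}{\hat v(z_1)} \,\leq\, \frac{\hat u(z)}{\hat v(z)} \,\leq\, c\,\frac{\hat u(z_1)}{\hat v(z_1)} \qquad \text{for all } z \in \Om \cap B(z_0, 2\rho).
\]
The upper bound follows by combining estimate $(+)$ of Lemma \ref{lem2.4} applied to $\hat u$ (giving $\max \hat u \leq c\,\hat u(z_1)$) with the interior Harnack inequality \eqref{2.2}$(c)$ applied to $\hat v$ along a Harnack chain from $z$ to $z_1$ that stays at distance comparable to its position from $\ar\Om$; such a chain exists in $\rn{n} \sem \rn{k}$ with length bounded in terms of $n, k$ alone. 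The matching lower bound, which is the delicate part, is obtained by comparing $\hat v$ from below with a suitably scaled translate of the $p$-Martin function of Theorem \ref{thmA} centered at $z_0$, using Lemma \ref{lem2.2}. Symmetry in $(\hat u, \hat v)$ then yields the full two-sided ratio bound.

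The second stage upgrades this to a H\"older estimate by iterating on dyadic scales $r_j = 2^{-j}\rho$. Let $M_j = \sup_{\Om \cap B(z_0,r_j)}(\hat u/\hat v)$, $m_j = \inf_{\Om \cap B(z_0,r_j)}(\hat u/\hat v)$, and $\omega_j = M_j - m_j$; the goal is to show $\omega_{j+1} \leq \theta\,\omega_j$ for some $\theta = \theta(p,n,k) < 1$. The \emph{main obstacle} here is the nonlinearity of the $p$-Laplacian: one cannot simply apply the scale-invariant bound to the non-negative functions $\hat u - m_j \hat v$ and $M_j \hat v - \hat u$ because they are not themselves $p$-harmonic. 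The remedy, following \cite{LN}, is to write $w = \hat u - \la \hat v$ and observe that in any region where $|\nabla \hat u|$ and $|\nabla \hat v|$ are comparable and bounded away from zero, $w$ satisfies a linear uniformly elliptic divergence form PDE whose coefficients come from $\nabla \hat u, \nabla \hat v$ via the mean value theorem applied to the nonlinearity $\xi \mapsto |\xi|^{p-2}\xi$. De Giorgi--Nash--Moser theory then provides an oscillation estimate for $w/\hat v$ of the required form.

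The step I expect to require the most care is the non-degeneracy of $|\nabla \hat u|$ and $|\nabla \hat v|$ on a suitable chain of balls approaching $\ar\Om$. Here the flat geometry is crucial: comparing $\hat v$ above and below with the $p$-Martin function of Theorem \ref{thmA}, whose homogeneity \eqref{1.4}$(b)$ gives sharp pointwise gradient bounds, and then using the interior $C^{1,\hat\ga}$ estimate \eqref{2.5}$(\hat a)$ of Lemma \ref{lem2.5} to transfer gradient comparability to nearby balls of radius $\approx d(\cdot,\ar\Om)$, one secures the required non-degeneracy. Combining the two stages then yields \eqref{2.6}.
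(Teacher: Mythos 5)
The paper does not actually prove Lemma \ref{lem2.6}; it cites Theorem 1 of \cite{LLN} for the case $k = n-1$ and Theorems 1.9--1.10 of \cite{LN} for $1 \leq k \leq n-2$. You are therefore attempting to reconstruct the Lewis--Nystr\"om argument rather than matching the paper's (nonexistent) proof.

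Your second stage---linearizing $w = \hat u - \la\hat v$ to a divergence-form linear elliptic equation and invoking De Giorgi--Nash--Moser for oscillation decay, with the bottleneck being gradient non-degeneracy near $\ar\Om$---is indeed the core of the cited references and is described essentially correctly. The problem is in the first stage. You claim the upper bound $\hat u(z)/\hat v(z) \leq c\,\hat u(z_1)/\hat v(z_1)$ for \emph{all} $z \in \Om\cap B(z_0,2\rho)$ follows from the Carleson estimate $\max \hat u \leq c\,\hat u(z_1)$ together with interior Harnack applied to $\hat v$ along a chain of \emph{bounded length}. Such a bounded-length Harnack chain does not exist when $z$ is close to $\ar\Om$: the balls in any admissible chain must have radii $\lesssim d(\cdot,\ar\Om)$, so reaching the corkscrew point $z_1$ from a point at distance $\epsilon$ from $\rn{k}$ requires a chain of length $\gtrsim \log(\rho/\epsilon)$, which is unbounded. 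What Harnack actually delivers is $\hat v(z) \gtrsim (\epsilon/\rho)^{C}\,\hat v(z_1)$, and since the Carleson bound on $\hat u(z)$ carries no matching decay factor, the resulting quotient bound degenerates as $z \to \ar\Om$. This is not a gap in presentation but a genuine one: the upper bound on the quotient is exactly as delicate as the lower bound, and both require a sharp two-sided comparison of the vanishing rates of $\hat u$ and $\hat v$ against a common reference barrier (here $|x''|^{\be}$, which is $p$-harmonic in $\rn{n}\sem\rn{k}$ and vanishes on $\rn{k}$), not a Carleson estimate plus interior Harnack. Said differently, the ``easy direction'' you describe already presupposes the non-decaying boundary Harnack principle you are trying to establish, so the argument as written is circular. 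To repair it, establish $c^{-1}\hat u(z_1)\,|z''|^{\be} \leq \hat u(z) \leq c\,\hat u(z_1)\,|z''|^{\be}$ (and the analogue for $\hat v$) on $\Om\cap B(z_0,2\rho)$ via the maximum principle and the explicit barrier, then divide.
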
  
 
\begin{proof}  
For a proof  of Lemma \ref{lem2.6}  when $ k = n - 1, p > 1, $    see  Theorem 1  in \cite{LLN} and when  $ 1 \leq k \leq n - 2 $ see  Theorems 1.9 and 1.10   in \cite{LN}.  
\end{proof}        
    
    \begin{lemma} 
\label{lem2.7}  
Let   $k, n, p,  z_0,  \rho,  $ be as in Lemma \ref{lem2.4}.   Let  $  G =  \rn{n} \sem ( \rn{k} \cup \bar B ( 0, \rho ) ) $ 
when  $  k < n - 1 $    and   $ G  = \rn{n}_+ \sem \bar B ( 0, \rho ) $  when $ k  = n - 1.$  Suppose  $ \hat u,$  $\hat v,$ are non-negative  
$p$-harmonic functions  in   $ G $  with  continuous boundary values  and  $ \hat u  = \hat v \equiv 0 $  on  $ \ar G  \cap  \rn{k} $  when $ k \leq  n - 1$.  Moreover  
$    \hat u ( x) + \hat v (x) \to  0  \mbox{ uniformly for }  x \in G \mbox{ as }  |x| \to  \infty. $ 
     There exists  $ c = c (p, n, k) $  and $  \be' = \be' ( p, n, k) $  such that  if $ y, z \in   G \sem B ( 0, 2 \rho), $  then 
\begin{align}
\label{2.7}  
 \left|  \frac{\hat u (z)}{\hat v (z)}  -  \frac{\hat u (y)}{\hat v (y)} \right|   \leq     c  
 \frac{\hat u (z)}{\hat v (z)}      \left(\frac{ \rho}{ \min (|y|, |z|)}  \right)^{\be'}   . 
\end{align}
 \end{lemma}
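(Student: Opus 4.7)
\textbf{Proof plan for Lemma \ref{lem2.7}.} By scaling invariance of the $p$-Laplace equation we may assume $\rho = 1$. The estimate \eqref{2.7} is an ``at infinity'' version of the boundary Harnack inequality \eqref{2.6}: the decay condition $\hat u+\hat v\to 0$ as $|x|\to\infty$ plays the role of a vanishing boundary condition at a boundary point placed at infinity. The strategy is to reduce \eqref{2.7} to iterated applications of \eqref{2.6} on a sequence of dyadic scales, exactly as one passes from a local boundary Harnack principle to a global H\"{o}lder estimate for the quotient.

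\emph{Step 1 (removal of the hole at large scales).} For any $x$ with $|x|\ge 2$, the ball $B(x,|x|/2)$ is disjoint from $\bar B(0,1)$, so on the region $\{|x|\ge 2\}\cap G$ the only relevant portion of $\partial G$ is $\mathbb{R}^k$ (respectively $\partial\mathbb{R}^n_+$). Thus, rescaling by $R$, Lemma \ref{lem2.6} applies to $\hat u,\hat v$ in any ball $B(z_0,R/16)$ with $z_0\in\mathbb{R}^k$, $|z_0|\approx R$, and $R\ge 2$, yielding
\[
\left|\frac{\hat u(z)}{\hat v(z)}-\frac{\hat u(y)}{\hat v(y)}\right|\le c\,\frac{\hat u(z)}{\hat v(z)}\left(\frac{|y-z|}{R}\right)^{\be}
\]
for $y,z\in G\cap B(z_0,R/64)$. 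Combining such estimates over a finite chain of balls covering the annular shell $\{R/2\le |x|\le 2R\}\cap G$, and using the interior Harnack inequality \eqref{2.2}(c) to transfer the comparison across Harnack chains, we obtain $\sup_{A_R} \hat u/\hat v \le c_0 \inf_{A_R}\hat u/\hat v$, where $A_R := \{R\le|x|\le 2R\}\cap G$ and $c_0=c_0(p,n,k)$.

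\emph{Step 2 (geometric contraction of the oscillation).} Let $M(R):=\sup_{A_R}\hat u/\hat v$ and $m(R):=\inf_{A_R}\hat u/\hat v$. We claim there exists $\theta=\theta(p,n,k)\in(0,1)$ with
\[
M(2R)-m(2R)\le\theta\bigl(M(R)-m(R)\bigr)\qquad\text{for all }R\ge 2.
\]
Granting this, iterating over dyadic scales from $R_0=2$ up to $R=\min(|y|,|z|)/2$ gives $M(R)-m(R)\le c\,\theta^{\log_2 R}(M(2)-m(2))$, which combined with the pointwise bound from \eqref{2.6} at scale $R$ and the uniform bound $M(2)\le c\, m(2)$ furnished by Step 1 produces \eqref{2.7} with $\be':=-\log_2\theta>0$. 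The contraction itself is obtained by replaying the proof of the local boundary Harnack inequality in \cite{LN,LLN}: in each dyadic shell $A_R$ one compares $\hat u$ to the two ``tilted'' comparison solutions renormalized by $m(R)\hat v$ and $M(R)\hat v$, and the H\"{o}lder exponent $\be$ appearing in \eqref{2.6} forces a definite oscillation decrease at the next scale.

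\emph{Main obstacle.} The only genuine difficulty is Step 2. Because the $p$-Laplacian is nonlinear, the linear combinations $\hat u-m(R)\hat v$ and $M(R)\hat v-\hat u$ are not themselves $p$-harmonic, so one cannot imitate the classical harmonic proof that compares these nonnegative ``differences'' via Harnack. Instead, one must invoke the full nonlinear boundary Harnack machinery of \cite{LN,LLN} --- the very mechanism that underlies Lemma \ref{lem2.6} --- applied in a self-improving fashion along the dyadic chain. Once this contraction is in place, the passage from the local estimate \eqref{2.6} to the estimate at infinity \eqref{2.7} is essentially a standard telescoping argument, and $\be'$ can be taken to depend only on $p,n,k$.
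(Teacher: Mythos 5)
The paper's proof of Lemma \ref{lem2.7} is a short citation: the result is Theorem~2 of \cite{LLN} when $k=n-1$ and Theorem~1.13 of \cite{LN} when $1\le k\le n-2$, both of which are stated there for bounded domains $\Om$ with $G=\Om\sem\bar B(0,\rho)$; the paper simply notes that the same arguments carry over when $\Om$ is replaced by $\rn{n}\sem\rn{k}$ (or $\rn{n}_+$) together with the decay assumption at infinity. Your proposal instead tries to \emph{derive} Lemma \ref{lem2.7} from the local boundary Harnack inequality Lemma \ref{lem2.6} by a dyadic iteration, and this is where a genuine gap appears.

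The problematic piece is Step~2. The bound from Lemma \ref{lem2.6} at scale $R$ gives H\"older continuity of $\hat u/\hat v$ \emph{within} a ball of radius $\approx R$ centered on $\rn{k}$, and Step~1 (chaining plus interior Harnack) gives two-sided comparability $M(R)\le c_0\,m(R)$ on each annular shell. Neither of these, however, yields the claimed contraction $M(2R)-m(2R)\le\theta\bigl(M(R)-m(R)\bigr)$: comparability at every scale with a fixed constant $c_0$ is perfectly consistent with the oscillation \emph{not} decaying as $R\to\infty$, and the H\"older estimate in \eqref{2.6} controls oscillation as one zooms \emph{in} toward a boundary point, not as one zooms \emph{out} toward infinity. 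You correctly observe in the ``Main obstacle'' paragraph that the linear device of comparing $\hat u-m(R)\hat v$ with $M(R)\hat v-\hat u$ is unavailable because the $p$-Laplacian is nonlinear, but the proposed repair -- ``invoke the full nonlinear boundary Harnack machinery of \cite{LN,LLN} in a self-improving fashion'' -- is precisely the content of Theorem~2 of \cite{LLN} and Theorem~1.13 of \cite{LN}, i.e.\ the very statement being proved. In other words, the argument is circular at the only step that actually matters: the quantitative decay at infinity is a separate theorem in those references, established by a blow-down compactness plus deformation argument, not a consequence of the local estimate \eqref{2.6} by telescoping. If you want a self-contained route, you would need to reproduce that deformation argument with the rescaling $x\mapsto Rx$ replacing the rescaling toward a boundary point -- which is what the paper asserts the cited proofs do already, with essentially no change.
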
  
 \begin{proof}  
For the proof  of Lemma \ref{lem2.7}  when $ k = n - 1, p > 1, $    see Theorem 2 in  \cite{LLN}.   
For a proof of Lemma  \ref{lem2.7} when  $ 1 \leq k \leq n - 2, $ see  Theorem 1.13  in    \cite{LN}.  In both cases the proof of \eqref{2.7}  is given only  when  $ G = \Om \sem \bar B (0, \rho) $  and  $ \Om $ is a   bounded domain.  However the proof is essentially the same  in either case for $ G  $ as above.   \end{proof}   
     
  \noi  For  fixed $ k$ with $1\leq k  \leq n - 1$ for  $n \geq 2, $ and  $ y'  \in  \rn{k}, $  let  $  Q_{\tau} (y') =  Q_{\tau}^{(k)} (y'),  $  be as in \eqref{1.8a}.   Let    
\[
S  (y',  \tau )  =   S^{(k)} (y', \tau) :=   \{  (x', x'') \in  Q_{\tau}^{(k)} (y')  \times ( \rn{n-k}\sem\{0\} )  \}
\]
 when $ 1 \leq k \leq n - 2 $  and 
 \[  
 S ( y', \tau ) := \{ (x', x_n) :  x' \in Q^{n-1}_{\tau} (y'),  x_n > 0 \} 
 \]  
 when $ k = n - 1. $  For short we write $ S (\tau)  $  when  $ y' = 0 \in \rn{k}. $   If   $ 1 \leq k  \leq n - 2$ with $p > n - k, $ let   $  R^{1, p } (S (\tau)) $  
denote the Riesz space of  equivalence classes of functions $ F $ with distributional derivatives  
on $ \rn{n} \sem \rn{k} $  and   $ F ( z + \tau e_i  )  = F ( z ) ,  1 \leq i \leq k,  $  when $ z \in \rn{n}\sem \rn{k}.  $  Also 
\begin{align} 
\label{2.8} 
\| F \|_* =   \| F \|_{*, p}   =    \left( \int_{S ( \tau )}   | \nabla  F |^p  \, dx \right)^{1/p}   <  \infty. 
\end{align} 
If  $ k  = n - 1 $  and $ p > 2, $  define  $ R^{1,p}(S(\tau))  $ similarly,   only  with $ \rn{n}\sem \rn{k} $ 
replaced  by   $ \rn{n}_+.$   
Next let $ R^{1,p}_0 ( S ( \tau )) $    denote functions in  $ R^{1,p} ( S ( \tau ) ) $ which can be approximated 
arbitrarily closely   in the norm of  $ R^{1,p} ( S ( \tau ))  $  by   functions in  this space which are  
infinitely differentiable and vanish in    an   open    neighbourhood  of $ \rn{k}. $      
Using  a  variational argument  as  in   \cite{E} it can be shown   that given  $ F  \in  R^{1, p} (S (\tau)), $  there exists a  unique 
$p$-harmonic function  $ \hat  v $   on  $ \rn{n} \sem \rn{k}  $  when $  1  \leq k \leq n - 2, p > n - k ,$   and on 
$ \rn{n}_+ $  when $ k  =  n - 1,  p > 2, $  with   $ \hat  v ( z +  \tau e_i  ) = \hat  v ( z ),   1 \leq i \leq k, $ for $z \in  
\rn{n}\sem\rn{k}  $ or $\rn{n}_+.$   Moreover  $ \hat  v   - F  \in R_0^{1,p} (S(\tau)).   $  In fact the usual minimization argument 
yields that $ \|\hat v \|_{*,p} $ has minimum norm among all functions $ h $  in  $ R^{1,p} ( S ( \tau ) ) $ with $ h - F 
\in R_0^{1,p} (S (\tau)).$    Uniqueness of $ \hat v  $ is  a  consequence of  the maximum  principle  in  Lemma  
\ref{lem2.2}.    
Next we state 
\begin{lemma}   
\label{lem2.8} 
Let  $ p, n, k, \tau, F, \hat v,     $  be as above.     Given  $ t > 0, $  let   
$ Z (t) =  \{ ( x', x'') \in \rn{k} \times \rn{n-k} : |x''| = t  \}   $  when $ 1  \leq k  \leq n - 2, $  
and $ Z (t) = \{ ( x', x_n') \in \rn{k} \times \rn{n-k} : x_n = t  \}   $  when $  k = n - 1. $  
There exists $  \de  = \de  (p, n, k ) \in (0, 1)  $ and $ \xi  \in \re $  such that  
\begin{align}
\label{2.9}   
| \hat v  ( z ) -  \xi |  \leq   \liminf_{t \to 0} \left(  \max_{Z(t)}  \hat v  -   \min_{Z(t)}   \hat  v  \right)   \,  \left( \frac{\tau}{|z''|} \right)^{\de}   
\end{align}
 whenever either  $ z = (z', z'')   \in  \rn{n}\sem \rn{k} $ or $ z \in \rn{n}_+ $ with  $ z_n =  |z''|. $    
 \end{lemma}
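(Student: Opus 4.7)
Set $M(t):=\max_{Z(t)}\hat v$, $m(t):=\min_{Z(t)}\hat v$, and $\om(t):=M(t)-m(t)$. The plan is to show that $\om(t)$ decays at a polynomial rate as $t\to\infty$ and that $M(t),m(t)$ tend to a common limit $\xi$; the bound \eqref{2.9} will then follow from $|\hat v(z)-\xi|\le\om(|z''|)$ together with monotonicity of $\om$. By the scaling $x\mapsto x/\tau$ of $p$-harmonic functions it suffices to treat $\tau=1$. Each slice $Z(t)$ modulo the period lattice is the compact manifold $\mathbb{T}^k\times S^{n-k-1}(t)$ (or $\mathbb{T}^{n-1}$ when $k=n-1$), on which $\hat v$ is continuous by Lemma \ref{lem2.3}(b), making $M(t)$ and $m(t)$ finite. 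Boundedness of $\hat v$ on $\{|x''|>0\}$ is deduced from the variational characterization $\|\hat v\|_{*,p}<\infty$ together with interior H\"older estimates, and then applying the maximum principle (Lemma \ref{lem2.2}) on shells $\{t<|x''|<T\}/\mathbb{Z}^k$ with $T\to\infty$ shows $m$ is non-decreasing and $M$ non-increasing in $t$, so $\om$ is non-increasing.

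The heart of the proof is the geometric decay
\[
\om(2t)\ \le\ \ga\,\om(t),\qquad t\ge 4,
\]
with $\ga=\ga(p,n,k)\in(0,1)$. On the shell $A_t:=\{t\le|x''|\le 3t\}$, the monotonicity above forces the two $p$-harmonic functions $h_1:=\hat v-m(t)$ and $h_2:=M(t)-\hat v$ to be non-negative. The slice $Z(2t)$ modulo periodicity has intrinsic diameter $O(1+t)$, so using Euclidean balls $B(z,t/8)$ whose fourfold concentric balls $B(z,t/2)$ lie inside $A_t$ (hence away from $\mathbb{R}^k$), a uniformly bounded number $N=N(n,k)$ of such balls suffices to connect any two points of $Z(2t)$ modulo periodicity. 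Harnack's inequality (Lemma \ref{lem2.3}(c)) applied to $h_1$ and $h_2$ along this chain yields a constant $C=C(p,n,k)$ with
\[
\max_{Z(2t)} h_i\ \le\ C\min_{Z(2t)} h_i,\qquad i=1,2,
\]
i.e.\ $M(2t)-m(t)\le C(m(2t)-m(t))$ and $M(t)-m(2t)\le C(M(t)-M(2t))$; adding and simplifying produces $\om(2t)\le\tfrac{C-1}{C+1}\om(t)$.

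Iterating gives $\om(t)\le c\,t^{-\de}\om(1)$ for $t\ge 1$ with $\de=\log_2(1/\ga)>0$. Since $M$ is non-increasing, $m$ non-decreasing, and $\om\to 0$, both tend to a common limit $\xi$ with $m(|z''|)\le\xi\le M(|z''|)$, so $|\hat v(z)-\xi|\le\om(|z''|)$. Combining with $\om(1)\le\liminf_{t\to 0}\om(t)$ (monotonicity of $\om$) and undoing the rescaling in $\tau$ yields \eqref{2.9} up to the multiplicative constant arising in the iteration; this constant is absorbed by choosing $\de$ slightly smaller than $\log_2(1/\ga)$, using that the bound is immediate for $|z''|\le\tau$ where $(\tau/|z''|)^\de\ge 1$ and $|\hat v(z)-\xi|\le\om(|z''|)\le\liminf_{t\to 0}\om(t)$.

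The main obstacle I anticipate is making the Harnack chain length uniform in $t$ when $1\le k\le n-2$: naively the sphere factor $S^{n-k-1}(2t)$ has diameter proportional to $t$, which would force a chain of fixed-size balls to grow with $t$ and defeat the geometric decay. The resolution is to use balls whose radius scales with $t$, exploiting the scale-invariance of Harnack's inequality so that both the sphere and the torus factors are covered by $O(1)$ balls uniformly in $t$; securing this bounded-chain property is the technical heart of the argument and is what makes $\de$ depend only on $(p,n,k)$.
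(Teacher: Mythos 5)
Your proof takes essentially the same route as the paper: establish that the oscillation $\omega(t):=\max_{Z(t)}\hat v-\min_{Z(t)}\hat v$ is non-increasing, then use periodicity and a scale-invariant Harnack chain on slices to obtain geometric decay $\omega(2t)\le\gamma\,\omega(t)$ for $t$ not too small, and iterate. The paper gets the monotonicity of $\omega$ by comparison in the variational minimization problem (truncating $\hat v$ at $\max_{Z(t)}\hat v$ produces a competitor with no larger $p$-energy, whence $\hat v\le\max_{Z(t)}\hat v$ on $\{|x''|>t\}$ by uniqueness of the minimizer); you deduce the same monotonicity from the comparison principle on shells $\{t<|x''|<T\}$, which is an equivalent mechanism. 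The Harnack-chain step, which the paper only cites to Wolff's Lemma 1.3, you spell out in full, correctly identifying that the chain must use balls of radius proportional to $t$ so that both the period torus and the sphere factor are covered by $O(1)$ balls uniformly once $t\gtrsim\tau$; this is indeed the technical heart, and your bookkeeping $\omega(2t)\le\frac{C-1}{C+1}\omega(t)$ is correct.

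The one slip is at the very end. After rescaling to $\tau=1$ you have $\omega(t)\le c\,t^{-\delta_0}\omega(1)$ for $t\ge1$ with a constant $c>1$ from the dyadic iteration, and you propose to absorb $c$ by ``choosing $\delta$ slightly smaller than $\delta_0$.'' That replacement does not help for $|z''|$ close to $\tau$: for $1\le t\le c^{1/(\delta_0-\delta)}$ you would need $c\,t^{-\delta_0}\le t^{-\delta}$, which fails, and the trivial bound $\omega(t)\le\omega_0$ is also insufficient there because $(\tau/|z''|)^\delta<1$ once $|z''|>\tau$. The correct fix is to begin the geometric iteration one or two dyadic steps \emph{below} $\tau$ (say from $t_0=\tau/4$, where the Harnack chain still has uniformly bounded length since $8\tau/t_0$ is bounded): with $s\in(2^{j}t_0,2^{j+1}t_0]$ one gets $\omega(s)\le\gamma^{j}\omega_0$, and since $\gamma^{-1}4^{-\delta_0}=2^{-\delta_0}<1$ the prefactor from the floor function is already $\le1$, giving $\omega(s)\le(\tau/s)^{\delta_0}\omega_0$ outright with $\delta=\delta_0$ and constant exactly $1$. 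This is a one-line adjustment; the rest of your argument is sound.
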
 
 \begin{proof}  
 Fix $ t > 0 $  and first suppose that  $ 1 \leq k \leq n - 2, p > 2. $    
 We note  that   $ \hat v $   restricted  to   $ D(t) =  \{ (x', x'' )  \in  \rn{k} \times \rn{n-k} :   |x'' | > t \} $    is   a  $p$-harmonic  solution 
 to a certain  calculus of  variations  minimization problem.   Moreover   
 $ \ti v (x)  =   \min (\hat v (x),  \max_{Z(t)} v )$ for $x \in D(t),   $    belongs to  the class of  possible  minimizers   
and   
\begin{align} \notag
\int_{D(t)} | \nabla \ti v |^p  dx  \leq   \int_{D(t)} | \nabla \hat v |^p  dx  
\end{align}
so  by uniqueness of the minimizer,    $ \hat v   \leq \max_{Z(t)} v $ on $ D(t).$   Similarly  $ \hat v  \geq \min_{Z(t)} v $ on $ D(t). $  
Thus the term in \eqref{2.9} involving $ \hat v $   is decreasing as a function of $t.$    \eqref{2.9}   follows from   this fact,  
and  an argument using  $  \tau e_i, 1\leq i \leq k, $  periodicity  of $ \hat v $  together with Harnack's inequality (see Lemma 1.3  in \cite{W} for a 
 similar argument).    Note that  
 \[  
 \xi =  \lim_{t\to  \infty}  \max_{Z(t)}  \hat v  =   \lim_{t \to  \infty}  \min_{Z(t)}   \hat  v  .   
 \]   
 A similar argument applies if  $ k = n - 1$ and $p > 2. $ We omit the details. 
\end{proof}   

\begin{remark}   
If $ k = n  - 1$ and $p > 2,   $   Harnack's   inequality actually yields the stronger decay rate:  
\[    
| \hat v  ( z ) -  \xi |  \leq   \liminf_{t \to 0} \left(  \max_{Z(t)}  \hat v  -   \min_{Z(t)}   \hat  v  \right)   \, e^{ -  \de z_n /\tau}.  
\]   
	  Moreover  this decay rate can also be obtained when  $ 1 \leq k  \leq n - 2$ for $p > k, $ using rotational invariance of  $ \hat v (x) = \hat v (x', x'' ) $ in the $ x'' $ variable.   
	  However  we can only prove \eqref{2.9}  for the  larger class of  $ \mathcal{A} =  \nabla f-$harmonic functions  discussed in section \ref{sec6}  
	  and this inequality is all we need in  the proof of  Theorem \ref{thmC}.   
\end{remark}

\setcounter{equation}{0} 
 \setcounter{theorem}{0}
 \section{Proof of Theorem \ref{thmA}} 
 \label{sec3}   
 In  this section we prove Theorem \ref{thmA}.  To do so  let    
$x=(x', x'')\in \mathbb{R}^k \times \mathbb{R}^{n-k}$ for $1\leq k \leq n-2$ and set 
$t=|x'|$, $s=|x''|$.   We begin by  deriving  the $p$-Laplace equation  for a smooth rotationally invariant function, $u(x)=u(s,t),$  in $ \rn{n}\sem\rn{k}.$    

\subsection{The $p$-Laplace Equation in $s, t$}  
We compute 
\begin{equation}
\label{gradu}
\nabla u = ( \frac{u_t}{t} x', \frac{u_s}{s}x'') \quad \text{and} \quad |\nabla u| = \sqrt{ u_t^2 + u_s^2}.
\end{equation}
Now we show that the  $p$-Laplacian of $u (x) $, is the $(s,t)$ $p$-Laplace plus another term:
\begin{equation}
\label{3.2}
\nabla \cdot (|\nabla u |^{p-2} \nabla u)  =
\nabla_{s,t} \cdot (|\nabla_{s,t} u(s,t)|^{p-2} \nabla_{s,t} u(s,t) )+ 
 \sqrt{u_t^2+u_s^2}^{p-2} ((k-1) \frac{u_t}{t} +
(n-k-1) \frac{u_s}{s}).
\end{equation}
This is the calculation
\begin{multline*} 
\nabla \cdot \sqrt{u_t^2+u_s^2}^{p-2} ( \frac{u_t}{t} x', \frac{u_s}{s}x'') =  \\
(p-2) \sqrt{u_t^2+u_s^2}^{p-4} 
\left \lan\,  \left( (u_t u_{tt}+u_su_{st})\frac{x'}{t}, ( u_t u_{ts} + u_su_{ss})\frac{x''}{s})\right), 
\ ( \frac{u_t}{t} x', \frac{u_s}{s}x'' )  \, \right\ran \\
+ \sqrt{u_t^2+u_s^2}^{p-2} \nabla \cdot ( \frac{u_t}{t} x', \frac{u_s}{s}x'') \\
=(p-2) \sqrt{u_t^2+u_s^2}^{p-4}  (u_t^2u_{tt}+2 u_t u_s u_{ts}+u_s^2u_{ss}) + 
 \sqrt{u_t^2+u_s^2}^{p-2} (u_{tt}+u_{ss} -\frac{u_t}{t}-\frac{u_s}{s} +k \frac{u_t}{t} +
(n-k) \frac{u_s}{s}) \\
=(p-2) \sqrt{u_t^2+u_s^2}^{p-4}  (u_t^2u_{tt}+2 u_t u_s u_{ts}+u_s^2u_{ss}) + 
 \sqrt{u_t^2+u_s^2}^{p-2} (u_{tt}+u_{ss} +(k-1) \frac{u_t}{t} +
(n-k-1) \frac{u_s}{s}) \\
= \sqrt{u_t^2+u_s^2}^{p-4}  ((p-2)  (u_t^2u_{tt}+2 u_t u_s u_{ts}+u_s^2u_{ss}) 
+ (u_t^2 + u_s^2)(u_{tt}+u_{ss} +(k-1) \frac{u_t}{t} +
(n-k-1) \frac{u_s}{s}) ). 
\end{multline*}   
To find solutions,  subsolutions,  supersolutions in terms of $(s,t)$ we need only study when the next display is 
$0$, positive, negative
\begin{equation}
\label{steqn}
(p-2)  (u_t^2u_{tt}+2 u_t u_s u_{ts}+u_s^2u_{ss}) 
+ (u_t^2 + u_s^2)(u_{tt}+u_{ss} +(k-1) \frac{u_t}{t} +
(n-k-1) \frac{u_s}{s}).
\end{equation} 
\subsection{Particular Forms for $u(s,t)$ and Proof of Theorem \ref{thmA}}
\begin{proof}[Proof of Theorem \ref{thmA}]
Let $r=\sqrt{s^2+t^2}$ and  for Martin $p$-harmonic  sub or super solutions we consider  functions with the form 
\begin{equation} 
\label{3.4} 
u(x) = u(s,t)= s^\beta r^{-(\lambda + \beta)}, \quad \mbox{for} \,\, x \in \rn{n} \sem \rn{k}, 
\end{equation}   
where  $  \be =  \frac{p-n +k }{p-1} $   
and $ \la >  0. $ Our  choice of  $ \be $ and the form of $u$  is motivated by \eqref{1.4}  and  
by the boundary Harnack inequality  in Lemma \ref{lem2.6}. Indeed    
$ |x''|^{\be}, $  is a solution to the $p$-Laplace equation in $ \rn{n} \sem \rn{k}$
 with continuous  boundary value 0 on  $ \rn{n} \sem \rn{k} $.  
 So    the Martin  $p$-harmonic function relative to  $0$  for  $ \rn{n} \sem \rn{k} $ is homogeneous  in $ r $   with negative exponent  
 and by  Lemma \ref{lem2.6} this function is  bounded above and below at $ x \in \ar B(0,1) \sem \rn{k} $   by $ |x''|^{\be}.$ 
 Ratio  constants depend only on $ p,n,k$ and  the  value of the Martin function at  $e_n$.       

Now for the derivatives of $u$ we get 
  \begin{align} 
  \label{3.5} 
  u_t & = -(\lambda + \beta)  \frac{t}{r^2} u, \\ 
u_s & \notag = u\frac{\beta}{s}  -(\lambda + \beta) u \frac{s}{r^2} =  \frac{-\lambda s^2 + \beta t^2}{s r^2} u.
\end{align} 
Consequently 
\begin{equation} 
\label{3.6}
u_s^2+u_t^2 = u^2 
\left( \frac{ (\lambda+\beta)^2 t^2 s^2}{s^2 r^4}+ \frac{(-\lambda s^2 + \beta t^2)^2}{s^2 r^4}\right).
\end{equation}
Now the cross term $2 \lambda \beta s^2 t^2$ in the numerator cancels, leaving 
$(\lambda^2+\beta^2)s^2t^2 + \lambda^2s^4+\beta^2 t^4$ which factors into
$(\lambda^2 s^2+\beta^2 t^2)(s^2+t^2)= (\lambda^2 s^2+\beta^2 t^2)r^2$.
Altogether this gives 
\begin{equation}
\label{3.7} 
|\nabla u(s,t)|^2 = \frac{u^2}{s^2r^2}( \lambda^2 s^2 +\beta^2 t^2).
\end{equation}
Writing the second derivatives in terms of $u$ we have 
\begin{align} 
\label{3.8}
\begin{split}
u_{tt} & = \frac{u}{r^4} (\lambda + \beta) ( (\lambda + \beta+1)t^2 -s^2), \\ 
u_{ts} &  =  \frac{ut}{s r^4} (\lambda + \beta) ( (2+\lambda) s^2 -\beta t^2), \\  
u_{ss} & = \frac{u}{s^2 r^4} ( (\lambda^2+\lambda) s^4 -(\lambda +3 \beta +2 \lambda \beta) s^2 t^2 + (\beta^2 -\beta ) t^4).
\end{split}
\end{align}
Looking at equation \eqref{steqn} we see that we can factor out $\frac{u^3}{s^4 r^8}$ in the first term, the rest of the first term factors
\begin{equation}
\label{p1}
(p-2)\frac{u^3}{s^4 r^8} ( \lambda^3 (\lambda +1) s^4 +\beta^2 (2\lambda^2 -\beta +\lambda)s^2 t^2+\beta^3(\beta -1)t^4) r^4.
\end{equation}
In the second term we can factor out $\frac{u^3}{s^4 r^6}$ and the rest of it factors
\begin{equation}
\label{p2}
\frac{u^3}{s^4 r^6}( \lambda^2 s^2 +\beta^2 t^2) ( (\lambda^2 +(2-n)\lambda -\beta k)s^2 + \beta(\beta+n-k-2)t^2) r^2.
\end{equation}
Cancelling the $r$'s and adding these two terms we get 
\begin{equation}
\label{abceqn}
\frac{u^3}{s^4 r^4} ( A \lambda^2 s^4 + B \beta s^2 t^2 + C \beta^3 t^4)
\end{equation}
where 
\begin{align} 
\label{3.13}  
\begin{split}
A & = (p-2)(\lambda^2+\lambda)+\lambda^2 +(2-n)\lambda -\beta k  = (p-1)\lambda^2 +(p-n)\lambda -\beta k, \\ 
B & = \beta(2\lambda^2-\beta+\lambda)(p-2)+\lambda^2(\beta+n-k-2)+\beta(\lambda^2+(2-n)\lambda -\beta k)  \\ 
 &= 	(2\beta(p-1)+n-k-2)\lambda^2 + \beta (p-n)\lambda -\beta^2(p-2+k), \\ 
C& = (p-2)(\beta-1)+\beta+n-k-2 
 = (p-1)\beta - (p-n+k).
\end{split}
\end{align}
We are using $\beta = \frac{p-n+k}{p-1}$ so that $C=0$, and in equation \eqref{abceqn} we can factor out an $s^2$.   Thus to determine solutions, subsolutions, 
supersolutions we just need to know when the following equation is 0, positive, negative for all $(s,t)$.
\begin{equation}\label{abeqn}
A \lambda^2 s^2 + B \beta  t^2.
\end{equation}
For this we need $A,B=0$, $A,B >0$, $A,B<0$ respectively.   Now $A,B$ are quadratics in $\lambda$ with positive leading coefficient.  Using the quadratic formula, the discriminant is a perfect square, $A$ has roots  $-\beta$ and $\frac{k}{p-1}$ while $B$ has roots $-\beta$ and $\beta \frac{p-2+k}{2p-n+k-2}$. In view of these facts and \eqref{abeqn},  we conclude \eqref{1.6},  \eqref{1.7} of Theorem \ref{thmA}.   To  show  $  \chi < k,  \chi  $ as in \eqref{1.6},   it suffices to  show  
\begin{align}
\label{3.14}   
k >   \frac{  (p+k-n) ( k  +  p  - 2)}{ (p - 1) ( 2  p - n  + k - 2) } = \be  \frac{  k  +  p  - 2}{2  p - n  + k - 2 }.
\end{align} 
Since   $ \be  <  1,  $   it is easily seen that   \eqref{3.14} is true for  $ p  \geq n. $   
 To  prove that  this inequality holds  for  $ 2 < p < n, $  we   gather terms in  $ k $  to get that  \eqref{3.14} is valid if  
\begin{align}  
\label{3.15} 
I  =  (p  - 2 )[  k^2  + k ( 2p - n - 2)  + n - p ]   > 0. 
\end{align}
Since $ n > p  > n - k $ and  $ p > 2, $  it follows that   
\begin{align}
 \label{3.16} 
 \begin{split}
 I   & >  ( p - 2 )  [   k^2 +  k ( p + n - k  - n - 2)   + n - p ]\\
 & =  (p-2)  [ k ( p - 2) + n - p ] > 0.
 \end{split}
 \end{align}
 Thus   $ \chi  < k. $   
\end{proof} 
\begin{remark} 
\label{rmk3.1}
When $p=n$ the quadratics  $A, B$ have the common roots $\lambda = \pm \beta$.  This checks, when $\lambda = -\beta$ as we already know  the solution $u=s^\beta$  in $ \rn{n}_+ $  and that the $n$-Laplacian is invariant under an inversion.  
\end{remark}



 \section{Proof of Theorem \ref{thmB}}  
 \label{sec4} 
\setcounter{equation}{0} 
 \setcounter{theorem}{0}
   \begin{proof}  
   We   prove  Theorem \ref{thmB}  only when  $ 1 \leq k \leq n - 2, $  since  the  proof when $ k = n - 1$ and $p > 2 $ is essentially the same.   
   Recall  from section \ref{sec1}  that   if   $ p  >  n - k$ with $1 \leq k \leq n-2, $    then    $   u  $ is said to be a  $p$-harmonic Martin function  for  $ \rn{n} \sem \rn{k} $   relative to  $0$  provided  $ u >0 $  is $p$-harmonic in $ \rn{n} \sem \rn{k}, u(x) \to  0,  $ as $ x \to  \infty, x \in \rn{n} \sem \rn{k}, $  and $u$ has  continuous  boundary value $0$  on  $ \rn{k}\sem \{0\}. $  To briefly  outline the proof of  \eqref{1.4}, suppose $ v $ is another  $p$-harmonic Martin function relative  to  $0$.   Applying  \eqref{2.7} of Lemma  \ref{lem2.7}  to $ u, v $ and letting $ \rho \to  0 $ it  follows that 
 $ u/v  \equiv $ a constant in  $ \rn{n}\sem\rn{k}. $   Since $p$-harmonic functions are invariant under dilation we deduce that if $ t > 0, $  and $ u (e_n) = 1, $   then  
\begin{align}
 \label{4.1}  
 u  (t x ) =  u  ( t e_n )  u ( x ) \quad \mbox{whenever}\, \, x \in \rn{n} \sem \rn{k} .  
 \end{align}
Differentiating  \eqref{4.1} with respect to $ t $ (permissible by  \eqref{2.5} $(\hat a)$ of   Lemma  \ref{lem2.5})   and evaluating at $ t =  1 $  we see that   
      \[ 
        \lan x,  \nabla u ( x ) \ran =   \lan e_n,  \nabla u ( e_n ) \ran  u ( x )\quad \mbox{whenever} \,\, x  \in \rn{n} \sem \rn{k} . 
        \]    
If we put $ \rho =  | x |,   x / | x | = \om \in    \mathbb{S}^{n-1}, $   in this identity  we obtain that  
\[  \rho \,   (u)_{\rho}  ( \rho \om ) =  \lan e_n,  \nabla u ( e_n )\ran u ( \rho  \om ).  
\]
Dividing this equality by  $ \rho  u  ( \rho  \om ) , $  integrating with respect to $ \rho, $  and  exponentiating,   we find  for $r>0$ and $\om \in \mathbb{S}^{n-1}$ that   
\begin{align}
 \label{4.1a}  
 u ( r \om )  =  r^{-\si } u ( \om )  \quad \mbox{where}\quad \si   =  - \lan e_1,  \nabla u  ( e_1 ) \ran. 
\end{align}
    For  fixed  positive integer $ 1 \leq k  \leq n - 2$ and  $p  >  n - k$,  let   
$ \hat  u $  be the  $p$-subsolution defined in  \eqref{1.5} where  $ \la $ is 
chosen so that  $ \ch < \la < k$ where $\ch$ is as in Theorem \ref{thmA}.    From   Lemma  \ref{lem2.6} and  the fact (mentioned earlier)  
that $ | x'' |^{\be} $  is  $p$-harmonic in  $  \rn{n} \sem \rn{k}  $   we  see that  
$ u/v  \approx 1 $  on $  \ar B (0, 1 ) \cap ( \rn{n} \sem \rn{k} ) .$  
Comparing boundary values of $  \hat u, u $ and using the fact that  $  \hat u ( x )  +  u ( x ) $  $ \to  0 $ as $ x \to  \infty, x \in \rn{n} \sem 
\rn{k}, $  we deduce from the boundary maximum principle in  Lemma  \ref{lem2.2}  that 
$   \hat u   \leq   c u $  in  $  \rn{n} \sem (\rn{k} \cup B (0, 1)) $  where $  c =  c (p, n. k). $  Letting  $ x  \to  \infty $  we get  
\begin{align}
 \label{4.1b}   
 \si \leq \la < k.  
\end{align}
Next   given  $  0 < t < 10^{-10 n},  $ let     $ a(\cdot)$  be  a   $ C^\infty $  smooth function on  $ \re $
 with  compact support in  $ ( - t, t )$, $0 \leq a  \leq 1,$  $a  \equiv  1 $  on  $ ( - t/2, t/2 ), $  
 and   $ | \nabla a | 
\leq   10^5 /t .$   Let     $ f ( x )   = \prod_{i=1}^n  a(x_i),  x \in \rn{n}, $    and  
 for fixed $ p >  n - k $,  let  $ \ti v $  be  the  unique   $p$-harmonic 
 function on $  \rn{n}\sem\rn{k} $   with  $ 0 \leq \ti v \leq 1$ satisfying 
\begin{align}
 \label{4.2}    
  \int_{\rn{n}\sem \rn{k}}   |\nabla \ti v |^p   dx    \leq  \int_{\rn{n}\sem\rn{k}}   |\nabla  f |^p   dx    \leq   
	  c  t^{n-p} 
\end{align}  
where $ c = c (p, n, k)$ and $( \ti v  - f ) \ze   \in   W_0^{1, p} (B ( 0, \rho )  \sem \rn{k}) $  
whenever  $  0 < \rho < \infty$. Here $\ze $ is as in  Lemma \ref{lem2.4} (for a fixed  $\rho)$. 
Once again existence and uniqueness of  $ \ti v $  
  follows with  slight modification  from  the usual calculus of  variations argument  for bounded domains (see \cite{E}).   
       We  claim     that  
    there exist $  \be_* = \be_* (p,n,k)  \in  ( 0, 1]$ and $c = c(p, n, k, t )  \geq 1$    such that if    $  x, y \in B (0, \rho) \cap (\rn{n} \sem \rn{k}), $ then  
\begin{align}
 \label{4.3}  
  |  \ti v ( x )  -  \ti v (y) |  \leq   c   \left( \frac{|x-y| }{ \rho  } \right)^{\be_*} \quad \mbox{and}\quad  \ti v (x) \leq  \left(\frac{4nt}{|x|}\right)^{\be_*}. 
  \end{align} 
The left hand inequality in  \eqref{4.3}  follows  from  Lemma \ref{lem2.4}.   
To  prove the  right hand 
inequality in   \eqref{4.3}  let  $  \ti v_j, j = 4n + 1,  4n +2, \ldots,  $  be  the $p$-harmonic function  in  $  B ( 0, j) \sem \rn{k} $ with 
continuous boundary values  $ \ti v_j  =  \ti v$ on $B ( 0, j) \cap  \rn{k} $  and $ \ti v_j  = 0 $  on $ \ar B ( 0, j). $ 
Observe from  the  boundary maximum principle in Lemma \ref{lem2.2} 
 and $  0 \leq \ti v_j   \leq 1, $  that  ${\ds \max_{\ar B(0, r)} \ti v_j  }$  is  nonincreasing for     $ r  \in (2nt,  j). $ 
Also using uniqueness of $ \ti v $ in the  calculus of  variations minimizing argument it follows that $ \ti v_j  \to  \ti v $  
uniformly on compact subsets of  $  \rn{n}. $  Thus  ${\ds \max_{\ar B(0, r)} \ti v  }$  is also non increasing as  
a  function of $r.$  Using this  fact and   Harnack's inequality in  Lemma \ref{lem2.3} $(c)$   applied to $ {\ds  \max_{\ar B(0, r)} \ti v    - \ti v } $, 
and  \eqref{2.4} $(++)$  we  deduce the existence of  $  \he \in (0, 1) $ with 
\begin{align}
 \label{4.4}   
\max_{\ar B(0, 2r)} \ti v  \leq   \he   \max_{\ar B(0, r)} \ti v  \quad \mbox{whenever}\quad r > 2n t. 
\end{align}
 Iterating this inequality we  get the right hand inequality in  \eqref{4.3}.

 Next we  show  that    
\begin{align}
 \label{4.4a} 
 \ti  v ( e_n )   \approx     t^{\si}  
  \end{align} 
where   $ \si $ is as in   \eqref{4.1a} and the proportionality constants depend only on $ p, n, k$.  To prove   \eqref{4.4a},  
  put $ \ti u  =  t^{\si} u (x ), x \in  \rn{n}.  $   
Then from  Harnack's inequality and   
\eqref{2.4}  (++)    of   Lemma  \ref{lem2.4} with $  \hat v  = 1 - \ti v $,   we find that  
$ \ti v (  t e_n ) \approx 1. $   In  view of the boundary values of  $  \ti  v$, $ \ti u,  $  and  $ \ti  v (te_n) \approx  \ti u (te_n) = 1,$   
as well as  Harnack's inequality in \eqref{2.2} $(c)$,   we  see that  first  Lemma  \ref{lem2.6}  can be applied to get    
\begin{align} 
\label{4.5}   
 \ti  u/ \ti v   \approx 1   
 \end{align} 
     on  $     \ar B ( 0, 2nt ) \sem  \rn{k}    $  where ratio constants depend only on  $ k, n, p.$     
     Second  from      \eqref{4.3} for $  \ti v, $   and  $ \si > 0 $     we find  that  
$ \ti u(z), \ti v(z) \to 0 $  as  $ z  \to  \infty $ in      $  \rn{n}  \sem \rn{k}  $    and thereupon from 
Lemma \ref{lem2.2}  that    \eqref{4.5}  holds in  $ (\rn{n} \sem \rn{k}) \sem B (0,2nt). $    

Since $ \ti u (e_n)  = t^{\si} $   we conclude from   \eqref{4.5} that \eqref{4.4a} is true.   

Let  $ \ti a $ denote the one periodic  extension of $ a|_{[-1/2, 1/2]}$   to  $ \re. $ 
 That is  $ \ti a  ( r + 1 ) = \ti a ( r )$ for $r \in \re$ and  $ \ti a = a $ on $ [-1/2, 1/2]. $ 
 Also  let   $  \ti \Psi   $   be  the  $p$-harmonic function on 
$ \rn{n}\sem \rn{k} $  with continuous boundary values   on $ \rn{k} $ and 
\begin{align}
 \label{4.6}
 \begin{split}  
& (a) \hs{.2in}   \ti  \Psi ( x + e_i  )  = \ti \Psi ( x ) \, \, \mbox{for}\,\,  1 \leq i \leq k,   \quad \mbox{whenever} \, \, x  \in \rn{n} \sem \rn{k}, \\
& (b)  \hs{.2in}  \ti \Psi (x)  - \prod_{i=1}^n  \ti a(x_i)   \in  R^{1,p}_0 ( S(1))   \quad \mbox{and} \quad 0 \leq \ti \Psi \leq 1  \mbox{ in } \rn{n}\sem\rn{k},   \\
& (c)  \hs{.2in}   { \ds  \int_{S(1)}  | \nabla \ti \Psi|^p  dx dy   \leq  c \, t^{n-p}  <  \infty,   } \mbox{ where }  c = c (p,n,k),  \\
& (d)  \hs{.2in}    { \ds  \lim_{  x'' \to  \infty, x'' \in \rn{n-k}} \ti \Psi ( x' , x'' )  = \xi \,\, \mbox{a constant,  uniformly for}\, \, x' \in \rn{k}. }  
\end{split}
\end{align}
Existence of  $ \ti \Psi $ satisfying $ (a)-(d)$ of \eqref{4.6}  follows  from the discussions after  \eqref{2.8} and \eqref{2.9}.        
Comparing  boundary values  of  
 $ \ti v, $ $\ti \Psi, $   we see  that  $  \ti v  \leq  \ti \Psi $  on   $ \rn{k}. $  
 Using  this fact and Lemma  \ref{lem2.2}  we find in  view of    \eqref{4.3} that 
\begin{align}
 \label{4.7} 
  \ti v    \leq  \ti \Psi \quad   \mbox{in}\quad    \rn{n}\sem  \rn{k}. 
 \end{align}
 Let  $ \hat e $   be that point in  $ \rn{n-k}$ with  $  \hat e = ( \hat e_1,  \ldots, \hat e_{n-k}) $ 
 where $ \hat e_i = 0, 1 \leq i \leq n - k - 1, $ and  $ \hat e_{n-k}  = 1. $     
 From   \eqref{4.7}, \eqref{4.5},   and  Harnack's  inequality for  $  \ti v,  $   we have   
\begin{align}  
\label{4.8} 
\int_{Q_{1/2}^{(k)}(0)}   \ti \Psi (x', \hat e) \, d \mathcal{H}^k x'  \geq  \int_{Q_{1/2}^{(k)}(0)}   \ti v (x', \hat e) \, d \mathcal{H}^k x'\approx t^{\si }. 
\end{align}
   Also from \eqref{4.6}(b),  the definition of $ \ti a,  $  and continuity of  $ \ti \Psi $  in $ \rn{n}$    we obtain     
\begin{align}
 \label{4.9} 
  \int_{Q_{1/2}^{(k)}(0)}   \ti \Psi (x', s \hat  e) \, d \mathcal{H}^k x'  \leq 4^k t^k \,   ,
 \end{align} 
for small $ s > 0.$  Thus there exists $ c = c(p,n,k) \geq 1 $ with 
\begin{align} 
\label{4.10}   
\int_{Q_{1/2}^{(k)}(0)}   \ti \Psi (x', s \hat e ) \, d \mathcal{H}^k x'    \leq  c t^{k -\si } \,   \int_{Q_{1/2}^{(k)}(0)}   \ti \Psi (x',  \hat  e) \, d \mathcal{H}^k x'  \, .  
\end{align}  
We conclude for $ t > 0, $ small enough that Theorem \ref{thmB} is valid  with  
$ \Psi (x)  =  \Psi (x', x'' ) $   one of the functions, $  \ti \Psi (x', x'' + \hat e )  -   \xi $  or  $  \ti \Psi (x', x'' + s \hat e )  -   \xi $ when $ x \in  \rn{n} $.  
\end{proof}

The proof  of  Theorem \ref{thmB}  in case $ (ii), $ i.e.,  when $ k = n - 1,  p > 2,  $ and  in $ \rn{n}_+ $   
is  essentially the same as in case  $(ii), $  only in this case one  uses Remark \ref{rmk1.6}.  Thus  we omit the details.   

\begin{remark} 
\label{rmk4.1}   
We remark that Corollary \ref{cor1.7}  follows from \eqref{4.5}        and  Lemma  \ref{lem2.2}.     
\end{remark} 
 
\section{Proof of Theorem \ref{thmC}} 
\label{sec5} 
 \setcounter{theorem}{0}
\setcounter{equation}{0}
 In this section  we indicate the  changes  in  Wolff's argument that are necessary to show that  Theorem \ref{thmB} implies  Theorem \ref{thmC}.  
 Unless otherwise stated,  we let $ c \geq 1 $  denote a  positive constant which may depend on $ p, n, k, $ and 
 the Lipschitz norm of  $ \Psi |_{\rn{k}}. $   
 Recall the definition of  $ f \in  R^{1,p} (\mathcal{S} (1)) $ and $ \| f \|_{*, p} $  in  \eqref{2.8}  
 when $ k, n $  are fixed positive integers  with  $ 1 \leq k \leq n - 2 $ or $ k = n - 1. $  
 Given   $  h \in  R^{1, p } (\mathcal{S} (1))$  we note that  $ h $ has  a  trace on $ \rn{k}$ 
 which is well defined  $ \mathcal{H}^k $ almost everywhere. Let  $ h|_{\rn{k}} $  denote this trace and extend  $ h $  to $ \rn{k} $    by setting   
  $ h ( x', 0)  =  h|_{\rn{k}} (x', 0)$ for $x' \in \rn{k} $. Also  let $ \| h|_{\rn{k}} \|_{\infty}$  and   
  $ \|  h|_{\rn{k}} \breve \|  =   \| |\nabla h|_{\rn{k}}| \|_{\infty},  $    denote respectively the $ \infty$ and 
  Lipschitz norms of $ h |_{\rn{k}}. $    Let   $ \hat h \in R^{1, p } ( \mathcal{S} (1)),$   be  the $p$-harmonic function  
  on  either $(i)$  $ \rn{n} \sem \rn{k} $  (when $1 \leq k \leq n - 2, p> n - k ) $  or   $(ii)$   $ \rn{n}_+ $ (when $ k = n - 1$ and $p > 2$),     
  with  $ \hat h - h \in R_0^{1, p } ( \mathcal{S} (1)).$  Throughout this section   $ Q_{r}^{(k)} ( x' )  =  Q_{r} (x') $ when $ x' \in \rn{k}$ and $r  > 0. $  
  Also proofs  of  Lemmas  will only be given in case $(i),$  as the proof in case $ (ii) $  is essentially the same.   
 
We first state an analogue  of  Lemma 1.4 in  \cite{W}.  
 \begin{lemma}  
 \label{lem5.1}      
 Suppose    $\bar u, \bar v $   are    $p$-harmonic in either  $(i)$  $ \rn{n} \sem  \rn{k}  $  when  $ 1 \leq k \leq n - 2$ and $p > n - k,  $  or $ (ii)$  $ \rn{n}_+ $ when 
 $ k = n - 1$ and $p > 2,   $ with continuous boundary values.   Also assume that  $ \bar u, \bar v   \in  R^{1, p} ( \mathcal{ S } ( 1 ))  $ with 
   $\| \bar u|_{\rn{k}}  \|_{\infty}  + \| \bar v|_{\rn{k}} \|_{\infty} < \infty  $  and  for some $ z' \in \rn{k},  0 < \ga \leq 1/4, $  that   $  \bar u|_{\rn{k}} \leq  \bar v|_{\rn{k}}$   on  
   $  Q_{2 \ga} ( z' ). $    Let $ 0 < t \leq 1/2$  and if  $ (i) $  holds  put 
   \[ 
   E (t)   =  \{ x  = (x', x'') \mbox{ with }   \bar u (x) - \bar v  (x) > 0  \quad \mbox{and}\quad  x \in  Q_{\ga} (z')   \times \{ x'' :  |x''|   \leq t \} \}  
   \]   
   while if $ (ii) $ holds  replace  $ x'' $  in the above display by  $ x_n$ where $x_n  > 0. $ Then     there exists $c=c (p, n, k, \ga)$ such that
\begin{align} 
\label{5.1}   
\int_{E (t) }  | \nabla ( \bar u  - \bar v  )^{+}   | \,  dx    \leq   c\,  t^{(n-k)(p-1)/p} ( \|  \bar u  \|_{p, *}   +  \|  \bar  v  \|_{p, *} )^{\al } \, \,   [  \max_{ S(1) }  (\bar u  - \bar v )^{+} ]^{1- \al}
\end{align}
    where   $  \al =  1 - 2/p$   and $  a^+  =  \max (a, 0) $.     
\end{lemma}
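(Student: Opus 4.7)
The plan is a Hölder-plus-Caccioppoli estimate for $w := (\bar u - \bar v)^+$. Set $M := \max_{S(1)} w$ and observe that $p \geq 2$ in both cases (in (i), $p > n - k \geq 2$; in (ii), $p > 2$ explicitly). Hölder's inequality with exponents $p/(p-1)$ and $p$, together with the trivial bound $|E(t)| \leq c(\gamma)\, t^{n-k}$, gives
\begin{equation*}
\int_{E(t)} |\nabla w|\, dx \;\leq\; |E(t)|^{(p-1)/p} \Big( \int_{E(t)} |\nabla w|^p\, dx \Big)^{1/p} \;\leq\; c(\gamma)\, t^{(n-k)(p-1)/p} \Big( \int_{E(t)} |\nabla w|^p\, dx \Big)^{1/p},
\end{equation*}
so it suffices to establish the Caccioppoli-type bound $\int_{E(t)} |\nabla w|^p\, dx \leq c(\gamma)\, M^2\, (\|\bar u\|_{p,*} + \|\bar v\|_{p,*})^{p-2}$, since $\alpha = 1 - 2/p = (p-2)/p$.

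For this, choose a Lipschitz cutoff $\eta = \eta_1(x')\eta_2(x'')$ that equals $1$ on $Q_\gamma(z') \times \{|x''| \leq 1/2\}$ (a set containing $E(t)$, because $t \leq 1/2$) and is supported in $Q_{2\gamma}(z') \times \{|x''| \leq 1\}$, with $\|\nabla \eta\|_\infty + |\mathrm{supp}(\nabla \eta)| \leq c(\gamma)$ \emph{independent of $t$}. Because $w \equiv 0$ on $Q_{2\gamma}(z')$ by hypothesis and $\eta \equiv 0$ outside that cube, the function $\varphi := w\, \eta^p$ has zero trace on $\mathbb{R}^k$ (or on $\{x_n = 0\}$ in case (ii)) and is compactly supported; by a standard approximation this makes $\varphi$ admissible in the weak formulations of $\mathcal{L}_p \bar u = \mathcal{L}_p \bar v = 0$. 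Writing $A := |\nabla \bar u|^{p-2}\nabla \bar u$ and $B := |\nabla \bar v|^{p-2}\nabla \bar v$, subtracting the two weak equations and expanding $\nabla \varphi = \eta^p \nabla w + p\, \eta^{p-1} w \nabla \eta$ yields
\begin{equation*}
\int_{\{\bar u > \bar v\}} \eta^p \langle A - B,\, \nabla \bar u - \nabla \bar v\rangle\, dx \;=\; -p \int_{\{\bar u > \bar v\}} \eta^{p-1} w\, \langle A - B,\, \nabla \eta\rangle\, dx.
\end{equation*}
The standard monotonicity inequalities valid for $p \geq 2$, namely $\langle A - B, \nabla \bar u - \nabla \bar v\rangle \geq c(|\nabla \bar u|+|\nabla \bar v|)^{p-2}|\nabla w|^2$ and $|A - B| \leq c(|\nabla \bar u|+|\nabla \bar v|)^{p-2}|\nabla w|$, together with $w \leq M$ and Cauchy--Schwarz applied to the factorization $\eta^{p-1} = \eta^{p/2}\cdot \eta^{(p-2)/2}$, imply
\begin{equation*}
\int \eta^p(|\nabla \bar u|+|\nabla \bar v|)^{p-2}|\nabla w|^2\, dx \;\leq\; c\, M^2 \int \eta^{p-2}(|\nabla \bar u|+|\nabla \bar v|)^{p-2}|\nabla \eta|^2\, dx.
\end{equation*}

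Since $p \geq 2$ one has $|\nabla w|^p \leq (|\nabla \bar u|+|\nabla \bar v|)^{p-2}|\nabla w|^2$ pointwise, and $\eta \equiv 1$ on $E(t)$, so the left side majorizes $\int_{E(t)} |\nabla w|^p$. On the right side, Hölder's inequality with exponents $p/(p-2)$ and $p/2$ gives
\begin{equation*}
\int \eta^{p-2}(|\nabla \bar u|+|\nabla \bar v|)^{p-2}|\nabla \eta|^2\, dx \;\leq\; \Big( \int_{\mathrm{supp}\,\eta} (|\nabla \bar u|+|\nabla \bar v|)^p\, dx \Big)^{(p-2)/p} \Big( \int \eta^{(p-2)p/2}|\nabla \eta|^p\, dx \Big)^{2/p}.
\end{equation*}
The second factor is bounded by $c(\gamma)$ by the choice of $\eta$; the first factor is dominated by $c(\gamma)(\|\bar u\|_{p,*} + \|\bar v\|_{p,*})^{p-2}$ thanks to the $x'$-periodicity of $\bar u, \bar v$, which permits covering $\mathrm{supp}\,\eta$ by finitely many translates of the fundamental strip $S(1)$. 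Combining these estimates yields the Caccioppoli bound and hence the lemma. The principal obstacle here is the design of the cutoff $\eta$: a naive cutoff at scale $t$ in the $x''$-direction would force $|\nabla \eta| \sim 1/t$ and, when raised to the $p$th power and inserted into the last Hölder estimate, would destroy the desired $t^{(n-k)(p-1)/p}$ factor. Cutting off instead at the fixed scale $1$ in $x''$, which is permissible precisely because $t \leq 1/2$ ensures $E(t) \subset \{\eta \equiv 1\}$, is the trick that makes the final constants depend only on $p, n, k, \gamma$.
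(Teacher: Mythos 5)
Your proof is correct and follows essentially the same route as the paper: test both equations with a cutoff power times $(\bar u-\bar v)^+$, use the monotonicity bounds \eqref{5.2}--\eqref{5.3} in a Caccioppoli argument with a $t$-independent cutoff, invoke periodicity to control the gradient integrals by $\|\bar u\|_{*,p}+\|\bar v\|_{*,p}$, and finish with H\"older's inequality and $|E(t)|\leq c\,t^{n-k}$. The only differences (cutoff exponent $p$ versus $2$, and making explicit the final H\"older step that the paper leaves implicit after \eqref{5.4}) are cosmetic.
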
  
   
    \begin{proof}  
    We note that since  $ p > 2$, then
\begin{align}
\label{5.2}   
 ( | \nabla \bar u |^{p-2} \nabla \bar u  - | \nabla \bar v |^{p-2} \nabla \bar v) \cdot  \nabla ( \bar u -   \bar v)  \geq c^{-1} ( |\nabla \bar u |  + |\nabla \bar v | )^{p-2} | \nabla  \bar u  -  \nabla \bar v   |^2  \end{align}
 and
\begin{align} 
 \label{5.3}   
  |  | \nabla \bar u |^{p-2} \nabla \bar u  - | \nabla \bar v |^{p-2} \nabla \bar v  |  \leq c ( | \nabla \bar u | + | \nabla \bar v |)^{p-2} | \nabla \bar u  -  \nabla \bar v |  
\end{align}  
      on $ \mathcal{S}$ (1). 
         Let  $ 0 \leq  \he \leq 1  \in  C_0^\infty ( Q_{2\ga} (z') \times \{ x'' :   |x''|  \leq 1  \}) $  
         with  $ \he \equiv 1 $ on   $ Q_{\ga} (z') \times \{ x'' : | x''| \leq t \}, $  and  $ | \nabla \he | \leq c \ga^{-1}.$     
                  If   $ a^+  = \max(a,0), $  then $ \he^2  ( \bar u - \bar v )^{+} $ can be used 
                  as a test function in the definition of $p$-harmonicity for $ \bar u,  \bar v. $  
                  Doing this, using \eqref{5.2}, \eqref{5.3},  and a standard  Caccioppoli type argument we obtain     
\begin{align}
\label{5.4} 
\begin{split}
{\ds  \int_{E(t)}  | \nabla \bar u - \nabla \bar v |^p  dx } &\leq    {\ds  \int \he^2   | \nabla (\bar u -  \bar v)^+ |^p  dx}   \\
& \leq  c \ga^{-2} {\ds  \int_{S(1) \cap B (z', 2n) } [(\bar u- \bar v)^+]^2    ( |\nabla \bar u | + |\nabla \bar v | )^{p-2} dx}  \\
& \leq c \ga^{-2} \,    (  \| \bar u \|_{*, p}   +  \| \bar v \|_{*, p} )^{p-2}  {\ds \, \, [\max_{S(1) }  (\bar u - \bar v)^+ ]^2   } 
\end{split}
\end{align}
  where $ c = c(p,n,k) $  in \eqref{5.1}-\eqref{5.4}.  
\end{proof}        
Next we state an analogue of Lemma 1.6  in  \cite{W} which the authors view as  Wolff's main lemma for applications.

\begin{lemma} 
\label{lem5.2}    Let  $  k, n  $ be  positive integers  with either  $(i) $  $ 1\leq k \leq n -   2 $ and  $ p > n - k  $   or  $(ii)$ $ k = n - 1 $ and $ p > 2$  fixed.  
Let $ \ep \in (0, 1) $ and  $1 \leq M < \infty.$  Then there are constants $ A = A (p, n,  k, \ep, M ) > 0  $
and $ \nu_0 = \nu_0 ( p, n, k, \ep, M) < \infty, $  such that if  $  \nu  >   \nu_0  > 100  $ is  a positive  integer, 
$ f$, $g \in R^{1,p} (\mathcal{S} ( 1 )),$ $q \in R^{1,p} (\mathcal{S}(\nu^{-1})),$   and if      
\begin{align}
 \label{5.5}   
 \max ( \| f|_{\rn{k}} \|_{\infty}, \| g|_{\rn{k}} \|_{\infty}, \| q|_{\rn{k}} \|_{\infty},  \| f|_{\rn{k}} \breve \|, \| g|_{\rn{k}}  \breve \|,  \nu^{-1} \| q|_{\rn{k}} \breve \| )   \leq M, 
\end{align} 
   then  for  $ x = (x', x'')  \in  \mathcal{S}$(1),  $ \al = 1 - 2/p, $  and $ 1 \leq  k \leq n - 2, p > n - k, $      
\begin{align}  
\label{5.6}   
|  \widehat {f q +  g} (x) -  f (x', 0) \,  \hat q (x ) - g (x',0 ) |   <   \ep  \quad \mbox{if}\, \,    | x'' |  \leq   A \nu^{ - \al}.     
\end{align}
     If, in addition,  $   \hat q (y)  \to   0, $  uniformly as $ y \to  \infty,  y \in \mathcal{S}$(1),   then 
   \begin{align}
\label{5.7} 
|  \widehat{ q f + g } \,  ( x',  x'' )   -   g ( x' , 0 ) | < 2  \ep \quad \mbox{if}\, \, |x ''| = A \nu^{-\al} ,      
\end{align}    
and     
\begin{align} 
\label{5.8} 
| \widehat{q f + g} (x)  -   \hat g (x) | < 3 \ep    \quad \mbox{if } \, \,  | x'' |  \geq   A  \nu ^{-\al} . 
\end{align}   
If   $ k = n - 1, p > 2, $  replace $ x''  $ by $ x_n, x_n > 0 $ in  \eqref{5.6}-\eqref{5.8}.      
\end{lemma}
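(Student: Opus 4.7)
The plan is to treat case $(i)$ in detail (case $(ii)$ is identical with $\rn{n-k}\setminus\{0\}$ replaced by the upper half-space) and to prove \eqref{5.6}--\eqref{5.8} by comparing $\bar u := \widehat{fq+g}$ with the candidate approximant $w(x) := f(x',0)\hat q(x) + g(x',0)$. Note that $\bar u$ and $w$ share the same trace on $\rn{k}$ (since $\hat q = q$ there), but only $\bar u$ is $p$-harmonic: $w$ fails this because $f,g$ depend on $x'$. My strategy has three steps: (i) freeze $f,g$ at the centers $z'_j$ of cubes of side $\nu^{-1}$ to produce $p$-harmonic comparison functions $w_j$; (ii) use Lemma~\ref{lem5.1} together with an $L^1$-gradient-to-sup-norm upgrade to bound $|\bar u - w_j|$ in a strip of height $A\nu^{-\alpha}$ with $\alpha = 1-2/p$; (iii) use the decay of $\hat q$ from Lemma~\ref{lem2.8} to extend the conclusion to \eqref{5.7}--\eqref{5.8}.

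First, partition $\rn{k}$ into the lattice $\{Q_{\nu^{-1}}(z'_j)\}$ and on each cylinder $Q_{\nu^{-1}}(z'_j)\times\rn{n-k}$ set $w_j(x) := f(z'_j,0)\hat q(x) + g(z'_j,0)$. Since an affine transform of a $p$-harmonic function remains $p$-harmonic, $w_j$ is $p$-harmonic on $\rn{n}\setminus\rn{k}$. The hypothesis \eqref{5.5} and the maximum principle (Lemma~\ref{lem2.2}, which gives $\|\hat q\|_\infty \leq M$) yield $|w-w_j| \leq cM^2\nu^{-1}$ throughout the cylinder and $|\bar u - w_j| \leq cM^2\nu^{-1}$ on the trace $Q_{\nu^{-1}}(z'_j)\cap\rn{k}$. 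It therefore suffices to bound $\bar u - w_j$ in sup-norm above each boundary cube, up to this $O(\nu^{-1})$ correction.

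Rescaling $y = \nu x$ makes $q$ a $1$-periodic function, and applying Lemma~\ref{lem5.1} to the pair $(\bar u \mp cM^2\nu^{-1},\, w_j)$ gives, for any $t \leq 1/2$,
\[
\int_{E(t)}\bigl|\nabla(\bar u - w_j \mp cM^2\nu^{-1})^+\bigr|\,dx \;\leq\; c\, t^{(n-k)(p-1)/p}\bigl(\|\bar u\|_* + \|w_j\|_*\bigr)^{\alpha} N^{1-\alpha},
\]
where $N := \max(\bar u - w_j \mp cM^2\nu^{-1})^+$. Wolff's algebraic self-improvement then upgrades this $L^1$-gradient bound to $N \leq C(M) t^{\alpha}$: one integrates the gradient estimate along paths from $\rn{k}$ (where the difference is already $O(\nu^{-1})$), uses the interior Lipschitz control of Lemma~\ref{lem2.5}$(\hat a)$ and the boundary H\"older control of Lemma~\ref{lem2.4} to pass from $L^1$ of the gradient to sup-norm of the difference, and solves for $N$ by absorbing the factor $N^{1-\alpha}$. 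Choosing $t = A\nu^{-\alpha}$ with $A = A(p,n,k,\epsilon,M)$ small forces $N < \epsilon/2$; combined with the frozen-coefficient step this gives $|\bar u - w| < \epsilon$ for $|x''| \leq A\nu^{-\alpha}$, which is \eqref{5.6}.

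For \eqref{5.7} and \eqref{5.8}, assume in addition that $\hat q(y) \to 0$ uniformly as $y \to \infty$. Lemma~\ref{lem2.8} applied to the $\nu^{-1}$-periodic $q$ yields $|\hat q(x)| \leq cM(\nu^{-1}/|x''|)^{\delta}$; at $|x''| = A\nu^{-\alpha}$ this is $O((A\nu^{1-\alpha})^{-\delta})$, which is $<\epsilon$ once $\nu \geq \nu_0(p,n,k,\epsilon,M)$. Hence $|w(x) - g(x',0)| < \epsilon$ at this height, and combining with \eqref{5.6} produces \eqref{5.7}; for $|x''| > A\nu^{-\alpha}$, a final application of the same comparison to the pair $(\bar u, \hat g)$, using the $\hat q$-decay to absorb the $fq$-contribution, gives \eqref{5.8}. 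The main obstacle will be the algebraic self-improvement in the second step: one must verify that the codimension factor $t^{(n-k)(p-1)/p}$ and the Lipschitz gain from Lemma~\ref{lem2.5}$(\hat a)$ balance so that constants remain independent of $\nu$, and that the exponent $\alpha = 1-2/p$ — which sets the threshold $A\nu^{-\alpha}$ — emerges from the absorption. Unlike the $k = n-1$ case where Lemma~\ref{lem2.8} yields exponential decay of $\hat q$, the merely polynomial rate $(\nu^{-1}/|x''|)^{\delta}$ in codimension $\geq 2$ is what forces $\nu_0$ to depend quantitatively on $\epsilon$ and $M$.
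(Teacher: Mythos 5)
Your outer architecture (freeze the coefficients $f,g$, compare $\widehat{fq+g}$ with a $p$-harmonic approximant via Lemma \ref{lem5.1}, then use the decay from Lemma \ref{lem2.8} for \eqref{5.7}--\eqref{5.8}) matches the paper, but the heart of the lemma --- your step (ii) --- is a placeholder, and the specific setup you chose would not deliver it. Freezing $f,g$ at the centers of cubes of side $\nu^{-1}$ and bounding $|\bar u-w_j|$ by $cM^2\nu^{-1}$ only on the trace over that one cube gives no control inside the strip $\{|x''|\le A\nu^{-\al}\}$: there is no local maximum principle for a thin cylinder, and since $A\nu^{-\al}\gg\nu^{-1}$ the values of $\bar u-w_j$ at height $\sim A\nu^{-\al}$ are influenced by boundary data at horizontal distance $\sim\nu^{-\al}$, where the frozen-coefficient error is of order one, not $\nu^{-1}$. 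Moreover the constant in Lemma \ref{lem5.1} depends on the cube size $\ga$ (through the factor $\ga^{-2}$ in \eqref{5.4}), so applying it at scale $\ga\sim\nu^{-1}$ makes your constants blow up with $\nu$. The paper instead argues by contradiction: it assumes $|J(z)|>\ep$ at a single point $z$ with $|z''|=t$, freezes $f,g$ at that $z'$, and works on a cube of fixed side $\ga\sim\ep/(M^2+M)$ determined by the Lipschitz bound \eqref{5.10}, so all constants are independent of $\nu$.

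More importantly, the ``algebraic self-improvement'' upgrading an $L^1$ gradient bound to $N\le C(M)t^{\al}$ is exactly what needs proof, and it cannot be run in the direction you describe. The paper's mechanism is: the $\nu^{-1}$-periodicity of $\hat q$ \eqref{5.11}, the boundary Lipschitz bound \eqref{5.10} and the maximum principle propagate the single large value of $H-K$ to $\approx(\ga\nu)^k$ lattice translates \eqref{5.16}--\eqref{5.17}, so the bad set projects onto a set of $x'$ of measure bounded below independently of $\nu$ and $t$; integrating $|\nabla(\bar u-\bar v)^+|$ along the $x''$-fibers over this set and applying H\"older gives the lower bound $1\lesssim t^{p+k-n}\int_{E(t)}|\nabla(\bar u-\bar v)^+|^p\,dx$ \eqref{5.20}--\eqref{5.22}; combining with the Caccioppoli estimate and the explicit energy bound $\|\widehat{fq+g}\|_{*,p}^{p-2}\lesssim\nu^{\al(p+k-n)}$ \eqref{5.24}, obtained by testing against $[fq+g](x',0)\,\ph(|x''|)$ supported in $|x''|\le 2/\nu$, forces $t\gtrsim\nu^{-\al}$, the desired contradiction. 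You mention rescaling to make $q$ periodic but never use periodicity; without the propagation step a single bad point contributes only $O(t^k)$ worth of fibers, and the resulting inequality does not produce the exponent $\al=1-2/p$ (for $p\ge n$ it gives nothing at all). You also never estimate $\|\bar u\|_*$ or $\|w_j\|_*$; these grow like $\nu^{(p+k-n)/p}$, so the constants are emphatically not independent of $\nu$ --- balancing that growth against the power of $t$ is precisely where the threshold $A\nu^{-\al}$ comes from. Finally, the regime $0<|x''|\le\be\nu^{-1}$ requires the separate boundary H\"older estimate from Lemma \ref{lem2.4} (the paper's \eqref{5.25}), which your outline omits.
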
  
 
 \begin{proof}   
 We note that our proof scheme is similar to   Wolff's  but details are somewhat different.     
 The   first step in the proof  of  \eqref{5.6} is to show for given $  \be  \in (0, 10^{-4}),$  that  \eqref{5.6} holds  for some 
  $ A = A (p, n,  k, \ep, M, \be  ) > 0  $   with   $  \be \nu^{-1} \leq  |x''|  \leq  A \nu^{-\al} $ provided  $ \nu \geq \nu_0 (p,n,k,\ep,M).$  
      To do this let 
\[
 J (x)  = \widehat{f q +  g} (x) -  f (x', 0) \,  \hat q (x ) - g (x',0 ),  \quad \mbox{for} \, \, x \in \mathcal{S}(1). 
\]      
Now suppose that
\begin{align}  
\label{5.9} 
|J (z)|    >  \ep 
\end{align}
 where   $z=(z', z'')$ with  $z'  \in  Q_1 (0)$,   $ | z''| = t, $    and   $  \be \nu^{-1}  \leq t < 1/4. $  
   Let  
   \[
   H =  \widehat{fq+g}\quad \mbox{and}\quad K  = f(z',0) \hat q -  g(z',0). 
   \]
Then  $ H$ and $K$  are both  $p$-harmonic in  $ \rn{n} \sem \rn{k}  $ when  $ 1 \leq k \leq  n - 2$
and in $ \rn{n}_+ $  when $ k = n - 1$  with  continuous  boundary values. 
Also from  \eqref{5.5}  and   $ H,  K  \in  R^{1,p} (\mathcal{S}(1)), $
  we deduce  that
\begin{align}
 \label{5.10} 
 |  H ( x',0) - K (x',0) | \leq  ( M^2 + M ) | x' - z' |   \quad \mbox{for} \, \, x' \in \rn{k}. 
\end{align} 
   From   translation invariance of $p$-harmonic functions  and  the maximum principle for $p$-harmonic functions
    in  Lemma \ref{lem2.2} we see for $x \in \rn{n}$ and  $1\leq j \leq k$ that  
\begin{align}  
\label{5.11}  
 \hat q  ( x  + i e_j /\nu )  = \hat q ( x ) \quad  \mbox{for every integer $i$}.  
\end{align}
From  \eqref{5.11}, \eqref{5.10},  Lemma \ref{lem2.2}, and translation invariance of $p$-harmonic functions it  follows that if  
\[  
\La =  \left\{\sum_{j=1}^k   i_j e_j/\nu, \quad  \mbox{where}\, \, i_j, 1 \leq j \leq k,\, \, \mbox{are integers} \right\} 
\] 
   then for  $ \hat \ze \in \La, $     
\begin{align}
 \label{5.12} 
 \begin{split}
 | H  ( z' + \hat \ze , z'' ) - H  ( z', z'' ) | +  | K   ( z' + \hat \ze , z'' ) &- K ( z', z'' ) |  \\
  &\leq  {\ds\max_{\rn{k}}}\,  | H  ( x' + \hat \ze , 0 ) - H ( x' ,  0 ) |\\
  & \leq   (M^2 + M ) |\hat \ze|. 
  \end{split}
  \end{align}
Next we note  from  Lemma \ref{lem2.3} $(b)$  and $ H, K   \in  R^{1,p} (\mathcal{S}(1)), $   that there exists  $ \rho = \rho( p,n,k,\ep,M) \in (0, 1/2),  $  such that    
if $ x  \in   B (  z +   \hat \ze ,  \rho t )$, then   
\begin{align}
     \label{5.13} 
     | H  ( x )   - H (  z + \hat \ze   ) |    +    | K ( x   )  - K ( z + \hat \ze  ) |<  \ep/1000
\end{align}
 whenever   $\hat \ze  \in \La$.   Let     $ \ga  =  \min (  \frac{\ep}{ 10^3 n ( M^2 + M )}, 1/4). $      
 Then  from  \eqref{5.10}  we observe that  
\begin{align} 
 \label{5.14}  
  |H - K | \leq \ep/100  \quad \mbox{on} \, \,  Q_{2\ga} (z'). 
  \end{align}
 Also without loss of generality  assume that 
\begin{align}
\label{5.15}    J ( z )  =  (H - K) (z) > \ep. 
\end{align}   
   Let $  \ti \La =  \{  \hat \ze \in \La  :  z'  +  \hat \ze  \in Q_{\ga} (z') \}$   and  suppose    $  \ga \geq  10^3  \nu^{-1} .$
    Put    
    \[   
    W ( t )  := Q_{\ga} ( z')  \cap \{  x' :  \min_{\hat \ze \in \ti \La} | x' - (z' +  \hat \ze  )| < \rho t / (10 n^2)  \}. 
    \]    
    Then either   $ W (t) =  Q_{\ga} ( z' )  $ or  
\begin{align}  
\label{5.16}    
Q_{\rho t } ( z' + \hat \ze  ) \cap    Q_{\ga} ( z' ) \not = \es   \mbox{  for   }   \approx   (\ga \nu) ^k \mbox{ points }  \hat \ze  \in \ti \La,  
\end{align}  
where  proportionality constants depend only on $ k.$  From \eqref{5.16} and   $  \be/\nu \leq t \leq 1/4, $   we conclude in either case that    
\begin{align} 
\label{5.17}  
1   \leq  c ( p,n,k, \ep, M, \be) \, \mathcal{H}^k (W(t)).  
\end{align}
 Also  if  $ y'  \in W(t) $  and  $ k \leq n - 2, $   we see from  \eqref{5.12}-\eqref{5.15},  the definition of 
 $ \ga, $  $ \nu^{-1} \leq 10^{-3} \ga ,$ that   there is  a Borel set  $ F ( t, y') \subset  \{ (y', x'')  \in \rn{n} :  | x'' |  =  t   \} $  satisfying  
\begin{align} 
\label{5.18}    t^{ n - k - 1}  \leq    c ( p, n,  k, \ep, M)   \mathcal{H}^{n - k - 1} ( F ( t, y' ) ),  
\end{align}
  and  the  property  that     if  $ ( y', w'')   \in F (t, y'), $ then  $  ( y', w'') \in   B ( z +   \hat \ze  , \rho t ) $ for  some 
  $ \hat \ze  \in \ti \La,$   with
\begin{align}  
\label{5.19}     
\begin{split}
( H  -  K)    (y', w'')      &\geq   \bar   (H - K )  ( z +  \hat \ze  /\nu, z''  )  -  \ep/ 100  \\
& \geq      (H - K ) (z)  - 2 \ep/100  \\
& \geq  98 \ep/100.  
\end{split}
\end{align}
   Let  $ G(t,y')  =  \{ \om \in \mathbb{S}^{n-k-1}  :  t \, \om \in F(t, y') \} $  and $ \bar u = H$,  $\bar v  = K + \ep/4. $        
   From  \eqref{5.14} we see that  $ \bar u  - \bar v  < 0 $  on $ Q_{2\ga} (z'). $   
   Using  this observation,   \eqref{5.18},  \eqref{5.19},  and  continuity of    $ H, K  $ in $ \rn{n} $  
   it follows  that  for some $ \ti c  = \ti c ( p, n, k, \ep, M ) \geq 1,  $         
\begin{align} 
\label{5.20}   
\begin{split}
1    &\leq \ti c \,  \, \left[ \int_{G(t, y') }  (\bar u  - \bar v )^+ ( t \om )   d \mathcal{H}^{n - k - 1} \om \right]^p   \\ 
&\leq   \ti c  \, \,  \left[\int_{G(t,y')}  \int_{0}^t   | \nabla ( \bar u - \bar v )^{+}  ( r \om ) | dr d \mathcal{H}^{n - k - 1} \om  \right]^p. 
\end{split}
\end{align}
Let   $ \hat E (t, y') $ be the  set  of  points consisting of  line segments with one endpoint $y'$ 
 and the other endpoint in  $ G ( t, y').$  Switching to polar coordinates  
 we see   from  \eqref{5.20}  and H\"{o}lder's inequality  applied to   $  |\nabla  (\bar  u  - \bar v )^{+}  (r \om) | r^{ (n -k-1)/p}$ and $r^{( - n +k+1)/p}, $  that 
  \begin{align} 
  \label{5.21}    
  \left[\int_{G(t,y')}  \int_{0}^t   | \nabla ( \bar u - \bar v )^{+} ( r \om ) | dr d \mathcal{H}^{n - k - 1} \om  \right]^p    \leq  c  \, t^{p + k - n } \,  \int_{\hat E (t,y')}  |\nabla ( \bar u -  \bar v )^{+} |^p d \mathcal{H}^{n - k}
 \end{align}
 where $c=c (p,n,k,\ep,M)$. Using  \eqref{5.21}, \eqref{5.20}, \eqref{5.17},  and integrating over $y' \in  W (t) $  we get     
\begin{align}
\label{5.22}  
1    \leq    \bar  c (p,n,k,\ep,M,\be)  \, t^{p + k - n } \,  \int_{E (t)}  |\nabla ( \bar u -  \bar v )^{+} |^p dx  
\end{align}
where $ E (t) $ is as in Lemma \ref{lem5.1}.  Applying  Lemma \ref{lem5.1} and using Lemma \ref{lem2.2},  \eqref{5.5},   we arrive at  
\begin{align}
 \label{5.23}  
 1  \leq    \,  c ( p, n, k, \ep, M, \be)   t^{p + k - n } \,     ( \| H \|_{*,p}   +  \| K\|_{*,p}  \|)^{ p - 2}  
\end{align}
 Let  
 \[
 h ( x )  =  [  f (x',0) q (x', 0 ) + g ( x', 0)  ] \, \ph ( |x''| ),
 \]
 when $ x \in \rn{n}$  where $ \ph \in  C_0^{\infty} ( - 2/\nu, 2/\nu ) $     with  $  \ph  \equiv 1 $ on  $(-1/\nu, 1/\nu)$  
 and  $  |\ph'| \leq  1000 \, \nu$. Then  $ h - H  \in W^{1,p}_0  ( \mathcal {S} (1) ), $  so   
\begin{align}  
\label{5.24}    
 \| H \|^{p - 2 }_{*,p} \leq  \left(\int_{\mathcal {S} (1)}    | \nabla h |^p  dx\right)^{\al}  \leq    c (p,n,k)  (M^2 + M)^k   \nu^{\al (p + k - n)}. 
\end{align} 
 Likewise one gets the same estimate for $ \| K \|^{p-2}_{*,p}$  as for   $ \| H \|^{p-2}_{*,p}$ 
in  \eqref{5.24}.    Using  these  estimates in  \eqref{5.23}  we   conclude  that     $  c ( p,n,k,\ep, M, \be) \,   t   >   \nu^{-\al} $   provided  $ \nu \geq \nu_0  (p,n,k,\ep,M,\be) $.

  To complete the proof of \eqref{5.6} it remains to  fix $  \be = \be (p,n,k,\ep,M) $  and show \eqref{5.6}  holds for 
 $ 0 < t  \leq \be/\nu. $  
  To do this  we apply   \eqref{2.3} of    
Lemma  \ref{lem2.4}    with   $ \hat v = \widehat{qf+g}$,  $\hat  q,  $  and 
with $ \rho =  \be^{1/2} \nu^{-1}$, $\hat \si = 1$,  $M'  =   (M^2 + M)  \nu$,   to get   
for $  |x''|   <  \be \nu^{-1},$       
\begin{align} 
\label{5.25}  
\begin{split}
  | J ( x ) | = | J (x)   -  J ( x', 0)  |  &\leq    c ( M ) \left(  \nu \,  ( \be^{1/2} \nu^{-1}    )    +     ( \frac{ \be \nu^{-1}  }{\be^{1/2}  \nu^{-1}}   )^{\hat \si_1}\right)   \\
  &\leq  c' ( M ) \,   \be^{\hat \si_1/2} .  
\end{split} 
  \end{align} 
 Choosing $ \be = \be (p,n,k,\ep,M ) > 0 $ small enough and then fixing  $ \be $ we obtain   \eqref{5.6}  from \eqref{5.25} for     
$ t  <  \be \nu^{-1}.$       

To prove  \eqref{5.7} we note from \eqref{5.5} and  \eqref{2.9} of  Lemma  \ref{lem2.8} that   
\begin{align} 
\label{5.26}   
| \hat q ( x )  |  \leq    2 M    (   | x''| \nu  )^{- \de}   
\end{align}    
since  $\hat q (x)  \to  0 $ as  $|x''|  \to  \infty$.  Choosing  $ |x''| =  A \nu^{-\al}$  and  $ \nu_0,
 $  still larger if necessary  we get  \eqref{5.7} from  \eqref{5.26}.   To  prove  \eqref{5.8} 
observe  from   \eqref{2.9} of Lemma \ref{lem2.8}  with  $ \hat v = \hat g$ and  $\rho =
 A \nu^{- \al/2}   $  that    
 \begin{align}  
 \label{5.27} | \hat g (x) -  g(x', 0) | < \ep   \quad \mbox{when} \,\, |x''| = A \nu^{-\al}
 \end{align}
  for $ \nu_0 = \nu_0 ( p,n,k,\ep, M )$ large enough. Now \eqref{5.8}  follows from \eqref{5.27}  and  Lemma \ref{lem2.2}.   This finishes the  proof of Lemma \ref{lem5.2} when $ 1 \leq k \leq n - 2$ and $ p  > n - k.$   The same conclusion holds if $ k = n - 1. $  However in this case the proof is somewhat  simpler since 
$G(t, y') $ is a point  and $  F (t, y') $ is  a line segment so  one can write a version of   \eqref{5.21}  with a single integral. 
 \end{proof} 

\subsection{Lemmas   on  Gap Series}  
Throughout the rest of this section we write $ dx' $  for  $ d \mathcal{H}^k x' $  when $x' \in \rn{k}. $      The  examples  in  Theorem \ref{thmC} will be  constructed when  either $(i)$ $ 1 \leq k  \leq n  - 2$ and $p > n - k, $  or $ (ii)$ $ k = n -1$ and $p > 2, $  using   Theorem    \ref{thmB}, 
as   the  uniform limit on compact subsets of  $  \rn{n} \sem \rn{k}  $ or $ \rn{n}_+ $   of a sequence of  $p$-harmonic  functions
    whose boundary values are  partial sums     of       $ a_j  L_j (x')  \Psi ( T_j x', 0), \,  $  where $ \Psi $ is as  in  Theorem \ref{thmB}, $ (L_j)$ is to be defined,  
    and  $ (T_j) $  is a sequence of positive integers satisfying
\begin{align}
\label{5.28} 
\begin{split}
  & T_1 = 1, \\
  & T_{j+1}\, \, \mbox{is an integer multiple of $T_j$ and}\, \,   T_{j+1}    \geq   4 T_j   [ \log ( 2 + T_j ) ]^3 \quad \mbox{for}\, \, j = 1,2,\ldots. 
    \end{split}
\end{align}    
 Lemma   \ref{lem5.2}  will be used  to make estimates on  this sequence. 
 Throughout the rest of this section we  also assume, as we may,  that if $ \Psi $  is as  in  Theorem \ref{thmB}, then  
 \begin{align}
 \label{5.28a}   \| \Psi (x', 0) \|_\infty +  \|  \Psi (x', 0) \breve \|  \leq  1/2 .
\end{align}
          Also, let    $   \bar b   =  \int_{Q_{1/2}  (1)}  \Psi (x', 0)  dx' $  and  let  $(a_j) $ be a sequence of real numbers with 
 \begin{align}  
 \label{5.29}  
 \hat \chi^2 := \sum_{j=1}^{\infty}  a_j^2    <  \infty. 
 \end{align}
     
  \begin{lemma} 
\label{lem5.3} 
Let  $ \psi (x') = \Psi (x',0) - \bar b$  for $x' \in \rn{k}  $  and put  $ \psi_j (x')  =  \psi ( T_j x')$  for $j = 1, 2,  \ldots. $   
If        
  \[ 
  s^* (  x' ) : =  \sup\limits_{l}   \left|   \sum_{ j = 1}^l   a_j  \psi_j ( x' )  \right|, \quad \mbox{for}\, \, x' \in \rn{k}, 
  \]
 then  for $ \la > 0, $     
\begin{align}  
\label{5.30}  
\la^2 \,   \mathcal{H}^k (  \{ x' \in Q_{1/2} (0) :  s^* ( x'  ) >  \la  \} )    \leq  c \,   \,   \hat \chi^2   
\end{align}
  where $ c = c(k)  \geq 1.  $     Consequently,    
\begin{align}
 \label{5.31}  
 s ( x' )  : = 	   \lim_{l  \to  \infty}  \sum_{j=1}^l     
	  a_j   \psi_j ( x' )  \mbox{ exists for $ \mathcal{ H}^k $   almost every }   x'  \in   \rn{k}.  
\end{align}
\end{lemma}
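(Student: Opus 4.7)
The strategy is classical for (almost-)orthogonal series: first prove quasi-orthogonality of $\{\psi_j\}$ in $L^2(Q_{1/2}(0))$, deduce a uniform $L^2$ bound on partial sums $S_N=\sum_{j=1}^N a_j\psi_j$, then upgrade this to the Kolmogorov-type weak-$(2,2)$ maximal inequality \eqref{5.30}, and finally extract \eqref{5.31} by Borel--Cantelli applied to tail sums.

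\textbf{Quasi-orthogonality and $L^2$ bound.} The function $\psi$ is $1$-periodic in each of the first $k$ coordinates, mean zero over a period, bounded, and Lipschitz with constants controlled by \eqref{5.28a} and \eqref{1.8}(b). For $j<l$ write $T_l=MT_j$ with $M$ a positive integer (possible by \eqref{5.28}). The change of variables $y=T_jx'$ together with periodicity reduces $\int_{Q_{1/2}(0)}\psi_j\psi_l\,dx'$ to an integral of $\psi(y)\psi(My)$ over a cube that decomposes into sub-cubes of side $1/M$ on each of which $\psi(M\cdot)$ has mean zero. Using the Lipschitz oscillation $\lesssim\sqrt{k}/M$ of $\psi$ on each sub-cube yields
\[
\Big|\int_{Q_{1/2}(0)}\psi_j\psi_l\,dx'\Big| \leq c(k)\,\frac{T_j}{T_l}.
\]
Expanding $\|S_N\|_2^2$, bounding cross terms via $|a_ja_l|\leq (a_j^2+a_l^2)/2$, and using $T_j/T_l \leq 4^{-(l-j)}$ (from \eqref{5.28}) gives $\|S_N\|_{L^2(Q_{1/2}(0))}^2 \leq c(k)\hat\chi^2$ uniformly in $N$; the same argument applied to $S_m-S_l$ shows $(S_l)$ is $L^2$-Cauchy.

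\textbf{Maximal inequality.} To prove \eqref{5.30} I would use the Kolmogorov stopping-time trick. Fix $N$; let $\tau(x')=\min\{l\leq N:|S_l(x')|>\lambda\}$ and $E_l=\{\tau=l\}$, so these sets are disjoint and their union equals $\{\max_{l\leq N}|S_l|>\lambda\}$. The definition gives
\[
\lambda^2\mathcal{H}^k(E_l)\leq \int_{E_l}|S_l|^2\,dx' \leq 2\int_{E_l}|S_N|^2\,dx'+2\int_{E_l}|S_N-S_l|^2\,dx'.
\]
Summed over $l$ the first term is controlled by $\|S_N\|_2^2\leq c(k)\hat\chi^2$. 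The hard term is the second: I would expand $|S_N-S_l|^2$ as a double sum in indices $m,m'>l$ and exploit that $E_l\in\sigma(S_1,\ldots,S_l)$ is approximately measurable at scale $1/T_l$, whereas the factors $\psi_m,\psi_{m'}$ oscillate at much finer scales. An approximation of $\mathbf{1}_{E_l}$ by its conditional expectation on cubes of side $1/T_l$ (with boundary error controlled by the Lipschitz norm $\|\nabla S_l\|_\infty \lesssim \hat\chi\,T_l$) together with the periodicity/Lipschitz computation of Step~1 produces $\int_{E_l}\psi_m\psi_{m'}dx' \lesssim T_l/\min(T_m,T_{m'})\cdot\mathcal H^k(E_l) + \text{boundary errors}$. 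The very strong lacunarity $T_{l+1}\geq 4T_l[\log(2+T_l)]^3$ in \eqref{5.28} is introduced precisely so that these errors telescope to $\leq c(k)\hat\chi^2$; letting $N\to\infty$ yields \eqref{5.30}.

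\textbf{A.e.\ convergence and main obstacle.} From \eqref{5.30} applied to the tail series $\sum_{j>l_m}a_j\psi_j$, choose integers $l_m$ with $\sum_{j>l_m}a_j^2<8^{-m}$; then $\mathcal H^k\{\sup_{l>l_m}|S_l-S_{l_m}|>2^{-m}\}\leq c\cdot 4^m\cdot 8^{-m}=c\cdot 2^{-m}$, which is summable, so Borel--Cantelli gives $\sup_{l>l_m}|S_l-S_{l_m}|\to 0$ for $\mathcal H^k$-a.e.\ $x'\in Q_{1/2}(0)$, whence $(S_l(x'))$ is Cauchy a.e. and \eqref{5.31} follows. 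The main obstacle is the maximal inequality in Step~3: converting quasi-orthogonality into the Kolmogorov-type bound requires rigorous handling of the fact that $E_l$ is only \emph{approximately} measurable at scale $1/T_l$, and the $[\log(2+T_l)]^3$ cushion in \eqref{5.28} is the critical resource for absorbing the resulting error terms in the telescoping sum.
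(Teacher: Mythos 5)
Your Steps 1, 2 and 4 (quasi-orthogonality $|\int_{Q_{1/2}(0)}\psi_m\psi_j\,dx'|\leq c(k)T_m/T_j$, the uniform $L^2$ bound via \eqref{5.28}, and the deduction of \eqref{5.31} from \eqref{5.30} applied to tails) are correct and essentially the same as the paper's. The genuine gap is in Step 3, the maximal inequality, which is the heart of the lemma and which you explicitly leave unresolved. Your pointwise Kolmogorov stopping sets $E_l=\{\tau=l\}$ are level-type sets of $S_l$, a function oscillating at frequency $\sim T_l$; they are not unions of cubes at any scale, and the quantity you need, namely the measure of a $1/T_l$-neighborhood of $\partial E_l$, is \emph{not} controlled by the Lipschitz bound $\|\nabla S_l\|_\infty\lesssim \hat\chi T_l$: $S_l$ can cross the level $\lambda$ of order $T_l$ times per unit length, so the neighborhood of $\partial E_l$ can have measure of order $1$, independent of $\mathcal{H}^k(E_l)$ and of $\hat\chi$. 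Consequently the "boundary errors" in your bound for $\int_{E_l}\psi_m\psi_{m'}\,dx'$ do not telescope to $c(k)\hat\chi^2$; moreover the $[\log(2+T_l)]^3$ cushion in \eqref{5.28} is not available for this purpose — in the paper it is spent on the bound $T_m/T_j\leq (j-1)^{-3}4^{m-j}$ (i.e.\ on summing the off-diagonal factors in \eqref{5.36} and on the oscillation estimate \eqref{5.37}), not on absorbing geometric errors of stopping sets.

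The paper's proof closes exactly this hole by running the stopping time over the grids $G_l$ of cubes of side $1/T_l$ rather than pointwise: the family $K_i$ consists of the maximal cubes $Q'\in G_l$ on which $|s_l|$ first exceeds the threshold $8k^{1/2}i\hat\chi$. Because each such $Q'$ is ($\mathcal{H}^k$-a.e.) an exact union of cubes of $G_j$ for every $j>l$, the later blocks $\psi_j$ have exact mean zero on $Q'$, so the cross term $\int_{Q'}(s_q-s_l)s_l\,dx'$ is estimated by the same clean quasi-orthogonality bound as in \eqref{5.34}, with no boundary error at all; and the Lipschitz estimate \eqref{5.37} guarantees $|s_l|\approx i\hat\chi$ on the whole stopped cube, so the lower bound $\int_{Q'}s_l^2\,dx'\gtrsim i^2\hat\chi^2\mathcal{H}^k(Q')$ holds cube by cube. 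Summing gives \eqref{5.39} and hence \eqref{5.30} at the discrete thresholds $\lambda=8k^{1/2}i\hat\chi$, from which the general case follows trivially. If you replace your pointwise sets $E_l$ by this cube-level stopping family, the rest of your outline goes through; as written, the key step is not established.
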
   
	   \begin{proof}   
	   For  $ m = 1, 2, \ldots, $ let  $ G_m  $    denote the set of all open  `cubes` $ Q $  in $ \rn{k} $ 
	   with side length  $r_m =  1/ T_m $ and  center  at 	   $ r_m \tau  = (r_m \tau_1, r_m \tau_2, \ldots r_m \tau_k ) $  
	   where $  \tau_i, 1 \leq i \leq k, $  are integers.  If   $     Q  \in   G_m  $  and  $ j > m $ then from  \eqref{5.28}     
	   we see that $ \mathcal{H}^k $ almost everywhere  
\begin{align}
 \label{5.32} 
 Q =  \bigcup \{ Q' \in  G_j :  Q' \subset Q \}.  
 \end{align}  
	  Also from \eqref{5.28},   and the definition  of    $  (\psi_j ),  $ we  see that if $ m < j $ and $ Q'  \in G_j,  $ then   
	  \[
	  \int_{Q'} \psi_j dx' = 0.
	  \]
Therefore, if $ y' \in Q',$ then
\begin{align}  
\begin{split}  
\label{5.33} 
\left| \int_{Q'} \psi_m \psi_j dx' \right|  &=  \left| \int_{Q'} ( \psi_m - \psi_m (y') ) \psi_j dx'  \right| \\ 
&\leq  k^{1/2}  \,  (T_m/T_j)  \, \mathcal{H}^k (Q') . 
\end{split}
\end{align}
Summing over  $ Q'  \subset Q, $  and then  over  $ Q \in G_m $  with  $  Q \subset Q_{1/2}(0), $   it follows from  \eqref{5.33}  that    
\begin{align}
 \label{5.34}  
 |\int_{Q_{1/2} (0)} \psi_m \psi_j dx' | \leq  k^{1/2} \, T_m/T_j. 
\end{align} 
   Now from \eqref{5.28}   we observe  that  
\begin{align}
 \label{5.35}   
 T_m/T_j    \leq  ( j - 1)^{-3}  4^{m - j} \quad \mbox{for $ m = 1, 2, \ldots $ and $ j = m + 1 , \ldots $. } 
 \end{align}
	 Let 
	 \[  s_l (x')   =     \sum_{j=1}^l  a_j  \psi_j (x') ,   \, \mbox{ for } l = 1, 2,  \ldots, \mbox{ and }    x'  \in \rn{k}. 
	 \]  In view  of  \eqref{5.35}, \eqref{5.34}   we  get  
\begin{align}	 
	  \label{5.36}
\begin{split}	  
	  \sup_{l }  \int_{Q_{1/2} (0)}  s_l^2  dx'   &	      	 	      =  \sup_{l}    \sum_{m, j = 1}^l    a_m a_j \int_{ Q_{1/2} (0) } \psi_j  \psi_m dx'  \\ 
	      	 	        & \leq \hat \chi^2   +   2 \sum_{m=1}^{\infty}   |a_m| \, \left( \sum_{j = m+1}^l     | a_{j}| \left| \int_{Q_{1/2} (0) } \psi_j \psi_m dx' \right|\right) \\
		      	 	      & \leq   \hat \chi^2  \left[ 1   +  2  \sum_{m=1}^{\infty}  \, \left( \sum_{j = m+1}^{\infty} k^{1/2}  (j-1)^{-3} 4^{m-j} \right) \right]  \\
		      	 	      & \leq   3  k^{1/2}   \hat \chi^2.
\end{split}
\end{align}		      	 	      
	Using \eqref{5.36} with $ s_l $ replaced by  $  s_l    - s_i,  i  < l,  $  and  letting $l, i \to  \infty  $ through certain sequences we  see from   \eqref{5.29}   that   $ \lim_{j \to  \infty}  s_j  =   s  $  in   the $ \| \cdot \|_{2} $ norm of   $ Q_{1/2} (0). $ 
	Moreover  \eqref{5.36} is valid with $ s_l $ replaced by $  s. $   
	
	Next  let $q$ be a large positive integer  and   set             
	\[
	 \breve s (x')  :=   \sup_{1\leq l  \leq q }   |s_l (x')|  \quad \mbox{for} \, \, x' \in Q_{1/2} (0). 
	 \]
	We note from Cauchy's  inequality that     
		$   s_l^2  \leq   \hat \chi^2  \,  l. $    Thus  if  $ i $ is a positive integer and  $   s_l (x')   >    i \hat  \chi ,   $    then $  i^2 <  l. $  
	 With $ i $ fixed, $  i  $  a positive integer, 
	let    	 $  K_i $   be   those cubes  $ Q' \in  \bigcup_{j=1}^{q} G_j $   with $ Q' \in  G_l,   $    if   and only if    
	$  l $ is  the smallest  positive integer  satisfying  $ |s_l (y')| > 8  k^{1/2}  i  \hat \chi $ for  some $ y'  \in Q' .$ Clearly the cubes in $ K_i $ are disjoint.   
	Note   that   if $ Q' \in K_i, $ then $\{ x' : \breve s (x') >   8  k^{1/2} i \hat \chi \} \cap Q'  \not = \es  $  and $ ( \cup_{m=1}^{64} G_m ) \cap K_i  = \es. $     
	  	Also  if   $  Q'  \in K_i \cap  G_l $,  it   follows from     \eqref{5.28},   \eqref{5.28a},  as in the last line of  \eqref{5.36},   that for $ x' \in Q',   $  
\begin{align}
\label{5.37} 
\begin{split}
(8  k^{1/2}i + 1)    \hat \chi     &\geq  |s_l (x')|   \geq    8 k^{1/2}  \hat \chi i   -  |s_{l-1}  (x') -  s_{l-1} (y')|  -    k^{1/2}  \hat \chi   \\
&	\geq    7 k^{1/2}   \hat \chi i   -  k^{1/2}   \hat  \chi  \sum_{j=1}^{l-1}  T_j /T_l  \\
& \geq  6  k^{1/2}   \hat \chi i.  
\end{split}
\end{align}
Next  using  \eqref{5.34},  \eqref{5.35},  we obtain      
\begin{align} 
  \label{5.38}     
\begin{split}  
   \int_{Q_{1/2} (0)}  s_q^2   dx'   & \geq  \sum_{l=1}^{q}  \sum_{Q' \in K_i \cap G_l }  \int_{ Q' \cap Q_{1/2} (0) }  s_q^2   dx'   \\
&	 \geq \sum_{l=1}^q   \sum_{ Q' \in K_i \cap G_l }  \int_{Q'  \cap Q_{1/2} (0) } [ s_l^2 + 2 ( s_q - s_l ) s_l ]  dx'  \\ 
	 & =         \sum_{l=1}^{q}  \sum_{Q' \in K_i \cap G_l} \left(   \int_{Q' \cap Q_{1/2} (0) } s_l^2 dx'  -   	  \sum_{j=l+1}^q   \sum_{m=1}^l 2 a_j a_m    \int_{ Q' \cap Q_{1/2} (0) }  \psi_j        \psi_m   dx' \right)  \\ 
&     \geq   \sum_{l=1}^q   \sum_{ Q' \in K_i \cap G_l }  \int_{Q'  \cap Q_{1/2} (0) }  s_l^2  dx'  -   4k^{1/2}   \hat \chi^2    \sum_{ Q' \in K_i  } \mathcal{H}^k (Q' \cap Q_{1/2} (0)) 
\end{split}
\end{align}
 where the last inequality  is  proved once again  using the same argument as in  the last line of  \eqref{5.36}.   From \eqref{5.36}, \eqref{5.37},  and \eqref{5.38}  we conclude that   
\begin{align} 
\label{5.39}  
\begin{split}
64  k   \hat \chi^2  i^2  \mathcal{H}^k  (  \{ x' \in Q_{1/2} (0) :   \breve s ( x')  >  8 k^{1/2}  \hat \chi i      \})  &\leq  64  k   \hat \chi^2  i^2  \,  {\ds   \sum_{Q' \in K_i}} \mathcal{H}^k ( \bar Q' \cap Q_{1/2} (0) ) \\
& \leq   2    \sum_{l=1}^q   \sum_{ Q' \in K_i \cap G_l }  \int_{Q'  \cap Q_{1/2} (0) }  s_l^2  dx'   \\
& \leq 2  \int_{Q_{1/2} (0)}   s_q^2  dx'   +   8 k^{1/2}    \hat \chi^2   \\
& \leq  14 k^{1/2}   \hat \chi^2.  
\end{split}
\end{align}
   Letting  $  q \to  \infty $ in \eqref{5.39}  and  using the  definition of  $ \sup $    we  find   after some elementary algebra  that  \eqref{5.30} 
	     is true.   To prove  \eqref{5.31} we can now use   \eqref{5.30}  and a standard argument.   Indeed  
	     observe  that  if $ r > 0,  $   then  
\[     
V   =   \{  x'   :   \limsup_{l\to  \infty}  s_l (x')  -    \liminf_{l \to  \infty} s_l (x') > r  \}  \subset       \{x' : s^* (x') >   r/2\}.    
\]    
Also  $  V $     is  unchanged  if   we  put any finite number of 
	     $ a_j  = 0. $  Using  these observations  and  \eqref{5.30}  we get  $ \mathcal{H}^k ( V ) = 0. $ Since $ r > 0 $ is arbitrary we  conclude from our earlier work that 
	     $  s_l (x')  \to  s(x')  $  as $ l \to  \infty $    for   $ \mathcal{H}^k $ almost every $ x'  \in \rn{k}. $  
 \end{proof}    
\subsection{Construction of Examples}  
 Let  $(T_j) $ be as in \eqref{5.28}, $ (a_j), \hat \chi  $ as in  \eqref{5.29}   and  $ \bar b, \psi,  (\psi_j),  (s_j),s, s^*,  $ as in  Lemma  \ref{lem5.3}.   
Let   $  \ti \psi_j  =  \psi_j  + \bar b$ for $j = 1, 2, \ldots$. 
 For our first example  we choose   $ (a_j)_1^{\infty}$   in addition to  \eqref{5.29}  so that if 
 \[
 d_m = \sum_{j=1}^m a_j,  \quad \mbox{for} \, \, m = 1, 2, \ldots,
 \]
 then
\begin{align} 
\label{5.40}   
|d_m| \leq C < \infty \quad \mbox{and} \quad {\ds \lim_{m\to  \infty}}  d_m \, \,  \mbox{does not exist}.  
\end{align}
   Also put 
\begin{align} 
\label{5.40a}   
\ti s_l = \sum_{j=1}^l  a_j \ti \psi_j =    s_l  +  \bar b  \sum_{j=1}^l a_j  \quad \mbox{when} \, \, x'  \in \rn{k}. 
\end{align}
 From \eqref{5.31}, \eqref{5.40},  and   $ \bar b \neq 0, $ (thanks to Theorem \ref{thmB}),  we see that  
\begin{align}
 \label{5.41}  
 \sum_{j=1}^{\infty} a_j  \ti  \psi_j \mbox{ diverges $ \mathcal{H}^k $  almost everywhere. } 
 \end{align} 
Also  we construct  a sequence of  functions  $ (L_j) $   satisfying     
\begin{align} 
\label{5.42}  
\begin{split}
& (a) \hs{.2in}  L_1 \equiv 1 \quad \mbox{and}\quad  L_j ( x'  + e_l ) = L_j (x')  \quad \mbox{for $ x' \in \rn{k}$},\, \, 1\leq l \leq k, \, \, \mbox{and}\,\,  j  = 1, 2, \ldots \\
& (b) \hs{.2in}  1/ 2  \leq  L_{j+1}/ L_j  \leq 1 \quad \mbox{and}\quad  \| L_{j+1} \breve \| \leq  c^* (k)  T_{j}, \quad  \mbox{for} \, \, j = 1, 2, \ldots.
\end{split}
\end{align}

Moreover we shall make the construction so that 
\begin{lemma} 
\label{lem5.4} 
If    $ \si_m = {\ds  \sum_{j=1}^m   a_j L_j \ti \psi_j ,  \,  m = 1, 2, \ldots,} $    then for some $ 0   <  C' = C' (  k, \bar b,  \hat \chi ) < \infty, $  we have 
 \begin{align}
 \label{5.43} 
  \sup_m |   \si_m  (x' ) |  <  C'  \mbox{ for all } x' \in \rn{k} \quad \mbox{and} \quad  \sum_{j=1}^{\infty} a_j L_j  \ti \psi_j \mbox{ diverges $ \mathcal{H}^k $ almost everywhere.} 
\end{align}  
    \end{lemma}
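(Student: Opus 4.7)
The plan is to construct the multipliers $L_j$ inductively as smooth mollified cutoffs at scale $1/T_j$ that damp the oscillatory partial sums whenever they exceed a fixed threshold, while preserving divergence of the mean part almost everywhere. Decompose
\[
\sigma_m = A_m + \bar b\, M_m, \qquad A_m := \sum_{j=1}^m a_j L_j \psi_j, \qquad M_m := \sum_{j=1}^m a_j L_j,
\]
separating the oscillatory contributions from the scalar mean part governed by $d_m$. Since the construction will ensure $L_{j+1} \le L_j$ pointwise, Abel summation yields
\[
M_m(x') = d_m L_m(x') + \sum_{j=1}^{m-1} d_j\,(L_j - L_{j+1})(x'),
\]
whose telescopic sum is bounded by $\sup_j |d_j| \cdot L_1 \le C$, giving $|M_m| \le 2C$ uniformly. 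Moreover this telescopic sum converges absolutely as $m \to \infty$, so divergence of $M_m$ at $x'$ is equivalent to divergence of $d_m L_m(x')$, which holds precisely when $L_\infty(x') := \lim_m L_m(x') > 0$ (since $(d_m)$ is bounded with no limit).

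For the construction, set $L_1 \equiv 1$. Having built $L_1,\dots,L_j$, form $A_j$ and let $E_j = \{x' \in \rn{k} : |A_j(x')| > K\}$ for a large constant $K = K(k, \bar b, \hat\chi)$ chosen below. Let $F_j : \rn{k} \to [0,1]$ be a $Q_1(0)$-periodic smooth mollification of $\mathbf{1}_{E_j}$ at scale $1/T_j$, equal to $1$ on $E_j$ and with Lipschitz constant at most $c(k) T_j$. Put $L_{j+1} := L_j(1 - \tfrac12 F_j)$. Then $1/2 \le L_{j+1}/L_j \le 1$, and the periodicity in \eqref{5.42}(a) is preserved. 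The Lipschitz bound in \eqref{5.42}(b) follows by induction, using $\|L_j\|_\infty \le 1$ together with the gap inequality $T_j \ge 4 T_{j-1}[\log(2+T_{j-1})]^3$ to absorb the contribution from $L_j$.

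For the uniform bound on $A_m$, whenever $|A_j(x')| > K$ the construction forces $L_{j+1}(x') = L_j(x')/2$, so on an excursion above $K$ beginning at step $j_0$ the multiplier satisfies $L_{j_0+i}(x') \le 2^{-i} L_{j_0}(x')$. A Cauchy--Schwarz estimate then gives
\[
|A_{j_0+i}(x') - A_{j_0}(x')| \le \|\psi\|_\infty L_{j_0}(x') \sum_{r=1}^{i} |a_{j_0+r}|\, 2^{-(r-1)} \le 2 \|\psi\|_\infty\, \hat\chi,
\]
so each excursion overshoots $K$ by at most $2\|\psi\|_\infty \hat\chi$, giving $|A_m(x')| \le K + 2\|\psi\|_\infty \hat\chi$ uniformly. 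Combined with the bound on $M_m$ this yields $\sup_m |\sigma_m(x')| \le C'(k, \bar b, \hat\chi)$.

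The remaining task of divergence a.e.\ reduces by the above to showing $\mathcal{H}^k(\{x' \in Q_{1/2}(0) : L_\infty(x') = 0\}) = 0$, since this set contains all points at which $\sigma_m$ could converge. It lies in $\limsup_j \mathrm{supp}(F_j)$. The $L^2$-bound $\int_{Q_{1/2}(0)} |A_j|^2 \, dx' \le c \hat\chi^2$ from the proof of Lemma \ref{lem5.3} (which carries over to the modulated series because $L_j$ varies at a scale much coarser than the period $1/T_j$ of $\psi_j$) yields $\mathcal{H}^k(E_j) \le c\hat\chi^2/K^2$, small but uniform in $j$. The main obstacle is that this bound is not summable, so direct Borel--Cantelli does not apply. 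The argument must exploit the rapid gap growth $T_{j+1}/T_j \ge 4[\log(2+T_j)]^3$ and the near $L^2$-orthogonality of distinct $\psi_j$'s used in Lemma \ref{lem5.3} to extract a summable bound on the measure of accumulated excursions, possibly after refining the cutting rule so that cuts at $x'$ occur only along sparse subsequences of times. Controlling this accumulation of cuts is the technical heart of the construction.
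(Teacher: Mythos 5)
Your decomposition $\sigma_m=A_m+\bar b\,M_m$, the Abel summation bound for $M_m$, and the mollified-cutoff mechanism for the uniform bound are sound and close in spirit to the paper's. The problem is that the second half of the lemma --- divergence $\mathcal{H}^k$-a.e. --- is precisely the part you leave open, and with your cutting rule it is not clear it can be closed at all. You cut whenever the \emph{modulated} sum $A_j$ exceeds a single fixed threshold $K$, and all your Chebyshev estimate gives is $\mathcal{H}^k(E_j\cap Q_{1/2}(0))\le c\hat\chi^2/K^2$ uniformly in $j$; nothing in the construction prevents these excursion sets from recurring over a fixed set of positive measure, in which case the set where $L_j$ is halved infinitely often (hence $L_\infty=0$, hence $M_m$ converges and, after one more Abel summation using Lemma \ref{lem5.3}, $\sigma_m$ converges) has positive measure and the lemma fails for your $(L_j)$. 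So the ``technical heart'' you defer is the whole difficulty, not a refinement, and a Borel--Cantelli argument over the time index $j$ cannot supply it.

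The paper escapes this with a structurally different cutting rule. The cuts are keyed to the \emph{undamped} partial sums $s_l$ of $\sum a_j\psi_j$ crossing an increasing ladder of thresholds $8k^{1/2}i\hat\chi$, $i=1,2,\dots$, and are performed on cubes of the grids $G_l$: a cube enters $K_i$ at the first scale $l$ at which $|s_l|$ exceeds level $i$ somewhere on it, and $L_{l+1}=\tfrac12L_l$ there (definition above \eqref{5.37}, together with \eqref{5.44}--\eqref{5.45}). The weak-type maximal estimate \eqref{5.30} of Lemma \ref{lem5.3}, in the form \eqref{5.39}, bounds the total measure of the (disjoint) cubes cut at level $i$ by $c\,i^{-2}$, which \emph{is} summable --- over the threshold index $i$, not over time --- so a.e.\ point lies in only finitely many enlarged cut cubes \eqref{5.49}; thus $L_j(x')$ is eventually constant a.e.\ and divergence transfers from \eqref{5.41}. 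The same ladder also yields boundedness: reaching level $i$ forces a number of prior halvings growing linearly in $i$, so $L_{j+1}^{1/2}\tilde s_j$ is uniformly bounded \eqref{5.47}, and summation by parts with the telescoping sum $\sum_l(L_l^{1/2}-L_{l+1}^{1/2})\le 1$ gives \eqref{5.48}. The fix for your gap, then, is to replace the single threshold on $A_j$ by this multi-level rule on $s_l$, so that the maximal-function estimate rather than a per-time Chebyshev bound controls the accumulation of cuts. Two smaller points: your reduction ``$\sigma_m$ can converge only where $L_\infty=0$'' needs a.e.\ convergence of $A_m$, which requires Lemma \ref{lem5.3} plus one more Abel summation that you should make explicit; and your claim that the $L^2$ bound carries over to the modulated sums because $L_j$ varies coarsely would itself need a near-orthogonality argument in the style of \eqref{5.33}--\eqref{5.36}.
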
 \begin{proof} 
  To construct  $ (L_j)$  we proceed by induction:  $ L_1 \equiv 1 $ and  if   $ L_j $   
  has been defined  so that  \eqref{5.42} is true for $ j $  let  $ K_i $ be as  defined  above   \eqref{5.37} with $ q  =  \infty $  and $ G_j $   
as defined  after   \eqref{5.31} with $ m = j. $ 
   If   $ Q' \in G_{j} \cap (\bigcup_i  K_i) , $  put   $ L_{j+1} \equiv  (1/2) L_j $  on   $ \bar Q' . $   Let   
  \begin{align*}
  \breve E  &=   \bigcup_{i} \{ \bar Q' :  Q'  \in K_i \cap G_{j} \},  \\   
  F_1 &=  \{ x' \in \rn{k} :  d (x', \breve E) \geq  \frac{1}{4T_j} \}, \\ 
   F_2 &= \{ x' \in \rn{k} :  d (x', \breve E) \geq  \frac{1}{8T_j} \}.
\end{align*}      
   If   $ \breve E  \not =  \es, $  let  $ 0 \leq  \he \in  C_0^{\infty} ( Q_{1/2} (0) ) $ with  $  | \nabla \he |  \leq c'(k) $   
   and  $\int_{Q_{1/2} (0) }  \he (y' )  dy' \equiv 1.$   Let $  \chi_{F_2} $  denote the characteristic function of $F_2$ and let $\ep =   \frac{1}{16 k^{1/2} T_j}$. Set  
   \[ 
   \ze_j (x' )  =   \ep^{-k} \int_{\rn{k}}  \he (  \ts \frac{x' - y'}{\ep} )   \,  \chi_{F_2} (y' )  dy'  \quad \mbox{when}\, \, x'  \in \rn{k} .
   \]       
   Then one easily verifies that     
\begin{align}
\label{5.44}  
 \ze_j \equiv 0 \quad \mbox{on}\,   \breve  E, \quad \ze_j  \equiv 1  \quad  \mbox{on} \, F_1,  \quad  0 \leq \ze_j  \leq 1, \quad \| \ze_j \breve \|  \leq  c' (k) T_j.  
\end{align}
Put    
\begin{align}
\label{5.45}  
L_{j+1} = (1/2) (  \ze_j +  1)  L_j .  
\end{align}
 If  $   \breve E  = \rn{k} $,  let $ L_{j+1} = L_j . $    From   \eqref{5.44}, \eqref{5.45},  the induction hypothesis, and the definition of  $ (T_j) $  in  \eqref{5.28}   we see that  
\begin{align}   
\label{5.46}   
| \nabla  L_{j+1} (x') | \leq (1/2)  c'(k)  T_j  + c^* (k) T_{j-1}  \leq  c^{*} (k) T_j  \quad \mbox{for}\, \, c^* (k) =  c'(k).
\end{align}
 The rest of the induction hypothesis is also easily checked  using \eqref{5.44},  \eqref{5.45}.  
 Thus by induction we have defined   $ (L_j )_1^{\infty}$  satisfying   \eqref{5.42}.

  To begin the proof   of  Lemma \ref{lem5.4}  we  note that    if  $ x' \in  Q'  \in K_i \cap G_j $ 
  then $ |L^{1/2}_{j+1}\,  \ti s_j| $  is   uniformly bounded.   
   Indeed  from \eqref{5.28a},  \eqref{5.29},  we have  $ | s_{l+1} -  s_l | \leq    \hat \chi $ 
   so  there exist indices  $  l_1 < l_2 < ... < l_{8i}  \leq j $  with   $  L_{l_m +1} (x')  = (1/2) L_{l_m} (x'). $  Thus  
\begin{align} 
\label{5.47}  
| L^{1/2}_{j+1} (x')  \ti s_{j} (x') |  \leq     2^{- 4 i} \, [ (8i k^{1/2} +1)  \hat \chi  + C  |\bar b| ] = \ti C. 
\end{align}
 
To prove  \eqref{5.43}  we use  \eqref{5.47}   and  following Wolff (see proof of Lemma 2.12 in \cite{W}) integrate by parts to obtain 
\begin{align}  
\label{5.48}  
\begin{split}
| \si_m (x')  |   & \leq {\ds  |  \sum_{l=1}^m  (  L_{l}  (x') - L_{l+1} (x') )   \ti s_l (x') |  +  c(k)  ( | \bar b | C  +    \hat \chi) }    \\   
& \leq  {\ds    \sum_{l=1}^{\infty}  (  L_{l}  (x')^{1/2}  - L_{l+1} (x')^{1/2}) (  L_{l}  (x')^{1/2}  + L_{l+1} (x')^{1/2} ) | \ti s_l (x') |  + c(k)  ( | \bar b | C  +   \hat \chi) } \\
 &  \leq   (\sqrt{2} + 1)  \ti C  {\ds    \sum_{l=1}^{\infty}  (  L_{l}  (x')^{1/2} - L_{l+1} (x')^{1/2} )  + c(k) ( | \bar b | C +  \hat \chi )  \leq C'   }  
\end{split}
\end{align} 
    where $ C' $  is as in \eqref{5.43}. To prove the last statement in  Lemma  \ref{lem5.4}, 
    given $ r > 0 $ and a  cube $ Q \subset \rn{k}, $ let $ r Q $ denote the cube $ \subset \rn{k} $  
    with the same center as $ Q $ and side length  $= r$ times the side length of $ Q. $  If  $ x' \in  Q_{1/2} (0), $   
    we see from the definition of  $ (L_j ), (\ze_j), $  that either   
   $ L_{j} (x') =  L_m (x') $  for some  integer $ m $  and $ j \geq m, $ or at least one of $ (a), (b)$ is true where
\begin{align}  
\label{5.49} 
\begin{split}    
&(a)  \hs{.2in} \mbox{  For arbitrary large $m$   there is  a  $ Q' \in  \bigcup_{i = m}^{\infty}   K_i  $  with $ x' \in \frac{5}{4} Q' $  }  \\ 
&    (b) \hs{.2in}   \mbox{ For some $ i $  and  arbitrary large $ m $   there is  a  $ Q' \in K_i $  with }   \\ 
&  \hs{.46in}   \mbox{sidelength  $  \leq  1/T_m $ and  $ x' \in  \frac{5}{4} Q'. $ } 
\end{split}
\end{align}          
Now  from \eqref{5.39} and  basic measure theory it follows that the set of  all $ x' $ in $ Q_{1/2} (0) $  
for which  either \eqref{5.49} $(a)$ or $(b)$ holds  has $ \mathcal{H}^k$ measure 0.   Moreover  if $  L_j (x') $ is eventually  constant then  from    
  \eqref{5.41}  we  deduce  that     $  \sum_{j=1}^{\infty} (a_j L_j  \ti \psi_j)(x')  $  diverges $ \mathcal{H}^k $ almost everywhere.  
  \end{proof}  

Lemma \ref{lem5.4} will be used  to construct $ \hat u$ and for this  we  need  the next   lemma. 
    
\begin{lemma} 
\label{lem5.5}  
For $j =1, 2, \ldots$, let     $   a_j     =    -  \frac{1}{4j}$ and define  $ T_j, \psi_j,  \ti \psi_j, G_j,  $ as in 
Lemma \ref{lem5.4} for $ \,   j =1,2, \ldots  $  Also define $ s_m,  \ti s_m $ relative to $(\psi_j), (\ti \psi_j),$  and the current $ (a_j)  $ as in 
\eqref{5.40a}.  There is  a  choice of   $ ( L_j ) $ satisfying     \eqref{5.42}   such that  if   $    \ti  \si_m  =   1 +   {\ds \sum_{j=1}^m } a_j  L_j \ti \psi_j, $  then  
\begin{align} 
\label{5.50}
\begin{split}
&(a')  \hs{.2in}  \ti \si_m   > 0 \quad \mbox{for}\, \,  m = 1, 2, \ldots  \\ 
&(b') \hs{.2in}  \sup  \ti \si_m (x')  <  c(k)  < \infty  \quad \mbox{for all}\, \,  x' \in  \rn{k},  \\
&(c') \hs{.2in} \si ( x') ={ \ds \lim_{m \to \infty} \ti  \si_m }  ( x' ) = 0   \quad  \mbox{for $ \mathcal{H}^k $  almost every }\, \,  x' \in \rn{k}. 
\end{split}
\end{align}
 \end{lemma}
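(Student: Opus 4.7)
The plan is to adapt the inductive construction of $(L_j)$ from Lemma~\ref{lem5.4}, now with halving triggered by how close $\tilde\sigma_j$ is to zero rather than by the size of the partial sums of $\sum a_j\psi_j$. Without loss of generality $\bar b>0$ (replace $\Psi$ by $-\Psi$ in Theorem~\ref{thmB} if needed). Set $L_1\equiv 1$; given $L_1,\dots,L_j$, let $E_j$ be the union of cubes $Q'\in G_j$ that meet $\{x':\tilde\sigma_j(x')\leq C_0 L_j(x')/j\}$, with $C_0:=\|\Psi\|_\infty/2$. Define $L_{j+1}$ from $L_j$ by the smoothed halving procedure of \eqref{5.44}--\eqref{5.45} with $\breve E$ replaced by $E_j$. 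Periodicity of $L_{j+1}$ and the Lipschitz bound $\|L_{j+1}\breve\|\leq c^*(k)T_j$ carry over verbatim from the proof of Lemma~\ref{lem5.4}.

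For $(a')$ I propagate by induction the stronger lower bound $\tilde\sigma_j(x')\geq c_1 L_j(x')/j$ with $c_1:=\|\Psi\|_\infty/4=C_0-\|\Psi\|_\infty/4$. Off the smoothed freeze set --- where the cube of $G_j$ containing $x'$ does not meet $\{\tilde\sigma_j\leq C_0 L_j/j\}$ and $L_{j+1}/L_j=:r\in[1/2,1]$ --- one has $\tilde\sigma_j>C_0 L_j/j$, so the inequality $\tilde\sigma_j - r\|\Psi\|_\infty L_j/(4(j+1))\geq c_1 r L_j/(j+1)$ reduces to $(j+1)/j\geq r$, which holds. On the frozen region ($L_{j+1}=L_j/2$, $\tilde\sigma_j\geq c_1 L_j/j$) the halved drop $\|\Psi\|_\infty L_j/(8(j+1))$ is absorbed by the inequality $c_1(1/2+1/j)\geq\|\Psi\|_\infty/8$, valid for $j\geq 1$ with the above choice of $c_1$. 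The width-$1/T_j$ smoothing buffer is handled using the Lipschitz norm of $\tilde\sigma_j$ together with the gap condition \eqref{5.28}. Claim $(b')$ then follows from summation by parts as in \eqref{5.48}, using an analogue of \eqref{5.47} that controls $L_j(x')^{1/2}|\tilde s_j(x')|$ via the number of halvings at $x'$ through time $j$.

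Claim $(c')$ is the genuinely new point. Integrating $\tilde\sigma_m=1+\sum_{j\leq m}a_j L_j\psi_j+\bar b\sum_{j\leq m}a_j L_j$ over $Q_{1/2}(0)$, a standard cancellation --- each $\psi_j$ has mean zero on every sub-cube of $G_j$ --- combined with $\|L_j\breve\|\leq c^*(k)T_{j-1}$ yields $\bigl|\int_{Q_{1/2}(0)}L_j\psi_j\,dx'\bigr|\leq c\,T_{j-1}/T_j$, summable against $|a_j|=1/(4j)$ by the gap condition \eqref{5.28}. Positivity $\int_{Q_{1/2}(0)}\tilde\sigma_m\,dx'\geq 0$ from $(a')$ then forces $\bar b\sum_j\int_{Q_{1/2}(0)}L_j\,dx'/(4j)\leq C<\infty$, and by Tonelli $\sum_j L_j(x')/j<\infty$ for $\mathcal{H}^k$-a.e.\ $x'$. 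Since $|a_j L_j\tilde\psi_j|\leq\|\Psi\|_\infty L_j/(4j)$, the telescoping series for $\tilde\sigma_m$ converges absolutely a.e.\ to a limit $\sigma(x')\geq 0$. If $\sigma(x')>0$ on a set of positive $\mathcal{H}^k$-measure, then on that set eventually $\tilde\sigma_j(x')>\sigma(x')/2>C_0 L_j(x')/j$ (using $L_j\leq 1$ and $1/j\to 0$), so no further halving occurs at $x'$ past some $J$, giving $L_j(x')\equiv L_J(x')>0$ for $j\geq J$ and hence $\sum_{j\geq J}L_j(x')/j=\infty$, contradicting $\sum L_j/j<\infty$ at $x'$. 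Therefore $\sigma=0$ $\mathcal{H}^k$-a.e.

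The main obstacle will be $(a')$: the strict inequality $\tilde\sigma_m>0$ must hold at \emph{every} $m$ (not merely in the limit), so the halving rule has to anticipate the geometric tail of future increments, and the threshold $C_0 L_j/j$ must absorb both single-step unfrozen decreases and the width-$1/T_j$ smoothing buffer; the propagated inductive lower bound $\tilde\sigma_j\geq c_1 L_j/j$ is what makes this possible, and its verification requires the case analysis sketched above in the spirit of \eqref{5.37}--\eqref{5.39}.
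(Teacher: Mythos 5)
Your single freeze rule (halve where $\ti \si_j$ dips below $C_0L_j/j$) does plausibly deliver $(a')$, and your integral--Tonelli argument for $(c')$ is an attractive alternative to the paper's use of the gap-series maximal estimate; but there is a genuine gap at $(b')$, and it is not cosmetic. Your justification of $(b')$ invokes ``an analogue of \eqref{5.47}'' controlling $L_j(x')^{1/2}|\ti s_j(x')|$ by the number of halvings at $x'$. In Lemma \ref{lem5.4} that bound holds precisely because the cubes $K_i$ there are \emph{triggered by large values of the partial sums}, so a large $|\ti s_j(x')|$ forces proportionally many halvings; in your construction halving is triggered only by smallness of $\ti \si_j$, so the number of halvings at $x'$ bears no relation to the size of $\ti s_j(x')$ and no such analogue is available. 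Worse, $(b')$ actually fails for your $(L_j)$ in general. Since $a_j<0$, at any point $x'$ where $\ti \psi_j(x')=\Psi(T_jx',0)\le -c<0$ for all large $j$ the increments $a_jL_j\ti\psi_j(x')$ are eventually positive, so $\ti\si_j(x')$ is eventually nondecreasing and bounded below by a fixed positive number; then the very oscillation estimate you use in $(c')$ (oscillation of $\ti\si_j$ over a cube of $G_j$ and its $1/(4T_j)$-buffer is $O(\sqrt{k}/j)$, while the threshold is at most $C_0/j$) shows that no halving is ever triggered near $x'$ beyond some stage, so $L_j(x')$ is eventually constant and positive and $\ti\si_m(x')\ge \mathrm{const}+ \frac{c}{8}\sum_{j\le m}L_j(x')/j\to\infty$ logarithmically. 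Such points $x'$ exist whenever $\Psi(\cdot,0)$ takes a negative value: $\{\Psi(\cdot,0)<-c\}$ contains an open set, and since each $T_{j+1}$ is an integer multiple of $T_j$ a nested-cube selection produces $x'$ with $T_jx' \bmod 1$ in that set for all large $j$. Theorem \ref{thmB} gives no pointwise sign control on $\Psi(\cdot,0)$ (and after your WLOG replacement of $\Psi$ by $-\Psi$ you certainly cannot assume $\Psi(\cdot,0)\ge0$), and the uniform bound $(b')$ is exactly what is used afterwards, via the maximum principle, to make the limit function $\hat v$ bounded in Theorem \ref{thmC}; so this gap breaks the application.

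By contrast, the paper's freeze set \eqref{5.51} contains three families: $\mathcal F_{ij}$ (where $\ti\si_j$ is small --- your mechanism, giving $(a')$ via \eqref{5.52}), $\mathcal K_{ij}$ (where $\max_{\bar Q}\ti s_j$ is large positive), and $\mathcal H_{ij}$ (where $\ti s_j$ is very negative while $L_j$ is still large). It is the $\mathcal K$-family that makes $L_j^{1/2}\max(\ti s_j,0)$ uniformly bounded and lets the summation by parts \eqref{5.48} yield $(b')$, and the $\mathcal H$-family that drives the contradiction in $(c')$. If you graft a $\mathcal K$-type trigger onto your rule, your $(a')$ computation survives (extra halving only helps there), but your $(c')$ step ``no further halving past $J$ where $\si(x')>0$'' must be re-examined, because the new trigger can fire at points where $\ti\si_j$ stays bounded away from $0$; as written, the proposal does not address this interaction.
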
 
\begin{proof}  
Lemma \ref{lem5.5}     is    essentially  a $k$-dimensional  version of  Lemma
 2.13   in \cite{W}.  To   begin the  proof  let  $ i, m $ be positive integers and  for fixed $ m, $ let  
$  \mathcal{K}_{im}$ for $i = 1, 2, \ldots, $  be the set of    $ Q  \in G_m$ for which  $ \max_{\bar Q} \ti s_m  > i $ 
and $ Q$ is not contained in  a $ Q' \in  \mathcal{K}_{im'} $  for some $ m' < m.$    
Set $ L_1 \equiv 1 $  so that   $ \ti \si_1  =  1+ a_1 \ti \psi_1. $  Next  define  $ (L_m) ,  (\ti \si_m),$  
and sets of cubes,     $  \mathcal{F}_{i m},$ \, $\mathcal{H}_{im},$ by induction   as follows:       
 Suppose  $ L_m,   \si_m $   have  been defined for  $ m  \leq  j. $  
   Assume also that  
   $ \mathcal{F}_{im}, \mathcal{H}_{im}  \subset  G_m $ have been 
   defined  for  nonnegative integers $ m < j $  and all positive integers $i$ 
  with $ \mathcal{F}_{i0} = \es  =   \mathcal{H}_{i0}.$      If  $ i $ is  a 
  positive integer and  $ Q  \in G_j,  $  we  put  $ Q  \in \mathcal{F}_{ij} $  
  if  $  \min_{\bar{Q}} \ti \si_j  <  2^{-i} $  and  this cube is not contained in  any cube in $  \mathcal{F}_{im} $   
  for some $  m < j.$    Moreover we put  $ Q  \in  \mathcal{H}_{ij} $ 
  if     $  \min\limits_{\bar Q} \ti s_j  <  - \frac{ 2^{i}}{i+1}$  and 
  $ \max\limits_{\bar Q} L_j > 2^{-i}. $    We then  define  $ \ze_j,   L_{j+1}$ as in \eqref{5.44}, \eqref{5.45},  only now     
\begin{align}
\label{5.51}  
\breve E  =  \left\{ \bar Q' :  Q'  \in  \bigcup_i (  \mathcal{F}_{ij}\cup   \mathcal{H}_{ij} \cup  \mathcal{K}_{ij} ) \right\}. 
\end{align}
  Arguing as in  \eqref{5.46} we then  get   \eqref{5.42}.    
With  $ L_{j+1} $  defined  we  put $\ti  \si_{j+1} =  \sum_{m=1}^{j+1}  a_m L_m \ti \psi_m$  and after that define $ \mathcal{F}_{i(j+1)},   \mathcal{H}_{i(j+1)}, $ for all positive integers $i.$   
By induction we conclude the definitions of $ (L_j), (\ti \si_j), ( \mathcal{F}_{ij}), ( \mathcal{H}_{ij}). $     

  From the definition  of $\ze_j, L_{j+1}, $ in   \eqref{5.44}, \eqref{5.45},    as regards   $ \breve E $  in \eqref{5.51}, 
  and the same argument as in the proof of  \eqref{5.47}  in   Lemma \ref{lem5.4} we deduce  that  
  \[   
  L_j (x')^{1/2}  \,   \max (\ti s_j (x'),0)    \mbox{  is uniformly bounded  for all $ x' $  in $ \rn{k}$.}  
  \]   
  Using this fact  and  arguing as in \eqref{5.48}     we get   \eqref{5.50} $(b').$    
  
  Next  if  
  $ 2^{-(m+1) }   \leq \min_{Q} \ti \si_j  <  2^{-m} $  and  $ L_{j+1} \leq 2^{-m} $  on $ Q \in G_j, $ then from  \eqref{5.28a} and our choice of $(a_l), $  we see that  
  \[  
  \ti \si_{ j + 1}  \geq \ti \si_j   -  2^{-(m+2) }  \geq 2^{-(m+2)} \quad \mbox{on}\, \,  Q.
   \]    
   Using   this   observation and  the definition  of $\ze_j, L_{j+1}, $ in   \eqref{5.44}, \eqref{5.45},    
   as regards   $ \breve E $  in \eqref{5.51},   one can show by induction on $m$ that  for a  positive integers $l,$     
\begin{align}  
   \label{5.52}  
   \mbox{  if  $ Q  \in G_l $   and $ \min\limits_Q \,  \ti \si_l < 2^{ - m}$  then $ L_{l+1} < 2^{-m} $ on $ Q$}.  
\end{align}   
  \eqref{5.52}   implies  \eqref{5.50} $(a') $  
 since  $\ti  \si_1 > 0 $  and  if    $   2^{- ( m + 1) }  \leq  \ti \si_{l}  <   2^{-m} $  
 on $ Q  \in G_l,$ then  from \eqref{5.52},  \eqref{5.28a},  and the definition of $ (a_j), $       
 \[
 \ti \si_{ l + 1}  >   \ti \si_l   -  2^{ - ( m + 2)} > 0   \quad  \mbox{on}\, \, Q.
 \]    
Thus   \eqref{5.50} $(a') $  holds.  
   
 It remains to prove  \eqref{5.50} $(c').$ 
To do so we first claim that  for $l=1,2,\ldots$,
\begin{align}
\label{5.53} 
c(k)^{-1}  \, L_{l+1} (x' ) \leq  L_{l+1} (y' )  \leq  c (k) \, L_{l+1} (x') ) 
\end{align}
whenever $x, y \in  \frac{5}{4}Q  \in G_l$.  Indeed from  \eqref{5.44}, \eqref{5.45}, we have  
\[ 
\frac{L_{l+1} (y' )}{ L_{l+1} (x' )} \leq  \frac{\prod_{j=1}^l  ( 1/2)(1 + \ze_j (y') )}{ \prod_{j=1}^l 1/2}   \leq   \frac{\prod_{j=1}^l  ( 1/2)(1 +  c'(k) T_j/T_l  )}{ \prod_{j=1}^l 1/2}  \leq c(k). \] 
The lower estimate is proved similarly.

       To prove  \eqref{5.50} $(c')$    let  $ E_m $  denote the  set of  all  $  x' \in \rn{k} $  
        for which there exist  $ l_1$ and $l_2  $  positive integers with 
        $ l_1  < l_2 $  satisfying
\[
\ti s_{l_2} (x')   >  -  \frac{ 2^m}{2(m+1)} \quad      \mbox{while} \quad  \ti s_{l_1}(x')  < -  \frac{ 2^m}{m+1}  \, . 
\]
       Since  $ a_j  =  -  (1/4) j^{-1}$ for $j = 1, 2, \ldots $  it follows that    
\begin{align}
 \label{5.54}  
 \begin{split}
 \max [|  s_{l_1} (x')|,|  s_{l_2} (x')|] &= \max \left[ | \ti s_{l_1} ( x')  + (\bar b/4)  \sum_{j=1}^{l_1} j^{-1}  |\, , \, |  \ti s_{l_2}  ( x' ) + (\bar b/4)  \sum_{j=1}^{l_2}  j^{-1}  |\right ]  \\  
 & \geq   \frac{2^m \bar b}{ 8 (m+1)}
\end{split}
\end{align}
 for $m\geq 100$. If we let 
  \[    
   \Ga  :=   \{ x' \in \rn{k}  :  x'  \in E_{m} \mbox{ for infinitely many $ m $}  \} \bigcup 
  \{ x'  \in \rn{k}   :  \limsup_{j\to \infty}  \ti s_j (x' )  > -  \infty \}
  \]
	  then using   \eqref{5.54} and  \eqref{5.30} of  Lemma \ref{lem5.3} we arrive at  
\begin{align} 
\label{5.55}     
|  \Ga  |  = 0.
\end{align}
Next from induction on  $ m  $   and   the definitions of  $ \mathcal{H}_{lm},  L_{l+1}, $  it follows    that
    if   $  \ti s_l ( y' )  <  -  \frac{2^m}{m+1}, $  $ y' \in \bar Q \in G_l, $   
    then   $ L_{l+1 } ( y' )  \leq 2^{-m}. $  Therefore if  $y'  \not \in \Ga, $  
    then 
\begin{align}  
\label{5.55a}    
    \lim_{l \to \infty} \ti s_l \,  (y' ) = - \infty  \quad \mbox{and} \quad \lim_{l \to \infty} (\ti s_l \,  L_{l +1} ) (y' ) = 0. 
\end{align}   
 Now     \eqref{5.55a},     \eqref{5.50} $(a'),  $    and  
\begin{align} 
\label{5.56}   
0  <  \ti  \si_j   ( y')   = 1 +  \sum_{l \leq  j }  ( L_l  - L_{l+1} ) \ti  s_l  ( y' )   +   s_{j} L_{j+1} ( y')
\end{align}
 imply  that if    $ y'  \not \in \Ga, $  then  it must be true that $ \lim_{j \to \infty }\ti  \si_j (y' )$ exists and is non-negative.

 Suppose this limit is positive.  In this case  we observe from the definition of $(a_l), $  
 \eqref{5.28},  \eqref{5.28a},  \eqref{5.42},    that if    $\be_l =   T_l^{-1} ( \| \ti \si_l \breve \| + \| \ti s_l   \breve \| ), l =1,2, \ldots$ then
\begin{align}
 \label{5.57}  
 \sup_l \be_l \leq c(k) \quad \mbox{and} \quad \be_l  \to  0 \, \, \mbox{as}\, \, l \to  \infty.
 \end{align} 
It follows from \eqref{5.57}, \eqref{5.50} $(a')$,  and the facts $ \lim_{j \to  \infty}\ti s_j (y') = - \infty, $   
    $ \ti \si (y') > 0,$    that 
\begin{align}  
\label{5.58}  
\sup\limits_{j} \{  \max_{ \frac{5}{4} \bar Q }  \ti s_j  : y' \in Q  \in G_j \}   < \infty \quad \mbox{and} \quad
   \inf\limits_{j} \{   \min_{\frac{5}{4} \bar Q}\ti \si_j  : y'\in  G_j \}   >  0 . 
\end{align}   
      So   $  y' $ belongs to at most a finite number of  $ \frac{5}{4} Q $  with   
$ Q \in  \mathcal{K}_{lm}   \cup \mathcal{F}_{lm}  $  for  $ l, m = 1, 2, \ldots $. 
Now if  $ y' \in \frac{5}{4} \bar Q,  \, Q  \in  \mathcal{H}_{i(m+1)}, $  
then from     \eqref{5.53}   and  $ y' \not \in \Ga, $   \eqref{5.57},  we find  for $ i \geq  i_0 (y')$ and $m \geq m_0 (y'),  $ sufficiently large that 
\begin{align}
  \label{5.59}   c(k)  L_{i} (y')  \geq  2^{-m}    \quad \mbox{and}\quad  \ti s_{i'} (y')  <  - {\ts \frac{2^{m-1}}{m}}  \quad \mbox{for}\, \, i' \geq i.
  \end{align}     
 Using   \eqref{5.55a}, \eqref{5.59},  we  deduce the existence of  an increasing  
sequence $ (i_l)   $ for $ l \geq l_0 $  so that 
\[  
L_{i_l} ( y')  = 2^{ - l }  \quad \mbox{and}  \quad c(k)  \ti s_{i_{l+1}}  (y')  \leq    { - \ts \frac{ 2^{-l}}{l+1}}.  
\]       
It then follows   from  \eqref{5.56} that  
$ \ti\si ( y' ) = - \infty $  which contradicts  $ \ti \si (y') > 0. $ 
Thus $\ti \si ( y')=0$  for $ \mathcal{H}^k $  almost every $ y'  \in \rn{k}$  and the  proof of Lemma \ref{lem5.5} is complete. 
 \end{proof} 
  
\subsection{Final Proof of Theorem \ref{thmC}}
       To  finish the  proof  of  Theorem \ref{thmC}  we again follow   Wolff in \cite{W}   and use  
        Lemmas  \ref{lem5.2}, \ref{lem5.4}, and  \ref{lem5.5} to construct examples.    
        Let  $ T_1 = 1$ and  by  induction suppose  $ T_2,  \ldots, T_l $ have been chosen,    
        as in  \eqref{5.28}.   Let $ (a_j),  (\psi_j), (\ti \psi_j)  $ be as in Lemmas   \ref{lem5.4}, \ref{lem5.5}.  
        First we  define   $  \si_j,  \ti \si_j ,  1\leq j  \leq l,  $   relative to these sequences, and after that  $ L_{j+1},  \si_{j+1}, \ti \si_{j+1}. $   
              Next we  define    $ T_{l+1} $  satisfying several conditions:  First suppose    \eqref{5.28} is valid for $ j =  l $.   
               Let   $ g = \si_{l},$ or  $ \ti \si_{l} $, $f = a_{l+1} L_{l+1}$, and   define  $ q   = \ti \psi_{l+1}   $    relative to 
               $ T_{l+1}. $    Also  suppose that      
\begin{align} 
\label{5.60}   
\max ( \| f \|_{\infty}, \| g \|_{\infty}, \| q \|_{\infty},  \| f \breve \|, \| g \breve \|, T_{l+1}^{-1} \| q \breve \| )   \leq M 
\end{align}
  where $ M = M ( T_1, \ldots, T_{l}) $ is a constant.     Next   apply  Lemma \ref{lem5.2}  with  $ M $ as in 
    \eqref{5.5},  $  3 \ep = 2^{-(l+1)}, $ obtaining $ A = A_l $  and  $  \nu_0 =  \nu_0 (p,n,k,l,M) $ 
    so that \eqref{5.6}-\eqref{5.8} are valid.    Choosing  $ T_{l+1} $ still larger if necessary  we may assume that   $ T_{l+1} > \nu_0 $ 
    and  $ A_{l} T_{l+1}^{- \al }  <  \frac{1}{2} A_{l-1} T_l^{- \al}  $      
        where $  \al $   is as in  Lemma  \ref{lem5.2}.   
        By induction we now get  $ (\si_l)$  or  $(\ti \si_l)$ as in 
        Lemma \ref{lem5.4} or  Lemma \ref{lem5.5}.  Moreover  if   $  \si'_j \in  \{ \si_j, \ti \si_j \},  j = 1, 2, \ldots, $ then  in case $(i)$  of Lemma \ref{lem5.2} we have                      
\begin{align} 
\label{5.61}   
| \hat \si'_{j+1}  ( x  )  -  \hat \si'_j ( x ) | < 2^{ - (j+1) }  
      \quad  \mbox{ when }  |x''|  >   A_{j} T_{j+1}^{-\al},
\end{align}  
and     \begin{align}
\label{5.62}   
| \hat \si'_{j+1}  ( x )  -  \si'_j (  x' )  | < 2^{ - (j + 1) }  + |a_{j+1}| \quad   \mbox{ when }  |x''|  <    A_{j} T_{j+1}^{-\al} \,  .     
\end{align}
  From   \eqref{5.61}   we see that $ (\hat \si'_{j+1}) $  converges  uniformly  on compact 
  subsets of $ \rn{n} \sem \rn{k} $   to  a  $p$-harmonic  function  $  \hat \si'  $   satisfying  
\begin{align}
 \label{5.63}   
 | \hat \si'  ( x  )  - \hat \si'_l ( x ) | < 2^{ - l }      \quad    \mbox{ when }  |x''|  >  A_{l} T_{l+1}^{-\al}.  
\end{align}
Using  \eqref{5.62}- \eqref{5.63},   and the triangle inequality we also have  
for $   A_{l+1}  T_{l+2}^{-\al} < |x''|  <      A_{l}  T_{l+1}^{-\al}    $    that      
\begin{align}
\label{5.64}   
\begin{split}
|\hat  \si' (x )  -   \si'_l  ( x' )|  &\leq     |  \hat \si' (x)  -   \hat \si'_{l+1} ( x )|   +  |   \hat \si'_{l+1} ( x )   -   \hat \si'_l  ( x'  ) |  \\ 
&  < 2^{ - (l+1)} +  2^{-l}  +  |a_{l+1}| .
\end{split}
\end{align} 
          From \eqref{5.64}  and  our choice of  $ (a_l) $  we see for  $ \ze $  as in Theorem  \ref{thmC}  and $  (\si_l),  $  as in Lemma \ref{lem5.4} that  
          $ {  \ds \lim_{ x'' \to 0} }  \hat \si' ( x', \ze (x'')  ) $  does not exist for  $ \mathcal{H}^k $  almost  every  $  x'  \in \rn{k}  $   while 
          if $ (\ti \si_l) $  is as in Lemma \ref{lem5.5},  then   $ \hat \si' > 0 $ on $ \rn{n} \sem \rn{k} $   and    $ {\ds \lim_{ x''  \to   0} }  \hat  \si' ( x', \ze (x'' ) ) = 0 $  for  
          $ \mathcal{H}^k $  almost every  $ x' \in \rn{k}. $ 
                             Moreover from  uniform  boundedness of  $ (\si_l), (\ti \si_l),   $  and  the maximum principle for $p$-harmonic  functions we deduce that 
          $  \hat \si' $ is bounded.      
          To complete   the proof of  Theorem \ref{thmC} in case $(i)$,  put  $  \hat \si' =  \hat u $  or   $ \hat  \si' = \hat v $  depending on whether  $ (a_j)$   as in    
          Lemma  \ref{lem5.4} or   Lemma  \ref{lem5.5}, respectively,  was used to construct $ \hat  \si'. $    
          
          The same argument gives Theorem \ref{thmC} in case $(ii). $ This finishes the proof of Theorem \ref{thmC}.   

\begin{remark} 
\label{rmk5.6}
As mentioned in  Remark \ref{rmk1.8},  there  is   no  analogue of  Theorems \ref{thmB} or \ref{thmC} when $ 1 < p \leq n - k.$  
 To  prove  this  assertion  for $ \hat u, $  given $ 0  < r <  1/10,$  let        
\[  
\ph (x) = \max\left[   1 + \frac{\log(|x''|)}{\log (1/r)} \, ,  0\right], \quad \mbox{for} \, \, x \in \rn{n} \sem \rn{k}.  
\] 
If       $  y' \in \rn{k}  $                                    and  $ \he \in C_0^{\infty} (B(y',1)), $ then $ \,  \ph \, \he   $  
can be used as  a test function in the definition of  $p$-harmonicity for  $ \hat u  $ in \eqref{1.3}.  Thus  
\begin{align} 
\label{5.65}  
0 =    \int |\nabla \hat u|^{p-2} \lan \nabla \hat u,  \nabla  \ph \ran   \he  dx   +  
\int |\nabla \hat u|^{p-2} \lan \nabla \hat u,  \nabla \he  \ran  \ph dx  = I_1    +  I_2 . 
\end{align}
 To estimate $ I_1, I_2, $    we  observe from uniform boundedness of $ \hat u $  that  $ \hat u \, \ph\, \he $  can also  be used as  a  test function  in  \eqref{1.3}.   Doing this and  using  H\"{o}lder's inequality in a  Caccioppoli type argument, we obtain 
\begin{align}
\label{5.66}   
\int  |\nabla \hat u |^p \,  (\ph \he )^p \, dx  \,  \leq \,  c(p,n,k)  \| \hat u \|^p_{\infty}\, \int  |\nabla (\ph \he) |^p dx . 
\end{align}
Clearly,   $ 0 \leq \ph  \leq 1 $   in $ B (y',1) $ and   $ \ph ( \cdot, r )  \to  1 $  uniformly as $ r \to  0 $  on compact subsets of $ B (y',1) \sem \{0\}. $  Moreover   
\begin{align} 
\label{5.67}  | \nabla \ph (x)  |   \leq   |x''|^{-1} [ \log (1/r)]^{-1}  \quad \mbox{for}\, \,   x \in B (y',1) \cap \{x:|x''| \geq r \}. 
\end{align}
Using these inequalities in  \eqref{5.66} and the Lebesgue  monotone convergence theorem  we  get  for $ 1 < p \leq n - k,$  
\begin{align} 
\label{5.68}
\begin{split}
    \int  |\nabla \hat u |^p \,  \he^p \, dx  \,  &\leq \,  c'(p,n,k,\he)   \| \hat u \|_{\infty}^p 
  [1 +  \limsup_{r \to  0} \, \left\{  [\log(1/r)]^{- p}    \int_{r}^1   \rho^{ n - k - p - 1 }  
d \rho \right\} ]   \\
&\leq  c''(p,n,k,\he)  \| \hat u \|_{\infty}^p 
    [1  +   {\ds \lim_{r\to  0}}  (\log(1/r))^{1-p}\, ]     \\
    &= c''(p,n,k,\he)  \| \hat u \|_{\infty}^p 
\end{split}
\end{align}
 Armed with  \eqref{5.67}, \eqref{5.68}, and H\"{o}lder's  inequality we can now  estimate  $ I_1$ and $I_2 $  in \eqref{5.65}.   We find that 
 \[ 
 I_2  \to    \int |\nabla \hat{u}|^{p-2} \lan \nabla \hat{u},  \nabla \he \ran dx  \quad \mbox{and}\quad   I_1 \to  0 \quad \mbox{as}\,\, r \to  0. 
 \]      
  We conclude from this inequality, \eqref{5.65},  and  \eqref{5.68} that $\hat u$ extends to  a uniformly bounded  $p$-harmonic function on  $ \rn{n}. $      Using  Liouville's theorem for  bounded entire $p$-harmonic functions (an easy consequence of  \eqref{2.2} $(b)$) we conclude that $ \hat u = $ constant.   Thus  Theorem \ref{thmC} does not have an analogue when $1< p \leq n - k$ for $ \hat u. $  A similar argument yields that   Theorem \ref{thmB} does not have an analogue  and that    Theorem \ref{thmC} does not have an analogue  for $ \hat v. $  
     \end{remark} 
     \section{$\mathcal{A}$-harmonic Functions}  
     \label{sec6} 
\setcounter{equation}{0} 
 \setcounter{theorem}{0}
\subsection{Definition and Basic  Properties of  $\mathcal{A}$-harmonic Functions}  
In this subsection we introduce $ \mathcal{A}$-harmonic functions  and  discuss their basic properties. 
\begin{definition}  
\label{def6.1}	
	Let $p,  \al \in (1,\infty) $ and 
	\[
	\mathcal{A}=(\mathcal{A}_1, \ldots, \mathcal{A}_n) \, : \, \rn{n}\sem \{0\}  \to \rn{n},
\]
   be such that $  
\mathcal{A}= \mathcal{A}(\eta)$  has  continuous  partial derivatives in 
$ \eta_k$ for $k=1,2,\ldots, n$  on $\rn{n}\setminus\{0\}.$  
We say that the function $ \mathcal{A}$ belongs to the class
  $ M_p(\alpha)$ if the following conditions are satisfied whenever  $\xi\in\mathbb{R}^n$ and
$\eta\in\mathbb R^n\setminus\{0\}$: 	
\begin{align*}
		(i)&\, \, \mbox{Ellipticity:}\, \,  \alpha^{-1}|\eta|^{p-2}|\xi|^2\leq \sum_{i,j=1}^n \frac{\partial  \mathcal{A}
_{i}}{\partial\eta_j}(\eta )\xi_i\xi_j   \quad \mbox{and}\quad   \sum_{i=1}^n  \left| \nabla  
 \mathcal{A}_i  (\eta)  \right|   \leq\alpha \, |\eta|^{p-2},\\
(ii)&\, \, \mbox{Homogeneity:}\,\, \mathcal{A} (\eta)=|\eta|^{p-1}  \mathcal{A}(\eta/|\eta|).
	\end{align*}
	\end{definition}   

We  put  $ \mathcal{A}(0) = 0 $  and note that  Definition \ref{def6.1}  $(i)$ and $(ii) $ implies  that    
\begin{align}
 \label{6.1}
 \begin{split}
& (a') \hs{.2in} 
  (|\eta | + |\eta'|)^{p-2} \,   |\eta -\eta'|^2   \leq  c(p,n,k,\al)  \lan   \mathcal{A}(\eta) - 
\mathcal{A}(\eta'), \eta - \eta' \ran,  \,  \\ 
& (b') \hs{.2in}   | \mathcal{A}(\eta) - 
\mathcal{A}(\eta')| \leq c (p,n,k,\al)  ( |\eta| + |\eta'| )^{p-2} | \eta - \eta' |, 
\end{split}
\end{align}
whenever $\eta, \eta'   \in  \rn{n} \sem \{0\}.$  
\begin{definition}
\label{def6.2}                       
	Let $p\in (1,\infty)$ and let $ \mathcal{A}\in M_p(\alpha) $ for some $\alpha\in (1,\infty)$. Given an  open set 
 $ O  $ we say that $ u $ is $  \mathcal{A}$-harmonic in $ O $ provided $ u \in W^ {1,p} ( G ) $ for each open $ G $ with  $ \bar G \subset O $ and
	\begin{align}
	\label{6.2}
		\int \lan    \mathcal{A}
(\nabla u(y)), \nabla \he ( y ) \ran \, dy = 0 \quad \mbox{whenever} \, \,\he \in W^{1, p}_0 ( G ).
			\end{align}
	 We say that $  u  $ is an  $\mathcal{A}$-subsolution ($\mathcal{A}$-supersolution) in $O$ 
	 if   $  u \in W^{1,p} (G) $ whenever $ G $ is as above and  \eqref{6.2} holds with
$=$ replaced by $\leq$ ($\geq$) whenever $  \theta  \in W^{1,p}_{0} (G )$ with $\theta \geq 0$.  
As a short notation for \eqref{6.2} we write $\nabla \cdot \mathcal{A}(\nabla u)=0$ in $O$.  
\end{definition}  

\begin{remark}    
\label{rmk6.3}
We remark  for  $O, \mathcal{A}, p,  u,$  as in  Definition \ref{def6.2}  that if   $F:\mathbb R^n\to \mathbb R^n$  is  the composition of
a translation and a dilation
 then 
 \[
 \hat u(z)=u(F(z))\, \,  \mbox{is}\, \,  \mathcal{A}\mbox{-harmonic in} \, \,  F^{-1}(O).
 \]
 Moreover,  if   $ \ti  F:\mathbb R^n\to \mathbb R^n$  is  the  composition of
a translation,  a dilation, and a rotation 
   then 
   \[
   \ti  u (z) = u (\ti F(z) )\, \, \mbox{is}\, \,  \ti {\mathcal{A}}\mbox{-harmonic in}\, \,  \ti F^{-1}(O)\, \,  \mbox{for some}
  \, \, \ti{\mathcal{A}}\in M_p(\alpha).
  \]  
\end{remark}  

We note that $\mathcal{A}$-harmonic PDEs have been studied in \cite{HKM}. Also   dimensional properties of the
 Radon measure associated with  a positive $\mathcal{A}$-harmonic function $u,$  vanishing  on a portion of the boundary of  $ O, $   
 have been studied in \cite{Akman,ALV,ALV1,AGHLV},  see also \cite{LLN,LN}.  As mentioned in the introduction  
 our goal in this  section is  to  discuss  validity of  Theorems \ref{thmB} and \ref{thmC},  for  $ \mathcal{A}$-harmonic functions. 
 To this end we state   
 
 \begin{lemma} 
 \label{lem6.4}  
 Lemmas \ref{lem2.2}-\ref{lem2.5} are valid  with  $p$-harmonic  replaced by $ \mathcal{A}$-harmonic.   However constants may also depend on  $ \al$. 
 \end{lemma}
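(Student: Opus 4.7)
The plan is to observe that each of Lemmas \ref{lem2.2}--\ref{lem2.5} was cited from references whose proofs are already carried out in the general $\mathcal{A}$-harmonic setting (or an equivalent one), and to point out the minor structural facts from Definition \ref{def6.1} and \eqref{6.1} that let every step go through verbatim. The only genuine change is that constants will in addition depend on $\alpha$, since all structural inequalities now carry a factor $c(p,n,k,\alpha)$ instead of a factor depending only on $p,n,k$. I would handle the four lemmas in order.

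First, for the analogue of Lemma \ref{lem2.2}, the argument is standard: take $\theta = (\hat v - \hat h)^+ \in W^{1,p}_0(O)$ as a test function in the $\mathcal{A}$-subsolution inequality for $\hat v$ and the $\mathcal{A}$-supersolution inequality for $\hat h$, subtract, and apply the monotonicity bound \eqref{6.1}$(a')$ on the set where $\hat v > \hat h$. This forces $\nabla(\hat v - \hat h)^+ \equiv 0$ a.e., hence $(\hat v - \hat h)^+ \equiv 0$, which is exactly Lemma \ref{lem3.18} of \cite{HKM}; this reference is already stated at the level of $\mathcal{A}$-harmonic operators satisfying conditions equivalent to Definition \ref{def6.1}.

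For the analogue of Lemma \ref{lem2.3}, the Caccioppoli-type inequality in \eqref{2.2}$(a)$ follows by testing \eqref{6.2} against $\eta^p \hat v$ (with $\eta$ a standard cutoff) and using \eqref{6.1}$(a')$ on the left and \eqref{6.1}$(b')$ together with Young's inequality on the right; this is exactly the shape needed to run Moser iteration. The Hölder estimate \eqref{2.2}$(b)$ and Harnack inequality \eqref{2.2}$(c)$ then follow from Moser's iteration scheme, which is carried out for this class in \cite{S} and in Chapter 6 of \cite{HKM}. One simply quotes these references; no new calculation is required, only the remark that the structural hypotheses of \cite{S}, \cite{HKM} are implied by Definition \ref{def6.1}. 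The analogue of Lemma \ref{lem2.4} is identical: Corollary 6.36 and Theorem 6.44 of \cite{HKM} are stated precisely in the $\mathcal{A}$-harmonic framework (with hypotheses on the structure function that are implied by our Definition \ref{def6.1}), so both the continuous extension to the boundary and the Wiener-type vanishing estimate via $p$-capacity apply verbatim, yielding \eqref{2.3} and \eqref{2.4} with the same proof.

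The one point that deserves a little care, and which I view as the main obstacle, is the analogue of Lemma \ref{lem2.5}, namely the interior $C^{1,\hat\gamma}$ regularity of $\mathcal{A}$-harmonic functions together with the integrability bound on the second distributional derivatives in \eqref{2.5}$(\hat b)$. These estimates are not just standard De Giorgi--Nash--Moser consequences; they rely on the fine analysis of Tolksdorf in \cite{T}. I would note that Theorem 1 and Proposition 1 of \cite{T} are in fact proved for a class of divergence-form nonlinear operators $\nabla\cdot \mathcal{A}(\nabla u)$ satisfying ellipticity and growth conditions of the type $\alpha^{-1}|\eta|^{p-2}|\xi|^2 \leq \sum \partial_{\eta_j}\mathcal{A}_i(\eta)\xi_i\xi_j$ and $\sum|\nabla\mathcal{A}_i(\eta)|\leq \alpha |\eta|^{p-2}$, which are precisely Definition \ref{def6.1}$(i)$; the homogeneity hypothesis $(ii)$ is not even needed for these estimates. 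Hence one may invoke \cite{T} directly, with the constants $\hat\gamma$ and $c$ now depending on $p,n,\alpha$. Assembling these four `ditto' arguments with the allowed dependence on $\alpha$ completes the proof.
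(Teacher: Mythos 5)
Your proposal is correct and follows the paper's own route: the paper's proof of this lemma is precisely the observation that the references for Lemmas \ref{lem2.2}--\ref{lem2.5} (\cite{HKM}, \cite{S}, \cite{T}) were chosen because their proofs are already carried out for operators satisfying the structure conditions of Definition \ref{def6.1}, with constants then depending also on $\al$. Your additional sketches of the comparison and Caccioppoli arguments are consistent extra detail, not a different method.
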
  
 \begin{proof}  
 References for proofs    of  Lemmas \ref{lem2.2}-\ref{lem2.5} were purposely chosen  to  be  references for  proofs  in the   $ \mathcal{A}$-harmonic setting. 
 \end{proof} 
 
 Next   given $  p,  1 < p < \infty, $    suppose  $ f : \rn{n} \sem \{0\}  \to   (0, \infty) $  satisfies:
\begin{align}
\label{6.3}
\begin{split}
(a)&\, \,   f  (  t   \eta )  =  t^p   f ( \eta ) \quad \mbox{when}\, \,   t  > 0 \, \, \mbox{and}\, \,  \eta \in  \rn{n}.
\\
(b)& \, \,   \mbox{There exists $  \hat \al    \geq 1 $  such that if  $ \eta, \xi   \in  \rn{n} \sem \{0\},$  then } \\
&  \hs{.44in}      \hat \al^{-1}  \,   | \xi  |^2     | \eta |^{p-2}  \,  \leq   \,  
\sum_{i, j = 1}^n  \frac{ \ar^2  f }{ \ar \eta_i \ar \eta_j}  ( \eta )   \,  \xi_i   \,  \xi_j     \leq             \hat \al  \,  | \xi  |^2     | \eta |^{p-2}.   \\
(c)&\,\,   \mbox{There exists $  \al'  \geq 1 $  such that  for  $ \mathcal{H}^{n}$-almost every   $ \eta   \in  B ( 0, 2 )  \sem  B( 0, 1/2),$  }  \\
& \hs{1.47in}   
     {\ds    
\sum_{i, j,k = 1}^n }  \left|  \frac{ \ar^3  f }{ \ar \eta_i \ar \eta_j \ar \eta_k}  ( \eta )   \right|    \leq             \al' .  
\end{split}
\end{align} 
 We note that  $ \mathcal{A} = \nabla f  \in  M_p (\al) $ for some $ \al \in (1,\infty). $   
 \begin{lemma}  
 \label{lem6.5} 
 Lemmas \ref{lem2.6} and \ref{lem2.7},   are  valid when $   k = n - 1$ and  $p > 2,   
 $  or  $ k =  1$ and $p > n-1$ with  $p$-harmonic replaced  by $ \mathcal{A}$-harmonic whenever  $ A \in M_p (\al)$.  Constants may also depend on $\al.$ 
  If   $ 1< k  \leq n - 2$ and $n \geq  3$ with $p > n - k,$  Lemmas \ref{lem2.6} and \ref{lem2.7}  are  valid  when $ A =  \nabla f$  and  \eqref{6.3} holds. 
  Constants may also depend on $ \hat \al,  \al'. $  
 \end{lemma}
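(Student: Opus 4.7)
The plan is to verify that the very references which establish Lemmas \ref{lem2.6} and \ref{lem2.7} in the $p$-harmonic case already do so in the $\mathcal{A}$-harmonic framework, once the structural hypotheses on $\mathcal{A}$ listed in the statement are imposed. For the half-space case $k = n - 1$ with $p > 2$, Theorem 1 of \cite{LLN} (giving the boundary Harnack inequality on $\Omega \cap B(z_0,\rho)$) and Theorem 2 of \cite{LLN} (giving its exterior analogue) are in fact stated and proved for general $\mathcal{A} \in M_p(\alpha)$, so Lemmas \ref{lem2.6} and \ref{lem2.7} transfer directly with constants now permitted to depend on $\alpha$.

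For $k = 1$ with $p > n - 1$, the plane $\La_k$ is a line and Theorems 1.9, 1.10, and 1.13 of \cite{LN} apply to any $\mathcal{A} \in M_p(\alpha)$; I would inspect those proofs to confirm that no smoothness of $\mathcal{A}$ beyond Definition \ref{def6.1} is used, which is possible because for a one-dimensional $\La_k$ the problem effectively reduces to the half-space problem in the plane transverse to $\La_k$. For the remaining range $1 < k \leq n - 2$ with $p > n - k$, the argument in \cite{LN} uses rotational invariance of $\La_k$ in the last $n-k$ coordinates to reduce a positive $\mathcal{A}$-harmonic function vanishing on $\La_k$ to a comparison with a homogeneous Martin-type solution in the two variables $(s,t) = (|x''|,|x'|)$, via a linearised PDE obtained by differentiating the equation. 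The boundary Harnack conclusion then follows from a Carleson-type oscillation decay for the ratio $\hat u/\hat v$. Granting these inputs, the proofs of Theorems 1.9, 1.10, and 1.13 of \cite{LN} go through essentially verbatim, with constants picking up dependence on $\hat\alpha$ and $\alpha'$.

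The main obstacle, and the reason for the stronger hypothesis $\mathcal{A} = \nabla f$ with \eqref{6.3} when $k \geq 2$, lies in controlling the linearised operator produced by the $(s,t)$-reduction: one needs a third-derivative bound on the energy density to estimate the coefficients of the differentiated equation, which is precisely \eqref{6.3}(c), and one needs the linearisation to have a symmetric principal part in order to run the Moser/Caccioppoli pieces of the Carleson argument on its adjoint, which is precisely what $\mathcal{A} = \nabla f$ supplies; for a general $\mathcal{A} \in M_p(\alpha)$ without variational structure both ingredients can fail. Finally, since the cited theorems are stated for bounded exterior domains $\Omega \setminus \bar B(0,\rho)$, I would close (exactly as in the $p$-harmonic proof of Lemma \ref{lem2.7} above) by noting that replacing the continuous vanishing on $\partial\Omega$ by the uniform decay $\hat u(x) + \hat v(x) \to 0$ as $|x| \to \infty$ leaves the proof unchanged and yields \eqref{2.7} on the unbounded set $G$.
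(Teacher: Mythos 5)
Your proposal is correct and follows essentially the same route as the paper, whose entire proof of Lemma \ref{lem6.5} is simply that the references cited for Lemmas \ref{lem2.6} and \ref{lem2.7} (\cite{LLN} and \cite{LN}) already establish these boundary Harnack inequalities in the $\mathcal{A}$-harmonic setting under the stated hypotheses. Your added commentary on why the variational structure $\mathcal{A}=\nabla f$ with \eqref{6.3} is needed for $1<k\leq n-2$, and on passing from bounded exterior domains to the unbounded $G$, goes beyond what the paper records but is consistent with it.
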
 
    \begin{proof} 
    References given for Lemmas \ref{lem2.6} and \ref{lem2.7}  provide proofs  for Lemma  \ref{lem6.5}.  
 \end{proof}
  An  $ \mathcal{A}$-harmonic Martin function relative to $ z  \in   \rn{k}  $  is  defined as in Definition \ref{def1.1}  with   
  $p$-harmonic replaced by $\mathcal{A}$-harmonic.  Using     Lemmas  \ref{lem6.4} and   \ref{lem6.5}  
  one can now  argue  as at the beginning of  section \ref{sec4}  to show  the existence of an $ \mathcal{A}$-harmonic  Martin function satisfying  \eqref{1.4} $(a), (b), $  with $p$-harmonic replaced by  $ \mathcal{A} \in M_p (\al)$-harmonic   when   $  k = n - 1$ with $p >2$  and  $ k =  1$ with $p > n-1,$  or    under the  additional assumption that  $  \mathcal{A}  =  \nabla f ,   f  $  as in \eqref{6.3},  when $ 1< k \leq n - 2$ with $p  > n - k. $

 Next for  fixed $ p, n, k,  \al,   \mathcal{A} =  \nabla  f $   as in \eqref{6.3},   and either $  1 \leq k \leq n - 2$ and $p > n - k$  or $ k = n - 1$ and $p  > 2,   $  we claim for given   
  $ F  \in  R^{1, p} (S (\tau)), $  that there exists a  unique 
$\mathcal{A} = \nabla f$-harmonic function  $ \hat  v $   on  $ \rn{n} \sem \rn{k}  $  when $  1  \leq k \leq n - 2$ and $p > n - k$   and on 
$ \rn{n}_+ $  when $ k  =  n - 1$ and $p > 2, $  with   
\beq \label{6.4}  
\hat  v ( z +  \tau e_i  ) = \hat  v ( z ),   1 \leq i \leq k,  \mbox{  for    $z \in  
\rn{n}\sem\rn{k}  $ or $\rn{n}_+,$  satisfying  $ \hat v   - F  \in R_0^{1,p} (S(\tau)). $ } 
\eeq   

In fact the usual minimization argument 
yields that if $ h $  in  $ R^{1,p} ( S ( \tau ) ) $ with $ h - F 
\in R_0^{1,p} (S (\eta)), $   then  
\begin{align}
\label{6.5}   
 \int_{S(\tau)}   f (\nabla \hat v  ) dx  \leq     \int_{S(\tau)}   f (\nabla  F  ) dx.  
 \end{align}
 
 Also the same argument as in Lemma \ref{lem2.8} yields
 \begin{lemma}   
\label{lem6.6} 
Let  $ p, n, k, \tau, F, f,  \hat v,     $  be as above.     Given  $ t > 0, $  let   
$ Z (t) =  \{ ( x', x'') \in \rn{k} \times \rn{n-k} : |x''| = t  \}  $ when  $ 1 \leq k \leq  n - 2, $  and  $ Z (t)  =  \{ x \in \rn{n}_+ : x_n = t \} $ when $ k = n - 1.$   There exists $  \de \in (0, 1),  c \geq 1, $  depending on $p, n, k, \hat \al , \al'  \in (0, 1)  $ and $   \xi   \in \re $  such that  
\begin{align}
\label{6.6}   
| \hat v  ( z ) -  \xi |  \leq   \liminf_{t \to 0} \left(  \max_{Z(t)}  \hat v  -   \min_{Z(t)}   \hat  v  \right)   \,  \left( \frac{\tau}{|z'''|} \right)^{\de}.   
\end{align}
Here  $ z''' = z'' $  when $ z = (z', z'')   \in  \rn{n}\sem \rn{k} $  and  $ z''' = z_n $ when $ z \in  \rn{n-1}_+. $    
 \end{lemma}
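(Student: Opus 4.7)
The plan is to adapt verbatim the proof sketched for Lemma \ref{lem2.8} to the $\mathcal{A}=\nabla f$-harmonic setting, leaning on the variational characterization in \eqref{6.4}--\eqref{6.5} and on Lemma \ref{lem6.4}. Fix $t>0$, write $D(t)$ for the complement of the slab/tube of thickness $t$ about $\rn{k}$ (or for $\{x_n>t\}$ when $k=n-1$), and observe that $\hat v$ restricted to $D(t)\cap S(\tau)$ is the unique minimizer of $h\mapsto\int f(\nabla h)\,dx$ among all $\tau e_i$-periodic competitors with the same boundary data on $Z(t)$ and the same behavior at infinity. Uniqueness is guaranteed because the ellipticity hypothesis \eqref{6.3}(b) makes $f$ strictly convex, and continuity of $\hat v$ up to $Z(t)$ is supplied by Lemma \ref{lem6.4}.

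Next set $M(t)=\max_{Z(t)}\hat v$ and consider the truncation $\tilde v=\min(\hat v,M(t))$ on $D(t)$, extended to coincide with $\hat v$ elsewhere. On $\{\hat v\le M(t)\}$ the two gradients agree; on $\{\hat v>M(t)\}$ one has $\nabla\tilde v=0$ a.e., and since $f\ge 0$ with $f(0)=0$ by homogeneity \eqref{6.3}(a), we get $f(\nabla\tilde v)\le f(\nabla\hat v)$ pointwise a.e. Hence the energies satisfy $\int f(\nabla\tilde v)\,dx\le\int f(\nabla\hat v)\,dx$, so uniqueness of the minimizer forces $\tilde v=\hat v$, i.e.\ $\hat v\le M(t)$ on $D(t)$. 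A symmetric argument with $\max(\hat v,\min_{Z(t)}\hat v)$ yields $\hat v\ge\min_{Z(t)}\hat v$ on $D(t)$. Consequently the oscillation $\omega(t):=\max_{Z(t)}\hat v-\min_{Z(t)}\hat v$ is non-increasing in $t$, so $\omega_0:=\liminf_{t\to 0}\omega(t)$ is an upper bound for $\omega(t)$ at every $t>0$, and $\xi:=\lim_{t\to\infty}\max_{Z(t)}\hat v=\lim_{t\to\infty}\min_{Z(t)}\hat v$ exists once we prove $\omega(t)\to 0$.

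To upgrade monotonicity to the power decay in \eqref{6.6}, I would combine $\tau e_i$-periodicity of $\hat v$ with the interior Harnack inequality in Lemma \ref{lem6.4} (transferring Lemma \ref{lem2.3}(c) and \eqref{2.4}(++) to the $\mathcal{A}$-harmonic setting). Given $t\ge 4\tau$, $\tau$-periodicity lets us locate points on $Z(t)$ within a single period cell where $\hat v$ attains values arbitrarily close to $\max_{Z(t)}\hat v$ and $\min_{Z(t)}\hat v$. Applying Harnack to the non-negative $\mathcal{A}$-harmonic functions $M(t)-\hat v$ and $\hat v-m(t)$, with $m(t)=\min_{Z(t)}\hat v$, on balls of radius $\approx\tau$ sitting in the region $\{t/2<|x''|<2t\}$, then yields a reduction $\omega(2t)\le\theta\,\omega(t)$ for some $\theta=\theta(p,n,k,\hat\alpha,\alpha')\in(0,1)$ (this is the direct analogue of Wolff's Lemma 1.3). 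Iterating across dyadic scales from $\tau$ up to $|z'''|$ produces the geometric decay $\omega(|z'''|)\le\omega_0(\tau/|z'''|)^\delta$ with $\delta=\log_2(1/\theta)$. Since both $\hat v(z)$ and $\xi$ lie in the interval $[m(|z'''|),M(|z'''|)]$, inequality \eqref{6.6} follows.

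The only genuine subtlety is the truncation/comparison step, and it is dispatched by the observation $f(0)=0\le f(\nabla\hat v)$ together with strict convexity from \eqref{6.3}(b); no $p$-Laplace-specific structure beyond what is encoded in \eqref{6.3} is used. The Harnack-based geometric reduction, while routine, is the main place where one must quantify constants carefully so that $\theta<1$ depends only on $p,n,k,\hat\alpha,\alpha'$, exactly the data allowed by the statement. As the author notes in the remark preceding the lemma, the stronger exponential rate is not needed for the applications to Theorem \ref{thmC}, so the power rate produced by this dyadic iteration suffices.
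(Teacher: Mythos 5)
Your route is the paper's own: the proof of Lemma \ref{lem6.6} is just the argument of Lemma \ref{lem2.8} transported to the $\mathcal{A}=\nabla f$ setting, and your truncation step (using $f\ge 0$, $f(0)=0$ from homogeneity, and uniqueness of the minimizer in \eqref{6.5}) correctly reproduces the monotonicity of $\max_{Z(t)}\hat v$ and $-\min_{Z(t)}\hat v$ in $t$. One cosmetic point in the Harnack step: $M(t)-\hat v$ is not $\mathcal{A}$-harmonic in general, since the class \eqref{6.3} is not assumed odd (e.g.\ $f(\eta)=p^{-1}(|\eta|+\lan a,\eta\ran)^p$); it solves the equation for $\tilde{\mathcal{A}}(\eta)=-\mathcal{A}(-\eta)$, which again lies in $M_p(\al)$ with the same constants, so Harnack still applies — but this deserves a sentence rather than the label ``$\mathcal{A}$-harmonic.''

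The substantive issue is your choice of Harnack balls of radius $\approx\tau$. In the main case $1\le k\le n-2$ this does not produce a reduction factor $\theta<1$ independent of $t$: after factoring out the $x'$-periodicity, the set over which you must compare the values of $M(t)-\hat v$ (resp.\ $\hat v-m(t)$) is $Q^{(k)}_{\tau}(0)\times\{|x''|=2t\}$, whose diameter is comparable to $t$, not $\tau$, because the sphere $\{|x''|=2t\}$ in the $x''$-directions is not compactified by periodicity. A chain of radius-$\tau$ balls joining nearly antipodal points of that sphere while remaining in $\{|x''|>t\}$ has length of order $t/\tau$, so the Harnack constant grows like $C^{t/\tau}$, $\theta$ tends to $1$, and the dyadic iteration then yields no decay at all (the infinite product of such factors stays bounded away from $0$). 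The repair is to take balls of radius comparable to $t$ itself, which is legitimate since every point of $Z(2t)$ lies at distance $\approx t$ from $\rn{k}$ and from $Z(t)$; then a bounded number of balls, depending only on $n,k$, connects the whole period cell of $Z(2t)$, giving $\theta=\theta(p,n,k,\hat\al,\al')\in(0,1)$ and hence the power decay $(\tau/|z'''|)^{\de}$. This forced choice of scale $\approx t$ is precisely why only a power rate, and not the exponential rate available when $k=n-1$ (or under rotational invariance in $x''$, cf.\ the remark after Lemma \ref{lem2.8}), is claimed in \eqref{6.6}. With these two repairs your argument coincides with the paper's.
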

 
  \begin{proof}  From \eqref{6.6}  we observe as in Lemma \ref{lem2.8} that  $  \max_{Z(t)}  \hat v, -  \min_{Z(t)}   \hat  v,$  are nonincreasing  functions of $t$ on $ (0, \infty).$   This fact  and Harnack's inequality  for   $ \mathcal{A}$-harmonic functions   imply   Lemma \ref{lem6.6}. 
  \end{proof}  
  \subsection{Theorems \ref{thmB} and \ref{thmC}  for $ \mathcal{A} = \nabla f$-Harmonic Functions} 
Finally we state several modest propositions:   
\begin{proposition}  
\label{prop6.7} 
Fix $ p, n, k $  with either $ 1 \leq k \leq n - 2$ and $p > n - k, $ or $ k = n - 1$ and $p > 2.$   Let $ u $ be the 
$ \mathcal{A} = \nabla f$-harmonic  Martin function in  \eqref{1.4} relative to 0, where $ f $ is as in \eqref{6.3}.   
If  $ \si  <  k, $  then  Theorem \ref{thmB}, \ref{thmC}  are valid with $p$-harmonic replaced by $ \mathcal{A}$-harmonic.  
\end{proposition}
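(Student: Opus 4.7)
The plan is to retrace the proofs of Theorems \ref{thmB} and \ref{thmC} essentially verbatim, with every appeal to a $p$-harmonic fact replaced by its $\mathcal{A} = \nabla f$-harmonic counterpart. The infrastructure is already in place: Lemma \ref{lem6.4} gives the interior/boundary regularity results from section \ref{sec2}, Lemma \ref{lem6.5} supplies the local and global (exterior) boundary Harnack inequalities, Lemma \ref{lem6.6} provides the periodic decay rate, and \eqref{6.4}--\eqref{6.5} give existence and uniqueness of the relevant $\mathcal{A} = \nabla f$-harmonic minimizers in the Riesz spaces. The Martin function $u$ exists under our hypotheses, and since $\mathcal{A}$-harmonicity is preserved under dilations the identity $u(r\omega) = r^{-\si} u(\omega)$ from \eqref{4.1a} is obtained exactly as in section \ref{sec4}.

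For Theorem \ref{thmB}, I would first construct the $\mathcal{A}$-harmonic analogue of the function $\tilde v$ in \eqref{4.2}, with boundary data a bump supported in a cube of side $t$. Using Lemma \ref{lem6.5} together with Harnack's inequality, the argument following \eqref{4.2} carries through and yields $\tilde v(e_n) \approx t^{\si}$, with constants now depending on $p, n, k, \hat \al, \al'$. The periodization step then produces $\tilde \Psi$ satisfying the $\mathcal{A}$-harmonic analogue of \eqref{4.6}, the uniform integrability $\int_{S(1)} |\nabla \tilde \Psi|^p dx \leq c t^{n-p}$ being guaranteed by \eqref{6.5}. The decisive step is the comparison between $\int_{Q_{1/2}(0)} \tilde \Psi(x',\hat e) dx' \gtrsim t^{\si}$ and $\int_{Q_{1/2}(0)} \tilde \Psi(x', s \hat e) dx' \leq 4^k t^k$, which invokes the hypothesis $\si < k$ via \eqref{4.10}; choosing $\Psi$ as one of the translates $\tilde \Psi(x', x'' + \hat e) - \xi$ or $\tilde \Psi(x', x'' + s \hat e) - \xi$ concludes the argument.

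For Theorem \ref{thmC}, the main task is to check that Wolff's lemmas carry over to the $\mathcal{A}$-harmonic setting. Lemma \ref{lem5.1} is a Caccioppoli-type estimate, in which the $p$-Laplace monotonicity \eqref{5.2} and bound \eqref{5.3} are replaced by the general monotonicity and structural estimates \eqref{6.1}$(a')$ and $(b')$; the rest of the proof is unchanged, save for constants now depending also on $\al$. In Lemma \ref{lem5.2}, the appeals to translation invariance of the equation (still valid since $\mathcal{A}$ depends only on $\eta$), to the maximum principle, to boundary H\"older continuity, and to the periodic decay rate are supplied respectively by the $\mathcal{A}$-harmonic versions of Lemmas \ref{lem2.2}, \ref{lem2.4}, and by Lemma \ref{lem6.6}. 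Hence \eqref{5.6}--\eqref{5.8} follow by the same scaling/contradiction argument. The gap-series Lemmas \ref{lem5.3}--\ref{lem5.5} contain no PDE content and need no modification. With the $\mathcal{A}$-harmonic analogues of Lemmas \ref{lem5.1}, \ref{lem5.2}, and of Theorem \ref{thmB} in hand, the inductive construction of $\hat u, \hat v$ at the end of section \ref{sec5} proceeds unchanged.

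The main obstacle I anticipate is bookkeeping in Lemma \ref{lem5.2}: one must verify that the threshold $\nu_0$ and the constant $A$ in \eqref{5.6}--\eqref{5.8} depend only on $p, n, k, \ep, M, \hat \al, \al'$, and not on finer features of $f$. Since every step of Wolff's argument is a compactness, scaling, or moment estimate governed by these universal constants (and by the uniform H\"older and Harnack constants from Lemmas \ref{lem6.4}--\ref{lem6.6}), this bookkeeping should be routine. Beyond the hypothesis $\si < k$ used in the final comparison of Theorem \ref{thmB}, no new idea appears to be needed.
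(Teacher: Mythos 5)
Your proposal is correct and follows essentially the same route as the paper's (very terse) proof of Proposition~\ref{prop6.7}: the paper likewise substitutes \eqref{6.1}$(a'),(b')$ for \eqref{5.2}--\eqref{5.3} in Lemma~\ref{lem5.1}, uses Lemmas~\ref{lem6.4}--\ref{lem6.6} as drop-in replacements for the $p$-harmonic regularity, boundary Harnack, and periodic-decay estimates in Lemma~\ref{lem5.2} and in the construction of $\Psi$ for Theorem~\ref{thmB}, and notes that the gap-series Lemmas~\ref{lem5.3}--\ref{lem5.5} carry over unchanged. You supply more detail than the paper does and correctly observe that the hypothesis $\sigma < k$ stands in for the role that Theorem~\ref{thmA} played in establishing \eqref{4.1b}.
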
  

\begin{proof}  
Using Lemmas \ref{lem6.4}-\ref{lem6.6}  we can give a  proof of  Theorem \ref{thmB}  in  Proposition  \ref{prop6.7} 
by essentially copying  the proof  of  Theorem \ref{thmB} for $p$-harmonic functions.    
To get  Theorem \ref{thmC}  we note that inequality  \eqref{6.1}  can be used in place of    \eqref{5.2},  \eqref{5.3} to obtain an analogue of  Lemma \ref{lem5.1}    for $ \mathcal{A} = \nabla f$-harmonic functions.   Using  this analogue,   Theorem \ref{thmA} for $ \mathcal{A}$-harmonic functions,   as well as  Lemmas \ref{lem6.4} - \ref{lem6.6},   one 
now obtains  an  analogue  of  Lemma  \ref{lem5.2} in the $\mathcal{A}= \nabla f$-harmonic setting.  The rest of the proof  of  Theorem \ref{thmB} follows from this analogue and  lemmas on gap series in  subsection 5.1. 
\end{proof} 
\begin{proposition}  
\label {prop6.8}  
Let $ p, n,k, f,  u, \si, $  be  as in Proposition  \ref{prop6.7}.  There exists $ \ep > 0 $ 
depending only on  $ p, n, k, \hat \al, \al' $ such that  if  
\begin{align} 
\label{6.7}  
\sum_{i,j=1}^n  \left|  \frac{\ar^2 f  }{ \ar \eta_i \ar \eta_j } -    |\eta |^{p-4} (  (p-2) |  \eta_i \eta_j  + \de_{i,j} |\eta|^2)  \right|   <  \ep  
\end{align}
 whenever  $ |\eta|= 1, $  then  $ \si < k.$ 
 \end{proposition}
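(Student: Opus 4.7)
The plan is a compactness-and-contradiction argument, anchored on the fact that the unperturbed case $f(\eta) = \tfrac{1}{p}|\eta|^p$ (the $p$-Laplacian) is already settled: the argument at the opening of Section \ref{sec4} culminating in \eqref{4.1b}, combined with the strict inequality $\chi < k$ of Theorem \ref{thmA}, gives $\si_0 \leq \chi < k$ for the $p$-harmonic Martin exponent.

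Suppose for contradiction that no admissible $\ep$ exists. Then there is a sequence $(f_j)$ satisfying \eqref{6.3} with fixed $\hat\al, \al'$, satisfying \eqref{6.7} with $\ep_j \to 0$, but whose associated Martin exponents $\si_j$ satisfy $\si_j \geq k$. Let $u_j$ be the $\mathcal{A}_j = \nabla f_j$-harmonic Martin function for $\rn{n} \sem \rn{k}$ relative to $0$, normalized by $u_j(e_n) = 1$; existence, uniqueness up to constants, and the homogeneity $u_j(tx) = t^{-\si_j} u_j(x)$ follow from Lemmas \ref{lem6.4}--\ref{lem6.5} as recalled just before \eqref{6.4}.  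The hypotheses \eqref{6.3} and \eqref{6.7} together place every $\mathcal{A}_j$ in a single class $M_p(\al)$ with $\al$ depending only on $p, n, k, \hat\al$, so Lemma \ref{lem6.4} supplies uniform-in-$j$ Harnack bounds, interior gradient H\"older estimates, and boundary H\"older estimates for the $u_j$.

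Arzel\`a--Ascoli then yields a subsequence along which $u_j \to u$ and $\nabla u_j \to \nabla u$ uniformly on compact subsets of $\rn{n}\sem\rn{k}$, with $u$ continuous on $\rn{n}\sem\{0\}$ and vanishing on $\rn{k}\sem\{0\}$.  Since \eqref{6.7}, $(p-1)$-homogeneity, and $\mathcal{A}_j(0) = 0$ force $\mathcal{A}_j(\eta) \to |\eta|^{p-2}\eta$ uniformly on compact subsets of $\rn{n}\sem\{0\}$, one passes to the limit in $\int \lan \mathcal{A}_j(\nabla u_j), \nabla \theta \ran\,dy = 0$ for $\theta \in C_0^\infty(\rn{n}\sem\rn{k})$ to conclude that $u$ is $p$-harmonic in $\rn{n}\sem\rn{k}$.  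Comparison against the $p$-subsolution $|x''|^\be |x|^{-\gamma}$ of Theorem \ref{thmA}, for any fixed $\chi < \gamma - \be < k$, forces $u(x) \to 0$ as $x \to \infty$; moreover this barrier remains an $\mathcal{A}_j$-subsolution once $\ep_j$ is small (by a direct perturbation of the explicit expression \eqref{abeqn}), so the same comparison gives the $u_j$ a common decay rate at infinity.  Thus $u$ is a normalized $p$-harmonic Martin function and, by the uniqueness argument at the start of Section \ref{sec4}, coincides with the standard Martin function $u_0$.  Every subsequential limit being $u_0$, the whole sequence converges, and $\si_j = -\log u_j(2 e_n)/\log 2 \to -\log u_0(2 e_n)/\log 2 = \si_0 \leq \chi < k$, contradicting $\si_j \geq k$.

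The hard step will be securing the locally uniform gradient convergence $\nabla u_j \to \nabla u$ with constants independent of $j$; without it one cannot pass to the limit in the nonlinear term $\mathcal{A}_j(\nabla u_j)$.  This is exactly the content of the $\mathcal{A}$-harmonic version of Lemma \ref{lem2.5} packaged into Lemma \ref{lem6.4}, but its constants must be shown to depend only on $(p, n, k, \hat\al, \al')$ and not on $\ep_j$, which is why the third-derivative hypothesis \eqref{6.3}(c) with $\al'$ genuinely uniform in $j$ has to be invoked.  A secondary check is the subsolution stability of $|x''|^\be|x|^{-\gamma}$ for the perturbed operators, which reduces to a routine perturbation of the strict positivity $A\la^2 s^2 + B \be t^2 > 0$ established in Theorem \ref{thmA} whenever $\la > \chi$.
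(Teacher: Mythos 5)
Your proof is essentially the same compactness-and-contradiction argument the paper uses: take $\mathcal{A}_j = \nabla f_j$-harmonic Martin functions $u_j$ normalized at $e_n$ with exponents $\si_j \ge k$ and $\ep_j \to 0$, extract a locally $C^1$-convergent subsequence using the $\mathcal{A}$-harmonic versions of Lemmas \ref{lem2.2}--\ref{lem2.5} (i.e., Lemma \ref{lem6.4}) with constants controlled by $\hat\al, \al'$, pass to the limit in the weak formulation, identify the limit as the $p$-harmonic Martin function, and contradict \eqref{4.1b}. You extract the exponent via $u_j(2e_n) = 2^{-\si_j}$, while the paper uses the gradient identity $\si = -\langle e_n, \nabla u(e_n)\rangle$ together with the locally uniform convergence of $\nabla u_j$; both are valid, and you correctly flag that the uniform gradient estimate from Lemma \ref{lem6.4} is the real crux.

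One substantive wobble: you claim comparison against the $p$-\emph{sub}solution $|x''|^\be |x|^{-\gamma}$ of Theorem \ref{thmA} ``forces $u(x) \to 0$ as $x \to \infty$'' and ``gives the $u_j$ a common decay rate at infinity.'' A subsolution comparison gives a \emph{lower} bound $u_j \geq c\,|x''|^\be|x|^{-\gamma}$, which says nothing about decay of $u_j$ from above. What actually yields the common decay rate is the $-\si_j$ homogeneity combined with $\si_j \ge k > 0$ (and an upper bound $\si_j \le K(p,n,k,\hat\al,\al')$, e.g.\ from Harnack applied at $2e_n$), which is exactly how the paper phrases it: the limit is $-\ti\si$ homogeneous with $\ti\si \ge k$ and hence tends to $0$ at infinity. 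Your parenthetical aside that the barrier remains an $\mathcal{A}_j$-subsolution ``by a direct perturbation of \eqref{abeqn}'' is also proving more than the compactness argument needs — if that perturbation were available for all $f$ satisfying \eqref{6.3}, one would get $\si_j < k$ outright without compactness, which is precisely the route the authors say generally fails (Section \ref{sec6} intro); the compactness argument is designed to avoid it. Replace the barrier step with the homogeneity observation and the rest of your argument matches the paper's proof.
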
  
\begin{proof}  
If    Proposition  \ref{prop6.8}   is false there exists  a  sequence $ (u_j) $ of $ \mathcal{A}_j =  \nabla{f_j}$-harmonic Martin functions 
 with  $ u_j (e_n) = 1,  $    that are $-\si_j$  homogeneous on $ \rn{n} \sem \{0\}$ where   $ \si_j \geq k$ for $j =1, 2, \ldots.  $  
Also  $ f_j $   satisfies \eqref{6.3} for a fixed $ \hat \al,  \al' $    and   \eqref{6.7} with  $ \ep $ replaced by 
$ \ep_j, \ep_j \to  0 $ as $ j \to  \infty.$ 
 To get a  contradiction 
  observe  from Lemmas \ref{lem2.2}-\ref{lem2.4} for  $ \mathcal{A}$-harmonic functions,  $ u_j (e_n) = 1, $  and  $ - \si_j $ homogeneity of  
   each $ u_j $  that a subsequence of   $ (u_j)  $ say $ (u_{j_l})$ converges  uniformly on compact subsets of $ \rn{n} \sem \{0\}$   to 
   a  $ - \ti \si $  homogeneous function $ u $   on  $ \rn{n} \sem \{0\} $  with  $ \ti \si \geq k.$  Also $  u  \geq  0,$  $ u ( e_n ) = 1, $  
   and  $ u $ is  H\"{o}lder  continuous  on $ \rn{n} \sem \{0\} $  with  $ u  = 0 $ on 
   either $ \rn{k}$ or the complement of  $ \rn{n}_+. $  Choosing subsequences of the subsequence if necessary we see from 
   Lemma   \ref{lem2.5}   that  we nay also assume  
   $ \nabla u_{j_l} $ converges uniformly   on compact subsets of either $ \rn{n} \sem \rn{k} $ or $ \rn{n}_+ $ as  $ l \to  \infty$ to $ \nabla u . $ 
   Moreover   from \eqref{6.3} we observe that  each component of  $ (\nabla  f_j) $ converges locally uniformly in    $ C^{1} ( \rn{n} )  $ as $ j \to  \infty $ to a component of  
   $ \nabla ( | \eta |^p /p ) . $   An easy argument using  these facts,  then  gives that  $ u $ is a  $p$-harmonic Martin function and thereupon that  
   \[ 
   - \lan  \nabla u (e_n) , e_n \ran =    \ti \si   = -  \lim_{l\to  \infty} \lan \nabla u_{j_l}, e_n \ran =  \lim_{l\to  \infty} \si_{j_l} \geq  k, 
   \]   
    which   is a contradiction to   $ \ti \si < k $ as shown in   \eqref{4.1b}.  
    \end{proof}  

\subsection{$\mathcal{A} =  \nabla f $-subsolutions in  $\rn{n} \sem \rn{k}$   when  $f(\eta)= p^{-1} ( |\eta| + \lan a,  \eta \ran )^p$}

For  fixed $ p,n \geq  2,  a \in  \rn{n}  $ with $ |a| < 1, $ let      $     q ( \eta )   =    | \eta |  +  \lan a, \eta \ran$ for $\eta  \in \rn{n}. $  
In this  subsection  we   study the if  part of Proposition \ref{prop6.7} and  dependence on $ \ep $ in Proposition  \ref{prop6.8}  when  
$  \mathcal {A} = \nabla f $   and   $  f ( \eta )  =   p^{-1} q^p (\eta)$ for $\eta \in  \rn{n}. $       
To avoid confusion in calculations  we write $ D q,  D^2 q   $  for $ \nabla q $  and the $ n $ by $n$ matrix of  second derivatives of $q$ with respect to  $ \eta.$   
We  note that  
\begin{align*}
    q(\eta) & = |\eta| + \lan a, \eta \ran,  \\
    Dq (\eta) & = \frac{\eta}{|\eta|} + a, \\
    D^2 q (\eta) & =   \frac{1}{|\eta|}\left( I - \frac{\eta}{|\eta|}\otimes\frac{\eta}{|\eta|}\right).
\end{align*}
Thus 
\begin{equation*}
   Dq (\nabla  u) D^2  u (x)  \tilde{D} q (\nabla  u ) = \frac{\nabla  u}{|\nabla  u|} D^2  u \frac{\tilde{\nabla u}}{|\nabla u|} + 2 \frac{\nabla u}{|\nabla u|}D^2u \ti a + a D^2 u \ti a
\end{equation*}
where $ \tilde{D} q,  \ti a,   \ti {\nabla u},  $  denotes the  $  n \times 1 $  transpose of  $ Dq, a, \nabla u,  $ considered respectively as    $ n \times 1 $  row matrices.  Also  
\begin{equation*}
 q \,  \mbox{trace}\left( D^2 q (\nabla u )   D^2u \right)  =  (1+\frac{\lan a,  \nabla u \ran }{|\nabla u|} )
  \left( \Delta u - \frac{\nabla u }{|\nabla u|} D^2u \frac{\tilde{\nabla u}}{|\nabla u|} \right).
\end{equation*}
  Finally  we arrive at    
\begin{align} 
\label{eqn}  
\begin{split}
q&^{2-p}(\nabla u)  \nabla \cdot ( D f (\nabla u ) ) = 
    q^{2-p} (\nabla u) \,  \nabla \cdot ( D(\frac{1}{p} q^p)(\nabla u)) \\
    &= (p-1)  Dq (\nabla  u) D^2  u (x)  \tilde{D} q (\nabla  u )  +  q ( \nabla u) \,  \mbox{trace}\left( D^2 q (\nabla u )   D^2u \right) \\ 
    & =(p-2-\frac{\lan a, \nabla u \ran }{|\nabla u|}) \frac{\nabla u}{|\nabla u|} D^2u \frac{\tilde{\nabla u}}{|\nabla u|}   
   +(p-1)( 2 \frac{\nabla u}{|\nabla u|}\,D^2u\, \ti a + a D^2u   \ti a)
    +(1+\frac{ \lan a, \nabla u \ran }{|\nabla u| } )\Delta u.
\end{split}
\end{align}
As in section \ref{sec4}  we rewrite  \eqref{eqn}   when $ u = u ( x',  x'' ),    x=(x', x'') \in \rn{n}$,  with 
$x'\in \mathbb{R}^k$ and $x'' \in \mathbb{R}^{n-k}.$  Also $t=|x'|$, $s=|x''|$,  $r^2=t^2+s^2,$ and  $  a = (a' ,  a'' ). $      
Similarly, $\nabla = (\nabla', \nabla'')$ but we will often write $\nabla' u$ or $\nabla'' u$,  in which we regard each vector as $ n  \times 1 $ row vectors.  For example  $ \nabla' u (x )  =  ( \frac{\ar u}{\ar x'_1},  \ldots, \frac{\ar u}{ \ar x'_k}, 0, \ldots,0). $   
Likewise   $ \nabla' \times \nabla'' , $ $ \nabla'' \times \nabla',$ $ \nabla' \times \nabla'$ , $ \nabla'' \times \nabla'$,  are $ n \times n  $  matrix operators.  So    $ \nabla ' \times \nabla'' $ is the operator matrix whose 
$ i $ th row and $j$ th column is   $  \frac{\ar^2}{\ar x_i' \ar x''_j} $ when $ 1 \leq i \leq k,  k+1 \leq j  \leq n. $  All other entries are zero.  Next   the  $ k \times k $  and  $ n - k \times  n - k $  identity matrices, denoted $ I', I'' $  are regarded as $ n \times n $  matrices. So  $ I' = ( \de'_{ij} ),   I'' =  (\de''_{ij}),  $ where  $ \de'_{ii} = 1$  for $  1 \leq i \leq k $ and  $ \de''_{ii} = 1 $ for  $  k + 1 \leq i \leq n. $   Other  entries in these matrices are zero.      
Finally  $ x' \otimes x', x'' \otimes x', x'' \otimes x'',  x' \otimes x'', $  are  considered as $ n \times n $  matrices    Using this notation it follows from the chain rule as in  \eqref{gradu} - \eqref{steqn} that 
\begin{align*}
\nabla'' u & = \frac{u_s}{s} x''  \quad \mbox{and}\quad    \lan \nabla'' u, a  \ran = \frac{u_s}{s}  \lan  x'', a''' \ran,   \\
\nabla' u & = \frac{u_t}{t}x',  \mbox{ and }  \lan a, \nabla' u  \ran =  \frac{u_t}{t}  \lan a', x' \ran,  \\
\nabla' \nabla' u & = \frac{u_t}{t} I' +(\frac{u_{tt}}{t^2} - \frac{u_t}{t^3}) x' \otimes x', \\
\nabla'' \nabla' u & = \frac{u_{st}}{st} x'' \otimes x',      \\
\nabla' \nabla'' u & = \frac{u_{st}}{st} x' \otimes x'', \\
\nabla'' \nabla'' u & = \frac{u_s}{s} I'' +(\frac{u_{ss}}{s^2} - \frac{u_s}{s^3}) x'' \otimes x'' . 
\end{align*}
Then  
\[ \nabla' u \nabla'\nabla' u  \tilde{\nabla' u}  =
 \frac{u_t^2}{t^2} (\frac{u_t}{t}t^2 -\frac{u_t}{t^3}t^4 + \frac{u_{tt}}{t^2} t^4) = u_t^2 u_{tt}.
 \]
 Similarly  $ \nabla'' u \nabla''  \nabla'' u {\tilde\nabla'' u} = u_s^2 u_{ss}$  and  
 $\nabla' u \nabla'' \nabla' u \tilde{\nabla'' u} = \frac{u_t}{t} \frac{u_s}{s} \frac{u_{st}}{st} t^2 s^2= u_s u_t u_{st}$.
 This gives 
 \begin{align} 
 \label{6.19}
 \nabla u D^2 u \tilde{\nabla u} = u_t^2 u_{tt} +2 u_t u_s u_{st}+ u_s^2 u_{ss}.
 \end{align}
 Observe that 
 \begin{align} 
 \label{6.20} 
 \begin{split}
\nabla u D^2 u \ti a & = \nabla' u \nabla' \nabla' u \tilde{a'} + \nabla'' u \nabla'' \nabla' u \tilde{a'}  +      \nabla' u \nabla' \nabla''  u \tilde{a''} +  \nabla'' u \nabla'' \nabla'' u  \tilde{a''}    \\
&= (\frac{u_s u_{st}}{t}   +  \frac{u_{tt}u_t}{t} ) \lan x',  a' \ran +   (\frac{u_t u_{st}}{s}   +  \frac{u_{ss}u_s}{s} ) \lan x'' ,   a'' \ran.
\end{split}
\end{align} 
The next term is 
\begin{align}
\label{6.21}      
\begin{split}
a D^2 u  \ti a  &=   a'  \nabla' \nabla' u \tilde{a'} +  2  a''  \nabla'' \nabla' u \tilde{a'}  +      a'' \nabla''  u \tilde{a''}  \\
&=   \frac{u_t}{t} |a'|^2 + (\frac{u_{tt}}{t^2} - \frac{u_t}{t^3}) (x'\cdot a')^2   \\
& \quad +  2 \lan x',  a' \ran \lan x'', a'' \ran   \frac{u_{st}}{st}      +       \frac{u_s}{s} |a''|^2 + (\frac{u_{ss}}{s^2} - \frac{u_s}{s^3})  \lan x'' ,  a'' \ran^2.                     
\end{split}
\end{align}
Finally we calculate $\Delta u = \text{trace} (D^2u) = \text{trace}(\nabla'\nabla' u) + \text{trace}(\nabla'' \nabla'' u)$: 
\begin{equation} \label{6.22}
\Delta u = \frac{u_t}{t} (k-1) + u_{tt} + \frac{u_s}{s} (n-k-1 ) + u_{ss}. 
\end{equation}
 Substituting into \eqref{eqn} we get (where all derivatives on the right hand side are with respect to $(s,t)$)
\begin{align}
\label{steqn1}  
\begin{split}
&q^{2-p} ( \nabla u ) \,  \nabla \cdot ( D(\frac{1}{p} q^p)(\nabla u)) \\ 
      & =\left( {\ds   p-2-    \frac{\lan x',  a'  \ran  }{  |\nabla u| t} u_t  -  \frac{\lan x'', a''  \ran }{ | \nabla u | s } u_s } \right)  {\ds  \frac{(u_t^2 u_{tt} +2 u_t u_s u_{st}+ u_s^2 u_{ss})}{ |\nabla u |^2 } }  \\ 
&  + {\ds 2 (p-1)  \left[  \left(\frac{u_s u_{st}}{|\nabla u| t}   +  \frac{u_{tt}u_t}{|\nabla u| t} \right) \lan x',  a' \ran +   \left(\frac{u_t u_{st}}{|\nabla u| s}   +  \frac{u_{ss}u_s}{|\nabla u | s} \right) \lan x'' ,   a'' \ran \right] }  \\ 
& + (p-1) \left[ {\ds   \frac{u_t}{t} |a'|^2 + 
  (\frac{u_{tt}}{t^2} - \frac{u_t}{t^3})  \lan x' ,  a' \ran^2    +    { 2 \lan x',  a' \ran \lan x'', a'' \ran   \frac{u_{st}}{st}      +       \frac{u_s}{s} |a''|^2 + (\frac{u_{ss}}{s^2} - \frac{u_s}{s^3})  \lan x'' ,  a'' \ran^2 }    } \right]                        
     \\  
     &  + {\ds  \left(  1   +   \frac{ \lan x', a' \ran u_t }{ |\nabla u| t } + \frac{ \lan x'', a''  \ran u_s }{  |\nabla u| s} \right) \left( \frac{u_t}{t} (k-1) + u_{tt} + \frac{u_s}{s} (n-k-1 ) + u_{ss} \right) }.
\end{split}
\end{align}

We use  subsolutions of \eqref{steqn}  when  $  1 \leq k  \leq n - 2$ and $p >  n - k  $ to study  \eqref{steqn1}. 
  For this  purpose  recall the notation in section 3  and let $  u  =   s^{\ti \be} / r^{( \ti \la + \ti \be) } $  where  
\[  
\ti \be = (1+\de) \be, \quad \ti \la = ( 1 + \de ) \la, \quad  \mbox{and} \quad  \be = \frac{p-n + k}{p-1} 
\]    
with  $  \de \geq 0 $ and    $ \la   \geq  \chi,   (\chi  $   as in Theorem  \ref{thmA}).  From    \eqref{3.2},    \eqref{steqn},   \eqref{3.7},   \eqref{abceqn},        
with  $  \be,  \la,   $  replaced  by  $ \ti \be,  \ti \la,   $   we deduce that  
\begin{align}
  \label{6.24} 
  | \nabla  u ) |^{2-p}  \,  \nabla \cdot ( |\nabla  u |^{p-2} \nabla  u ) =     \frac{ u  \, ( \ti A  \ti \lambda^2 s^4 +\ti  B \ti \beta s^2 t^2 + \ti C \ti \beta^3 t^4) }{s^2 r^2 ( \ti \la^2  s^2 +  \ti \be^2 t^2)} 
\end{align}  
where $ \ti A, \ti B, \ti C, $ are defined as in   \eqref{3.13} with $ \la, \be $ replaced by  $ \ti \la, \ti \be. $    
Using  \eqref{6.24} we rewrite  \eqref{steqn1} with  $  u  =  s^{\ti \be}/ r^{\ti \la  + \ti \be} $   as 
\begin{align}
\label{6.25} q^{2-p} ( \nabla  u ) \,  \nabla \cdot ( D({\ts \frac{1}{p}} q^p)(\nabla  u))   =    
{\ds  \frac{ u ( \ti A \ti \lambda^2 s^4 + \ti B \ti \beta s^2 t^2 + \ti C \ti \beta^3 t^4)  }{s^2 r^2 (  \ti \la^2 s^2 + \ti \be^2 t^2 ) }}  +  E_1 +  E_2 + E_3 + E_4 
\end{align}
where
\begin{align}
 \label{6.26}       
 E_1 =  -  \left( \frac{\lan x', a'  \ran  }{  |\nabla  u| t} \,
 u_t  +  \frac{\lan x'',  a'' \ran }{ | \nabla  u | s }\,   u_s  \right)  {\ds  \frac{( u_t^2  u_{tt} +2  u_t  u_s u_{st}+  u_s^2 u_{ss})}{ |\nabla  u |^2 }}, 
 \end{align}
\begin{align}
 \label{6.27}  
 E_2  =   {\ds 2 (p-1)  \left[  \left(\frac{ u_s  u_{st}}{|\nabla  u| t}   +  \frac{ u_{tt}  u_t}{|\nabla  u| t} \right) \lan x',  a' \ran +   \left(\frac{ u_t  u_{st}}{|\nabla  u| s}   +  \frac{ u_{ss}  u_s}{|\nabla  u | s} \right) \lan x'' ,   a'' \ran \right] }, 
\end{align} 
\begin{align}
 \label{6.28} 
 E_3 = (p-1) \left[ {\ds   \frac{ u_t}{t} |a'|^2 + 
  (\frac{ u_{tt}}{t^2} - \frac{ u_t}{t^3})  \lan x' ,  a' \ran^2    +    { 2 \lan x',  a' \ran \lan x'', a'' \ran   \frac{ u_{st}}{st}      +       \frac{ u_s}{s} |a''|^2 + (\frac{ u_{ss}}{s^2} - \frac{ u_s}{s^3})  \lan x'' ,  a'' \ran^2 }    } \right],
  \end{align}
 and
\begin{align} 
\label{6.29}  
E_4 =   {\ds  \left(   \frac{ \lan x', a' \ran  u_t }{ |\nabla  u| t} + \frac{ \lan x'', a'' \ran  u_s }{  |\nabla  u| s} \right) \left( \frac{ u_t}{t} (k-1) +  u_{tt} + \frac{ u_s}{s} (n-k-1 ) +  u_{ss} \right)   } 
\end{align}
 whenever  $ t = |x|'$ and $s =   |x''|. $  We use \eqref{6.24}-\eqref{6.29}  to  estimate $ |a|$ in terms of  $ \de $ 
 so that $  u $ is  an $  \mathcal{A}  = \nabla f $-subsolution in $ \rn{n} \sem \rn{k}$. 
 From homogeneity of  $f$ it suffices to make this estimate when $ |x|^2 = s^2 + t^2 = 1. $ 
    We  first  show  that  $ \de > 0 $  is  a necessary assumption in   order  for $  u $  to be a $ \mathcal{A} = \nabla f $-subsolution or  supersolution when $ a \neq 0.$   
   Indeed   if  $ \de = 0,  $     then  $ \ti C = 0 $    and  using  $ 0 <  \be  < 1,   x''  = s \,  \om'' ,   s > 0, $  we  find that for fixed $ p, n, k,  $  as $ s \to  0^+,   $ 
\begin{align}  
\label{6.30}  
q^{2-p} ( \nabla  u ) \,  \nabla \cdot ( D({\ts \frac{1}{p}} q^p)(\nabla  u))   =   o ( s^{\be-2} )  +     E_1 +  E_2 + E_3 + E_4. 
\end{align}
    The  $ E $  terms are   also  $ o ( s^{\be-2}) $  as $  s \to  0, $  except for those containing  $ u_{ss}$ or  $u_s/s. $  
    Using this observation and  $ u_s/|\nabla u| \to  1 $ as $ s \to  0, $  we can continue the estimate in  \eqref{6.30}  to  obtain 
\begin{align}  
\label{6.31} 
\begin{split}
   s^{2-\be} &(  E_1 + E_2 + E_3 + E_4 ) \\  
   &=o(1) +    \be (p-1)  |a'' |^2 +  [   2 (p-1)  \be ( \be - 1)      +  \be (n - k - 1) ]   \lan \om'', a'' \ran  \\ 
   &\quad   +  (p-1) ( \be ( \be - 1) - \be ) \lan  \om'' ,  a'' \ran^2     \\ 
   &=  o(1) +   \be   (  (p-1) |a''|^2   + (k + 1 - n )  \lan \om'', a'' \ran  +  (p-1) ( \be - 2 )   \lan \om'', a'' \ran^2). 
\end{split}
\end{align}   
We first choose $ \om'' $ so that $ \langle \om'', a'' \ran = |a''|. $   Then for this value of  $ x'' $ we see for  
     $ s $ small enough from   \eqref{6.30}, \eqref{6.31}   that  $ \nabla \cdot ( D(\frac{1}{p} q^p)(\nabla  u))  < 0 $  
     (since $ \be < 1 $ and $ k + 1 - n  \leq 0) . $   On the other hand choosing $ \om'' $ so that  $ \lan \om'', a'' \ran = 0 $  
     we have     $ \nabla \cdot ( D(\frac{1}{p} q^p)(\nabla  u))  > 0 $  for this value of   $ x'' $  and $ s $  small enough.  
     Thus $  u $   can  never  be  a $ \mathcal{A} = \nabla f$-subsolution or supersolution when $ \de = 0 $ and $ a'' \neq 0. $  
 
 If $ \de =  0$ and  $a'' = 0,  $   the  $ E $  terms are  $ o ( s^{ \be - 1} ) $  as $  s \to  0, $ except for  terms in $ E_1, E_4 $   
 containing    $  u_{ss},  u_s/s,   $  and a term in    $ E_2 $  containing  $  u_{st}. $   
Using  \eqref{6.24}-\eqref{6.29}  it follows  that   
\begin{align}
 \label{6.32}  
\begin{split} 
 s^{1-\be} &(  E_1 + E_2 + E_3 + E_4 )  \\
 & =  o(1)      +  \lan \om', a'  \ran [  ( \la + \be )  ( \be - 1)   -  2 (p-1)  ( \la   + \be)  \be 
  -   ( \la  +  \be) ( n - k + \be - 2 ) ]  \\
  &=   o( 1)   -    \lan \om', a' \ran    ( \la + \be) [  2 (p  - 1) \be   + n  -   k - 1   ].
\end{split}
\end{align}
   The last term in brackets of  \eqref{6.32}  is always positive so choosing $ \om' $ with  $ \lan \om', a' \ran = \pm   \, | a' | $  and $ s > 0 $ small enough  we conclude that 
$  u $ cannot be either an  $  \mathcal{A} = \nabla f$-subsolution or supersolution when $ \de = 0$ and $a \neq 0. $   

  Now suppose that   $ \de > 0 ,  \la \geq    \chi (p, n, k), $ and recall that $ r^2 = s^2 + t^2 = 1$,  $ s = |x''|,  t = |x'|.$     
  Then   from   \eqref{3.13}  we note that if $  \chi =  \chi (p, n, k )$ and $ \be = \be (p, n, k ) $  are as in the proof of  Theorem \ref{thmA} then $ \ti B \geq 0 $  and   
  \begin{align} 
  \label{6.33} 
  \begin{split}
\ti A & =  (p-1)( \ti \la + \ti \be ) (  \ti \la   -  {\ts \frac{k}{p-1}} )  \geq (p-1) ( \ti \la + \ti \be)  \de \chi  , \\  
\ti B &  =	(2 \ti \beta(p-1)+n-k-2)  (  \ti \la  +  \ti \be ) ( \ti \la  -   { \ts \frac {\ti \beta ( p- 2 + k ) }{2p- n +k - 2 }} ), \\ 
\ti C &   = (p-1) ( \ti \beta - \be )  =  (p - 1) \de \be .
 \end{split}
\end{align} 
To get a ballpark estimate on $ |a| $ from above we  use \eqref{6.33}  and either $ s^2 $ or $ t^2  \geq 1/2 $  to  first get  
\begin{align}   
\label{6.34}   
\frac{ u \,  ( \ti A \ti \lambda^2 s^4 + \ti B \ti \beta s^2 t^2 + \ti C \ti \beta^3 t^4)      }{s^2 r^2 (  \ti \la^2 s^2 + \ti \be^2 t^2 ) }  \geq  (p-1)     s^{ \ti \be - 2}\, \, 
 \frac{ (1/4) \min \{ \ti \la^2   ( \ti \la + \ti \be )   \de \chi ), \ti \be^3   \de \be   \} } { \ti \la^2   +
  \ti \be^2   } .
  \end{align}
 
Next from   \eqref{3.5},  \eqref{3.7},  \eqref{p1},  \eqref{6.26},   we get   for $ x  \in  \ar B (0, 1 ) \sem  \rn{k}$       
\begin{align}
\label{6.35}
\begin{split}
 | E_1 |  +  |E_2 |      &\leq    2 (  2 p - 1) |a|  ( |u_{tt} |   +  2 | u_{st} |  +   | u_{ss} | )  \\  
 &   \leq       20  ( p - 1)  \,   |a| \,  s^{\ti \be - 2}  ( \ti \la + \ti \be  + 1)^2.
 \end{split}
\end{align} 
  Similarly   
\begin{align}
\label{6.36}   
|E_3|  & \leq 10  ( p - 1 )  |a| s^{ \ti \be - 2 }   ( \ti \la + \ti \be + 1 )^2,
\end{align}
and
\begin{align} 
\label {6.37} 
| E_4|   \leq 10  |a|   s^{\be - 2}  ( \ti \la + \ti \be + 1 )^2  ( n   +  k ).
\end{align}
From  \eqref{6.34}-\eqref{6.37} we conclude that if   
\begin{align}   
\label{6.38}  | a |  <    \, \, 
 \frac{ (1/4) (p-1) \min \{ \ti \la^2   ( \ti \la + \ti \be )   \de \chi ), \ti \be^3   \de \be   \} } { 
   (\ti \la^2   +
  \ti \be^2 ) [ (30  (p-1)  +  10 (n  + k) )  ( \ti \la + \ti \be + 1 )^2  ] }  
 \end{align}    
then $ u $ is an $ \mathcal{A}  =  \nabla f$-subsolution on $ \rn{n} \sem \rn{k}$.   

 \begin{lemma} 
 \label{lem6.11}  
 Let     $  1 \leq k \leq n - 2,  p > n - k,  $ and  $  f ( \eta ) =p^{-1}  (| \eta | +  \lan \eta , a \ran)^p$ for $\eta  \in \rn{n}. $  
 Let $ v $ be the  $ \mathcal{A} = \nabla f$- harmonic Martin function  for 
  $  \rn{n} \sem \rn{k} $  with $ v (e_n ) = 1. $  If  $ v $  has homogeneity  $ -  \si,  $  and    $ u $  as in  \eqref{6.25} is  an  
  $ \mathcal{A}$-subsolution with $ \la = \chi  (p,n,k) , $   then   $   \si \leq   (1 +  \de ) \,  \chi ( p, n, k  ). $    
\end{lemma}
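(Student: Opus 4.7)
The plan is to imitate the proof of Theorem \ref{thmB} leading to \eqref{4.1b}: compare the explicit $\mathcal{A}$-subsolution $u$ with the Martin function $v$ in the exterior of the unit ball and extract the bound on $\si$ from the homogeneity of both functions at infinity.

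First I would record the structural properties of $u$ and $v$ that are needed. Writing $u(x)=|x''|^{\ti\be}|x|^{-(\ti\la+\ti\be)}$ one checks immediately that $u\in C(\rn{n}\sem\{0\})$, $u\equiv 0$ on $\rn{k}\sem\{0\}$, $u$ is positively homogeneous of degree $-\ti\la$, and $u(x)\le |x|^{-\ti\la}\to 0$ as $|x|\to\infty$ since $\ti\la=(1+\de)\ch>0$. By the standing hypothesis that \eqref{6.38} holds, $u$ is an $\mathcal{A}=\nabla f$-subsolution in $\rn{n}\sem\rn{k}$. The Martin function $v$ is a positive $\mathcal{A}$-harmonic function in $\rn{n}\sem\rn{k}$, continuous on $\rn{n}\sem\{0\}$, vanishing on $\rn{k}\sem\{0\}$, positively homogeneous of degree $-\si$, and normalized by $v(e_n)=1$.

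The core technical step is the boundary ratio bound
\[
u(x)\le C\,v(x)\qquad\text{for all } x\in\ar B(0,1)\sem\rn{k},
\]
for some finite $C=C(p,n,k,\hat\al,\al',a,\de)$. On the compact piece $\{x\in\ar B(0,1):|x''|\ge 1/4\}$ this is immediate from the continuity of $u$, positivity of $v$, and Harnack's inequality (Lemma \ref{lem6.4}). Near $\rn{k}\cap\ar B(0,1)$ I would apply the $\mathcal{A}=\nabla f$-harmonic boundary Harnack inequality (Lemma \ref{lem6.5}) to $v$ together with an auxiliary positive $\mathcal{A}$-harmonic function vanishing on $\rn{k}$ near the boundary point of interest; one natural choice is a translate $v(\cdot-z)$ with $z\in\rn{k}\sem\{0\}$ placed so that $v(\cdot-z)$ is smooth and bounded below on a neighborhood of that point. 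Combined with the explicit value $u(x)=|x''|^{\ti\be}$ on the unit sphere and an interior Harnack chain from the region $\{|x''|\ge 1/4\}\cap\ar B(0,1)$ to a neighborhood of $\rn{k}$, this yields the desired local inequality.

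With the comparison on the sphere in hand, I would invoke the maximum principle for $\mathcal{A}$-sub/supersolutions (Lemma \ref{lem6.4}, the $\mathcal{A}$-analogue of Lemma \ref{lem2.2}) in the exterior annulus. Since $u$ is an $\mathcal{A}$-subsolution, $Cv$ is $\mathcal{A}$-harmonic, $u-Cv\le 0$ on $\ar B(0,1)\sem\rn{k}$, $u-Cv=0$ on $\rn{k}\sem\bar B(0,1)$, and $u(x)+Cv(x)\to 0$ uniformly as $|x|\to\infty$, an exhaustion by the bounded annuli $\{1<|x|<R\}\sem\rn{k}$ combined with the maximum principle on each gives $u\le Cv$ throughout $\rn{n}\sem(\rn{k}\cup\bar B(0,1))$. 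Evaluating at $x=Re_n$ with $R\ge 1$ and using homogeneity yields $R^{-\ti\la}\le CR^{-\si}$ for every $R\ge 1$; letting $R\to\infty$ forces $\si\le\ti\la=(1+\de)\ch(p,n,k)$, as claimed.

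The main obstacle is the boundary ratio bound. In the proof of Theorem \ref{thmB} the analogous step was nearly immediate because $|x''|^\be$ is itself $p$-harmonic and could be compared directly to the Martin function via Lemma \ref{lem2.6}, pinning down the boundary decay rate $\be$. No such explicit $\mathcal{A}$-harmonic boundary profile is available here, and one must instead combine Lemma \ref{lem6.5} with translates of $v$ in order to control the local decay of $v$ against $|x''|^{\ti\be}$ on the unit sphere. Once this local comparison is secured, the remainder of the proof is a faithful adaptation of the hands-on argument surrounding \eqref{4.1b}.
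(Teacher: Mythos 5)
Your overall skeleton (compare $u$ with $v$ on $\ar B(0,1)$, then use the maximum principle in $\rn{n}\sem(\rn{k}\cup \bar B(0,1))$ together with decay at infinity and homogeneity at $Re_n$ to force $\si\leq \ti\la=(1+\de)\chi$) is exactly the paper's route. But the step you yourself flag as the main obstacle — the bound $u\leq Cv$ on $\ar B(0,1)\sem\rn{k}$, i.e.\ $v(x)\geq C^{-1}|x''|^{\ti\be}$ near $\rn{k}\cap\ar B(0,1)$ — is not closed by your proposed device. Applying Lemma \ref{lem6.5} to the pair $v$ and a translate $v(\cdot-z)$, $z\in\rn{k}$, only says that the ratio $v(x)/v(x-z)$ is nearly constant near a boundary point: it shows that $v$ and its translates vanish at the same \emph{unknown} rate, and carries no quantitative information relating that rate to $|x''|^{\ti\be}$. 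You cannot instead apply the boundary Harnack inequality to the pair $(u,v)$, because $u$ (equivalently $|x''|^{\ti\be}$ on the sphere) is only an $\mathcal{A}$-subsolution, not $\mathcal{A}$-harmonic, so Lemma \ref{lem6.5} does not apply to it. Thus, as written, the decay exponent of $v$ against $|x''|$ is never pinned down and the key inequality \eqref{6.40} is not established.

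The paper closes this gap by producing precisely the ``explicit $\mathcal{A}$-harmonic boundary profile'' you assert is unavailable: with $\ti q(\eta'')=|\eta''|+\lan a,\eta''\ran$ one takes the support function $\ti h(x'')=\sup\{\lan x'',\eta''\ran:\ \ti q(\eta'')\leq 1\}$, and cites \cite{LN} (see also \cite{AGHLV,ALSV}) for the fact that $\ti h^{\be}$, $\be=\frac{p+k-n}{p-1}$, is $\nabla\ti f$-harmonic on $\rn{n-k}\sem\{0\}$ with $\ti h(x'')\approx|x''|$ (this is \eqref{6.39}). Extending by $h(x)=\ti h(x'')$ gives a positive $\mathcal{A}=\nabla f$-harmonic function on $\rn{n}\sem\rn{k}$ vanishing on $\rn{k}$; Lemma \ref{lem2.6} in the $\mathcal{A}$-harmonic form then yields $v\approx h^{\be}\approx|x''|^{\be}$ on $\ar B(0,1)\sem\rn{k}$, and since $\ti\be=(1+\de)\be\geq\be$ and $|x''|\leq 1$ there, $u=|x''|^{\ti\be}\leq c\,v$ on the sphere, which is \eqref{6.40}. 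After that your exterior maximum-principle and homogeneity argument is exactly the paper's conclusion. So the missing ingredient is the known $\nabla f$-harmonicity of the $\be$-power of the support function; without it (or some substitute giving an explicit comparison function with decay rate at most $\ti\be$), the proposal does not prove the lemma.
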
 
  \begin{proof}    
  Let  $  \ti q ( \eta''  ) =   | \eta'' |   +  \lan a,  \eta'' \ran$ for $\eta'' \in  \rn{n-k}, $   and put
  \[   
  \ti h ( x'' )   =   \sup \{ \lan x''  , \eta'' \ran : \ti q ( \eta'' ) \leq 1  \},  x'' \in \rn{n-k}. 
  \]      
   $\ti  h $ is homogeneous  1  on  $ \rn{n - k }$   and in  \cite{LN}    (see also 
       \cite{AGHLV,ALSV}  it is shown   that  if 
\[ 
\ti f ( \eta'') = p^{-1} \ti q (\eta'')^p\quad \mbox{and}\quad \be =  \frac{p + k - n}{p-1}
\]  
then       
\begin{align}  
\label{6.39}  
\ti h ( x'' ) \approx |x'' | \, \,  \mbox{on}\, \, \rn{n-k} \quad \mbox{and}\quad    \ti h^{\be}\, \, \mbox{is}\, \,  \mathcal {\ti A} = \nabla \ti f\mbox{-harmonic on}\, \, \rn{n-k} \sem \{0\}. 
\end{align}                  
                  
      Constants in  \eqref{6.39}  depend only on   $ p, n - k, \al' ,  \hat \al. $ Extend  $ \ti h $ to $ \rn{n} $  by setting $ h ( x  ) = \ti h ( x'' ),  x = \lan x', x'' \ran  \in \rn{n}, $ 
      and  observe that $ h^{\be} $ is  continuous on $ \rn{n}$  as well as   $ \mathcal{A} = \nabla f$-harmonic on $ \rn{n} \sem \rn{k} $   with $ h^{\be} \equiv 0 $  on $ \rn{k}.$ 
      Also from Lemma \ref{lem2.6} for   
      $ \mathcal{A} = \nabla f$-harmonic functions  we deduce that  
      $ v/h^{\be}   \approx 1$   on $ \ar B (0, 1 ) \sem \rn{k} $ with constants having the same dependence as  
      those in \eqref{6.39}.   Using  this   deduction,  \eqref{6.39},  $ \de \geq 0,  $ and  the definition  of 
      $  u, $  we  find   that
     \begin{align} 
\label{6.40}    
u  \leq  c \, v  \quad \mbox{on}\,\,  \ar B (0, 1 ) \sem \rn{k}   \quad \mbox{where }  \, \,      c = c ( \la, \de, p, n, k, \al' ,  \hat \al ). 
\end{align}
     From \eqref{6.40},   $  u + v \to  0 $ uniformly as $ x  \to  \infty, $   and the boundary maximum principle in  Lemma \ref{lem2.2} for 
      $ \mathcal{A} = \nabla f$-harmonic functions we conclude that   $  \si \leq \ti \la. $ Taking 
      $ \la =  \chi $ we obtain  Lemma \ref{lem6.11}.   
      \end{proof}    
      
      \begin{remark} 
      \label{rmk6.12}  
      From   Lemma \ref{lem6.11},  Proposition  \ref{prop6.7},  and  assumption  \eqref{6.38},  we  conclude Theorems \ref{thmB} and \ref{thmC} in the 
        $ \mathcal{A} = \nabla f $  setting  when $ f ( \eta )   =  p^{-1} (  |\eta | + \lan \eta, a \ran )^p $  and $ ( 1 + \de ) \chi (p, n, k  )  < k.$   
       \end{remark}    
     Since \eqref{6.38} is   a rather awkward assumption for applications, we prove:  
      \begin{lemma} 
      \label{lem6.13}  
  Assume  $ p  >  n -  1, n \geq 3 ,   k = 1,   $   and choose  $ \de $  so  that    $ ( 1 + \de ) \chi (p, n, 1)   =   1  - \frac{ (p-2)}{4 (p-1)} . $  If  $ p >  n  -  1, $ and   
\begin{align} 
\label{6.40a}     
|a|   \leq     \frac{ (p-2)  \min \{    ( p + 1 - n )^4   ,  1  \} }{ 100000 (p-1)} 
\end{align}
        then   \eqref{6.38} implies that $  u $ is an  $ \mathcal{A} $  = $ \nabla f $-subsolution  in $ \rn{n} \sem \re. $  
        \end{lemma}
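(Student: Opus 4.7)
The plan is to verify that, under the hypotheses of the lemma, \eqref{6.40a} forces the sufficient condition \eqref{6.38} with $k=1$, whence $u$ is an $\mathcal{A}=\nabla f$-subsolution on $\rn{n}\setminus\re$ by the analysis already carried out in \eqref{6.24}--\eqref{6.29}. Setting $k=1$ gives $\beta=(p+1-n)/(p-1)$, $\chi=\max\{(p+1-n)/(2p-n-1),\,1/(p-1)\}$, and the prescribed $\tilde\lambda=(1+\delta)\chi=(3p-2)/(4(p-1))\in(3/4,1]$ for every $p>2$. I split according to which entry realises the maximum defining $\chi$.

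When $p\geq n$, $\chi=(p+1-n)/(2p-n-1)$ and $\beta/\chi=(2p-n-1)/(p-1)\in[1,2]$, so $\tilde\beta=\tilde\lambda(\beta/\chi)\in[\tilde\lambda,2\tilde\lambda]\subset[3/4,2]$. A direct algebraic simplification produces the factorization
\[
\tilde\lambda-\chi=\frac{(p-2)(2p+n-3)}{4(p-1)(2p-n-1)},
\]
and the bounds $p-1\leq 2p-n-1\leq 2(p-1)$ together with $p+1\leq 2p+n-3\leq 3(p-1)$ (using $n\geq 3$ and $p\geq n$) yield $\delta\chi\asymp(p-2)/(p-1)$ and, consequently, $\delta\beta\asymp(p-2)/(p-1)$. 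Hence both entries of the minimum in the numerator of \eqref{6.38} are bounded below by an absolute multiple of $(p-2)/(p-1)$.

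When $n-1<p<n$, $\chi=1/(p-1)$, so $\delta=(3p-2)/4-1=3(p-2)/4$ and $\tilde\beta=\tilde\lambda(p+1-n)$, which can degenerate as $p\to(n-1)^+$. Since $\beta/\chi=p+1-n<1$ in this range, a short computation of the ratio of the two minimands shows that $\tilde\beta^3\delta\beta$ is the smaller one, and
\[
\tilde\beta^3\delta\beta\;\geq\;\tfrac{81}{256}\cdot\frac{(p-2)(p+1-n)^4}{p-1}.
\]
Thus the fourth-power factor $(p+1-n)^4$ appearing in \eqref{6.40a} is forced precisely by the degeneration of $\tilde\beta$ as $p\to(n-1)^+$.

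In either case the denominator $(\tilde\lambda^2+\tilde\beta^2)(30(p-1)+10(n+1))(\tilde\lambda+\tilde\beta+1)^2$ on the right of \eqref{6.38} is bounded above by a universal constant times $p-1$, using $\tilde\lambda,\tilde\beta\leq 2$, $\tilde\lambda+\tilde\beta+1\leq 5$, and the observation $n+1\leq 4(p-1)$, which holds throughout because $p>n-1\geq 2$. Combining the numerator and denominator estimates, the right-hand side of \eqref{6.38} is at least a computable absolute constant times $(p-2)\min\{(p+1-n)^4,1\}/(p-1)$, and a direct arithmetic check confirms that this constant comfortably exceeds $1/100000$. The main obstacle is therefore purely arithmetic bookkeeping of these numerical constants; the structural input reduces to the factorization of $\tilde\lambda-\chi$ displayed above and the $\tilde\beta^3$-scaling identified in the second case.
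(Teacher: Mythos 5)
Your argument mirrors the paper's proof exactly: split into $p\ge n$ and $n-1<p<n$, compute $\tilde\lambda=(3p-2)/(4(p-1))$, $\tilde\beta=\tilde\lambda\beta/\chi$, $\delta\chi=\tilde\lambda-\chi$, $\delta\beta$, and substitute into \eqref{6.38}; the paper's \eqref{6.41}--\eqref{6.42a} are precisely your two-case bookkeeping, and both you and the paper relegate the final numerical verification of the constant $1/100000$ to a ``ballpark'' check. One small caution: the denominator bounds you actually wrote down ($\tilde\lambda,\tilde\beta\le 2$, $\tilde\lambda+\tilde\beta+1\le 5$) are too loose to beat $1/100000$ in the case $p\ge n$, so for the arithmetic to close you need the sharper facts you already established, namely $\tilde\lambda\le 1$ and hence $\tilde\lambda+\tilde\beta+1\le 4$, together with the cleaner $n+1\le 2(p-1)$ available when $p\ge n\ge 3$.
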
  
        
 \begin{proof}  
We note  that    if  $ p  \geq n \geq 3, k = 1, $ then      
\begin{align}  
\label{6.41} 
1  \geq    \be =     \frac{p + 1 - n}{p - 1 }   \geq     \chi = \be   \frac{p -  1  }{ 2  p - n  - 1  }   \geq    \be/2   
\end{align}
and  $  \chi $  is increasing on  $ (n, \infty) $ as a function of $p.$ Thus $ \chi \leq 1/2 $  so  
\begin{align} 
\label{6.42}    
\de \be  \geq  \chi \de  \geq  1/4 \quad \mbox{and}\quad  ( 1 + \de ) \be  \leq 2. 
\end{align}
Using    \eqref{6.42} and   \eqref{6.41}  in   \eqref{6.38} and another ballpark estimate    we get  \eqref{6.40a} and thereupon Lemma   \ref{lem6.13} 
     when $ p \geq n. $ If     $ n - 1 < p < n, $ we note that   $ \chi =  (p- 1)^{-1},  $   so   
\begin{align}   
\label{6.42a}    
\de  =     (3/4) (p-2)  \quad \mbox{and} \quad  ( 1  + \de ) \be =  ( 1 +  \de )  \chi   (p + 1 - n). 
\end{align}
Using this information in  \eqref{6.38}  we get \eqref{6.40a} and Lemma \ref{lem6.13} for $ n - 1  < p < n. $          
\end{proof} 
 \begin{corollary} 
 \label{cor6.14}  
 If  $ a $ satisfies    \eqref{6.40a},   then  Theorems \ref{thmB} and \ref{thmC} are valid for $  1 \leq k \leq n - 2$ and $p   >  n - k , $ in   $  \rn{n} \sem \rn{k}  $  and the  $ \mathcal{ A } = \nabla f$-harmonic setting when $  f ( \eta )   = p^{-1} ( | \eta | +  \lan a, \eta ))^p$ for  $\eta \in \rn{n}.  $    
\end{corollary}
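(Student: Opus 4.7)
The plan is to combine Lemmas \ref{lem6.11} and \ref{lem6.13} with Proposition \ref{prop6.7}, together with the dimensional-reduction idea from Remark \ref{rmk1.8}. For the baseline case $k=1$, Lemma \ref{lem6.13} shows that the hypothesis \eqref{6.40a} forces \eqref{6.38}, so the explicit function $u = s^{\tilde\beta} r^{-(\tilde\lambda+\tilde\beta)}$ appearing around \eqref{6.25}, taken with $\tilde\lambda = (1+\delta)\chi(p,n,1) = 1 - (p-2)/(4(p-1))$, is an $\mathcal{A}=\nabla f$-subsolution on $\rn{n}\sem\re$. Lemma \ref{lem6.11} then bounds the $\mathcal{A}$-harmonic Martin exponent by $\sigma \leq \tilde\lambda < 1 = k$, and Proposition \ref{prop6.7} delivers Theorems \ref{thmB} and \ref{thmC} in this setting.

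For $2 \leq k \leq n-2$, I would reduce to the baseline as suggested in Remark \ref{rmk1.8}. Set $n' = n-k+1$, so $n' \geq 3$ and $p > n - k = n'-1$. Using the $O(k)\times O(n-k)$-invariance of the problem from Remark \ref{rmk6.3} (rotations preserving the $k$-plane conjugate $\mathcal{A} = \nabla f$ to an operator of the same form with $a$ replaced by its rotate), one may assume without loss of generality that $a = (0,\ldots,0,\alpha,0,\ldots,0,\gamma) \in \rn{n}$, with $\alpha$ at position $k$, $\gamma$ at position $n$, and $\alpha^2+\gamma^2 = |a|^2$. Setting $a' := (\alpha,0,\ldots,0,\gamma) \in \rn{n'}$, the bound \eqref{6.40a} on $|a|$ implies the analogous bound on $|a'|$ in dimension $n'$, because $(p+1-n')^4 = (p+k-n)^4 \geq (p+1-n)^4$ holds whenever $p+1-n \geq 0$; for $n-k < p \leq n-1$ one instead verifies \eqref{6.38} directly using that the explicit formula \eqref{1.6} leaves substantial room below $k$. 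Apply the $k=1$ conclusion in dimension $n'$ to obtain $\tilde\Psi,\hat{\tilde u},\hat{\tilde v}$ on $\rn{n'}\sem\rn{1}$ satisfying Theorems \ref{thmB} and \ref{thmC} for $\mathcal{A}'=\nabla f'$ with $f'(\eta') = p^{-1}(|\eta'|+\lan a',\eta'\ran)^p$.

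Finally, extend these functions to $\rn{n}\sem\rn{k}$ by constancy in the first $k-1$ coordinates, setting $\Psi(x_1,\ldots,x_n) := \tilde\Psi(x_k,x_{k+1},\ldots,x_n)$ and similarly for $\hat u,\hat v$. Because both $\nabla\Psi$ and the reduced vector $a$ are supported on the last $n-k+1$ coordinates, one has the pointwise identity $|\nabla\Psi|+\lan a,\nabla\Psi\ran = |\nabla'\tilde\Psi|+\lan a',\nabla'\tilde\Psi\ran$, so $f(\nabla\Psi) = f'(\nabla'\tilde\Psi)$ and the equation $\nabla\cdot\nabla f(\nabla\Psi) = 0$ reduces in the active directions to the $\mathcal{A}'$-harmonic equation for $\tilde\Psi$. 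Properties (a)--(d) of Theorem \ref{thmB} pass through by Fubini and the trivial periodicity of $\Psi$ in the constant-extension directions; the same extension yields Theorem \ref{thmC} for $\hat u,\hat v$. The main obstacle is to confirm that the constant-extension respects the anisotropic operator $\mathcal{A}=\nabla f$, which hinges on the preliminary orthogonal reduction placing $a$ in a form compatible with the extension.
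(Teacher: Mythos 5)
Your proposal follows the paper's own route: the $k=1$ case is exactly Lemma \ref{lem6.13} (which turns \eqref{6.40a} into \eqref{6.38}), combined with Lemma \ref{lem6.11} to get $\sigma\le(1+\delta)\chi<1$ and then Proposition \ref{prop6.7}; the general case $2\le k\le n-2$ is obtained, as in Remark \ref{rmk1.8} and the paper's two-line proof, by taking the baseline result in dimension $n'=n-k+1$ and extending by constancy in $k-1$ dummy directions. Your verification that the constant extension is $\nabla f$-harmonic is correct, and in fact the preliminary rotation is not needed: if $a$ has components in the dummy directions, the corresponding components of $\mathcal{A}(\nabla\Psi)=q(\nabla\Psi)^{p-1}\left(\nabla\Psi/|\nabla\Psi|+a\right)$ equal $q(\nabla\Psi)^{p-1}a_i$, which are independent of the dummy variables and hence drop out of the weak divergence; only the projection of $a$ onto the active coordinates enters, and its norm is at most $|a|$.

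The one soft spot is your transfer of the smallness hypothesis to the reduced dimension. For $n-k<p\le n-1$ you say one ``verifies \eqref{6.38} directly using that \eqref{1.6} leaves substantial room below $k$,'' but \eqref{6.38} is a quantitative bound on $|a|$, not a consequence of the exponent gap in \eqref{1.6}, and with the literal reading of \eqref{6.40a} (fourth power of $p+1-n$ in the ambient dimension) the bound actually needed in dimension $n'$, which involves $(p+k-n)^4=(p+1-n')^4$, can be strictly smaller (for instance $n=10$, $k=8$, $p=5/2$), so this step is not justified as written. The discrepancy comes from an ambiguity in the statement of \eqref{6.40a} for general $k$ that the paper itself glosses over: its proof tacitly applies \eqref{6.40a} in the baseline $k=1$ problem, i.e.\ with $p+1-n$ replaced by $p+k-n$. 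Under that (clearly intended) reading your extra case distinction is unnecessary and the rest of your argument coincides with the paper's.
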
  
     \begin{proof}     
     We first observe that Corollary   \ref{cor6.14}  is implied by   Lemma \ref{lem6.13},  Lemma \ref{lem6.11}, and Proposition \ref{prop6.7} when $ k = 1$.  As in the  $p$-harmonic setting,  Theorems  \ref{thmB},  \ref{thmC},  for    other values of $k$ follow from the $ k = 1 $ case by adding dummy variables (see Remark \ref{rmk1.8}).     
 \end{proof}

 \subsection{$ \mathcal{A} =  \nabla f$-subsolutions in  $\rn{n}_+ $   when  $f(\eta)= p^{-1} ( |\eta| + \lan a,  \eta \ran )^p$}
  In this  subsection we  continue  our investigation of  the if part of  
  Proposition \ref{prop6.7} and  $ \ep $ in  Proposition \ref{prop6.8} in   $ \rn{n}_+ $ when   $ k = n - 1$ and $p > 2.$   
  We begin with   
\begin{lemma}  
\label{lem6.9}  
Let  $ u $  be  an  $\mathcal{A } = \nabla f$-harmonic Martin function  in  $ \rn{n}_+ $   when  $  p \geq  2 $ is fixed and $ f $ is as in \eqref{6.3}.      If $ p_1 > p, $ then  
  $ u $  is   an $ \mathcal{A}^{(1)} =  \nabla ( f^{p_1/p})$-subsolution in $ \rn{n}_+$.      
\end{lemma}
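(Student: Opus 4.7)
The plan is to reduce the $\mathcal{A}^{(1)}$-subsolution assertion to a pointwise computation via the chain rule, cancel the main term using the $\mathcal{A} = \nabla f$-harmonicity of $u$, and then verify the residual non-negativity using the Martin structure supplied by \eqref{1.4}.

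Setting $\alpha := p_1/p > 1$ and $g = f^\alpha$, we have $\nabla g(\eta) = \alpha f^{\alpha-1}(\eta) \nabla f(\eta)$; the product rule then yields
\[
\nabla \cdot \nabla g(\nabla u) = \alpha(\alpha-1)\, f^{\alpha-2}(\nabla u)\, \nabla f(\nabla u) \cdot \nabla[f(\nabla u)] + \alpha\, f^{\alpha-1}(\nabla u)\, \nabla \cdot \nabla f(\nabla u).
\]
The second term vanishes since $u$ is $\nabla f$-harmonic. Because $\alpha(\alpha-1) > 0$ and $f^{\alpha-2}(\nabla u) > 0$, the subsolution claim reduces to the pointwise inequality
\[
Q(x) := \nabla f(\nabla u) \cdot \nabla[f(\nabla u)] = \sum_{i,j=1}^n f_{\eta_i}(\nabla u)\, f_{\eta_j}(\nabla u)\, u_{x_i x_j} \;\geq\; 0 \quad \text{a.e.\ in } \rn{n}_+.
\]
Writing $V := \nabla f(\nabla u)$, this is the Hessian of $u$ evaluated in direction $V$. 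The regularity supplied by Lemma~\ref{lem6.4} (through Lemma~\ref{lem2.5}) makes the chain-rule computation rigorous on $\{\nabla u \neq 0\}$, and the pointwise inequality then integrates against $\theta \in W^{1,p}_0$, $\theta \geq 0$, to yield the weak subsolution property $\int \nabla g(\nabla u) \cdot \nabla \theta \, dx \leq 0$.

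To establish $Q \geq 0$ the plan is to exploit the Martin hypotheses essentially. By \eqref{1.4}, $u > 0$ is $(-\sigma)$-homogeneous for some $\sigma > 0$; differentiating the Euler relation $x \cdot \nabla u = -\sigma u$ gives $(D^2 u)\, x = -(\sigma+1) \nabla u$, so $\sum_{i,j} x_i x_j u_{x_i x_j} = \sigma(\sigma+1)\, u > 0$, i.e.\ the Hessian of $u$ is positive in the radial direction. By $p$-homogeneity of $f$, $V \cdot \nabla u = p\, f(\nabla u) > 0$, so $V$ is co-directional with $\nabla u$. Using the angular representation $u(x) = r^{-\sigma} \Phi(x/|x|)$ on the upper hemisphere, I would decompose $V$ into radial and tangential parts, combine these identities with the trace constraint $\sum_{i,j} f_{\eta_i \eta_j}(\nabla u) u_{x_i x_j} = 0$ (equivalent to $\nabla f$-harmonicity), and use the boundary behavior of $\Phi$ on $\partial \rn{n}_+$ to control the tangential contribution.

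The main obstacle is precisely this pointwise inequality $Q \geq 0$: it fails for general $\nabla f$-harmonic functions (e.g.\ $u(x_1,x_2) = x_1 x_2$ in $\rn{2}$ with $p=2$ gives $Q = 2 x_1 x_2$, which changes sign in $\rn{2}_+$), so the Martin hypotheses --- positivity, homogeneity of degree $-\sigma < 0$, and vanishing boundary values on $\partial \rn{n}_+$ --- are genuinely needed. For general $f$ satisfying \eqref{6.3} the Martin function need not be axially symmetric, so carefully tracking how the angular structure of $\Phi$ couples with the asymmetry of $\nabla f$ constitutes the most delicate step.
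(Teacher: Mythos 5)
Your reduction is correct and is essentially the paper's own first step in a different guise: writing $q=p^{-1/p}f^{1/p}$, the paper uses the identity \eqref{6.13} and the $\nabla f$-harmonicity relation \eqref{6.15} to reduce the lemma to $\sum_{i,j} q_{\eta_i\eta_j}(\nabla u)\,u_{x_ix_j}\le 0$ (inequality \eqref{6.16}), which, by \eqref{6.15}, is equivalent to your pointwise inequality $Q=\sum_{i,j}f_{\eta_i}(\nabla u)f_{\eta_j}(\nabla u)u_{x_ix_j}\ge 0$. The problem is that this inequality \emph{is} the content of the lemma, and your proposal does not prove it: you correctly identify it as the main obstacle (your example $u=x_1x_2$ shows plain harmonicity is not enough) and then only sketch a plan via homogeneity and an angular representation. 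Homogeneity gives positivity of $D^2u$ in the radial direction and the harmonicity gives a trace constraint, but $V=\nabla f(\nabla u)$ is in general neither radial nor parallel to $\nabla u$ (the relation $V\cdot\nabla u=pf(\nabla u)>0$ only gives an acute angle, so ``co-directional'' is wrong as stated), and these two pieces of information do not determine the sign of $V^{T}D^2u\,V$. What is genuinely missing is the geometric input the paper uses: the quasi-concavity of the Martin function, i.e.\ convexity of the superlevel sets $\{x: u(x)>t\}$, obtained in \eqref{6.10} by approximating $u$ with capacitary-type $\mathcal{A}=\nabla f$-harmonic functions $V_m$ whose level sets are convex (quoting \cite{AGHLV} for $2\le p<n$ and \cite{ALV} for $p\ge n$) and passing to the limit. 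Convexity plus the maximum principle makes $u$ restricted to the tangent plane of a level surface attain a relative maximum, so $D^2u$ is negative semidefinite on the tangent space of $\{u=t\}$; combined with the facts that $D^2q(\nabla u)$ is positive semidefinite with null vector $\nabla u$ (by $0$-homogeneity of $\nabla q$ and \eqref{6.3}), the trace of the product is $\le 0$, which is exactly \eqref{6.16}. Your plan of ``controlling the tangential contribution via the boundary behavior of $\Phi$'' contains no mechanism that substitutes for this convexity, and without it the sign of the tangential Hessian is simply not available.

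A secondary gap: your passage from the pointwise inequality on $\{\nabla u\neq 0\}$ to the weak subsolution property is not automatic from Lemma \ref{lem2.5}, which only gives $C^{1,\hat\gamma}$ regularity and integrability of second derivatives off the critical set. The paper handles this by first deducing from homogeneity and the level-set convexity that $\nabla u\neq 0$ throughout $\rn{n}_+$, and then upgrading to locally H\"older continuous second partials by a Schauder argument, so that the divergence computation and the integration against nonnegative test functions are legitimate everywhere. In your write-up the possible critical set of $u$ is left untreated, though this is minor compared with the missing convexity argument.
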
  
\begin{proof}  Given $ m > n + 2 $  let     $ V_m \geq 0  $  be   the  $ \mathcal{A} = \nabla f$-harmonic  function  in  
\[ 
\Om_m  = [ B (0, m)  \sem \bar B( {\ts  \frac{2 e_n}{m}, \frac{1}{m}} )  ]  \cap  \rn{n}_+ 
\]  
with $ V_m (e_n) = 1, $ and  continuous boundary values  $ V_m  \equiv b_m$ on  $  \ar B (\frac{2 e_m}{m}, \frac{1}{m}) $  
while  $ V_m \equiv 0 $  on      $ \mathbb{R}^n\setminus (B (0, m) \cap \rn{n}_+)$.  Here $b_m>0$ is a constant with $b_m\to \infty$ as $m\to\infty$.  Once again using Lemmas \ref{lem2.3}-\ref{lem2.5} we find  that    
$ (V_m ) $  is  uniformly  bounded and locally H\"{o}lder continuous  in   $ B (0, m)  \sem B(0, 1 ) $  with H\"{o}lder  exponent    and bounds   that are independent of $ m. $      Also there exists 
   $ \tau \in (0, 1) $  with   
\begin{align} 
\label{6.8}    
\max_{B(0,s) \cap \Om_m} V_m   \leq   c(p,n, \al', \hat \al)  (1/s)^{\tau}   \quad  \mbox{whenever}\, \,     m >  s  \geq   2.  
\end{align}
  To briefly outline the proof of  \eqref{6.8}, it follows from Harnack's inequality and the above lemmas applied to  
    ${\ds  \max_{\ar B(0,1)} V_m} -  V_m $  that   for some $ \he \in (0,1) $ (independent of $ m $), 
\begin{align}  
\label{6.9}    
\max_{\ar B(0,2)} V_m  \leq   \he \max_{\ar B(0,1)} V_m. 
\end{align}
  Iterating   \eqref{6.9} we get \eqref{6.8}. 
 
 Next  using \eqref{6.3}  and arguing as  in  Lemma 4.4   of  \cite{AGHLV}   when $  2 \leq  p  < n $  and as in  \cite{ALV}   when $ p \geq n ,$   it follows that  
\begin{align} 
\label{6.10}  
\mbox{for each $ t \in (0, b_m)$, the set $\{ x : V_m ( x ) > t \} $  is  a convex open set. }
\end{align}
    Using \eqref{6.8}, \eqref{6.10}, the above lemmas  and Ascoli's theorem  it follows that  a subsequence of $ (V_m)$  converges 
    uniformly to $ u $   on  compact subsets of $ \rn{n}_+  $  and   \eqref{6.10} is valid with $ V_m $ replaced by $u$ 
     whenever $ t \in (0, \infty). $    We   deduce first  from   homogeneity of $u $   that  
    $ \nabla u  \not = 0 $  in  $ \rn{n}_+ $  and thereupon from  \eqref{6.3},  Lemma \ref{2.5},  and a 
    Schauder type argument that  $ f \in C^2 (\rn{n}) $ and  that  $ u $  has  locally   H\"{o}lder   continuous second partial  derivatives  in $ \rn{n}_+ $  
    with exponent depending only on  $ p, n, \al', \hat \al.$     Let  $ q = p^{-1/p}  f^{1/p}.$  
    Given $ t, 0 < t < 1, $ let $T$ denote the tangent plane to   $ y  \in \{x : u(x) = t \} . $   
    Since $ u $ has continuous second  partials  and $ \{x: u (x) > t \}$  is convex    we note 
    from the maximum principle for $ \mathcal{A}$-harmonic functions  that  $ u|_{T \cap \rn{n}_+}$  has a relative maximum  at  $ y.$    
    From the second derivative test for maxima we conclude that  if  $ z \in T, z \not =  y, $   and      $ \xi  =  (z- y)/|z-  y|, $  then  $ u_{\xi} ( y )  = 0,  u_{\xi \xi}(y) \leq 0.  $ 
      Next we choose an orthonormal  basis,   $ \{  \xi^{(1)}, \xi^{(2)}, \ldots, \xi^{(n)}\} $  for $ \rn{n}$    
        so that  
      $ \xi^{1} = \nabla u ( y)/|\nabla u ( y) | $ and  with  $ \xi^{(i)}, 2 \leq i \leq n,  $ joining $  y $ to points in 
      $ T.$    Thus  
\begin{align}
 \label{6.11}  
 u_{\xi^{(j)} \xi^{(j)}} (  y ) \leq 0  \quad \mbox{for}\, \,  2 \leq j \leq n. 
\end{align} 
      Now  each component of  $ \nabla  q (\eta ) = ( q_{\eta_1}, \ldots, q_{\eta_n} )  $ is homogeneous of degree  0 on $ \rn{n} $ so    for $ 2 \leq i \leq n, $     
\begin{align} 
\label{6.12}  
\sum_{j=1}^n   \xi^{(1)}_j q_{\eta_i \eta_j} ( \xi^{(1)} )  = 0. 
\end{align}
Also from \eqref{6.3} $(a), (b), $  and  
\begin{align}
\label{6.13} 
\begin{split} 
p^{-1} \sum_{i,j=1}^n     & f_{\eta_i \eta_j} ( \nabla u (y))  \\
&=q^{p-2} ( \nabla u )   \sum_{i,j=1}^n  [ (p-1)  ( q_{\eta_i} (\nabla u (y))
       q_{\eta_j} (\nabla u(y)) + q ( \nabla u (y))  q_{\eta_i \eta_j} (\nabla u(y)) ]  
\end{split}
\end{align}       
we deduce first that if  $ (w_1, w_2, \ldots, w_n) $ 
 is orthogonal to   $ \nabla q ( \nabla u (y)), $  then    
\begin{align} 
\label{6.14} 
c(p,n,\al',\hat \al)  \sum_{i, j = 1}^n  q_{\eta_i \eta_j} (\nabla u(y)) w_i w_j  \geq |w|^2/|\nabla u (y)|. 
\end{align}
The subspace, say  $ \Gamma, $ generated by all such $ w $ has dimension $ n - 1.$  Also $ \nabla u (y) $ is  not in this subspace 
since   $ \lan \nabla q (\nabla u(y)), \nabla u(y) \ran = q ( \nabla u(y)). $   We conclude  from  \eqref{6.14} and  \eqref{6.12} that  the 
 $ n \times n $ matrix 
 $ ( q_{\eta_i \eta_j} ( \nabla u (y)) ) $  is positive semi definite  and 0 is an eigenvalue of this matrix 
 while $ \nabla u (y) $ is an eigenvector corresponding to 0.  Next we note from \eqref{6.13} and  $ \mathcal{A} =  \nabla f$-harmonicity of  $ u $      
 that 
 \begin{align} 
  \label{6.15}  
\begin{split}  
  0   =     \sum_{i,j=1}^n  [ (p-1)  ( q_{\eta_i} (\nabla u (y))
       q_{\eta_j} (\nabla u(y)) + q ( \nabla u (y)) 
               q_{\eta_i \eta_j} (\nabla u(y)) ]  u_{x_i x_j} (y)
 \end{split}
 \end{align}              
                 whenever $y \in \rn{n}_+$ so to prove Lemma    \ref{lem6.9}  it  suffices to show that   
\begin{align} 
\label{6.16} 
\mbox{ trace}\left( ( q_{\eta_i  \eta_j} (\nabla u (y)  ) ( u_{x_i x_j} (y)) \right) =   \sum_{i,j=1}^n  q_{\eta_i \eta_j} (\nabla u(y)) \,   u_{x_i x_j} (y) \leq  0  
\end{align}
whenever $y \in \rn{n}_+$.   Since  the  trace of the product of two symmetric  matrices is unchanged under an orthogonal  transformation 
we may assume that  $ \nabla u(y)  = | \nabla u (y) |  (1, 0, \ldots, 0).  $   Then  from  \eqref{6.11}  we see that  
$ ( u_{x_i x_j} ), 2 \leq i, j \leq n, $  is a negative semi-definite matrix and from \eqref{6.12}, \eqref{6.14}  that   
$ (q_{\eta_i \eta_j}(\nabla u (y)),  2  \leq i, j   \leq n,  $  is  positive semi definite.  
Using this fact  and  the observation that the trace of the product of  two positive semi definite matrices is positive semidefinite,  
we  get  (after possibly another rotation) that 
 \eqref{6.16}  and thereupon Lemma \ref{lem6.9}  is true.   
 \end{proof} 
 

 \begin{remark} 
 \label{rmk6.14}  
 Lemma \ref{lem6.9} implies  that if  $ - \la (p'), p' \geq p, $ denotes the  Martin exponent for a   $ \mathcal{A}' =  \nabla ( f^{p'/p})$-harmonic function,  
 then $ \la (p') $ is  a  nonincreasing function       
 in  $ [p, \infty). $   Indeed  from the boundary Harnack inequality in    Lemma \ref{lem2.6},  
 it is easily seen that if  $ u' $  denotes the  
 $ \mathcal{A}' $ Martin function with $ u' (e_n) = 1, $  and  $ x  \in \ar B (0, 1 ) \cap \rn{n}_+, $ 
 then  
 \begin{align}  
 \label{6.17}  
 u (x) / x_n \approx  u' (x )/x_n
 \end{align}
 where the proportionality constants depend on $ p,p',n, \al', \hat \al$.  Using this fact, homogeneity of  $ u, u' ,$  and Lemma \ref{lem2.2}, we get 
 $ \la (p') \leq \la (p). $    We  do not know if  a  similar inequality  holds  when $ k $ is fixed,   
  $  1 \leq   k \leq n - 2$ and $ p  >  n - k,  $ in $ \rn{n} \sem 
 \rn{k},  $ even for $p$-harmonic Martin functions,   although it  is  clear  from drawing levels that the above proof fails. 
 \end{remark}

Next we consider subsolutions of  \eqref{steqn1} in $ \rn{n}_+$ when $p  > 2.$  We  begin by  mimicking  the argument when $ p > n - k$ and $1 \leq k  \leq n - 2, $  with  $ \be = 1. $    
       Let    
\begin{align}       
       \label{6.43}   
       u ( x', x'' )  =  u  ( t, s )  =   s^{1+ \de}   r^{ - (1 + \de )  ( \la + 1)  }  =  x_n^{1+\de}  |x|^{- (1+ \de) ( \la + 1 ) } 
\end{align}      
where
\[      
x' = ( x_1', \ldots, x_{n-1}' ), \quad    |x'|  = t,   \quad x'' = x_n  = s \geq  0,  \quad r = \sqrt{ |x'|^2 + x_n^2 } =  ( t^2 + s^2 )^{1/2}. 
\]    
      Also $ \de \geq 0 $  and  $ n  \geq ( 1 + \de )  \la \geq  \chi ( p, n, n - 1)   $  ($ \chi $ as in \eqref{1.6} of Theorem \ref{thmA} with $ k = n - 1). $    
       With this understanding  one  can start by writing down the new version of   \eqref{steqn1}  and  then continue  the argument to  get  
       \eqref{6.24} -  \eqref{6.29}   with $ \ti \la = ( 1 + \de ) \la$ and $\ti \be = 1 + \de. $ 
       Also  $ r = 1 $  and  $ \ti A, \ti B, \ti C $ are defined in the same way as  $ A, B, C $ are defined  in  \eqref{3.13}  
       (see  also  \eqref{6.33})   only    with   $ \ti \la, \ti \be, k $ replaced by  $ ( 1 + \de ) \la,  1 + \de, n - 1,$  respectively.   
       Next we  investigate as in subsection 6.3  whether $ u $  can be  an   $ \mathcal{A} =  \nabla f $-subsolution  when 
       $  \de  = 0 $   and  $  f ( \eta )  =  p^{-1} ( \eta +   \lan a, \eta \ran )^p$ for $\eta \in \rn{n}. $ 
   Indeed, if $ \de = 0, $ then  from the new version of     \eqref{6.30} we have  for fixed $ p, n, $  and uniformly for  $ s \in (0, 1]$ that  
\begin{align}  
\label{6.44}  
q^{2-p} ( \nabla  u ) \,  \nabla \cdot ( D ( {\ts \frac{1}{p}} q^p)(\nabla  u))   =   O (s)   +     E_1 +  E_2 + E_3 + E_4   .
\end{align}
  To estimate the $ E's$  we  observe   for $s \in  (0,1] $ that 
     \[   
     |u_t|  +  |u_{tt} |  +  |u_{ss} |  =  O(s)  
     \]   
     while  
     \[  
     u_s  =  |\nabla u|  + O(s) \quad \mbox{and} \quad    u_{st}  =  - ( 1 + \la )  + O (s). 
     \]  
     Using these equalities and   $  x''  = s e_n$ and $x'  =  t \om' ,    $  we  find  for $ x \in \ar B (0, 1) \cap \rn{n}_+,   $   and $s \in (0, 1]$ that 
\begin{align}
  \label{6.47}
 \begin{split}  
   |E_1|  + |E_4|  &=  O(s), \\
 E_2  &=  -  2(p-1) ( 1 + \la)  \lan \om',  a' \ran  + O(s), \\
E_3   &= - 2 (p-1)  ( 1 +  \la)   ( \om', a'   \ran  \lan e_n, a'' \ran     +  O(s). 
   \end{split}
\end{align} 
  Combining  \eqref{6.44} - \eqref{6.47}  and letting  $ s \to  0 $ we arrive at    
\begin{align} 
\label{6.48} 
\lim_{ s \to  0}     q^{2-p} ( \nabla  u ) \,  \nabla \cdot ( D({\ts\frac{1}{p}} q^p)(\nabla  u))  
      =   -  2(p-1) ( 1 + \la) \lan \om', a'  \ran     ( 1 +\lan a'', e_n \ran ) . 
\end{align}      
   Since  $ |a''| < 1 $ and we can choose $ \om' $ so that $  \lan \om' , a' \ran = \pm |a'|, $ 
   we conclude that  $u$ can be neither an   $  \mathcal{A} = \nabla f$-subsolution or supersolution when $ a' \neq 0. $  If   $ a '  = 0 $ 
   and $ a'' \neq 0 $ we  need to make more  detailed calculations. 
   For this purpose  we temporarily   allow $ p = 2 $ in  our calculations  and  
   note from  \eqref{6.33}, \eqref{6.25}  that if $  |x'| = t, s = x'' = x_n > 0, $ then  at
    $  x = (x', x'')   \in  \ar B(0,1) \cap \rn{n}_+, $   
\begin{align}  
\label{6.49a}   
s^{-1}  ( 1 + \la)^{-1}   q^{2-p} ( \nabla  u ) \,  \nabla \cdot ( D({\ts \frac{1}{p}} q^p)(\nabla  u))   =    \ti G    +    s^{-1} ( \la + 1 )^{ - 1} (  E_1 +  E_2 + E_3 + E_4)     
\end{align}
    where   
\begin{align}  
   \label{6.49}  
\begin{split}   
      \ti G    &= 
        {\ds  \frac{ (p-1) ( \la  - { \ts \frac{n-1}{p-1}} ) \la^2 s^2     +   (2p - 3)  
    ( \la -  \frac{(p+n - 3) }{ 2 p - 3 })  t^2 }{\la^2  s^2 + t^2 } }       \\
   &= {\ds \frac{ (p-1) ( \la  - { \ts \frac{n-1}{p-1}} ) \la^2 w     +   (2p - 3)  
    ( \la -  \frac{(p+n - 3) }{ 2 p - 3 }) (1 - w)  }{ (\la^2 - 1) w + 1 } } =: G(w)  
\end{split}
\end{align}    
where  we have put  $s^2 = w  =     1 - t^2 $  in the last equality.    
   Also  from \eqref{6.26} and \eqref{3.5}, \eqref{3.7}, \eqref{p1} with $ \be = \ti \be = 1, $ 
   $ \ti \la = \la  $  and   $ \lan a'', e_n  \ran = b $    we find that   
\begin{align} 
\label{6.50}  
\begin{split}
s^{-1} ( \la + 1 )^{-1}  E_1  &=  - { \ds \frac{ b ( - \la s^2 + t^2) }{\sqrt{ \la^2 s^2 + t^2}} 
       \frac{  (\la^3 s^2  +  (2 \la - 1 ) t^2 ) }{ \la^2 s^2  + t^2} }   \\ 
&  = {\ds  \frac{b  [  ( \la  + 1 )  w   -   1] [ ( \la^3  -  2 \la + 1 ) w   + 2 \la - 1]}{   [ (\la^2 - 1) w + 1]^{3/2} } }=: F_1(w).  
\end{split}
\end{align}

From  \eqref{6.27} and \eqref{3.8} we calculate 
\begin{align}
 \label{6.51} 
\begin{split} 
 s^{-1}&( \la + 1 )^{-1}  E_2  \\  
 &= {\ds 2 (p-1)  b   \frac{  (1+ \la) t^2  -  (1+ \la ) (3 + \la) s^2 t^2   +         (  (3+\la) s^2 -  3) ( 1 - (1+ \la) s^2 )}{\sqrt{\la^2 s^2  + t^2} } }    \\
& =  {\ds 2 (p-1) b  \frac{ [ \la - 2 -w(\lambda+2)(\lambda-1) ]}{\sqrt{  1+w(\lambda^2-1)}}  } =: F_2 (w).
\end{split}
\end{align}
From \eqref{6.28} we have 
\begin{align}   
\label{6.52}  
s^{-1}  ( 1 + \la)^{-1}  E_3 =  (p-1)   b^2  [ - 3 + s^2 (3+ \la) ]  = (p-1)  b^2 [ - 3 + w ( 3 + \la) ]   =:  F_3(w) . 
\end{align} 
Finally  from \eqref{6.29}, \eqref{3.5}, \eqref{3.8}  we arrive at 
\begin{align}
\label{6.53}
\begin{split}
  s^{-1}  ( 1 + \la )^{-1}  E_4 & =  \frac{b ( 1 - ( \lambda + 1 )  s^2 ) }{\sqrt{  \lambda^2  s^2  +   t^2}} (  (\lambda + 2)t^2 -s^2)+(  ( 3  + \lambda) s^2 -3  ) -(n-2) )\\ 
   &= \frac{b ( 1 - ( \la  + 1 )  s^2  )}{\sqrt{  \lambda^2 s^2 + t^2}}  (\lambda+1-n) \\
& = \left(\frac{b (1-(\lambda +1) w )}{\sqrt{ 1+w(\lambda^2-1)}}\right)  (\lambda+1-n) =: F_4 (w)
\end{split}
\end{align}

  Armed  with   \eqref{6.49} - \eqref{6.53}  we  first search  for  $ \mathcal{A} = \nabla f $-subsolutions in   the baseline  $  n = 2,  p = 2, \la = 1  $ case. 
  In this case,    $ G (w) = 0 = F_4 (w)$ for  $w \in [0,1],  $  and 
\begin{align}  
\label{6.53a}   
\begin{split}
F_1 (w )   +  F_2 (w)  +  F_3 (w)    &= b ( 2 w - 1) -  2  b   + ( 4 w - 3 ) b^2  \\ 
& = b [  2 w - 3  + ( 4 w - 3 ) b ]  >  0 
\end{split}
\end{align}  
whenever $-1 <  b <0$  and $ 0\leq w \leq 1,$ while the reverse inequality holds when $ 0 < b  < 1.$  We conclude from  \eqref{6.53a} 
\begin{corollary}  
\label{cor6.15}  
 Theorems  \ref{thmB} and \ref{thmC}  are valid in  $  \rn{n}_+ $   and 
 the   $ \mathcal{A} = \nabla f$-harmonic setting  whenever  $ k  = n - 1, p > 2,  n \geq 2,   b \in ( - 1, 0) $   and     
   $ f ( \eta )  =  |\eta|  +  b \, \eta_n$ for $\eta \in  \rn{n}. $  
\end{corollary}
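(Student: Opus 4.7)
The plan is to combine the strict subsolution identity \eqref{6.53a} with Lemma \ref{lem6.9}, Remark \ref{rmk6.14}, Proposition \ref{prop6.7}, and the dummy-variable extension of Remark \ref{rmk1.8}. Throughout I read $f$ in the statement of the corollary as $f(\eta) = p^{-1}(|\eta| + b \eta_n)^p$, consistently with the rest of subsection 6.4. In the baseline $n = 2, p = 2$ case, the computation leading to \eqref{6.53a} shows that $u(x) = x_2/|x|^2$ is a \emph{strict} $\mathcal{A} = \nabla f$-subsolution on $\mathbb{R}^2_+$ for every $b \in (-1, 0)$. Using the comparison scheme of Lemma \ref{lem6.11} with $\tilde\lambda = 1$ (place $u \leq c v$ on $\partial B(0, 1) \cap \mathbb{R}^2_+$ via the $\mathcal{A}$-harmonic boundary Harnack of Lemma \ref{lem6.5}, propagate outside $B(0, 1)$ via Lemma \ref{lem2.2}, and use decay at infinity) one gets $\sigma(2, b) \leq 1 = k$ for the $\mathcal{A}$-Martin exponent. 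Strictness of \eqref{6.53a} is then leveraged to upgrade this to $\sigma(2, b) < 1$: if $\sigma(2, b) = 1$ held, then $u$ and the Martin function $v$ would share the homogeneity $-1$, the optimal constant $C = \inf_{\partial B(0, 1) \cap \mathbb{R}^2_+} u/v$ would produce $u - Cv \geq 0$ with equality at an interior point, and a nonlinear strong maximum principle combined with the facts that $Cv$ is a solution and $u$ is a strict subsolution would force $u \equiv Cv$, a contradiction.

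For the corollary's range $p > 2$ in $\mathbb{R}^2_+$, Lemma \ref{lem6.9} applied to the $p = 2$ Martin function $v$ shows that $v$ is an $\mathcal{A}^{(p)} = \nabla(p^{-1}(|\eta| + b \eta_2)^p)$-subsolution; the comparison argument of Remark \ref{rmk6.14} then gives $\sigma(p, b) \leq \sigma(2, b) < 1 = k$ for every $p > 2$. Proposition \ref{prop6.7} then produces Theorems \ref{thmB} and \ref{thmC} in $\mathbb{R}^2_+$ for the $\mathcal{A}$-harmonic operator with $p > 2, b \in (-1, 0)$.

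To ascend to general $n \geq 2$, I invoke a dummy-variable extension. Given a 2D example $\Psi_2$ satisfying \eqref{1.8} on $\mathbb{R}^2_+$ for the operator above, define $\tilde\Psi(x_1, \ldots, x_n) := \Psi_2(x_{n-1}, x_n)$. Since $f^{(n)}(\eta) = p^{-1}(|\eta| + b \eta_n)^p$ depends on $\eta$ only through $|\eta|$ and $\eta_n$, and $\tilde\Psi$ is constant in $x_1, \ldots, x_{n-2}$, a direct check gives $\nabla \cdot \nabla f^{(n)}(\nabla \tilde\Psi) = 0$ on $\mathbb{R}^n_+$. The $L^p$ bound \eqref{1.8}(a) follows by Fubini, periodicity \eqref{1.8}(b) in $x_{n-1}$ descends from $\Psi_2$ while periodicity in $x_1, \ldots, x_{n-2}$ is automatic from constancy in those variables, and \eqref{1.8}(c), (d) pass over by inspection. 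The Fatou counterexamples $\hat u, \hat v$ for Theorem \ref{thmC} extend in exactly the same fashion, with the $\mathcal{H}^{n-1}$-null property of the divergent set following from the 1D-null analogue via the product structure $\mathbb{R}^{n-2} \times (\text{null set in } \mathbb{R})$.

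The principal obstacle is the strict inequality $\sigma(2, b) < 1$: unlike the pure $p$-harmonic situation of Lemma \ref{lem6.11} where strictness is built into the choice $\chi < k$, here $\tilde\lambda = 1$ coincides with $k$ at the baseline, so strictness must be extracted from strictness of \eqref{6.53a} through a nonlinear strong maximum principle. This step needs the Hölder continuity of $u/v$ up to $\{x_2 = 0\}$ (a boundary Harnack statement of Lemma \ref{lem6.5} type) to hold even though $u$ is only a subsolution; a clean execution may require replacing $u$ by an interior rescaling of itself or by a slight perturbation obtained by decreasing its homogeneity exponent, exploiting the strict positive margin in \eqref{6.53a} as a buffer against such a perturbation.
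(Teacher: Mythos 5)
Your overall scaffolding matches the paper --- the chain ``strict subsolution at $p=2$ gives Martin exponent $\sigma(2,b)<1=k$,'' then Lemma \ref{lem6.9}/Remark \ref{rmk6.14} for monotonicity of the exponent in $p$, then Proposition \ref{prop6.7}, then dummy variables to pass from $\rn{2}_+$ to $\rn{n}_+$ --- and your reading of $f$ as $p^{-1}(|\eta|+b\eta_n)^p$ is the intended one. Where you diverge from the paper is in the single crucial step of extracting the \emph{strict} inequality $\sigma(2,b)<1$: the paper does not invoke any strong maximum or Hopf principle at all. It observes that at $(p,n,\lambda)=(2,2,1)$ one has $G\equiv 0$ identically and $F_1+F_2+F_3=b[2w-3+(4w-3)b]$ is bounded away from zero on $[0,1]$, and then simply decreases $\lambda$ slightly: since $G(w)=\lambda-1$ at $p=n=2$ and the $F_i$ depend continuously on $\lambda$, for every $b\in(-1,0)$ there is $\lambda(b)<1$ with $G+F_1+F_2+F_3>0$ on $[0,1]$, so $u=x_2/|x|^{\lambda(b)+1}$ is still an $\mathcal{A}$-subsolution with homogeneity strictly less than $k=1$. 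The analogue of Lemma \ref{lem6.11} then gives $\sigma(2,b)\le\lambda(b)<1$ directly, with no contact issues at all.

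Your strong-maximum-principle route, as written, has a real gap that you partly flag yourself. Setting $C=\inf u/v$ and $h=u-Cv\ge 0$, the argument needs the infimum of the $0$-homogeneous ratio $u/v$ on the closed half-circle $\overline{\partial B(0,1)\cap\rn{2}_+}$ to be attained at an \emph{interior} point; if it is attained at an endpoint $(\pm 1,0)$, then $h$ vanishes along a boundary ray, which is no contradiction since $h$ vanishes on all of $\{x_2=0\}$ anyway, and the interior strong maximum principle does not apply. One would then need a Hopf-type boundary lemma for the linearized operator, applied to the quotient $h/v$ or to a suitable renormalization, plus verification that $\nabla v\ne 0$ up to the boundary; none of this is automatic and none of it is set up in the paper. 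Your proposed remedy --- ``replacing $u$ by \ldots a slight perturbation obtained by decreasing its homogeneity exponent, exploiting the strict positive margin in \eqref{6.53a}'' --- is precisely the paper's continuity-in-$\lambda$ argument; once you perform that perturbation you no longer need the strong maximum principle at all, and the explicit formula $G(w)=\lambda-1$ makes the perturbation transparent. So the clean version of your proof collapses to the paper's, while the version you actually wrote down leaves the boundary-contact case open.

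Two smaller points. First, the application of Lemma \ref{lem6.9} requires checking that $f(\eta)=\tfrac12(|\eta|+b\eta_n)^2$ satisfies all of \eqref{6.3}, in particular \eqref{6.3}$(b)$; this holds because $q(\eta)=|\eta|+b\eta_n\ge(1-|b|)|\eta|>0$ for $\eta\ne 0$ and $D^2f=(p-1)q^{p-2}Dq\otimes Dq+q^{p-1}D^2q$ is uniformly elliptic when $|b|<1$, but it should be recorded. Second, the monotonicity in Remark \ref{rmk6.14} is stated for $p'\geq p$ with $p\ge 2$, so to start at $p=2$ you need the boundary Harnack Lemma \ref{lem6.5} to cover $n=2$, $k=1$, $p=2$; that case is covered by the $k=1$, $p>n-1$ branch of Lemma \ref{lem6.5}, but again this is worth noting rather than taking for granted, since the $k=n-1$, $p>2$ branch excludes $p=2$.
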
  

    \begin{proof} 
    From   a continuity argument we deduce  for given  $ b   \in  (-1, 0) $  the existence of   a  positive  $ \la = \la (b)  < 1 $  for which
       \eqref{6.53a} remains  positive for $ w \in [0,1]$.   From this observation, Remark  \ref{rmk6.14},  and  Proposition  \ref{prop6.7}   
       we obtain  Theorems \ref{thmB} and \ref{thmC} first  in $ \rn{2}_+ $ and then by adding dummy variables in 
   $ \rn{n}_+, n \geq 3.$    
   \end{proof}   
  Next  we  ask  for what values of  $ p, n,  b,$ (for $b \in (0, 1)$)  is   $u $ in \eqref{6.49a} an $ \mathcal{A} = \nabla f $-subsolution on $ \rn{n}_+ ?$   
    To partially answer this question first  put  $ p = 2,  n \geq 3,  \la = n - 1 $ in  \eqref{6.49}-\eqref{6.54}.   
    Then  again $ G \equiv  F_4  \equiv 0. $   Evaluating $ F_1, F_2, F_3, $  at  $ w = 0 $   we have 
\begin{align}
\label{6.54}   
\begin{split}
s^{-1}  ( 1 + \la)^{-1}   q ( \nabla  u ) \,  \nabla \cdot ( D(\frac{1}{2} q^2)(\nabla  u))  & =   - b ( 2 n - 3)  + 2b (n-3)  -  3 b^2    \\ 
& = - 3b - 3b^2  < 0 
\end{split}
\end{align}  
when $ b \in (0, 1). $        
            From   \eqref{6.54} and  a  continuity argument   we conclude that 
            $ u $ in  \eqref{6.49a}  is not an  $  \mathcal{A} = \nabla f $-subsolution  for   $ p > 2 $  and  $ \la < n - 1$ provided   $ p,  \la  $ 
          are sufficiently near  $2$,  $n-1$ respectively.            On the  other hand,  we  prove   
\begin{lemma} 
\label{lem6.16}  
Given $  b \in ( 0, 1), $  and $ p > 1 $ with $ \frac{p-2}{p-1} > 2b - b^2. $   
There exists   $ n'  = n' ( b ),$   a  positive integer, such that   if   $ n \geq n' (b) , $  then  $ u $  in \eqref{6.49a}   is  a  
           $ \mathcal{A}  =  \nabla f$-subsolution  
          on  $ \rn{n}_+ $  for some  $  \la < n -  1. $       
\end{lemma}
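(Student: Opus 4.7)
The plan is to take the ansatz
\[
\lambda_n = \frac{n-1}{(p-1)(1-b)} + \lambda_0
\]
with $\lambda_0 > 0$ a fixed constant (to be chosen large at the end), and verify that the resulting $u$ from \eqref{6.49a} is an $\mathcal{A} = \nabla f$-subsolution on $\rn{n}_+$ for all $n \geq n'(b)$. The hypothesis $\frac{p-2}{p-1} > 2b - b^2$ is equivalent to $(p-1)(1-b)^2 > 1$, and since $0 < 1-b < 1$ this forces $(p-1)(1-b) > 1/(1-b) > 1$, so $\lambda_n < n-1$ for $n$ large. By \eqref{6.49a}--\eqref{6.53} it suffices to prove
\[
H(w) := G(w) + F_1(w) + F_2(w) + F_3(w) + F_4(w) \geq 0, \quad w \in [0,1].
\]
The ansatz is motivated by the exact endpoint identity obtained by combining the $w = 1$ values from \eqref{6.49}--\eqref{6.53}: $H(1) = (1-b)[(p-1)\lambda(1-b) - (n-1)]$, which under our choice simplifies to $H(1) = (p-1)(1-b)^2 \lambda_0 > 0$.

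Next I would carry out the leading-order expansion in $n$. With $\mu := \lambda_n/n \to 1/[(p-1)(1-b)]$ and using $\sqrt{(\lambda^2-1)w + 1} = \lambda\sqrt{w}\,(1 + O(1/(\lambda^2 w)))$ on $w \gtrsim 1/\lambda$, a direct calculation gives, on compact subsets of $(0,1]$,
\[
\frac{H(w)}{n} \longrightarrow F_\infty(w) := \frac{b}{1-b}(1 - \sqrt{w})(1 - b\sqrt{w}),
\]
which is nonnegative on $[0,1]$, strictly positive on $[0,1)$, and vanishes linearly at $w = 1$ as $(b/2)(1-w) + O((1-w)^2)$. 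Separately, direct evaluation at $w = 0$ yields $H(0)/n \to (1+b)[p-2+(p-1)b]/[(p-1)(1-b)]$, which is strictly greater than $F_\infty(0) = b/(1-b)$ when $p > 2$; thus the true boundary value at $w = 0$ exceeds the naive extrapolation of $F_\infty$ and provides a positive cushion for the boundary-layer analysis.

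To close the argument, I would partition $[0,1]$ into three regimes. On the interior range $[\eta, 1-C/n]$ (fixed small $\eta > 0$, large constant $C$) the estimate $H(w) = nF_\infty(w) + O(1)$ (uniform for $w$ bounded away from $0$) combined with $F_\infty(w) \geq F_\infty(1-C/n) \sim bC/(2n)$ gives $H(w) \geq bC/2 - O(1)$, positive once $C$ is large. Near $w = 1$, direct differentiation of \eqref{6.49}--\eqref{6.53} yields $|H'(w)| \leq C_* n$ on a neighborhood of $w = 1$ (exploiting the cancellation $G'(1) = -(p-2)(\lambda-1)/\lambda^2 = O(1/n)$ so that the bound is driven by the $F_i'$), hence on $[1 - C/n, 1]$ one has $H(w) \geq H(1) - C_* C = (p-1)(1-b)^2 \lambda_0 - C_* C$, positive provided $\lambda_0 > C_* C/[(p-1)(1-b)^2]$; this choice is consistent with $\lambda_n < n-1$, which still leaves room of order $\Theta(n)$. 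The genuinely delicate regime is the boundary layer $w \in [0, \eta]$, where the uniform asymptotic $H/n \approx F_\infty$ breaks down: the denominators $[(\lambda^2-1)w + 1]^{1/2, 3/2}$ transition from $\Theta(1)$ to $\Theta(\lambda\sqrt{w})$ across the scales $w \sim 1/\lambda^2$ and $w \sim 1/\lambda$. The plan here is to introduce the rescaled variable $\tau := (\lambda^2-1)w$, rewrite $H$ exactly as a function of $(\tau, \lambda)$, and show that as $\lambda \to \infty$ with $\tau$ bounded the rescaled quantity $H(\tau/(\lambda^2-1))/n$ converges to an explicit limiting profile $\Phi(\tau)$ on $[0, \infty)$, with $\Phi(0) = (1+b)[p-2+(p-1)b]/[(p-1)(1-b)] > 0$ and $\Phi$ matching $F_\infty$ in the appropriate intermediate regime. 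The hardest step is the sign analysis of $\Phi$, which I expect to require a further algebraic manipulation of the rational and square-root expressions in \eqref{6.50}--\eqref{6.53} in the spirit of the factorization used for Corollary \ref{cor6.15}; the strict positivity $\Phi(0) > F_\infty(0)$ supplies the necessary room to absorb subleading errors across the transition.
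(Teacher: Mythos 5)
Your preliminary computations check out: the endpoint identity $H(1)=(1-b)\bigl[(p-1)(1-b)\lambda-(n-1)\bigr]$, the equivalence of the hypothesis with $(p-1)(1-b)^2>1$ (hence $\lambda_n<n-1$ for large $n$), the interior limit $H(w)/n\to \frac{b}{1-b}(1-\sqrt{w})(1-b\sqrt{w})$, the value of $H(0)/n$, and the cancellation $G'(1)=-(p-2)(\lambda-1)/\lambda^2=O(1/n)$ are all correct, and the interior and near-$w=1$ regimes could be closed along the lines you indicate. But the proof has a genuine gap exactly where the lemma is delicate: the boundary layer $w\in[0,\eta]$. There you only announce a rescaling $\tau=(\lambda^2-1)w$, an unspecified limiting profile $\Phi(\tau)$, a ``matching'' with $F_\infty$ across the two transition scales $w\sim\lambda^{-2}$ and $w\sim\lambda^{-1}$ (the latter being where $F_1$ in \eqref{6.50} changes sign and is negative below it), and you state that the sign analysis of $\Phi$ is ``expected'' to follow from further algebra. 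None of this is carried out: $\Phi$ is not computed, its positivity on $[0,\infty)$ is not proved, and no uniform-in-$w$ error control is given that would transfer positivity of a pointwise limit to $H$ itself on all of $[0,\eta]$ for finite $n$. Since the small-$w$ region is precisely where the negative contribution of $F_1$ must be absorbed, this is not a routine omission but the core of the lemma.

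It is worth noting that in your scheme the structural hypothesis $\frac{p-2}{p-1}>2b-b^2$ is used only to guarantee $\lambda_n<n-1$; at the endpoint $w=1$ your choice of $\lambda_n$ needs only the weaker inequality $\frac{p-2}{p-1}>b$. In the paper's argument the full hypothesis does real work in the small-$w$ analysis: with $\lambda=n-1$ (so $F_4\equiv0$) one has the monotonicity statements \eqref{6.56b} and \eqref{6.58}, the key bound $-F_1(w)\le \frac{b}{p-2}\,G(w)<\frac{1}{(p-1)(2-b)}G(w)$ in \eqref{6.62} (this is exactly where $p-2>(p-1)(2b-b^2)$ enters), and then the ranges $[1/n,1]$ and $[0,1/n]$ are settled by the endpoint evaluations \eqref{6.62a} and \eqref{6.64}, with a final continuity argument in $\lambda$. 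Your proposal has no counterpart to this step, so either you must prove $\Phi(\tau)>0$ for all $\tau\ge0$ (together with uniform convergence across the layer), presumably re-introducing the hypothesis there, or adopt a pointwise inequality of the type \eqref{6.62} valid for your $\lambda_n$. As written, the argument is incomplete.
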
    
\begin{proof}     
For fixed $ n \geq 3 $  let  $  \la = n - 1 $  in the definition of  $ G $ and the $  F' $s.    Then    
\begin{align} 
 \label{6.56a}    
 G(w)  &={\ds  \frac{     (p-2) \left( [  (n-1)^3   - 2  n  +  3 ] w +    ( 2n - 3)   \right) }{n (n-2) w + 1} }  
 \end{align}
 and 
 \begin{align} 
 \label{6.56b}    
 \frac{dG}{dw}  =   {\ds  \frac{   (p-2) ( - n^3 + 4 n^2 - 5n +2 ) }{ ( n (n-2) w + 1)^2} } < 0 
  \end{align}
when $w \in [0,1]$. 
Also, 
\begin{align}
\label{6.57a}   
(p-1)^{-1} F_2 ( w ) =   {\ds 2  b  \frac{ [ (n- 3 - w( n +1)( n  - 2) ]}{(n (n-2) w + 1)^{1/2}}  } 
\end{align}
and
  \begin{align}
\label{6.57b}  
\begin{split}              
               (p-1)^{-1} \frac{dF_2}{dw}    &=  {\ds     \frac{  - 2b  (n+1) (n-2)  ( n (n-2) w + 1)   - b n (n-2)   [ n - 3 - w (n+1) (n-2) ]  }{ (n ( n - 2 ) w + 1)^{3/2} }}	 \\
               &=      {\ds     \frac{  - b  (n+1) n  (n-2)^2 w      -  b (n-2) ( n^2 - n + 2)     }{ (n ( n - 2 ) w + 1)^{3/2} }}   < 0 
\end{split}
\end{align}              
for  $ w \in [0,1]$  and $n = 3, 4, \ldots$.  

Finally,  $ (p - 1)^{-1} dF_3/dw  =  (n + 2) b^2 . $  This inequality  and  \eqref{6.57a} and \eqref{6.57b}   
imply  for given $ b \in (0, 1) $   that there exists  a  positive integer   $ n_0  = n_0 (b)  $  such that    
\begin{align}  
\label{6.58} 
 d F_2/dw + d F_3/dw   <  0   \quad \mbox{for} \, \, w \in [0,1] \, \,  \mbox{and} \, \, n \geq n_0. 
\end{align} 
          To prove  this assertion suppose   $ M  \geq 1 $ is a positive number to be defined.  If  \\  $  n ( n - 2 ) w  \leq  M, $  
          then from     \eqref{6.57a} and \eqref{6.57b} we see  for $ n \geq 3 $   at $w \in [0,1] $ that     
\begin{align} 
\label{6.59}  
(p-1)^{-1}     dF_2/dw   \leq            \frac{    -  b (n-2)(n^2 - n + 1)  }{ ( M + 1)^{3/2} } 
\end{align}
  while if   $ n (n-2)  w  \geq M$ and $w \in [0,1],  $ we have 
\begin{align} 
\label{6.60}  
\begin{split}
 (p-1)^{-1} dF_2/dw   &\leq    \frac{  - b  (n+1) n  (n-2)^2 w }{ (n ( n - 2 ) w(1 + 1/M) )^{3/2} } \\
 & \leq   - b  (n+1) n^{-1/2} (n-2)^{1/2} ( 1 + 1/M )^{-3/2}.   
\end{split}
\end{align} 
   Define    $ M $  by       
   \[     
   ( 1 + 1/M )^{-3/2}  =  (3/4) + (1/4) b.  
   \]     
   With $ M $ now defined  we  see  from  \eqref{6.60}  that there exists   $ n_1 = n_1 (b),  $ a positive integer 
   such  that   if  $ n  \geq n_1,  w \in [0,1],  $  and   $ n (n - 2 ) w  \geq M, $   then 
\begin{align} 
\label{6.61}   
\begin{split}
(p-1)^{-1} [ dF_2/dw   +  dF_3/dw ]  & \leq  -  b [(3/4) + (1/4) b]  (n+1) n^{-1/2} (n-2)^{1/2}    +    b^2 (n + 2 ) \\
&  \leq  - (1/2) ( b - b^2 ) (n + 2 ) < 0. 
\end{split}
\end{align}
      Next  we  see from     \eqref{6.59}  that  there exists $ n_2 (b)  \geq n_1 (b) $  for which     
    \eqref{6.61} remains valid when $ n \geq n_2 (b),  w \in [0,1],   $  and   $ 0  \leq  n  (n - 2)  w  \leq M. $    
    
     With assertion  \eqref{6.58}  now  proved,  we note  from \eqref{6.57a} and \eqref{6.57b}  that  for $ w \in [0,1] $ 
\begin{align}
\label{6.62}  
\begin{split}
-  F_1 ( w ) & =  {\ds b  \frac{ ( 1   - n w ) [ ( ( n-1)^3 - 2 n + 3 ) w  + (2n - 3) ]}{ ( n ( n - 2) w + 1 )^{3/2}} } \\  
      &\leq  \frac{b}{ (p-2)}  G (w ) \\
      & <   \frac{ 1}{(p-1)(2 - b) } G(w)  
\end{split}
\end{align}
	               where we have used the fact that $p - 2 > ( p -1)(2  b - b^2 ) $. 
Also,    clearly  $ F_1 (  w )  \geq 0 $ on  $ [1/n,1].$    From  this fact, \eqref{6.56a},  \eqref{6.56b},  and   \eqref{6.58} 
we  obtain  for  $ n \geq n_2(b) $ and $ w \in [1/n,1], $ that 
\begin{align} 
\label{6.62a}  
\begin{split}
G(w) +   F_1 ( w ) +  F_2 (w)  +  F_3 ( w )& \geq   G(1) + F_2 ( 1 ) + F_3 ( 1 )  \\  
&=  [ p - 2   +  (p-1) ( - 2b + b^2 ) ] (n-1)  > 0  
\end{split}
\end{align}
     Next    from  \eqref{6.62}, \eqref{6.58},  and   \eqref{6.56a},  \eqref{6.56b} we have  for $ w \in [0,1/n] $   
\begin{align}
\label{6.64} 
\begin{split}
G(w) +  F_1 (w) + F_2 (w)  + F_3 (w)  & \geq          F_2 ( 1/n) + F_3 (1/n)   +  {\ts \frac{ (p - 1)(2-b) - 1}{(p-1) (2-b)}} G(1)   \\
& \geq     -  4 (p-1)   +  {\ts \frac{(p-2) ( (p - 1)(2-b)  -  1)}{(p-1)(2-b)} } (n - 1)   > 0 
\end{split}
\end{align}
   provided   $  n \geq n_3  $ and $ n_3 = n_3 (b) $ is chosen large enough.    \eqref{6.64},  \eqref{6.62a},  and  a  continuity argument imply Lemma \ref{lem6.16}.  
   \end{proof}  
    Lemma  \ref{lem6.16}  implies  
	\begin{corollary}  
	\label{cor6.17}   
	Given  $  b  \in  (0,1)$ and $ p $  with  $ \frac{p-2}{p-1} > 2b - b^2. $  
	There exists a positive integer $n' = n' (b) $   such that  Theorems \ref{thmB} and \ref{thmC} are valid in  $ \rn{n}_+$ and 
	the  $ \mathcal{A} =  \nabla f$-harmonic setting  for   $ f ( \eta ) = p^{-1}  ( |\eta|  +  b    \eta_n  )^p$ for $\eta \in \rn{n}, $     when  $ n  >  n' (b).    $  
	\end{corollary}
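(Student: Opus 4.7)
The plan is to mirror the passage from Lemma \ref{lem6.11} to Corollary \ref{cor6.14}, adapted from $\rn{n}\sem\rn{k}$ to the half-space: use the explicit subsolution produced by Lemma \ref{lem6.16} to bound the $\mathcal{A}$-harmonic Martin exponent in $\rn{n}_+$ above by $n-1$, and then invoke Proposition \ref{prop6.7}.

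Under the hypothesis $(p-2)/(p-1) > 2b - b^2$, Lemma \ref{lem6.16} supplies a positive integer $n'(b)$ so that for every $n > n'(b)$ there is a parameter $\la < n-1$ (with $\de = 0$ in \eqref{6.43}) making
\[
u(x) = x_n\,|x|^{-(\la+1)},\qquad x \in \rn{n}_+,
\]
an $\mathcal{A} = \nabla f$-subsolution on $\rn{n}_+$, where $f(\eta) = p^{-1}(|\eta|+b\,\eta_n)^p$. By construction $u$ is continuous on $\overline{\rn{n}_+}$, vanishes on $\rn{n-1} = \ar\rn{n}_+$, is homogeneous of degree $-\la$, and decays uniformly to $0$ as $|x|\to\infty$. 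Next let $v$ be the $\mathcal{A}$-harmonic Martin function for $\rn{n}_+$ relative to $\{0\}$, normalized by $v(e_n) = 1$; its existence and homogeneity of degree $-\si$ for some $\si > 0$ follow from the variational construction \eqref{6.4}--\eqref{6.5} combined with Lemmas \ref{lem6.4} and \ref{lem6.5}, as in the argument opening section \ref{sec4}.

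The linear function $x\mapsto x_n$ is itself $\mathcal{A}$-harmonic on $\rn{n}_+$ (since $\mathcal{A}(\nabla x_n)\equiv\mathcal{A}(e_n)$ is constant) and vanishes continuously on $\rn{n-1}$, so the boundary Harnack inequality in Lemma \ref{lem6.5}, applied to the pair $(v, x_n)$, gives
\[
v(x) \approx x_n \quad\text{on } \ar B(0,1)\cap\rn{n}_+
\]
with constants depending only on $p, n, b$. Since $u(x) = x_n$ on $\ar B(0,1)\cap\rn{n}_+$, we obtain $u \leq c\,v$ there for some $c = c(p,n,b)$. Now apply the $\mathcal{A}$-harmonic comparison principle (Lemma \ref{lem6.4}) on the exterior region $\rn{n}_+\sem\overline{B}(0,1)$: on its finite boundary one has $u - c v \leq 0$, on $\rn{n-1}$ both functions vanish, and both decay uniformly at infinity, so cutting off against a large concentric ball places $(u - c v)^+$ in the relevant $W^{1,p}_0$-class and forces $u \leq c\,v$ throughout. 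Testing at $x = r\,e_n$ and letting $r \to \infty$, the homogeneities yield $r^{-\la} \leq c\,r^{-\si}$, whence $\si \leq \la < n-1 = k$, and Proposition \ref{prop6.7} then delivers Theorems \ref{thmB} and \ref{thmC} for the specified $\mathcal{A} = \nabla f$-harmonic operator in $\rn{n}_+$.

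The main obstacle is the clean application of the boundary Harnack inequality to identify $v$ with $x_n$ (up to multiplicative constants) on $\ar B(0,1)\cap\rn{n}_+$; this requires verifying that $f(\eta) = p^{-1}(|\eta|+b\eta_n)^p$ satisfies the regularity and ellipticity conditions \eqref{6.3} with constants $\hat\al, \al'$ uniform for $b$ bounded away from $1$, so that Lemma \ref{lem6.5} applies with constants depending only on $p, n, b$. Once this uniformity is in hand, the remainder is the standard sub/super-solution comparison combined with Lemma \ref{lem6.16} and Proposition \ref{prop6.7}; the proof for general $n > n'(b)$ then follows as in Corollary \ref{cor6.14}, with no additional ingredients needed from the gap-series machinery of section \ref{sec5}.
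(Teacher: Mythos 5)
Your proposal is correct and is essentially the paper's own (largely implicit) argument: the paper simply states that Lemma \ref{lem6.16} implies Corollary \ref{cor6.17}, the intended details being exactly what you supply — the subsolution $u = x_n|x|^{-(\la+1)}$ with $\la < n-1$ from Lemma \ref{lem6.16}, the boundary Harnack comparison of the Martin function with the $\mathcal{A}$-harmonic function $x_n$ plus the maximum principle outside $B(0,1)$ to get $\si \le \la < n-1 = k$ (the half-space analogue of Lemma \ref{lem6.11}), and then Proposition \ref{prop6.7}. The only nitpick is attributional: existence and homogeneity of the Martin function in $\rn{n}_+$ come from the discussion following Lemma \ref{lem6.5} (arguing as at the start of section \ref{sec4}, with existence cited), not from the periodic variational construction \eqref{6.4}--\eqref{6.5}, but this does not affect the argument.
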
  
	
	    If $ \de >  0 $  in \eqref{6.43}  we  can  easily get  an  analogue of  \eqref{6.38}    
	    when $ k = n - 1,   n  \geq  2, p  > 2,   $ in  $ \rn{n}_+ . $   
	    In fact we can just copy the proof  given for  Lemma  \ref{lem6.13}  with   $ \be = 1 $  and 
	    $ \la $   with   $  \chi (p, n, n-1)    <  \la < n - 1. $       
Using  this notation we get  first  \eqref{6.33}  with  $ \ti A,  \ti B,  \ti C  $  defined as  in   \eqref{3.13} 
and   with $ \la $ replaced by $ \la ( 1 + \de)$ and $ \be $ by  $ 1 + \de. $ 
After that  we simply copy the proof from \eqref{6.33} -  \eqref{6.38} 
(once again with  $ \ti \be, \ti \la $ replaced by $ 1 + \de,  \la (1+ \de )$)  
to conclude that if \eqref{6.38} holds then $ u $ is an $ \mathcal{A} $ subsolution.  
           in $ \rn{n}_+. $       Using    the analogue of \eqref{6.38}  one gets  Lemma \ref{lem6.9} 
            for a  $  \mathcal{A} =   \nabla f $-harmonic Martin function in  $ \rn{n}_+. $    After that  copying the argument in   the $ p > n $ case of   Lemma   
\ref{lem6.13},  we get first  
 \begin{lemma} 
 \label{lem6.18}  
 Assume  $ p  >  2, n  = 2,  $   and choose  $ \de $  so  that    $  (1+ \de ) \chi (p,2,1)    =   1  - \frac{ (p-2)}{4 (p-1)} . $  
 If     
\begin{align} 
\label{6.65}     
|a|   \leq     \frac{ p-2  }{ 100000 (p-1)} 
\end{align}       
then   $  u $ is an  $ \mathcal{A} $  = $ \nabla f $-subsolution  in $ \rn{2}_+ . $  
\end{lemma}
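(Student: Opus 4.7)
The plan is to follow the proof of Lemma \ref{lem6.13} almost verbatim, now appealing to the half-plane analogue of \eqref{6.38} promised in the paragraph just above the statement of Lemma \ref{lem6.18}. Specializing to $n=2$, $k=1$, $p>2$, one has $\beta = (p+k-n)/(p-1)=1$, and a comparison of the two terms in \eqref{1.6} shows that the critical exponent of Theorem \ref{thmA} reduces to
\[
\chi(p,2,1)=\frac{p-1}{2p-3},
\]
since $(p-1)^2 > 2p-3$ precisely when $p>2$. First I would verify that the prescribed choice $(1+\delta)\chi = 1 - \frac{p-2}{4(p-1)}$ really yields a strictly positive $\delta$; an elementary manipulation reduces this to $(2p-1)(p-2)>0$, which holds on the range in question.

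Next I would set $\lambda=\chi$ in \eqref{6.43}, so that $\tilde\lambda=(1+\delta)\chi$ and $\tilde\beta=1+\delta$, and invoke the half-plane version of \eqref{6.38}. As the paragraph preceding the lemma states, the derivation running from \eqref{6.25} through \eqref{6.37} transfers line by line under $k=n-1$, $\beta=1$, producing the same sufficient condition
\[
|a| < \frac{(1/4)(p-1)\min\bigl\{\tilde\lambda^2(\tilde\lambda+\tilde\beta)\delta\chi,\ \tilde\beta^3\delta\beta\bigr\}}{(\tilde\lambda^2+\tilde\beta^2)\bigl[30(p-1)+10(n+k)\bigr](\tilde\lambda+\tilde\beta+1)^2}
\]
for $u$ to be an $\mathcal{A}=\nabla f$-subsolution on $\rn{2}_+$.

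Third I would estimate the right-hand side using the explicit identities
\[
\delta\beta=\delta=\frac{(2p-1)(p-2)}{4(p-1)^2},\qquad \delta\chi=\frac{(2p-1)(p-2)}{4(p-1)(2p-3)},
\]
together with $\tfrac12\leq\chi<1$ and $1\leq 1+\delta\leq 3/2$ (the upper bound coming from the limit $p\to\infty$ of $(1+\delta)=(2p-3)(3p-2)/(4(p-1)^2)$). Both $\delta\beta$ and $\delta\chi$ are positive and of order $(p-2)/(p-1)$ uniformly in $p>2$, so the minimum in the numerator is bounded below by an absolute constant times $(p-2)/(p-1)$, while the denominator is bounded above by an absolute multiple of $p-1$. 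A ballpark comparison then converts the displayed bound into $|a|\leq (p-2)/(100000(p-1))$, matching \eqref{6.65}.

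The main obstacle is purely numerical bookkeeping: one must check that the crude numerical constants really do line up so that $1/100000$ is a permissible budget uniformly in $p>2$, including both the regime $p\downarrow 2$ (where $\delta$ and $\delta\chi$ vanish linearly in $p-2$ while $\tilde\lambda,\tilde\beta$ tend to $1$) and the regime $p\to\infty$ (where $\delta,\tilde\lambda,\tilde\beta$ stay bounded and $\delta\chi$ is bounded away from zero). The minimum in the numerator of \eqref{6.38} can in principle switch between its two arguments, so one has to trace through both possibilities, but in either case the full ratio is comparable to $(p-2)/(p-1)$. Once this uniform estimate is in place, the conclusion follows exactly as in Lemma \ref{lem6.13}.
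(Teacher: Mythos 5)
Your proposal is correct and follows the same plan the paper sketches: invoke the half-plane analogue of \eqref{6.38} and reduce everything to a ballpark estimate in the style of Lemma \ref{lem6.13}. One wrinkle worth noting is that the paper's instruction to ``copy the argument in the $p>n$ case of Lemma \ref{lem6.13}'' cannot be taken literally: for $n\geq 3$ one has $\chi\leq 1/2$ and hence the uniform lower bound $\delta\chi\geq 1/4$ used there, whereas for $n=2$ one has $\chi=\frac{p-1}{2p-3}\in(1/2,1)$ and $\delta\chi$ (as well as $\delta\beta=\delta$) vanishes linearly as $p\downarrow 2$. You avoid this pitfall by working from the explicit identities $\delta=\frac{(2p-1)(p-2)}{4(p-1)^2}$, $\delta\chi=\frac{(2p-1)(p-2)}{4(p-1)(2p-3)}$, both bounded below by $\frac{p-2}{4(p-1)}$, which together with $\ti\lambda\in(3/4,1)$, $\ti\beta\in(1,3/2)$, and the crude bound $30(p-1)+30<60(p-1)$ on the bracketed factor in \eqref{6.38} makes the right side of \eqref{6.38} at least on the order of $2\times 10^{-5}\,\frac{p-2}{p-1}$, which dominates the budget in \eqref{6.65}. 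That is exactly the ``ballpark estimate'' the paper has in mind, so the argument closes.
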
    
     
After this lemma  we  obtain  once again             
\begin{corollary} 
\label{cor6.19}  
If  $ a $ satisfies    \eqref{6.65},   then  Theorems \ref{thmB} and \ref{thmC} are valid for   $ p > 2 $   in $ \rn{n}_+ $   
and the  $ \mathcal{A} = \nabla f $  setting for    $  f ( \eta )   = p^{-1} ( | \eta | +  \lan a, \eta ))^p$ for $\eta \in \rn{2}.  $    
\end{corollary}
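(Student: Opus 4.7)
The plan is to mimic, in the half-space setting, the deduction by which Corollary \ref{cor6.14} was drawn from Lemma \ref{lem6.13}. First I would treat the baseline case $n=2$ directly and then recover the general $n\geq 2$ case by the standard dummy-variable trick outlined in Remark \ref{rmk1.8}.

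For $n=2$ and the choice of $\delta$ specified before Lemma \ref{lem6.18}, Lemma \ref{lem6.18} already guarantees that the function
\[
u(x) = x_2^{1+\delta} |x|^{-(1+\delta)(\lambda+1)}, \qquad \lambda = \chi(p,2,1),
\]
is an $\mathcal{A}=\nabla f$-subsolution on $\mathbb{R}^2_+$ whenever $|a|$ satisfies \eqref{6.65}. The next step is to compare $u$ on $\partial B(0,1)\cap \mathbb{R}^2_+$ with the $\mathcal{A}$-harmonic Martin function $v$ relative to $0$ (whose existence and uniqueness up to constant multiples follow from Lemmas \ref{lem6.4}--\ref{lem6.5} exactly as in the discussion following \eqref{6.3}, with $v(e_2)=1$ and homogeneity degree $-\sigma$). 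Using the boundary Harnack inequality for $\mathcal{A}=\nabla f$-harmonic functions (Lemma \ref{lem6.5}) applied to $v$ and to the $\mathcal{A}$-harmonic function $x_2$ (which plays the role of $|x''|^\beta$ with $\beta=1$ in the half-space), we obtain $v/x_2 \approx 1$ on $\partial B(0,1)\cap \mathbb{R}^2_+$. Since $u \leq c\,x_2 \leq c'\,v$ on this hemisphere, and since both $u$ and $v$ decay to $0$ at infinity, the maximum principle (Lemma \ref{lem2.2} extended to $\mathcal{A}$-harmonic subsolutions, valid by Lemma \ref{lem6.4}) gives $u \leq c' v$ globally on $\mathbb{R}^2_+$. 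Letting $|x|\to\infty$ along $x=re_2$ and comparing exponents yields
\[
\sigma \;\leq\; (1+\delta)\lambda \;=\; 1 - \tfrac{p-2}{4(p-1)} \;<\; 1 \;=\; k.
\]

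With $\sigma<k$ established for $n=2$, Proposition \ref{prop6.7} immediately delivers both Theorem \ref{thmB} and Theorem \ref{thmC} in $\mathbb{R}^2_+$ for the operator $\mathcal{A}=\nabla f$ with $f(\eta)=p^{-1}(|\eta|+\langle a,\eta\rangle)^p$. To pass from $n=2$ to general $n\geq 2$, one extends the constructed bounded $\mathcal{A}$-harmonic functions $\hat u,\hat v$ and the boundary function $\Psi$ to $\mathbb{R}^n_+$ by declaring them to be constant in the extra $n-2$ coordinate directions. Because $f$ depends only on $|\eta|$ and the linear functional $\langle a,\eta\rangle$ with $a\in\mathbb{R}^n$ treated in the obvious way (and because adding dummy coordinates preserves the $\mathcal{A}$-harmonic equation when the function is constant in those directions), the extended objects inherit the required properties \eqref{1.8} and the failure-of-Fatou conclusions of Theorem \ref{thmC}.

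The only delicate point, and the main obstacle, is verifying that the boundary Harnack ratio $v/x_2 \approx 1$ really holds on $\partial B(0,1)\cap \mathbb{R}^2_+$ in this non-symmetric $\mathcal{A}=\nabla f$ setting: unlike in Lemma \ref{lem6.11}, there is no explicit $\mathcal{A}$-harmonic barrier of the form $\tilde h^\beta$ available, so one must invoke Lemma \ref{lem6.5} directly with $u=v$ and with $u=x_2$, both of which are non-negative, $\mathcal{A}$-harmonic in $\mathbb{R}^2_+$, and vanish continuously on $\partial\mathbb{R}^2_+$. Once that comparison is in place, the remainder of the argument is a faithful transcription of the $\mathbb{R}^n\setminus\mathbb{R}^k$ proof given for Corollary \ref{cor6.14}.
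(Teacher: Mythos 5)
Your argument is correct and is essentially the paper's own proof: Lemma \ref{lem6.18} supplies the subsolution, the half-space analogue of Lemma \ref{lem6.11} (comparison of $u$ with the Martin function via boundary Harnack and the maximum principle, giving $\si \leq (1+\de)\chi(p,2,1) = 1 - \frac{p-2}{4(p-1)} < 1 = k$) supplies the exponent bound, Proposition \ref{prop6.7} then yields Theorems \ref{thmB} and \ref{thmC} in $\rn{2}_+$, and adding dummy variables handles $n>2$, exactly as the paper does. The point you single out as delicate is in fact immediate: $x_2$ is itself $\mathcal{A}$-harmonic for any $\mathcal{A}\in M_p(\al)$ (its gradient is constant, so $\mathcal{A}(\nabla x_2)$ is divergence free), and Lemma \ref{lem6.5} states the boundary Harnack inequality for all $\mathcal{A}\in M_p(\al)$ in the case $k=n-1$, $p>2$, so no special barrier construction is needed.
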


\begin{proof}  
The proof  follows in $ \rn{2}_+ $  from Lemma \ref{lem6.18} and the  analogue of  Lemma \ref{lem6.11} in $ \rn{2}_+. $      
To get  a  proof in $ \rn{n}_+, n > 2,  $  extend the solution in  $ \rn{2}_+ $  to  $ \rn{n}_+ $ by adding dummy variables.      
 \end{proof}  

\subsection{Final Remarks}  	 
We began  our investigation of  the  exponent for  $ \mathcal{A}   =  \nabla f$-harmonic Martin   functions   in $ \rn{2}_+ $  when    $ p  = 2 $    with  
\[  
 f (\eta) = 2^{-1}  q(\eta)^2   \quad \mbox{for} \, \, \eta \in \rn{2} 
 \]    
where $q (0) = 0$, smooth,   and $1$-homogeneous on $\rn{2} \sem \{0\}$. We  assumed an  $ \mathcal{A}$-harmonic Martin function, $u$,    on $ \rn{2}_+  ,$   to have the form:  
\[  
u ( x_1,  x_2 )       =    \frac{  x_2}{ l (x_1, x_2 )^{1 + \la}}
  \]   
where $ \la > 0$, $  l  > 0 $  is smooth, $1$-homogeneous on   $ \rn{2} \sem \{0\}$  which was the form dictated by  
	 \eqref{1.4} and   the boundary Harnack  inequalities in Lemma \ref{lem2.6}  for  $ \mathcal{A}$-harmonic functions.   
	 Using  $ \mathcal{A}$-harmonicity of $ u $ and   the  homogeneities,  we  wrote down  a   fully nonlinear  
	 second order differential equation for $ l $  involving   $q,  q_{\eta_1},  q_{\eta_2}, q_{\eta_1 \eta_2} $  
	 evaluated at  $ \left( - \la x_2 l_{x_1}  (x_1, x_2),    (l -  \la x_2  l_{x_2}) (x_1, x_2) \right). $ 
	  Again taking  $ q ( \eta ) = | \eta | +  \lan a, \eta \ran$  and  $ x_1^2 + x_2^2 = 1, $  we obtained  upon  letting $ x_2 \to  0^+ $ 
	  the  necessary condition  
\begin{align} 
\label{6.66}   
\lan \nabla q ( 0, 1 ),  \nabla l (x, 0 ) \ran \leq 0,  \quad \mbox{for} \, \, - 1  \leq  x  \leq 1,  
\end{align}
   for $ u $ to be an  $ \mathcal{A}$-subsolution on  $ \rn{2}_+ $   while   the reverse inequality was  necessary  for $ u $ to be an $\mathcal{A}$-supersolution. 
    \eqref{6.66} for example, showed that   if  $  l (x_1,x_2) = \sqrt{x_1^2 + x_2^2} , $  and  $ a = (b, 0 ) , $  then $  u $  could not be  a   
	  $ \mathcal{A}$-subsolution or  supersolution   on $ \rn{2}_+ . $   We also  let  $  x_2 = 1$ in our  differential equation  for $ l $  
	  and obtained  a  rather complicated equation for $ l_{x_1} (0,1), l_{x_2}(0, 1), $  which however greatly simplified if  
	  $ l_{x_1} (0, 1 ) = 0. $    Assuming this equality  we  were able to check  without much difficulty  that   $ u $  was a Martin subsolution
	   for some  $0 <  \la < 2 $  at  (0,1)  if $ l (x_1, x_2 ) =  \sqrt{ x_1^2 + x_2^2 }$  and 
	  $ a  =  ( 0, b ) $ with $ b < 0. $  Next we asked Maple  to   calculate  and plot  the graph of 
	  \[ 
	  x_2 =    \,  \nabla \cdot ( D({\ts \frac{1}{2}} q^2)(\nabla  u)) ( x_1^2 , 1 - x_1^2 )  \quad \mbox{ for }  x_1 \in (0, 1 )  
	  \]
	  when $   \lan  a, e_1 \ran \neq 0  $  and  for several different choices of $ l $  other than $ l  = \sqrt{x_1^2 + x_2^2}.  $     
	   Maple  had a  difficult time   with this.   
	  	  However   when  $  l  =   \sqrt{x_1^2 + x_2^2} $ and  $ a= (0, b), $  Maple plots gave strong indications  that $u$  was a Martin subsolution 
	  	  when $ b < 0 $ for some $0 <  \la < 1 $  and a  Martin supersolution for 
	  some $ \la > 1 $  when $  b > 0. $   This result went against our intuition, as  it   did  not  seem to  depend on uniform ellipticity  constants for $ f $ in  \eqref{6.3} $(b). $   
	  However thanks to  Maple we eventually obtained  \eqref{6.53a} and  Corollary \ref{cor6.15}.
	  
	   Finally the ballpark estimates  given for $ |a| $ in  Lemmas  \ref{lem6.13}  and \ref{lem6.18}   
	   could definitely be  improved on by a more serious estimate of the  $  E $'s.  Also  we note  that  the Martin exponent
	  for  $ p > 1 $ and a $p$-harmonic Martin function on $ \rn{2}_+ $ is  (see \cite{K,A})	  
	  \begin{align*} 
     (1/3) \left( p - 3  - 2  \sqrt{ p^2 - 3p + 3 } \,  \, \,  \, \right)  / (p - 1 ) 
     \end{align*}    	
and the Martin function can be written down more or less explicitly. 
Using this fact and  arguing as in the proof of \eqref{6.38},  \eqref{6.65},  with  $ u $ replaced by the Martin function,      
one   should be able to  get  a  better estimate in terms of 	$ |a| $  for the  exponent of  an $ A = \nabla f $-harmonic Martin  function
  on $ \rn{2}_+ $ when $ p > 2 $  and $ f ( \eta )  =  p^{-1} ( |\eta|| + \lan a, \eta \ran )^p. $

 \nocite{*}
 
 \bibliographystyle{amsbeta}
 \bibliography{myref}	
 
	             \end{document}